\documentclass[
12pt, 
a4paper, 
oneside, 
headinclude,footinclude, 
]{article}

\usepackage[
nochapters, 
beramono, 
eulermath,
pdfspacing, 
dottedtoc 
]{classicthesis} 

\usepackage{arsclassica} 

\usepackage[T1]{fontenc} 

\usepackage[utf8]{inputenc} 

\usepackage{graphicx} 

\usepackage{enumitem} 

\usepackage{amsmath,amssymb,amsthm} 

\usepackage{varioref} 

\hypersetup{
colorlinks=true, breaklinks=true, bookmarks=true,bookmarksnumbered,
urlcolor=webbrown, linkcolor=RoyalBlue, citecolor=webgreen, 
pdftitle={}, 
pdfauthor={\textcopyright}, 
pdfsubject={}, 
pdfkeywords={}, 
pdfcreator={pdfLaTeX}, 
pdfproducer={LaTeX with hyperref and ClassicThesis} 
} 

\usepackage{tikz}
\usetikzlibrary{decorations.pathreplacing, calligraphy}

\usepackage{hyperref}
\usepackage{bm, stmaryrd}											
\usepackage{a4wide}
\usepackage{mathdots}
\usepackage[english]{babel}
\usepackage{xcolor}
\usepackage{float}
\usepackage{bigints}
\usepackage{subcaption}
\usepackage{accents}

\allowdisplaybreaks

\newtheorem{thm}{Theorem}[section]
\newtheorem{lem}[thm]{Lemma}
\newtheorem{prop}[thm]{Proposition}
\newtheorem{conj}[thm]{Conjecture}
\newtheorem{coro}[thm]{Corollary}

\newtheorem*{thma}{First Main Theorem}
\newtheorem*{thmb}{Second Main Theorem}
\newtheorem*{thmc}{Tubular Neighbourhood Area Theorem}
\newtheorem*{thmaux}{Auxiliary Theorem}

\newtheorem*{vh}{Area Heuristic}
\newtheorem*{rvh}{Regular Area Heuristic}

\theoremstyle{definition}				
\newtheorem*{rem}{Remark}

\numberwithin{equation}{section}

\newcommand{\R}{\mathbb{R}}

\newcommand{\N}{\mathbb{N}}
\newcommand{\Z}{\mathbb{Z}}
\newcommand{\Q}{\mathbb{Q}}

\newcommand{\bP}{\mathbb{P}}

\newcommand{\E}{\mathbb{E}}
\newcommand{\V}{\mathbb{V}}

\newcommand{\dist}{\detokenize{dist}}

\newcommand{\erfc}{\detokenize{erfc}}

\newcommand{\ppi}{\overline{\pi}}

\title{\normalfont\spacedallcaps{Rational Points and Brownian Motion}} 

\author{\spacedlowsmallcaps{Faustin ADICEAM\textsuperscript{1} \& \spacedlowsmallcaps{Volodymyr PAVLENKOV\textsuperscript{2}} \& Evgeniy ZORIN\textsuperscript{3} } }

\date{} 

\begin{document}

\maketitle


\renewcommand{\sectionmark}[1]{\markright{\spacedlowsmallcaps{#1}}} 
\lehead{\mbox{\llap{\small\thepage\kern1em\color{halfgray} \vline}\color{halfgray}\hspace{0.5em}\rightmark\hfil}} 

\pagestyle{scrheadings} 

\begin{flushright}
\textit{Vasily Ivanovich Bernik (1947---2026) \& Michael Maurice Dodson (1937---2026) \\ in memoriam}
\end{flushright} 

\vspace{2mm}

\begin{abstract}
\noindent Given a real--valued function $f$ defined over a bounded interval with nonempty interior, let $\mathcal{N}_f\left(\delta, Q\right)$ denote the number of rational points in the plane with deno\-minators at most $Q\ge 1$ lying in the $\left(\delta/Q\right)$--tubular neighbourhood of the graph of the function $f$. A heuristic predicts that the number of such points should grow like the area of the  neighbourhood provided that $\delta$ (the parameter determining the thickness of the domain) is big enough compared with some negative power of the bound $Q$. \\

\noindent Considerable efforts have been committed to prove this heuristic in the case of sufficiently regular curves. This culminated in the works by Vaughan \& Velani~(2006) and  by Huang~(2015) establishing an asymptotic expansion for the counting function $\mathcal{N}_f\left(\delta, Q\right)$ provided that $\delta\gg Q^{-1+\epsilon}$ for some $\epsilon>0$, and provided that the map $f$ is twice continuously differentiable with non--vanishing, Lipschitz continuous second derivative.\\

\noindent The present work deals with the thus--far unexplored complementary regime where minimal regularity conditions are imposed on the curve. More precisely, it is concerned with the case where the function $f$  is an almost sure realisation of the graph of  Brownian motion (and, more generally, of any stochastic process which has an absolutely continuous law with respect to that of Brownian motion).\\

\noindent The main result establishes the existence of an almost sure asymptotic expansion for the counting function for all possible values of $\delta$, with the sole exception of an isolated critical regime, thereby going well beyond the theory currently available for regular curves. This exceptional regime is conjectured to correspond to a ge\-nui\-ne transition point in the asymptotic behavior. A key ingredient in the proof is the derivation of the area heuristic, which relies on establishing the almost sure asymptotics of the area of the tubular neighbourhood of the graph of Brownian motion. \\

\noindent This result, and indeed the theory developed towards its proof, have two main consequences~: firstly, they complete the counting aspect of the theory of Diophantine approximation on the graph of  Brownian motion ini\-tia\-ted by Sprind\v{z}uk (1979). 
Secondly, they hint at the existence of a theory unifying, through the area heuristic, the analysis of rational points near curves on the one hand and, on the other,  the local Hölder regularity and fine-scale oscillations of the curve. They thus build a seemingly new bridge between Number Theory and Multifractal Analysis.

\end{abstract}

\tableofcontents

\let\thefootnote\relax\footnotetext{\textsuperscript{1} {Laboratoire d’analyse et de mathématiques appliquées (LAMA), Université Paris-Est Créteil, France,} \texttt{faustin.adiceam@u-pec.fr}}
\let\thefootnote\relax\footnotetext{\textsuperscript{2}Department of Mathematics, University of York,
Heslington, York, YO10 5DD, England, {\texttt{volodymr.pavlenkov@york.ac.uk}}.}
\let\thefootnote\relax\footnotetext{\textsuperscript{3} Department of Mathematics, University of York,
Heslington, York, YO10 5DD, England, \texttt{evgeniy.zorin@york.ac.uk}.\\}

\section{Introduction}

\subsection{The General Setup}

\noindent Let $I$ be a bounded interval in the real line with nonempty interior. Let $f~: I\rightarrow \R$ be a bounded map. Given an integer $Q\ge 1$ and a real $\delta=\delta(Q)>0$ (possibly depending on $Q$), denote  by $\mathcal{N}_f\left(\delta, Q\right)$ the function counting   the number of rational points with deno\-mi\-nators at most $Q$ lying in 
a ($\delta/Q$)--neighbourhood of the curve $$\mathcal{C}_f\;=\;\left\{(x,f(x)) : x\in I\right\}.$$ Explicitly, employing here and throughout the symbol $\sharp$ to denote the cardinality of a finite set,
\begin{equation}\label{defcount}
\mathcal{N}_f\!\left(\delta, Q\right)\;=\; \sharp\left\{ \left(p,q,r\right)\in\Z^3\;:\; 1\le q\le Q \quad \textrm{and}\quad  \left(\frac{p}{q}, \frac{r}{q}\right)\in \mathfrak{T}_f\!\left(\frac{\delta}{Q}\right) \right\}.
\end{equation} 
Here\textsuperscript{4}\let\thefootnote\relax\footnotetext{\textsuperscript{4}In probability theory, and in particular in the study of the properties of Brownian motion, the notation $\Q$ is commonly used to denote a probability measure (especially in the context of the well--known Girsanov Theorem --- see, e.g., \cite[Theorem~6.4]{B}). In the entirety of this paper,  this notation is  employed only in its Number Theoretical meaning to denote the set of rational numbers. \\ Also,  the definition of the counting function  $\mathcal{N}_f\!\left(\delta, Q\right)$ in~\eqref{defcount} accounts for the fact that rationals are not assumed to be irreducible. A standard argument based on M\"obius inversion enables one to recover the analogues of all counting results established hereafter in the case of primitive rational points, that is, under the additional assumption that $\gcd(p,q,r)=1$. See, e.g., \cite[\S2]{H} for further details.}, given a parameter $\eta>0$, $\mathfrak{T}_f\left(\eta\right)$ denotes the $\eta$--tubular neighbourhood of the curve  $\mathcal{C}_f$ with respect to the sup--norm. This is defined as 
\begin{equation*}
\mathfrak{T}_f\!\left(\eta\right)= \left\{\bm{y}\in  \R^2\;:\; \dist\left(\mathcal{C}_f, \bm{y}\right)\le \eta\right\}, 
\end{equation*} 
where, given $\bm{y}=\left(y_1, y_2\right)\in\R^2$, 
\begin{equation}\label{defdist}
\dist\left(\mathcal{C}_f, \bm{y}\right)\;=\; \inf_{x\in I}\;\max\left\{\left|x-y_1\right|, \left|f(x)-y_2\right|\right\}.
\end{equation} 

\noindent The asymptotic behavior as $Q$ tends to infinity of the counting function $\mathcal{N}_f\left(\delta, Q\right)$  is go\-ver\-ned by the \emph{area  heuristic} based on the assumption that the set of rationals  
\begin{equation}\label{boundrat}
\mathfrak{S}_{\Q}(Q)\;=\; \left\{\left(\frac{p}{q}, \frac{r}{q}\right)\in\Q^2\;:\; p,r\in\Z \quad \textrm{and}\quad 1\le q\le Q\right\}^*,
\end{equation}
where the star indicates that each rational point is counted with multiplicity, is equidistributed in suitable neighbourhoods of the curve $\mathcal{C}_f$ (in a sense to be specified). This is saying that the number of rationals in $\mathfrak{S}_{\Q}(Q)$ falling in    $\mathfrak{T}_f\left(\delta/Q\right)$ should be proportional to the area of this set (assumed to be measurable). This can be formalised as follows~:


\begin{vh}
Assume that for any given $\eta>0$, the tubular neighbourhood  $\mathfrak{T}_f\!\left(\eta\right)$ is measurable and denote by $\mathcal{A}_f\!\left(\eta\right)$ its two--dimensional Lebesgue measure. Then, for suitable ranges of the thickness measure $\delta=\delta(Q)$, 
\begin{equation}\label{ah0}
\mathcal{N}_f\!\left(\delta, Q\right)\;\asymp\; \mathcal{A}_f\!\left(\frac{\delta}{Q}\right)\cdot Q^3
\end{equation}
\end{vh}

\noindent In the above and throughout, given positive reals $x$ and $y$ depending on some para\-meters, the notations $x \asymp y$, $x\ll y$ and $x\gg y$ mean the existence of a constant $c>0$ independent of these parameters such that $c^{-1}\cdot y \le x \le c\cdot y$, $\; x\le c\cdot y$ and $ x\ge c\cdot y$, respectively. The quantity $c$  is then referred to as the \emph{implicit constant}.\\

\noindent A few points deserve to be further justified and specified in the formalisation of the above heuristic. Firstly, the factor $Q^3$ should be seen as accounting for the number of rationals in the set $\mathfrak{S}_{\Q}(Q)$ falling in a given unit square. Under the assumption of equidistribution, the intersection of $\mathfrak{S}_{\Q}(Q)$ with the tubular neighbourhood $\mathfrak{T}_f\!\left(\delta/Q\right)$ is comparable to the product of the area of this neighbourhood with $Q^3$, which explains the form of the relation in~\eqref{ah0}. \\

\noindent Secondly, the equidistribution assumption should be expected to hold only  if the set $\mathfrak{T}_f\!\left(\delta/Q\right)$ is thick enough compared with the number of rational points available. This observation is usually formalised in the literature by  characterising the ranges of the thickness measure $\delta$ over which the heuristic is valid as a   lower bound of the form
\begin{equation}\label{ah}
\delta\gg Q^{-\alpha} \qquad \textrm{for some exponent}\qquad  \alpha>0.
\end{equation}
{Denote by $\alpha(f)$  the {(possibly infinite)} supremum of the admissible values of $\alpha$ for which the heuristic holds (which can happen only in the regime where the counting function $\mathcal{N}_f\!\left(\delta, Q\right)$ does not vanish).  
This quantity  can} then be referred to as the \emph{Area-Heuristic exponent}. It captures in particular the balance and  interplay between the two types of rational points involved in the counting function $\mathcal{N}_f\!\left(\delta, Q\right)$ : on the one hand those lying \emph{on} the curve (these are always included in the count and might ivalidate the \emph{area} heuristic, if comparatively too numerous, as they lie on a one-dimensional structure) and those genuinely  lying in a neighbourhood of the curve (but not on it, in such a way that their contribution depends on the value of $\delta$ and thus on the area of the domain they lie in). \\

\noindent The former are referred to as the \emph{intrinsic rational points}; they can be expected to contribute most to the counting function $\mathcal{N}_f\!\left(\delta, Q\right)$ when {$\delta\ll Q^{-\alpha}$} for some $\alpha>\alpha(f)$ (i.e.~when the tubular {neighbourhood} is ``small''), and depend on intrinsic properties of the curve determined by $f$ (for instance the arithmetic constraints that may be imposed in the equations defining it). The latter are referred to as the \emph{extrinsic rational points}; they can be expected to contribute most to the counting function $\mathcal{N}_f\!\left(\delta, Q\right)$ when {$\delta\gg Q^{-\alpha}$} for some $\alpha<\alpha(f)$  (i.e.~when the tubular neighboorhood is ``large''),  and depend on the local structure of the curve in the plane (which may depend, for instance, on its regularity or its curvature).

\subsection{The Case of Regular Curves}\label{subsec2}

\noindent In practice, the Area Heuristic  has been dealt with in the case where the function $f$ is regular enough, and more specifically in the case where it is at least continuously differentiable over the interval $I$ with a bounded derivative. It  is then easily seen that  $$\mathcal{A}_f\!\left(\frac{\delta}{Q}\right)\;\asymp\; \left|I\right|\cdot \frac{\delta}{Q}\, ,$$ where $\left|I\right|$ denotes the length of the interval $I$ and where the implicit constant is allowed to depend on the function $f$. The heuristic then specialises to the following form~: 

\begin{rvh}
Assume that the map $f~: I\rightarrow \R$ is continuously differentiable over $I$ with a bounded derivative. Then, 
\begin{equation}\label{rah}
\mathcal{N}_f\left(\delta, Q\right)\;\asymp\;  \left|I\right| \cdot \delta\cdot Q^2\qquad \textrm{when}\qquad \delta\gg Q^{-\alpha} \qquad \textrm{for some}\qquad  \alpha>0.
\end{equation}
\end{rvh}

\noindent The specification of an admissible value for the exponent $\alpha$ in this case, and indeed of the Area-Heuristic exponent, involves the consideration of the following type of obstructions. Assume that the function $f$ determines an algebraic curve defined over the rationals of degree $d\ge 2$. A simple argument based on a Taylor expansion  (illustrated in~\cite{A2} and proved in~\cite[Lemma~1]{BDL}) shows that a gap phenomenon occurs~:  when $\delta=o\!\left(Q^{1-d}\right)$ (in particular, when $\delta=o\!\left(Q^{-1}\right)$ if $d=2$), the  function $\mathcal{N}_f\!\left(\delta, Q\right)$ reduces to solely counting rational points \emph{on} the curve $\mathcal{C}_f$, and therefore solely to the intrinsic contribution. As previously pointed out, one can then not hope for some \emph{area} heuristic to hold.  \\

\noindent In the case that the map $f$ is at least twice differentiable, the Regular Area Heuristic  can be made more precise to take into account the possibility that the curve  $\mathcal{C}_f$ may accumulate an abnormally large number of rationals around points where it has a high order of contact with its tangent, in particular when the tangent is, furthermore, a rational line. To avoid such a degenerate situation, it is then assumed that its second derivative remains bounded  in absolute value both from above and away from zero  (that is, that the curve is convex or concave on its domain, and that the curvature is uniformly bounded above and  below).\\

\noindent Establishing the Regular Area Heuristic under these curvature assumptions has attracted a considerable amount of effort involving a large variety of  methods and theories. 
To cite but a few milestones in this quest, building on the earlier work  by Jarn\'ik~\cite{J}, by Swinnerton--Dyer~\cite{S-D} and by Bombieri \& Pila~\cite{BP}, Huxley~\cite{Hx} employed the so--called determinant method to establish the upper bound in the first relation in~\eqref{rah} in the weaker form $$\mathcal{N}_f\left(\delta, Q\right)\ll \left|I\right| \cdot \delta^{1-\eta}\cdot Q^2\quad \textrm{for any\quad }\eta>0\quad  \textrm{whenever}\quad \delta\gg Q^{-(1-\varepsilon)}\quad \textrm{for some}\quad \varepsilon>0.$$ Parallelly, the exact lower bound (without the $\eta$ factor) has been obtained in the same range of $\delta$'s by Beresnevich, Dickinson \& Velani~\cite{BDV} and then generalised to a larger class of functions by Beresnevich and Zorin~\cite{BZ} with a view towards applications to Metric Number Theory. \\

\noindent Getting back to the upper bound, the additional exponent $\eta>0$ in Huxley's estimate has subsequently been remarkably removed by Vaughan \& Velani~\cite{VV} through a mixture of ideas involving Fourier analytic methods and the Legendre transform of the function $f$ (which exists under the assumption of uniform bounds on the curvature). A refinement of their method culminated in the work by Huang~\cite{H}, 
where a precise asymptotic estimate for the counting function is obtained~:  

\begin{thm}[Huang, 2015]\label{huang}
Let $f~: I\rightarrow \R$ be a twice continuously differentiable function over a bounded interval $I$ such that $\left|f''\right|\asymp 1$ and such that the second derivative is, furthermore, Lipschitz continuous. Then, as $Q$ tends to infinity, 
$$\mathcal{N}_f\left(\delta, Q\right)\; =\;  \left(1+o(1)\right)\cdot \left|I\right| \cdot \delta\cdot Q^2\qquad \textrm{whenever}\qquad \delta\gg Q^{-(1-\varepsilon)} \qquad \textrm{for some}\qquad  \varepsilon>0. $$
\end{thm}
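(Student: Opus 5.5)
The plan is to follow the Vaughan--Velani method as refined by Huang. First I reduce the count to a one-dimensional problem. Since $|f'|$ is bounded, the sup--norm tubular condition is equivalent, up to boundary effects of size $O(Q^2\cdot\delta/Q)$ and a harmless reparametrisation, to the vertical condition $\|qf(p/q)\|\le \delta'$ with $p/q\in I$. Here $\|\cdot\|$ is the distance to the nearest integer and $\delta'=\delta(1+O(\delta/Q))$ is adjusted by the slope. More precisely, I will sandwich the tube between vertical strips of widths $(1\pm o(1))\delta/Q$ measured in the $y$-direction. This is harmless for the main term because the area of the strip is, to first order, independent of the slope. Note that $\delta$ is the vertical half-width in the count $\#\{(p,q): \|qf(p/q)\|\le \delta\}$, whose expected value is $2\delta\cdot|I|\sum_{q\le Q} q$. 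This must be reconciled with the normalisation $|I|\,\delta Q^2$ by reading the constant conventions exactly as in Huang's paper, and I will simply adopt his normalisation.

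Next I smooth the indicator of $[-\delta,\delta]$ modulo $1$ by Fej\'er or Beurling--Selberg majorants and minorants $\chi^\pm$ of width $\delta(1\pm\eta)$ with Fourier support $|j|\le J\approx \eta^{-1}\delta^{-1}$. The count then becomes
\begin{equation*}
\sum_{q\le Q}\sum_{p/q\in I}\chi^\pm\bigl(qf(p/q)\bigr)=\widehat{\chi^\pm}(0)\sum_{q\le Q}\#\{p: p/q\in I\}+\sum_{0<|j|\le J}\widehat{\chi^\pm}(j)\sum_{q\le Q}\sum_{p}e\bigl(jqf(p/q)\bigr).
\end{equation*}
The zero frequency gives the main term $(1+O(\eta))|I|\delta Q^2$, up to the normalising constant. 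It remains to show that the error is $o(\delta Q^2)$ when $\delta\ge Q^{-1+\varepsilon}$.

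For the oscillatory sums I apply Poisson summation in $p$ (van der Corput's B-process). Because $|f''|\asymp 1$, the phase $p\mapsto jqf(p/q)$ has second derivative $\asymp j/q$. Stationary phase then transforms the inner sum into a sum over dual variables $k$ of terms $\sqrt{q/j}\,e\bigl(-jq f^*(k/j)\bigr)$ plus error terms, where $f^*$ is the Legendre transform. Here the Lipschitz continuity of $f''$ is exactly what controls the second-order stationary phase error. After this step the sum over $q$ becomes a geometric (linear-phase) sum in $q$, since the dual phase is $q\cdot(\text{function of } k/j)$. That sum is bounded by $\min(Q,\|jf^*(k/j)\|^{-1})$. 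Summing over $k$ and $j$ reduces everything to counting near-integer values of $jf^*(k/j)$, that is, a dual rational-point count near the curve of $f^*$. Bounding this crudely, via a trivial estimate or Huxley's bound for the dual curve, gives a total error of $O(Q^{3/2}\delta^{-1/2}\,\text{polylog}+Q\delta^{-1}\ldots)$. This is $o(\delta Q^2)$ exactly when $\delta\ge Q^{-1+\varepsilon}$.

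I expect the main obstacle to be the stationary-phase step. Making its error terms uniformly small in $j\le J$ and $q\le Q$ requires care at the endpoints of $I$ and in the regime $j\approx q$ where the second derivative is small relative to the range. It also requires the Lipschitz hypothesis to give an $O(1)$ remainder. The first thing I would try is the Vaughan--Velani truncated Poisson formula with smooth cutoffs in $p$, which lets the endpoint terms be absorbed via the $\eta$-smoothing.
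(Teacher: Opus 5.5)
The paper does not prove this theorem; it quotes it from Huang~\cite{H}. Your architecture (smoothing, Poisson summation in $p$, Legendre dual, linear phase in $q$, reduction to a dual count) is the Vaughan--Velani duality strategy that Huang refines. As written, however, it does not reach the stated range.

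\textbf{The final estimate fails.} The trivial bound $\min(Q,\|jf^*(k/j)\|^{-1})\le Q$ gives exactly
\begin{equation*}
Q^{3/2}\sum_{0<|j|\le J}\min\left\{\delta,|j|^{-1}\right\}\cdot|j|^{1/2}\;\asymp\;Q^{3/2}\delta^{-1/2}.
\end{equation*}
This is $o(\delta Q^2)$ only when $\delta\gg Q^{-1/3+\varepsilon}$: at $\delta=Q^{-1+\varepsilon}$ it is $Q^{2-\varepsilon/2}$, against a main term $Q^{1+\varepsilon}$. Likewise $Q\delta^{-1}=o(\delta Q^2)$ needs $\delta\gg Q^{-1/2+\varepsilon}$. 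So your claim that the error is $o(\delta Q^2)$ ``exactly when $\delta\ge Q^{-1+\varepsilon}$'' is an arithmetic error.

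The missing idea is to actually exploit the cancellation in $q$. Split $j\sim J_0$ and $\|jf^*(k/j)\|\sim\theta\in[Q^{-1},1/2]$ dyadically, and insert a nontrivial dual count
\begin{equation*}
\#\left\{(k,j)\;:\; j\sim J_0,\ k/j\in f'(I),\ \|jf^*(k/j)\|\le\theta\right\}\;\ll\;\theta^{1-\eta}J_0^2+J_0^{1+\eta}.
\end{equation*}
This must hold uniformly in $\theta$, in particular for $\theta\approx Q^{-1}$, which lies at least a factor $Q^{\varepsilon}$ below the dual threshold $J_0^{-1}\gg\delta$; this is where the exchange $Q\leftrightarrow\delta^{-1}$ earns the range. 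Such a bound follows, e.g., from Huxley's estimate applied to $f^*$, extended to small $\theta$ by monotonicity; note that $(f^*)''=1/(f''\circ(f')^{-1})$ is $\asymp 1$ and Lipschitz. It gives $\sum_{j\sim J_0}\sum_k\min(Q,\|jf^*(k/j)\|^{-1})\ll Q^{\eta}J_0^2+QJ_0^{1+\eta}$. Hence the dual contribution is $\ll Q^{1/2+\eta}\delta^{-1/2}+Q^{3/2}\delta^{1/2-\eta}$, which is $o(\delta Q^2)$ for $\delta\ge Q^{-1+\varepsilon}$ once $\eta<\varepsilon/2$.

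The stationary-phase step you defer is equally load-bearing. You need a remainder $O(\sqrt{q/j}+\log(2+j))$ for each $(j,q)$, using only that $f''$ is Lipschitz; these remainders sum to $Q^{3/2}\delta^{1/2}+Q\log^2Q$. By contrast, the classical B-process remainder $(b-a)\lambda_2^{1/5}\lambda_3^{1/5}\asymp(jq)^{2/5}$ would alone cost $Q^{7/5}\delta^{-2/5}$ and confine you to $\delta\gg Q^{-3/7}$.

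\textbf{The opening reduction is false.} For the sup-norm tube of radius $\rho$, the vertical section above $x$ has length $2\rho\,(1+|f'(x)|)+O(\rho^2)$. So the slope enters at first order, not through $\delta'=\delta(1+O(\delta/Q))$. Moreover, the radius $\delta/Q$ does not depend on $q$, so to first order the condition becomes $\|qf(p/q)\|\le q\,\delta\,(1+|f'(p/q)|)/Q$, not $\|qf(p/q)\|\le\delta$. With~\eqref{defcount} read literally, the main term would be $\frac{2}{3}\,\delta Q^2\int_I(1+|f'|)$.

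The statement is correct for Huang's counting function $\#\{(p,q): q\le Q,\ p/q\in I,\ \|qf(p/q)\|\le\delta\}$. Its main term $2\delta|I|\sum_{q\le Q}q=(1+O(Q^{-1}))\,|I|\delta Q^2$ already matches, so there is nothing to reconcile. Drop the reduction and work with that count from the outset.
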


\noindent One of the {striking} features of this result is its uniformity~:  provided that the tubular neighbourhood is thick enough, and under the considered curvature assumptions, all dependence on the specific function $f$ is relegated to the error term. This error term takes an explicit form which varies in the work by Huang~\cite{H} and in its subsequent extension by Gafni~\cite{G}. Also, Huang~\cite{H} obtains the above asymptotic for a larger class of functions which are only differentiable once with a Lipschitz continuous derivative; the cost to pay is that it is valid only in the smaller range $\delta\gg Q^{-2/3}$.\\

\noindent Recent developments in this theory deal with the even more refined and difficult case where one considers very small tubular neighbourhoods of a regular enough curve, for instance when one assumes that $\delta=o(Q^{-1})$. As noted above, no uniform counting estimates can then be expected as the behavior of the function $\mathcal{N}_f\!\left(\delta, Q\right)$ heavily depends on the intrinsic Diophantine properties of the curve $\mathcal{C}_f$. Most of the current approaches have dealt with the case where the function $f$ is algebraic. Explicit (but probably not optimal) upper bounds for $\mathcal{N}_f\!\left(\delta, Q\right)$ are then obtained with the help of the determinant method  in some particular cases in~\cite{H-B} and in the general case in~\cite{AM2}. \\

\noindent Finally, it should be noted that the theory of counting rational points lying near higher dimensional regular enough manifolds  has also been   very actively developed. 
The reader is referred, e.g., to~\cite{A1, BVVZ_d, BVVZ_e} and to the references within for an account on the related state of the art.

\subsection{Counting Rational Points near a Trajectory of  Brownian Motion}

\noindent The main goal of the present work is to tackle the Area Heuristic in the so--far unexplored case where minimal regularity conditions are required for the function $f$. This is achieved upon working with one of the most canonical examples of such a function, namely an almost sure realisation of the standard linear Brownian motion. It defines a (class of) continuous but nowhere differentiable function(s).  In doing so, a parallel goal is to  complete  the counting aspect of the  programme set out by {Sprind\v{z}uk} in~\cite[Chap.~2, Section~13]{S}, where the theory of Diophantine approximation on the graph of  Brownian motion is initiated. \\

\noindent Specifically, {Sprind\v{z}uk} is concerned  with the closely related but weaker problem of establishing the \emph{extremality} of an almost sure realisation of the graph of the  linear Brow\-nian motion. This is saying that almost surely,  the Lebesgue measure of the set of  times $t>0$ such that the point on the graph of Brownian motion at time $t$ 
should satisfy a certain Diophantine condition is zero. This condition is that for some $\varepsilon>0$,  the point should  lie infinitely often within a distance $q^{-3/2-\varepsilon}$ of  a rational vector, the two coordinates of which have the same denominator $q\ge 1$~(\textsuperscript{5}\let\thefootnote\relax\footnotetext{\textsuperscript{5}To be more precise, {Sprind\v{z}uk} proves in~~\cite[Chap.~2, Theorem~12]{S} the so--called  dual form of the stated property of extremality. The formulation adopted here is equivalent to Sprid\v{z}uk's from Khintchine's classical Transference Theorem --- see, e.g., in~\cite[Chap.~V]{Ca}.}).\\

\noindent Some notation and terminology are now introduced to state the main results. For a standard reference on the theory of Brownian motion, the reader is referred to~\cite{MP}.\\

\noindent Let $\left(\Omega, \mathcal{F}, \mathbb{P}\right)$ be a probability space. Recall that the \emph{standard linear Brownian motion}  is a stochastic process (i.e.~a family of random functions) which to each $\omega\in\Omega$ associates a map $t\ge 0\mapsto B(t,\omega)$ meeting the following four properties~: \textbf{(1)} almost surely (i.e.~for  $\mathbb{P}$--almost all $\omega\in\Omega$), the graph of Brownian motion starts at the origin in the usual sense   that $B(0,\omega)=0$; \textbf{(2)} almost surely, the map $t\ge 0\mapsto B(t,\omega)$ is continuous; \textbf{(3)} the property of \emph{independence of increments} is satisfied; that is, for all $t_0 = 0\le t_1\le t_2\le \dots \le t_n$, the random variables $B(t_n,\;\cdot\;)-B(t_{n-1},\;\cdot)\;, \;\;\dots\;\; , B(t_1,\;\cdot\;)-B(t_{0}, \;\cdot\;)$ are independent; \textbf{(4)} given any $t\ge 0$ and $h>0$, the increments  $B(t+h,\;\cdot\;)-B(t,\;\cdot\;)$ are normally distributed with expectation zero and variance $h$.\\

\noindent As usual, the dependence on $\omega\in\Omega$ is  omitted and the stochastic process denoted by $\left\{B(t) \right\}_{t\ge 0}$. Its graph is the random set $\mathcal{G}\subset  \R_{\ge 0}\times \R$ whose restriction to times $T_1\le t\le T_2$, where $T_2>T_1\ge 0$, is denoted by $\mathcal{G}\!\left(\bm{\mathcal{T}}\right)$ (here and throughout,  $\bm{\mathcal{T}}$ is shorthand notation for the vector $\left(T_1, T_2\right)$). In other words, 
\begin{equation}\label{greph}
\mathcal{G}\!\left(\bm{\mathcal{T}}\right)\; :=\; \left\{\left(t, B(t)\right)\;:\; T_1\le t\le T_2\right\}\;\subset\;  \R_{\ge 0}\times \R.
\end{equation}  
It is standard, see~\cite[Theorem~1.27]{MP}, that almost surely, the graph of Brownian motion defines a continuous but nowhere differentiable curve. Furthermore, Taylor's Theorem (see~\cite[Theorem~4.29]{MP}) states that the almost sure Hausdorff dimension of the graph~\eqref{greph} is 3/2.  \\

\noindent The Area Heuristic is addressed in this context by generalising the definition of the counting function in~\eqref{defcount} to allow a much {larger} class of  approximation functions. Let then $\xi : x\ge 1\mapsto \xi(x)\in(0, 1]$ be any map. Given times $0\le T_1<T_2$, define the random counting function  
\begin{align}\label{defcountrand0}
\widehat{\mathcal{N}}_{\xi}\!\left(\bm{\mathcal{T}}; Q\right)\;=\;
&\sharp\Bigg\{ \left(p,q,r\right)\in\Z^3\;:\;  \dist\!\left(\bm{\rho}, \mathcal{G}\!\left(\bm{\mathcal{T}}\right)\right)\le \frac{\xi(q)}{2q},\nonumber \\
& \qquad\qquad\qquad\qquad \; \textrm{where}\qquad  \bm{\rho}=\left(\frac{p}{q}, \frac{r}{q}\right)\qquad\textrm{and}\qquad 1\le q\le Q\Bigg\}
\end{align}
(recall that the $\dist$ function is the one introduced in~\eqref{defdist}). In the particular case where $ \xi$ is the  approximation function
\begin{equation}\label{defpsideltaq} 
\xi^{(\delta)}_{Q}~: \; q\ge 1\;\mapsto\; {2}\delta\cdot \min\left\{\frac{q}{Q}, 1\right\}\qquad \textrm{for some} \;\qquad  \delta\; =\; \delta(Q)\in\left(0, {1/4}\right),
\end{equation} 
set 
\begin{align}\label{defcountrand1}
\mathcal{N}^{\,\flat}\!\left(\bm{\mathcal{T}};\left(\delta, Q\right)\right)\;=\; \widehat{\mathcal{N}}_{\xi^{(\delta)}_{Q}}\!\left(\bm{\mathcal{T}}; Q\right).
\end{align}
This is the exact analogue of the   deterministic counting function~\eqref{defcount} in the sense that the function $\mathcal{N}^{\,\flat}\!\left(\bm{\mathcal{T}};\left(\delta, Q\right)\right)$ counts the number of elements in the intersection of the set of rational points $\mathfrak{S}_{\Q}(Q)$  defined in~\eqref{boundrat} with the set $\widehat{\mathfrak{T}}\left(\bm{\mathcal{T}};\delta/Q\right)$. Here,
given $\eta>0$, 
\begin{equation}\label{count2}
\widehat{\mathfrak{T}}\!\left(\bm{\mathcal{T}}; \eta\right)\;=\; \left\{\bm{s}\in   {\R^2}\;:\; \dist\left(\bm{s}, \mathcal{G}\!\left(\bm{\mathcal{T}}\right)\right)\le \eta\right\}
\end{equation} 
is the $\eta$--tubular neighbourhood of the graph of Brownian motion between the two times $T_1$ and $T_2$.  \\
 
\noindent Determining the asymptotic behavior of the counting function~\eqref{defcountrand0} is the natural level of generality in  Diophantine approximation, where the  size of the target set is allowed to   depend on the denominators of the rational approximants. This, in particular, corresponds to the framework considered by Sprind\v{z}uk in~\cite{S} when giving impetus to the theory of  Diophantine approximation of trajectories of stochastic processes. \\

\noindent It should then be expected that the counting problem under consideration, usually referred to as \emph{asymptotic counting},   is governed by the cumulative expected contributions of all denominators.  The theorem below confirms this heuristic by determining the almost sure explicit shape of the individual contributions of each denominator under a mild assumption of monotonicity, natural in the Diophantine theory, on the  approximation function. 

\begin{thma}[Asymptotic Counting]\label{mainthm}
Fix an approximation function $\xi~: \left[1, \infty\right)\rightarrow \left(0,1/2\right)$ such that the map $q\mapsto \xi(q)/q$ is nonincreasing. Let $T_2>T_1>0$ and $\eta>0$ be reals, and let 
\begin{equation*}
V\!\left(\bm{\mathcal{T}}\right) \;=\; \max\left\{1,\;  \frac{1}{\sqrt{T_1}}, \; T_2-T_1,\; \sqrt{T_2}-\sqrt{T_1},\; \log\left(\frac{T_2}{T_1}\right),\; \frac{1}{T_2-T_1}\right\}.
\end{equation*}
Define the partial sums 
\begin{equation}\label{partial-sum}
\Xi\left(Q\right)\;=\; \sum_{q=1}^Qq^{3/2}\cdot\sqrt{\xi(q)}.
\end{equation} 
 Then, almost surely as $Q$ tends to infinity,
\begin{align}\label{maincount}
 \widehat{\mathcal{N}}_{ \xi}\left(\bm{\mathcal{T}}; Q\right)\;\;=\; &\frac{2\sqrt{2}}{\sqrt{\pi}}\cdot \left(T_2-T_1\right)\cdot\Xi\left(Q\right)\;+\; O\!\left(\mathcal{E}_\eta\!\left(\bm{\mathcal{T}}, Q\right)\right),
 \end{align}
where the error  term $\mathcal{E}_\eta\!\left(\bm{\mathcal{T}}, Q\right)$ is 
\begin{align*}
\mathcal{E}_\eta\!\left(\bm{\mathcal{T}}, Q\right)\;=\; & V\!\left(\bm{\mathcal{T}}\right)  \cdot \max_{1\le q\le Q}\sqrt{q^3\cdot \xi(q)}
\;+\;V\!\left(\bm{\mathcal{T}}\right)\cdot\sum_{q=1}^Q\sqrt{q\cdot\xi(q)}\cdot\left(\log q +\sqrt{q\cdot \xi(q)}\right)\\
  &+\; \sqrt{V\!\left(\bm{\mathcal{T}}\right)\cdot\Xi\left(Q\right)}\cdot \left(1+ \Xi\left(Q\right)^{2/5+\eta}\right)\cdot \left|\log\left(V\!\left(\bm{\mathcal{T}}\right)\cdot\Xi\left(Q\right)\right)\right|^{3/2+\eta}.
\end{align*}
The implicit constant  in~\eqref{maincount} depends at most on the specific trajectory of Brownian motion and on the choice of the exponent $\eta>0$ provided that the counting function $ \widehat{\mathcal{N}}_{ \xi}\left(\bm{\mathcal{T}}; Q\right)$ is, furthermore, restricted to those rationals with denominators
\begin{equation*}
q\;> \; \max\left\{\frac{4}{T_1} , \;  \frac{1}{T_2-T_1}, \;  \frac{2}{T_2} , \;  \left(\frac{V\!\left(\bm{\mathcal{T}}\right)}{T_2-T_1}\right)^2\right\}.
\end{equation*}
\end{thma}

\noindent It should be noted that the statement remains valid when the series defined by the partial sums~\eqref{partial-sum} converges. Also, the assumption $T_1>0$  is justified by the fact that Brownian motion admits a singular behavior near the  origin as it  takes there, by definition, an almost sure fixed value. See Figure~\ref{fig1} for some illustrations of graphs of realisations of  Brownian motion. 

\begin{figure}[H]
\includegraphics[width=\linewidth]{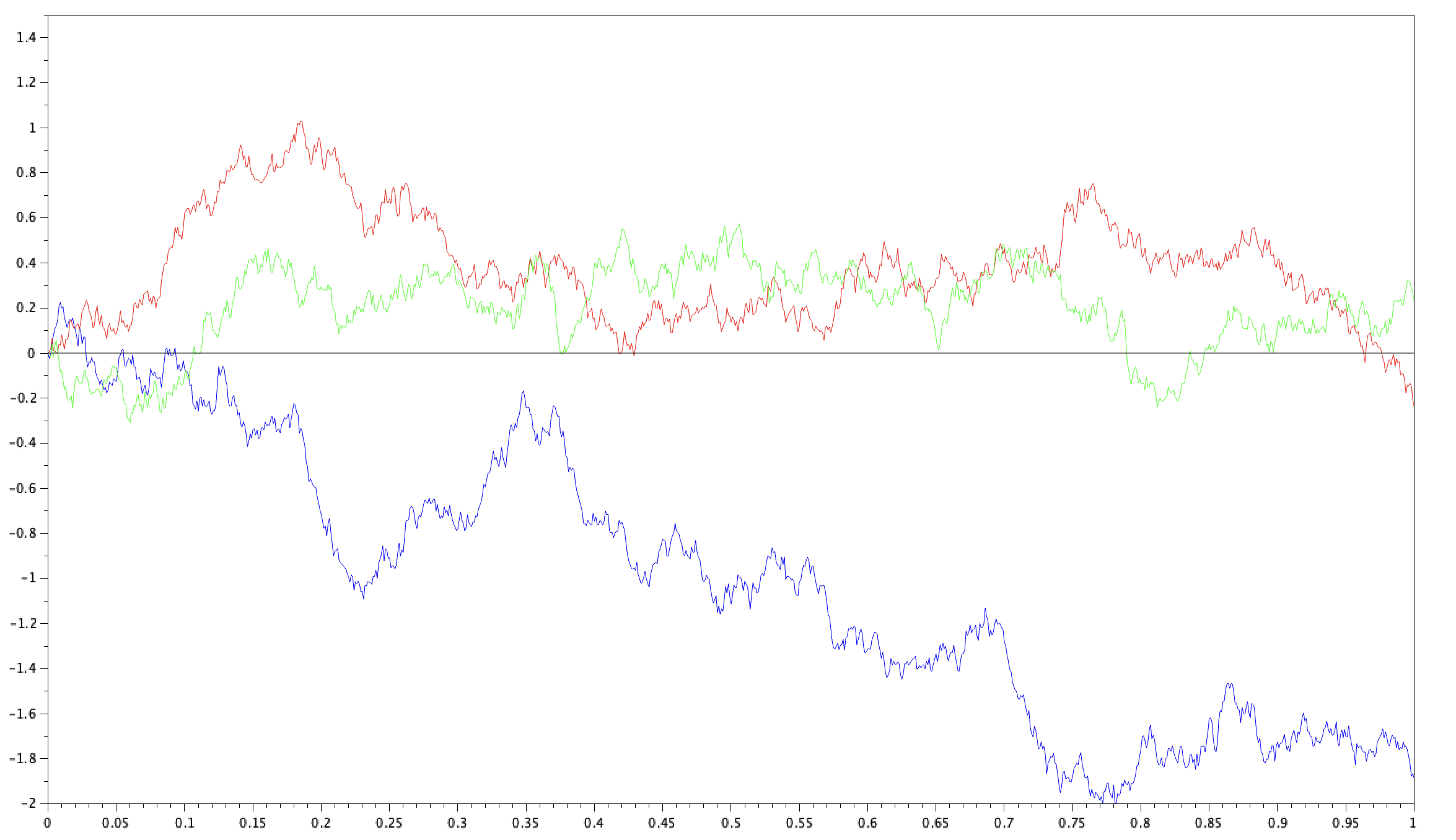}
\caption{Trajectories of the standard linear Brownian motion between  times $T_1=0$ and $T_2=1$.}
\label{fig1}
\end{figure}

\noindent The use of probabilistic methods is crucial in the proof of the above statement to overcome the impossibility to rely on Taylor expansions in the case of irregular curves. This idea was in some sense previously alluded in the context of establishing effective counting results in the case of random shifts of regular curves in~\cite{A2} or in the case of probabilistic generalisations of the Oppenheim  Conjecture in Diophantine Approximation in~\cite{AM1}.\\

\noindent In the context of the Area Heuristic, the relevant counting function is the one defined in~\eqref{defcountrand1}, namely $\mathcal{N}^{\,\flat}\!\left(\bm{\mathcal{T}};\left(\delta, Q\right)\right)$. The target set is then taken as having a size uniform in the bound imposed on the rational denominators (in the sense that the size depends only on this bound). 

%
%

\begin{thmb}[Uniform Counting]\label{coro1} Keep the notation of the First Main Theorem. Assume furthermore that $\delta~: \N\rightarrow (0, 1/4)$ is a map such that
\begin{align*}
\limsup_{Q\rightarrow \infty} \delta(Q)<\frac{1}{4}\cdotp
\end{align*} 
Then, almost surely as $Q$ tends to infinity,
\begin{align}\label{asymunifestim}
\mathcal{N}^{\,\flat}\!\left(\bm{\mathcal{T}};\left(\delta, Q\right)\right)\;=\; \frac{{4}}{3\sqrt{\pi}}\cdot\left(T_2-T_1\right)\cdot \sqrt{\delta(Q) Q^5}\cdot\left(1+o(1)\right)
\end{align}
whenever 
\begin{align}\label{lowboudelB}
\delta(Q)\cdot Q^5\gg\left(\log Q\right)^{2+\eta} \qquad \textrm{for some} \qquad \eta>0.
\end{align}
Furthermore, in the symmetric regime where 
\begin{align}\label{lowboudelBbis}
\delta(Q)\cdot Q^5\ll\left(\log Q\right)^{-2-\eta} \qquad \textrm{for some} \qquad \eta>0,
\end{align}
one has that almost surely, for all large enough integers $Q$, 
\begin{align}\label{asymunifestimbis}
\mathcal{N}^{\,\flat}\!\left(\bm{\mathcal{T}};\left(\delta, Q\right)\right)\;=\; 0. 
\end{align}
\end{thmb} 

\noindent A feature of the above result is that it imposes very little   constraints on the  range of admissible exponents $\alpha$  for the counting estimate to be valid under the assumption that either $\delta\ll Q^{-\alpha}$ or $\delta\gg Q^{-\alpha}$ (namely, \emph{any} $\alpha\neq 5$ is admissible). This stands in sharp contrast to the case of classes of regular curves where all known general results hold only when $\alpha<1$ and $\delta\gg Q^{-\alpha}$. \\

\noindent The only regime where the probabilistic  behavior of the counting function $\mathcal{N}^{\,\flat}\!\left(\bm{\mathcal{T}};\left(\delta, Q\right)\right)$ remains undetermined is when $\delta$ equals, up to logarithmic powers, $Q^{-5}$. The authors do not necessarily expect that an almost sure behavior (in the likes of~\eqref{asymunifestim} and~\eqref{asymunifestimbis}) should then be satisfied. The complete determination of the probability distribution of the random variable $\mathcal{N}^{\,\flat}\!\left(\bm{\mathcal{T}};\left(\delta, Q\right)\right)$ in this critical regime is left open  (perhaps is it Poissonian when $\delta Q^5\asymp 1$). A contribution towards this problem is contained in the auxiliary  statement below showing that, in the sense of convergence in probability, the dichotomy of capturing either no rational point or infinitely many of them in a tubular neighbourhood of the graph of  Brownian motion is determined by, respectively, the vanishing or the divergence to infinity of the sequence $\left(\delta(Q)\cdot Q^5\right)_{Q\ge 1}$.


\begin{thmaux} Keep the notation of the Second Main Theorem. Then, the following dichotomy holds~: 
\begin{itemize}
\item when $\delta(Q)\cdot Q^5\longrightarrow \infty$ as $Q\longrightarrow\infty$, 
\begin{align}\label{prob1}
\mathcal{N}^{\,\flat}\!\left(\bm{\mathcal{T}};\left(\delta, Q\right)\right)\; \overset{\mathbb{P}}{\mathrel{\scalebox{2}{$\sim$}}} \; \frac{{4}}{3\sqrt{\pi}}\cdot\left(T_2-T_1\right)\cdot \sqrt{\delta Q^5} \qquad \textrm{as}\qquad  Q\longrightarrow\infty.
\end{align}
This means that the ratio of the right and left--hand sides of the first relation tends to 1 in probability, and implies in particular that 
\begin{align*}
\mathcal{N}^{\,\flat}\!\left(\bm{\mathcal{T}};\left(\delta, Q\right)\right) \overset{\mathbb{P}}{\longrightarrow} \infty \qquad \textrm{as}\qquad  Q\rightarrow\infty.
\end{align*}
\item when $\delta(Q)\cdot Q^5\longrightarrow 0$ as $Q\longrightarrow\infty$, 
\begin{align}\label{prob2}
\mathbb{P}\left(\mathcal{N}^{\,\flat}\!\left(\bm{\mathcal{T}};\left(\delta, Q\right)\right)\ge 1\right)\; \ll \; 
\left(T_2-T_1\right)\cdot \sqrt{\delta Q^5},
\end{align}
implying in particular that 
\begin{align*}
\mathcal{N}^{\,\flat}\!\left(\bm{\mathcal{T}};\left(\delta, Q\right)\right) \overset{\mathbb{P}}{\longrightarrow} 0 \qquad \textrm{as}\qquad  Q\rightarrow\infty.\\
\end{align*}
\end{itemize}
\end{thmaux}

\noindent Up to a multiplicative absolute constant, the leading term in the Second Main Theorem is $$ \left(T_2-T_1\right)\cdot \sqrt{\frac{\delta}{Q}}\cdot Q^{3}.$$ For  the Area Heuristic to be satisfied, the factor $\left(T_2-T_1\right)\cdot \sqrt{\delta/Q}$ should match (up to multiplicative constants) the almost sure area of a $(\delta/Q)$--tubular neighbourhood of the graph of  Brownian motion. This is confirmed by the fol\-lowing statement {which, together with the Second Main Theorem, shows in particular  that the heuristic holds in a strong, extended sense   provided one stays away from the critical value of the exponent $\alpha=5$ (as introduced in~\eqref{ah})~: for any $\eta>0$, the heuristic is indeed valid when $\delta\gg Q^{-5+\eta}$ (this is the regime where the area of the tubular neighbourhood tends with the counting function $\mathcal{N}^{\,\flat}\!\left(\bm{\mathcal{T}};\left(\delta, Q\right)\right) $  to infinity) and, in the symmetric regime where $\delta\ll Q^{-5-\eta}$, the counting function $\mathcal{N}^{\,\flat}\!\left(\bm{\mathcal{T}};\left(\delta, Q\right)\right) $ vanishes at infinity in the same way as the area of the tubular neighbourhood.  In view of the discussion held when introducing the Area-Heuristic exponent (see after~\eqref{ah}), the latter case should be related to the elementarily verified property that Brownian motion captures, almost surely, no rational point~: in particular, the intrinsic contribution plays no role in the counting theory at any scale.}\\

\noindent Surprisingly, while the area of the tubular neighbourhood of a planar Brownian motion is well--studied (this is the so--called theory of Wiener Sausages  --- see~\cite{Sz}), no similar result could be found for the case of the tubular neighbourhood of the graph of the linear Brownian motion. 

\begin{thmc}[Almost sure behavior of the area of a tubular neighbourhood of the graph of Brownian motion]\label{thmvol}
Let $0\le T_1< T_2$ and $\eta>0$ be reals. Recall the definition~\eqref{count2} of the $\eta$--neighbourhood $\widehat{\mathfrak{T}}\!\left(\bm{\mathcal{T}}; \eta\right)$ of the graph of Brownian motion between times $T_1$ and $T_2$. Denote by $\widehat{\mathcal{A}}\!\left(\bm{\mathcal{T}}; \eta\right)$ the area of $\widehat{\mathfrak{T}}\!\left(\bm{\mathcal{T}};\eta\right)$, whenever it exists. \\

\noindent Then, $ \widehat{\mathcal{A}}\!\left(\bm{\mathcal{T}};\eta\right)$ is a well--defined random variable over the probability space $\left(\Omega, \mathcal{F}, \mathbb{P}\right)$ where Brownian motion is defined. Furthermore, the expectation of $\widehat{\mathcal{A}}\!\left(\bm{\mathcal{T}};\eta\right)$ admits the exact expression 
\begin{equation}\label{expvol}
\E\left[\widehat{\mathcal{A}}\!\left(\bm{\mathcal{T}};\eta\right)\right]\;=\; \frac{4}{\sqrt{\pi}}\cdot\left(T_2-T_1\right)\cdot \sqrt{\eta}\;+\; 2{\cdot\left(T_2-T_1\right)}\cdot \eta\;-\; \frac{8\left(\sqrt{2}-1\right)}{3\sqrt{\pi}}\cdot \eta^{3/2}
\end{equation} 
provided that $\eta<\left(T_2-T_1\right)/2$. Also, almost surely as $\eta\rightarrow 0^+$, the asymptotic estimate 
\begin{equation*}
\widehat{\mathcal{A}}\!\left(\bm{\mathcal{T}};\eta\right)\;=\; \left(\frac{4}{\sqrt{\pi}}+o(1)\right)\cdot \left(T_2-T_1\right) \cdot \sqrt{\eta}
\end{equation*}
is satisfied.
\end{thmc}

\noindent An interesting feature of the above estimates is that the leading terms do not contain any logarithmic corrections. This stands in contrast to  the asymptotic behavior of the expectation  of the area of a Wiener sausage in the plane which, as proved in~\cite[p.79]{Sz}, involves such an additional logarithmic correction.\\

\begin{rem} It is immediate that the conclusions of the First and Second Main Theorems actually hold for any almost surely continuous stochastic process $Y=\left(Y_t\right)_{T_1\le t\le T_2}$ whose law is absolutely continuous with respect to Brownian motion over the space of continuous functions over the interval $\left[T_1, T_2\right]$ (here, $0<T_1<T_2$). A natural way to construct such processes is to apply Girsanov's Theorem such as stated, e.g.,  in~\cite[Theorem~6.4]{B}. For the sake of illustration, this statement  implies that the  two theorems under consideration also hold for the following processes~:
\begin{itemize}
\item $Y$ is a \emph{Brownian motion with constant drift} $Y_t=B_t+\mu t$, where $\mu\in\R$, and more generally when $Y$ is a perturbed Brownian motion of the form $$Y_t\;=\; B_t+ \int_{T_1}^tu_s\cdot \textrm{d}s.$$ Here, $\left(u_s\right)_{T_1\le s\le T_2}$ can be taken as  an adapted process meeting the condition $$\mathbb{E}\left[\exp\left(\frac{1}{2}\cdot \int_{T_1}^{T_2}u_s^2\cdot\textrm{d}s\right)\right]<\infty.$$ 
\item $Y$ satisfies the stochastic differential equation $\textrm{d}Y_t= b\!\left(t, Y_t\right)\cdot\textrm{d}t+\textrm{d}B_t$, where the drift $b$ is, e.g., either bounded or satisfies the so-called Novikov condition. A particular case of this situation is the \emph{Ornstein–Uhlenbeck process} obtained upon setting $b(t,x)=-\theta x$, where $\theta>0$.
\item $Y$ is a \emph{Brownian bridge} away from the pinning time, say $T_2$. In the notation of the previous example, the drift is, in this case,  given by $b(t,x)=-x/(T_2-t)$. The law of this process is absolutely continuous with respect to that of Brownian motion on every interval of the form $\left[T_1, T_2-\varepsilon\right]$, where $\varepsilon>0$.
\end{itemize}
Other examples include Bessel processes away from the origin or a Brownian motion conditioned by sufficiently ``nice'' events. Details are left to the interested reader. 
\end{rem}

\subsection{Rational {Points} and Multifractal Analysis}

\noindent The appearance of  square-root factors in the  Second Main Theorem and in the Tubular Neighbourhood Area Theorem is reminiscent {of} the well--known fact that Brownian motion is almost surely $\gamma$-Hölderian everywhere  when $\gamma<1/2$, and that this threshold is optimal. 
More precisely, the Hölder--regularity of  Brownian motion is captured in a strong form in  Lévy's famous Mo\-du\-lus of Continuity Theorem~\cite[Theorem~1.14 \& Remark~1.18]{MP}. It claims that, almost surely and after normalisation to work over the interval $\left[T_1, T_2\right]=[0,1]$, 
\begin{equation}\label{levy}
\lim_{\eta\rightarrow 0^+}\;\sup_{\underset{\left|t-s\right|\le \eta}{0\le t, s\le 1}}\;\frac{\left|B(s)-B(t)\right|}{\sqrt{2\eta\log(1/\eta)}}\;=\; 1.
\end{equation}

\noindent Consider now a continuous function $f$ defined over a bounded interval $I$. Given $x\in I$ and $\eta>0$, define its local modulus of continuity at $x$ by
\begin{equation}\label{localmodulus}
\omega_f(x,\eta)\;=\;
\sup_{\substack{y\in I\\0<|x-y|\leq\eta}}
|f(x)-f(y)|.
\end{equation}
The local Hölder exponent of $f$ at $x$ is defined as
\begin{equation}\label{hoelderexp}
h_f(x)\;=\;
\liminf_{\eta\rightarrow 0^+}\;
\frac{\log\omega_f(x,\eta)}{\log\eta}
\end{equation}
with the convention that if the function $f$ is constant in a neighbourhood of $x$, then $h_f(x)=\infty$. \\

\noindent The study of the \emph{multifractal analysis} of the function $f$, as initiated by Frisch and Parisi~\cite{FP}, consists in determining its so--called \emph{singularity spectrum}; that is, the va\-lues taken by the map
$$
\gamma\geq 0\;\mapsto\;\dim_H L_f(\gamma),
\qquad \textrm{where}\qquad
L_f(\gamma)\;=\;\left\{x\in I\;:\;h_f(x)=\gamma\right\}.
$$
Here, the set $L_f(\gamma)$ is referred to as the \emph{iso--Hölder set} of level $\gamma$ of the function $f$, and $\dim_H$ denotes the Hausdorff dimension. The function $f$ is called \emph{monofractal with Hölder exponent $\gamma$} if
\begin{equation}\label{monofractal}
h_f(x)\;=\;\gamma
\qquad\textrm{for every}\qquad x\in I.
\end{equation}
In this terminology, Brownian motion is almost surely monofractal with Hölder exponent $1/2$. 
In view of this observation, the Second Main Theorem suggests that the local regularity of a function plays a pivotal role in determining the number of rational points with bounded denominators lying in a shrinking tubular neighbourhood of its graph. That being said, the Hölder exponent~\eqref{hoelderexp} only provides local information whereas  for the Area Heuristic to hold, one needs a stronger uniform control on the variations of the function under consideration.  \\

\noindent To capture this additional information, let $\mathfrak{D}(I)$ denote the collection of all those intervals obtained by repeated  bisections of the interval $I$, with the requirement that $I\in \mathfrak{D}(I)$. Given $J\in\mathfrak{D}(I)$ and an integer $k\geq 1$, let also
$$
J_{k}^{(1)},\;\dots,\; J_{k}^{(2^k)}
$$
be the partition of $J$   into $2^k$ subintervals of equal length  (up to boundary points). Set
\begin{equation}\label{averagedosc}
S_{f}\!\left(J,k\right)\;=\;
\frac{1}{2^k}\sum_{j=1}^{2^k}
\sup_{x,y\in J_{k}^{(j)}}|f(x)-f(y)|.
\end{equation}
The function $f$ is said to be \emph{uniformly oscillating with exponent $\gamma\in(0,1]$} if, for every $J\in\mathfrak{D}(I)$,
\begin{equation}\label{unifosc}
S_{f}\!\left(J,k\right)\;\asymp\;
\left(\frac{|J|}{2^k}\right)^\gamma
\qquad\textrm{as}\qquad k\longrightarrow\infty,
\end{equation}
where $|J|$ denotes the length of the interval $J$ and where the implicit constants may depend on $f$ and $J$, but not on $k$~(\textsuperscript{6}\let\thefootnote\relax\footnotetext{\textsuperscript{6}It can be shown that, whenever a monofractal function is uniformly oscillating, its uniformly oscillating exponent~\eqref{unifosc} coincides with its Hölder exponent~\eqref{monofractal}. Since the proof of this claim is nontrivial and lies beyond the scope of this paper, the equality between these two exponents   is  explicitly imposed in the statement of Conjecture~\ref{conj} below.. \\})

\noindent Condition~\eqref{unifosc} determines the order of magnitude of the area of the tubular neighbourhood up to multiplicative constants depending on $f$ and on $I$. Indeed, if $\eta>0$ is given and if the integer $k$ is chosen so that
\begin{equation}\label{dyad}
\eta\;\leq\;\frac{|I|}{2^k}\;<\;2\eta,
\end{equation}
then an elementary packing and covering argument\textsuperscript{7}\let\thefootnote\relax\footnotetext{\textsuperscript{7}This argument is sketched for completeness~: by continuity, the area of the $\eta$-neighbourhood of the graph of the restriction of $f$ to each subinterval  $I_{k}^{(\ell)}$ is of the order of $\eta$ times the maximum in~\eqref{areaavosc}. To deduce from this observation the  upper bound in the first relation  in~\eqref{areaavosc}, it suffices to cover the part of the graph above each subinterval $I_{k}^{(\ell)}$ by a rectangle enlarged by $\eta$ in both directions (to absorb the boundary effects). As far as  the lower bound is concerned, note first that the set
$
\left\{(x,y)\in\R^2\;:\;x\in I,\quad |y-f(x)|\leq\eta\right\}
$
is contained in the tubular neighbourood $\mathfrak{T}_f(\eta)$. Then, choose a residue class of indices $\ell$ modulo 4 for which the sum of the oscillations over the corresponding intervals $I_{k}^{(\ell)}$ is maximal. This sum is at least one quarter of the total sum, while the $\eta$--neighbourhoods of the corresponding graph pieces are disjoint in the horizontal direction. The relation~\eqref{areaavosc} follows.} yields that 
\begin{equation}\label{areaavosc}
\mathcal{A}_f(\eta)\;\asymp\;
\eta\cdot \sum_{\ell=1}^{2^k}
\max\left\{\eta, \;\sup_{x,y\in I_{k}^{(\ell)}}|f(x)-f(y)|\right\}
\;\underset{\eqref{averagedosc} \& \eqref{dyad} }{\asymp}\;
\eta+ S_{f}\!\left(I,k\right)
\end{equation}
in such a way that 
\begin{equation}\label{areaavoscconsequence}
\mathcal{A}_f(\eta)\;\underset{\eqref{unifosc}}{\asymp}\;\eta^\gamma
\qquad\textrm{as}\qquad\eta\longrightarrow 0^+.
\end{equation}

\noindent This leads one to the following conjecture, which builds a natural bridge between Number Theory and Multifractal Analysis~:

\begin{conj}[Oscillation Principle]\label{conj}
Let $f:I\rightarrow\R$ be continuous and uniformly oscillating with exponent $\gamma\in(0,1)$. Assume furthermore that 
the function $f$ is monofractal with Hölder exponent $\gamma$. 
Then,  there exists an exponent $\alpha>0$ such that
\begin{equation}\label{ahb}
\mathcal{N}_f\left(\delta,Q\right)\;\asymp\;
\left(\frac{\delta}{Q}\right)^\gamma\cdot Q^3
\qquad\textrm{whenever}\qquad
\delta\gg Q^{-\alpha}.
\end{equation}
\end{conj}

\noindent This Oscillation Principle  is intended to describe the expected behavior of a large class of irregular curves by conjecturally identifying some analytic features determi\-ning the asymptotic order of magnitude of the counting function under consideration. From~\eqref{areaavoscconsequence}, its assumptions already yield the asymptotic order of magnitude for the area of the tubular neighbourhood. For the Area Heuristic to hold, it thus remains to show that in some suitable range of the measure of thickness $\delta$,  rational points are sufficiently well--distributed in such tubular neighbourhood. \\

\noindent The ``regular case'' $\gamma=1$ is excluded from the assumptions of Conjecture~\ref{conj} as it is not hard to construct in this case a function which satisfies both the monofractality and uniform oscillation assumptions, which is not affine on any nondegenerate interval, and which nevertheless does not satisfy the estimate~\eqref{ahb}. 
Additional assumptions are thus needed to force the conclusion. As mentioned in the above Section~\ref{subsec2},   a result by Huang~\cite{H} provides such additional, non--degeneracy  conditions and establishes a form of the Oscillation Principle for a large class of continuously differentiable curves covering the case $\gamma=1$ --- an admissible exponent is then $\alpha=2/3$. \\

\noindent From the Second Main Theorem, the Oscillation Principle also holds almost surely  in the case of Brownian motion when $\alpha<5$. To see this, it suffices to show that the assumptions of Conjecture~\ref{conj}   are met. This is indeed the case since,  as noted above, the monofractal property holds with the exponent $\gamma=1/2$. Furthermore,  the Tubular Neighbourhood Area Theorem applied to the restriction of Brownian motion to each subinterval $J\in\mathfrak{D}(I)$, and combined with the area estimate~\eqref{areaavosc} yields the uniform oscillating property~\eqref{unifosc}. Since $\mathfrak{D}(I)$ is countable, these relations hold simultaneously for all $J\in\mathfrak{D}(I)$ on an event of full probability, whence the claim.  \\

\noindent In the case of continuously differentiable functions, the proof of the above--mentioned result by Huang~\cite{H} (under assumptions stronger than those in the Oscillation Principle) relies extensively on Taylor expansions. In the Brownian case however, the proof in Section~\ref{sec3} below is purely probabilistic. It thus remains wide open to establish Conjecture~\ref{conj} for  \emph{deterministic} monofractal functions satisfying the uniform oscillation condition~: neither the probabilistic  method nor the one relying on Taylor expansions is then available. Canonical examples of interest  are given by Weierstrass' nowhere differentiable  functions, whose regularity is studied in~\cite{Se} and  in the references within. \\

\noindent It would also be interesting to formulate an analogue of Conjecture~\ref{conj} in the more challenging case of multifractal maps. A most famous example is Riemann's non--differentiable function, the singularity spectrum of which is fully determined by Jaffard in his seminal work~\cite{Ja}. Such a statement would  need to capture  the distribution of the oscillations  of the multifractal map  both in localisation and in  scale.


\paragraph{Organisation of the paper.} The proofs of the  statements rely on the Second Moment Method in Probability Theory. More precisely, this method is combined with a slicing technique introduced in Section~\ref{sec2} to  first establish therein the Tubular Neighbourhood Area Theorem. The First  and Second Main Theorems, and also the Auxiliary  Theorem are then derived  in Section~\ref{sec3} from moment estimates proved in Sections~\ref{asymdiag}, \ref{asymoffdiag} and~\ref{secfin}. They rely on the sharp estimations of the probability that  Brownian motion hits a given rectangle proved in the earlier Section~\ref{probsquare}. \\

\noindent Each of the seven sections is introduced with a concise paragraph explaining  its articulation with the other parts of the paper. These introductory paragraphs together with the table of contents are meant to facilitate further the navigation within the various proofs and sections.

\paragraph{Acknowledgments.} The first--named author would like to thank Xiaolin Zeng (Université de Strasbourg) for various discussions pertaining to this paper. His work was supported by the French \emph{Agence Nationale de la Recherche} through grant ANR-25-CE40-1961-01. The other two co-authors were supported by EPSRC grant UKRI 2768. \color{black}

\paragraph{Notation and standard properties of Brownian motion used throughout.} The notation is fairly standard, and the basic properties of  Brownian motion stated here can be found, e.g., in~\cite{MP}. \\

\noindent Given two random variables $X$ and $Y$, the relation $X\overset{\mathcal{L}}{=}Y$ means that they share the same law. In the case where $X$ follows the normal law with expectation $\mu\in\R$ and variance $\sigma^2>0$, this is denoted by $X\overset{\mathcal{L}}{\thicksim} \mathcal{N}\left(\mu, \sigma^2\right)$. The independence of $X$ and $Y$ is represented by $X\amalg Y$. Only the standard, linear Brownian motion is considered throughout. It is defined over a probability space $\left(\Omega, \mathcal{F}, \mathbb{P}\right)$, and the expectation and variance of a random variable $X$   defined over this space are denoted by $\E\left[X\right]$ and $\V\left[X\right]$, respectively. \\

\noindent The maximum and the minimum of Brownian motion up to time $t\ge 0$ are the random variables 
\begin{equation*}\label{minmax}
M(t)\;=\;\max_{0\leq s\leq t}\;B(s)\qquad \quad\textrm{and}\quad\qquad m(t)\;=\;\min_{0\leq s\leq t}\;B(s)\;=\; -\max_{0\leq s\leq t}\;(-B(s)).
\end{equation*}
They meet the so-called \emph{Law of Maximum} claiming that for all  $L>0$, 
\begin{equation} \label{Mt}
\bP\left(M(t)>L\right)\;=\;2\cdot \bP\left(B(t)>L\right) \quad \textrm{and}\quad \bP\left(m(t)<-L\right)=2\cdot \bP\left(B(t)<-L\right).
\end{equation}

\noindent The following standard properties of the process $\left\{B(t)\right\}_{t\ge 0}$, some of them already mentioned before, shall be used throughout and referred to with the labels introduced below~: 
\begin{itemize}[leftmargin=20mm]
\item[$\bm{(P1)}$] \emph{Symmetry~:}  $B(t)\overset{\mathcal{L}}{=}-B(t)$  for all $t\ge 0$;
\item[$\bm{(P2)}$] \emph{Scaling invariance~:}  $\sqrt{c}\cdot B(t)\overset{\mathcal{L}}{=}B(ct)$ for all $t\ge 0$ and $c>0$;
\item[$\bm{(P3)}$] \emph{Law of increments~:}  $B(t)-B(s) \overset{\mathcal{L}}{{=}} B(t-s) \overset{\mathcal{L}}{\thicksim}  \mathcal{N}\left(0, t-s\right)$ for all $0\le s<t$;
\item[$\bm{(P4)}$] \emph{Independence of increments~:} $\left(B(t)-B(s)\right)\amalg B(s)$  for all $t>s\ge  0$.\\
\end{itemize}

\noindent Finally, given  $t>0$, the density of the random variable $B(t)$ is  the Gaussian function
\begin{equation} \label{def_p}
p_t~: x\in\R\;\mapsto\; p_t(x)\;=\; \frac{1}{\sqrt{2\pi t}}\cdot \exp\left(-\frac{x^2}{2t}\right).\\
\end{equation}


\section{Area of the Tubular Neighbourhood of a Trajectory of Brownian Motion}\label{sec2}

\noindent The goal of this section is to establish the Tubular Neighbourhood Area Theorem. Its content is independent of the remaining part of the paper.\\

\noindent First, given $\eta>0$, the elegant and simple argument outlined in~\cite[Remark~2.5]{CPS} shows that the area $\widehat{\mathcal{A}}\!\left(\bm{\mathcal{T}};\eta\right)$ of the tubular neighbourhood $\widehat{\mathfrak{T}}\!\left(\bm{\mathcal{T}}; \eta\right)$ is a random variable. \\

\noindent Given $t>0$, define the \emph{spread} of  Brownian motion up to time $t$ as the random variable 
\begin{equation*}
R(t)\;=\; M(t)-m(t). 
\end{equation*}
\noindent It is known from~\cite{Fe} that it admits the density function
\begin{align}
\rho_{t}(x)\; =\; \frac{8}{\sqrt{2\pi t}}\cdot \left(\sum_{k=1}^{\infty}\left(-1\right)^{k-1}\cdot k^2\cdot \exp\left(-\frac{\left(kx\right)^2}{2t}\right)\right)
\;=\; \sqrt{\frac{2}{\pi}}\cdot \frac{1}{x}\cdot F'\left(\frac{x}{2\sqrt{t}}\right),\label{densityR}
\end{align}
where, given $u>0$, 
\begin{align}\label{densityF}
F~: u\;\mapsto\; \frac{\sqrt{2\pi}}{u}\cdot\left(\sum_{k=1}^{\infty}\exp\left(-\frac{\left(2k-1\right)\cdot\pi^2}{8u^2}\right)\right)
\end{align}
is a cumulative distribution function.\\

\noindent Recall that $T_2>T_1\ge 0$ and $\eta>0$ are fixed reals, and set $\bm{\mathcal{T}}=\left(T_1, T_2\right)$. Define the \emph{localised maximum and minimum at time $t\in \left[T_1, T_2\right]$ at the scale $\eta>0$} as the random variables 
\begin{equation}\label{minmaxloc}
M(\bm{\mathcal{T}};\left(t, \eta\right))\;=\;\max_{\underset{T_1\le s\le T_2}{t-\eta\leq s\leq t+\eta}}\;B(s)\qquad \quad\textrm{and}\quad\qquad m(\bm{\mathcal{T}};\left(t, \eta\right))\;=\;\min_{\underset{T_1\le s\le T_2}{t-\eta\leq s\leq t+\eta}}\;B(s),
\end{equation}
respectively. Correspondingly, the \emph{localised range at time $t\in \left[T_1, T_2\right]$ at the scale $\eta>0$} is the interval 
\begin{equation}\label{rangeloc}
\mathcal{I}(\bm{\mathcal{T}};\left(t, \eta\right))\;=\; \left[m(\bm{\mathcal{T}};\left(t, \eta\right))-\eta, \; M(\bm{\mathcal{T}};\left(t, \eta\right))+\eta\right],
\end{equation}
and its length is denoted by $\left|\mathcal{I}(\bm{\mathcal{T}};\left(t, \eta\right))\right|$. From Fubini's Theorem, one has almost surely that 
\begin{equation}\label{volfub}
\widehat{\mathcal{A}}\!\left(\bm{\mathcal{T}}; \eta\right)\;=\; \bigintssss_{T_1}^{T_2} \left|\mathcal{I}(\bm{\mathcal{T}};\left(t, \eta\right))\right|\cdot\textrm{d}t.
\end{equation}
\noindent This decomposition is the key step in the proof of the Tubular Neighbourhood Area Theorem~: it reduces the volume estimate to the determination of the trace left by a ball of radius $\eta$ (with respect to the sup--norm) on a given vertical line at time $t$ when the center of the ball sweeps the graph of Brownian motion. This slicing technique allows for  the estimation of the first and second moments of the random variable $\widehat{\mathcal{A}}\!\left(\bm{\mathcal{T}};\eta\right)$, from which the statement of the Tubular Neighbourhood Area Theorem follows.

\begin{prop}[First moment estimate]\label{firstmom}
With the above notation, assume that $\eta<\left(T_2-T_1\right)/2$. Then, the expectation estimate~\eqref{expvol} holds; that is, 
\begin{equation*}
\E\left[\widehat{\mathcal{A}}\!\left(\bm{\mathcal{T}};\eta\right)\right]\;=\; \frac{4}{\sqrt{\pi}}\cdot\left(T_2-T_1\right)\cdot \sqrt{\eta}\;+\; 2\eta{\cdot\left(T_2-T_1\right)}\;-\; \frac{8\left(\sqrt{2}-1\right)}{3\sqrt{\pi}}\cdot \eta^{3/2}.
\end{equation*} 
\end{prop}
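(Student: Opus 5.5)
The plan is to take expectations in the slicing identity~\eqref{volfub} and exchange expectation and integral. This is legitimate by Tonelli's Theorem, since the integrand $\left|\mathcal{I}(\bm{\mathcal{T}};\left(t, \eta\right))\right|$ is nonnegative and jointly measurable in $(t,\omega)$ by continuity of the paths. It gives
\begin{equation*}
\E\left[\widehat{\mathcal{A}}\!\left(\bm{\mathcal{T}};\eta\right)\right]\;=\;2\eta\cdot\left(T_2-T_1\right)\;+\;\int_{T_1}^{T_2}\E\left[M(\bm{\mathcal{T}};\left(t, \eta\right))-m(\bm{\mathcal{T}};\left(t, \eta\right))\right]\textrm{d}t.
\end{equation*}
The first term already accounts for the summand $2\eta\left(T_2-T_1\right)$ in~\eqref{expvol}. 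It therefore remains to compute the expected spread of Brownian motion over the window $\left[\max\{T_1,t-\eta\},\,\min\{T_2,t+\eta\}\right]$.

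The key observation is that the spread of Brownian motion over an interval $[a,b]$ depends only on the increments $B(s)-B(a)$, $s\in[a,b]$. By $\bm{(P3)}$ and $\bm{(P4)}$ (the Markov property), this spread has the same law as $R(b-a)$. By the Law of Maximum~\eqref{Mt} and the symmetry $\bm{(P1)}$,
\begin{equation*}
\E\left[R(L)\right]\;=\;\E\left[M(L)\right]-\E\left[m(L)\right]\;=\;2\,\E\left[M(L)\right]\;=\;2\,\E\left|B(L)\right|\;=\;2\sqrt{\frac{2L}{\pi}}.
\end{equation*}
The explicit density~\eqref{densityR} is not needed here; one may use it instead as a cross-check.

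Next, split $[T_1,T_2]$ according to the window length $L(t)$. The hypothesis $\eta<\left(T_2-T_1\right)/2$ guarantees that the two boundary effects do not interact: on $[T_1+\eta,T_2-\eta]$ the window has length $2\eta$, while on $[T_1,T_1+\eta]$ (resp.\ $[T_2-\eta,T_2]$) it has length $t-T_1+\eta$ (resp.\ $T_2-t+\eta$), ranging over $[\eta,2\eta]$. The bulk part contributes
\begin{equation*}
\frac{4\sqrt{\eta}}{\sqrt{\pi}}\cdot\left(T_2-T_1-2\eta\right).
\end{equation*}
The two boundary pieces together contribute
\begin{equation*}
2\int_\eta^{2\eta}2\sqrt{\frac{2L}{\pi}}\,\textrm{d}L\;=\;\frac{8}{3}\sqrt{\frac{2}{\pi}}\left(2\sqrt{2}-1\right)\eta^{3/2}\;=\;\frac{8\left(4-\sqrt{2}\right)}{3\sqrt{\pi}}\,\eta^{3/2}.
\end{equation*}

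Summing the pieces, the $\eta^{3/2}$ terms combine as
\begin{equation*}
-\frac{8}{\sqrt{\pi}}\,\eta^{3/2}\;+\;\frac{8\left(4-\sqrt{2}\right)}{3\sqrt{\pi}}\,\eta^{3/2}\;=\;-\frac{8\left(\sqrt{2}-1\right)}{3\sqrt{\pi}}\,\eta^{3/2},
\end{equation*}
which yields exactly~\eqref{expvol}. There is no serious obstacle. The only points requiring care are the identification in law of the localised spread with $R(L)$ when $T_1>0$, and the bookkeeping of the boundary windows; this bookkeeping is precisely where the condition $\eta<\left(T_2-T_1\right)/2$ enters.
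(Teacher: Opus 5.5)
Your proof is correct and is essentially the paper's own. Both arguments apply Tonelli/Fubini to the slicing identity~\eqref{volfub}, then use symmetry together with the Law of Maximum to get an expected window spread of $2\sqrt{2L/\pi}$, and finish with the same bulk/boundary split of $[T_1,T_2]$, which is exactly where $\eta<(T_2-T_1)/2$ enters; the only difference is that you obtain the spread through $M(L)\overset{\mathcal{L}}{=}|B(L)|$, whereas the paper conditions on $B(t^-)$ and computes a tail integral.

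One caveat applies equally to the paper. Identity~\eqref{volfub} only measures the part of the sup-norm neighbourhood~\eqref{count2} lying above $[T_1,T_2]$. The end caps above $[T_1-\eta,T_1)$ and $(T_2,T_2+\eta]$ would add $4\eta^2+\frac{8\sqrt{2}}{3\sqrt{\pi}}\,\eta^{3/2}$ to the expectation, so the exact identity really concerns that restricted area; the almost sure asymptotics are unaffected.
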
 

\begin{proof}
From the decomposition~\eqref{volfub}, Fubini's Theorem yields that 
\begin{equation}\label{fubdem}
\E\left[\widehat{\mathcal{A}}\!\left(\bm{\mathcal{T}};\eta\right)\right]\;=\; \bigintssss_{T_1}^{T_2} \E\left[\,\left|\mathcal{I}(\bm{\mathcal{T}};\left(t, \eta\right))\right|\,\right]\cdot\textrm{d}t, 
\end{equation} 
where upon relying on the definition of the interval $\mathcal{I}(\bm{\mathcal{T}}; \left(t, \eta\right))$ in~\eqref{rangeloc} and on the property of symmetry (P1), the integrand can be expressed as 
\begin{align}\label{expint}
 \E\left[\,\left|\mathcal{I}(\bm{\mathcal{T}};\left(t, \eta\right))\right|\,\right]\;&=\; 2\eta+  \E\left[M(\bm{\mathcal{T}};\left(t, \eta\right))\right] - \E\left[m(\bm{\mathcal{T}};\left(t, \eta\right))\right] \; =\; 2\cdot \left(\eta+ \E\left[M(\bm{\mathcal{T}};\left(t, \eta\right))\right]  \right).
\end{align}
Set for convenience 
\begin{equation}\label{notatiem}
t^-= t^-\!\left(\bm{\mathcal{T}}; \left(t,\eta\right)\right)\;:=\; \max\left\{t-\eta, T_1\right\}\qquad \textrm{and}\qquad t^+= t^+\!\left(\bm{\mathcal{T}}; \left(t,\eta\right)\right):= \min\left\{t+\eta, T_2\right\}. 
\end{equation}
Then, 
\allowbreak{
\begin{align*}
\E\left[M(\bm{\mathcal{T}}; \left(t, \eta\right))\right] \;&=\; \E\left[\max_{t^-\le s\le t^+} B(s)\right]\\
& =\;  \E\left[\E\left[\max_{t^-\le s\le t^+} B(s)\middle|  B_{t^-}\right]\right] \\
& =\;   \E\left[B\left(t^-\right)\right]\;+\; \E\left[\E\left[\max_{0\le u\le t^+-t^-} \left(B\left(t^-+u\right)-B\left(t^-\right)\right)\middle|  B_{t^-}\right]\right].
\end{align*}
}The first term in this sum vanishes from Property~(P3). 
From the property of independence of increments~(P4), this yields that 
\allowbreak{
\begin{align*}
\E\left[M(\bm{\mathcal{T}}; \left(t, \eta\right))\right] \;&=\;   \E\left[\max_{0\le u\le t^+-t^-} \left(B\left(t^-+u\right)-B\left(t^-\right)\right)\right]\\
&\underset{(P2)\&(P4)}{=}\; \sqrt{t^+-t^-}\cdot \E\left[M(1)\right]\\
&=\; \sqrt{t^+-t^-}\cdot\int_0^\infty \bP\left(M(1)\ge y\right)\cdot\textrm{d}y\qquad \qquad \qquad\;\, \textrm{(by Fubini)}\\
&\underset{\eqref{Mt}}{=}\;  2\cdot \sqrt{t^+-t^-}\cdot\int_0^\infty \bP\left(B(1)> y\right)\cdot\textrm{d}y\\
&\underset{\eqref{def_p}}{=}\; 2\cdot \sqrt{t^+-t^-}\cdot\frac{1}{\sqrt{2\pi}}\cdot \underbrace{\int_0^\infty x\cdot \exp\left(-\frac{x^2}{2}\right)\cdot\textrm{d}x}_{=1}\quad \textrm{(by Fubini)}\\
&=\; \sqrt{\frac{2\cdot\left(t^+-t^-\right)}{\pi}}\cdot
\end{align*}
}Recall that it is assumed in the Tubular Neighbourhood Area Theorem  that $T_2-T_1>2\eta$. As a consequence, reverting to the full notation introduced in~\eqref{notatiem}, it follows from the decomposition~\eqref{fubdem} and from the second equation in~\eqref{expint} that 
\allowbreak{
\begin{align*}
\E\left[\widehat{\mathcal{A}}\!\left(\bm{\mathcal{T}};\eta\right)\right]\;&=\; 2\cdot\left(\eta{\cdot\left(T_2-T_1\right)}+\sqrt{\frac{2}{\pi}}\cdot \int_{T_1}^{T_2}\sqrt{ t^+\!\left(\bm{\mathcal{T}}; \left(t,\eta\right)\right)- t^-\!\left(\bm{\mathcal{T}}; \left(t,\eta\right)\right)}\cdot \textrm{d}t\right)\\
&=\; 2\cdot\left(\eta{\cdot\left(T_2-T_1\right)}+\sqrt{\frac{2}{\pi}}\cdot \left(\int_{T_1}^{T_1+\eta}\sqrt{t+\eta-T_1}\cdot\textrm{d}t\right)+\int_{T_1+\eta}^{T_2-\eta}\sqrt{2\eta}\cdot\textrm{d}t\right.\\
&\left.\qquad\qquad\qquad\qquad\qquad\qquad\qquad\quad\qquad\,\;\quad +\int_{T_2-\eta}^{T_2}\sqrt{T_2-t+\eta}\cdot\textrm{d}t\right)\\
&=\; 2\cdot\left(\eta{\cdot\left(T_2-T_1\right)}+\sqrt{\frac{2}{\pi}}\cdot \left( 2\cdot\int_{\eta}^{2\eta}\sqrt{v}\cdot\textrm{d}v+\int_{T_1+\eta}^{T_2-\eta}\sqrt{2\eta}\cdot\textrm{d}v\right)\right).
\end{align*}
}A direct calculation of the integrals yields the formula stated in the proposition. 
\end{proof}

\begin{lem}[Second moment estimate]\label{secmom}
Keep the above notation. Then, 
\begin{equation*}
\E\left[\left(\widehat{\mathcal{A}}\!\left(\bm{\mathcal{T}};\eta\right)\right)^2\right]-\left(\E\left[\widehat{\mathcal{A}}\left(\bm{\mathcal{T}}; \eta\right)\right]\right)^2\;\ll\; \eta^2\cdot\left(T_2-T_1\right),
\end{equation*} 
where the implied constant is absolute.
\end{lem}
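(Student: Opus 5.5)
The plan is to combine the slicing identity~\eqref{volfub} with Fubini's theorem to write the variance as a double integral of covariances:
\begin{equation*}
\V\left[\widehat{\mathcal{A}}\!\left(\bm{\mathcal{T}};\eta\right)\right]\;=\;\int_{T_1}^{T_2}\!\!\int_{T_1}^{T_2}\textrm{Cov}\left(\left|\mathcal{I}(\bm{\mathcal{T}};(t,\eta))\right|,\left|\mathcal{I}(\bm{\mathcal{T}};(s,\eta))\right|\right)\cdot\textrm{d}s\,\textrm{d}t.
\end{equation*}
Since $\left|\mathcal{I}(\bm{\mathcal{T}};(t,\eta))\right|=2\eta+M(\bm{\mathcal{T}};(t,\eta))-m(\bm{\mathcal{T}};(t,\eta))$, the deterministic term $2\eta$ does not contribute. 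Each covariance therefore reduces to that of the localised spreads $R_t:=M(\bm{\mathcal{T}};(t,\eta))-m(\bm{\mathcal{T}};(t,\eta))$ and $R_s$.

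\textbf{Step 1: decorrelation.} The key structural fact is that $R_t$ is invariant under adding a constant to the path. It is therefore a measurable function of the increments $\left\{B(u)-B(t^-)\right\}_{t^-\le u\le t^+}$ alone. When $|t-s|>2\eta$, the windows $[t^-,t^+]$ and $[s^-,s^+]$ overlap in at most one point. By the independence of increments~(P4), $R_t$ and $R_s$ are then independent and the covariance vanishes. The integration thus reduces to the strip $\left\{(s,t)\in[T_1,T_2]^2:|t-s|\le 2\eta\right\}$, whose area is at most $4\eta\cdot(T_2-T_1)$.

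\textbf{Step 2: bound on the strip.} On this strip I use Cauchy--Schwarz:
\begin{equation*}
\left|\textrm{Cov}(R_t,R_s)\right|\;\le\;\sqrt{\V[R_t]\cdot\V[R_s]}\;\le\;\sqrt{\E[R_t^2]\cdot\E[R_s^2]}.
\end{equation*}
By scaling invariance~(P2) together with (P3)--(P4), $R_t$ has the law of $\sqrt{t^+-t^-}\cdot R(1)$, where $R(1)=M(1)-m(1)$. Since $t^+-t^-\le 2\eta$, this gives $\E[R_t^2]\le 2\eta\cdot\E[R(1)^2]$. Moreover $\E[R(1)^2]\le 2\E[M(1)^2]+2\E[m(1)^2]<\infty$ by the Law of Maximum~\eqref{Mt}, since $M(1)$ has the law of $|B(1)|$. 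The density~\eqref{densityR} could also be used for this, but it is not needed. Hence $\left|\textrm{Cov}(R_t,R_s)\right|\ll\eta$ with an absolute constant.

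\textbf{Step 3: conclusion.} Multiplying the bound $\ll\eta$ from Step~2 by the strip area $4\eta\cdot(T_2-T_1)$ from Step~1 gives $\V\left[\widehat{\mathcal{A}}\!\left(\bm{\mathcal{T}};\eta\right)\right]\ll\eta^2\cdot(T_2-T_1)$, as claimed.

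\textbf{Main obstacle.} The only delicate point is Step~1, namely justifying rigorously that $R_t$ is measurable with respect to the increments on its own window. This holds because $R_t$ is a supremum, over rationals in the window, of differences $B(u)-B(v)$. Near the endpoints $T_1,T_2$ the windows are truncated, but they only shrink, so independence is preserved and the bound $t^+-t^-\le 2\eta$ still holds. No assumption $\eta<(T_2-T_1)/2$ is needed, since in that case the strip simply covers the whole square $[T_1,T_2]^2$, whose area is also at most $4\eta\cdot(T_2-T_1)$.
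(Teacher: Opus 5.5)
Your argument is correct and is essentially the paper's. Both proofs use the slicing identity~\eqref{volfub}, kill the contribution of pairs $(s,t)$ whose windows $[s^-,s^+]$ and $[t^-,t^+]$ are disjoint via independence of increments, and bound the remaining strip of area $\ll \eta\,(T_2-T_1)$ by Cauchy--Schwarz with a second moment $\ll \eta$.

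The only difference is that you work with covariances of the spreads rather than with the raw products $\left|\mathcal{I}(\bm{\mathcal{T}};(t,\eta))\right|\cdot\left|\mathcal{I}(\bm{\mathcal{T}};(s,\eta))\right|$. This removes the deterministic $2\eta$ and makes your bound valid for every $\eta>0$. By contrast, the paper's step $\E\left[\left|\mathcal{I}(\bm{\mathcal{T}};(t,\eta))\right|^2\right]\ll\eta$ tacitly needs $\eta\ll 1$, which is harmless since the lemma is only applied with $\eta<1$.
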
 

\noindent The proof is more elaborate than for the first moment. 

\begin{proof}
Relying on the decomposition~\eqref{volfub},  Fubini's Theorem implies that 
\begin{align*}
\E\left[\left(\widehat{\mathcal{A}}\left(\bm{\mathcal{T}};\eta\right)\right)^2\right]\;&=\; \bigintssss_{T_1}^{T_2} \bigintssss_{T_1}^{T_2} \E\left[\, \left|\mathcal{I}(\bm{\mathcal{T}}; \left(t, \eta\right))\right|\cdot \left|\mathcal{I}(\bm{\mathcal{T}}; \left(s, \eta\right))\right|\,  \right]\cdot\textrm{d}t\cdot\textrm{d}s\\
&=\; 2\cdot\left(\bigintssss_{T_1\le s\le t\le T_2} \E\left[\, \left|\mathcal{I}(\bm{\mathcal{T}}; \left(t, \eta\right))\right|\cdot \left|\mathcal{I}(\bm{\mathcal{T}}; \left(s, \eta\right))\right| \, \right]\cdot \textrm{d}t\cdot\textrm{d}s\right),
\end{align*} 
where the second equation follows by symmetry. As in the previous proof, let 
\begin{equation}\label{notatiembis}
t^-= t^-\!\left(\bm{\mathcal{T}}; \left(t,\eta\right)\right):= \max\left\{t-\eta, T_1\right\}\qquad \textrm{and}\qquad t^+= t^+\!\left(\bm{\mathcal{T}}; \left(t,\eta\right)\right):= \min\left\{t+\eta, T_2\right\}
\end{equation}
and, similarly,
\begin{equation*}
s^-= s^-\!\left(\bm{\mathcal{T}}; \left(s,\eta\right)\right):= \max\left\{s-\eta, T_1\right\}\qquad \textrm{and}\qquad s^+= s^+\!\left(\bm{\mathcal{T}}; \left(s,\eta\right)\right)\;:=\; \min\left\{s+\eta, T_2\right\}. 
\end{equation*}
Decompose  also the domain of integration into the following two subsets~: 
\begin{equation*}
\Sigma^{(1)}\!\left(\bm{\mathcal{T}}; \eta\right)\;=\; \left\{T_1\le s\le t\le T_2\;:\; s^-\le s^+\le t^-\le t^+\right\}
\end{equation*}
and
\begin{equation}\label{sigma2}
\Sigma^{(2)}\!\left(\bm{\mathcal{T}};\eta\right)\;=\; \left\{T_1\le s\le t\le T_2\;:\; s^-\le t^-\le s^+\le t^+\right\}.
\end{equation}
Thus, 
\begin{align}\label{decompint}
\E\left[\left(\widehat{\mathcal{A}}\!\left(\bm{\mathcal{T}}; \eta\right)\right)^2\right]\;&=\; 2\cdot\left(\bigintssss_{\Sigma^{(1)}\!\left(\bm{\mathcal{T}};\eta\right)}+\bigintssss_{ \Sigma^{(2)}\!\left(\bm{\mathcal{T}};\eta\right)} \right)\left(\E\left[\, \left|\mathcal{I}(\bm{\mathcal{T}}; \left(t, \eta\right))\right|\cdot \left|\mathcal{I}(\bm{\mathcal{T}}; \left(s, \eta\right))\right| \, \right]\cdot \textrm{d}t\cdot\textrm{d}s\right).\\ \nonumber
\end{align} 

\noindent Consider first the integral over the set $\Sigma^{(1)}\!\left(\bm{\mathcal{T}}; \eta\right)$. Fixing $(s,t)\in \Sigma^{(1)}\!\left(\bm{\mathcal{T}};\eta\right)$,
\begin{equation}\label{expcondi}
\E\left[ \,\left|\mathcal{I}(\bm{\mathcal{T}}; \left(t, \eta\right))\right|\cdot \left|\mathcal{I}(\bm{\mathcal{T}}; \left(s, \eta\right))\right| \, \right]\;=\; \E\left[ \E\left[\,\left|\mathcal{I}(\bm{\mathcal{T}}; \left(t, \eta\right))\right|\cdot \left|\mathcal{I}_{\bm{T}}(s, \eta)\right| \;\middle|\;  B\left(t^-\right) \right] \right],
\end{equation}
where 
\begin{align*}
\left|\mathcal{I}(\bm{\mathcal{T}}; \left(t, \eta\right))\right|\;&\underset{\eqref{rangeloc}}{=}\; 2\eta+M(\bm{\mathcal{T}}; \left(t, \eta\right))-m(\bm{\mathcal{T}}; \left(t, \eta\right))\\
&\underset{\eqref{minmaxloc}}{=} \; 2\eta+ \max_{\underset{T_1\le s\le T_2}{t-\eta\leq s\leq t+\eta}}\;\left(B(s)-B\left(t^-\right)\right)-\min_{\underset{T_1\le s\le T_2}{t-\eta\leq s\leq t+\eta}}\;\left(B(s)-B\left(t^-\right)\right).
\end{align*}
From the property of independence of increments (P4), the above maximum and mi\-ni\-mum, and therefore the random variable $\left|\mathcal{I}_{\bm{T}}(t, \eta)\right|$ itself,  are independent from  $B\left(t^-\right)$. As a consequence, equation~\eqref{expcondi} reads 
\begin{equation*}
\E\left[\, \left|\mathcal{I}(\bm{\mathcal{T}}; \left(t, \eta\right))\right|\cdot \left|\mathcal{I}(\bm{\mathcal{T}}; \left(s, \eta\right))\right| \, \right]\;=\; \E\left[\, \left|\mathcal{I}(\bm{\mathcal{T}}; \left(t, \eta\right))\right|\cdot  \E\left[ \, \left|\mathcal{I}(\bm{\mathcal{T}}; \left(s, \eta\right))\right| \;\middle|\;  B\left(t^-\right) \right] \right].
\end{equation*}
Here, $ \E\left[ \,\left|\mathcal{I}(\bm{\mathcal{T}}; \left(s, \eta\right))\right| \;\middle|\;  B\left(t^-\right) \right] $ lies in the sigma--algebra generated by $\left\{B(s)\right\}_{0\le s\le t^-}$ whereas,  again from property (P4), the random variable $\left|\mathcal{I}(\bm{\mathcal{T}}; \left(t, \eta\right))\right|$ is independent from it. This yields that 
\begin{align*}
\E\left[ \,\left|\mathcal{I}(\bm{\mathcal{T}}; \left(t, \eta\right))\right|\cdot \left|\mathcal{I}(\bm{\mathcal{T}}; \left(s, \eta\right))\right| \, \right]\;&=\; \E\left[\,\left|\mathcal{I}(\bm{\mathcal{T}}; \left(t, \eta\right))\right|\,\right]\cdot \E\left[ \E\left[\, \left|\mathcal{I}\left(\bm{\mathcal{T}}; \left(s, \eta\right)\right)\right| \;\middle|\;  B\left(t^-\right) \right] \right]\\
&=\; \E\left[\,\left|\mathcal{I}(\bm{\mathcal{T}}; \left(t, \eta\right))\right|\,\right]\cdot \E\left[ \,\left|\mathcal{I}(\bm{\mathcal{T}}; \left(s, \eta\right))\right| \,\right] .
\end{align*}
The integral over $\Sigma^{(1)}\!\left(\bm{\mathcal{T}}; \eta\right)$ thus becomes 
\begin{align}
& 2\cdot\left(\bigintssss_{\Sigma^{(1)}\!\left(\bm{\mathcal{T}}; \eta\right)}\E\left[\, \left|\mathcal{I}(\bm{\mathcal{T}}; \left(t, \eta\right))\right|\cdot \left|\mathcal{I}(\bm{\mathcal{T}}; \left(s, \eta\right))\right| \, \right]\cdot \textrm{d}t\cdot\textrm{d}s\right)\nonumber\\
& \qquad\qquad\qquad\qquad=\; 2\cdot\left(\bigintssss_{\Sigma_{\bm{T}}^{(1)}\left(\eta\right)} \E\left[\,\left|\mathcal{I}(\bm{\mathcal{T}}; \left(t, \eta\right))\right|\,\right]\cdot \E\left[\, \left|\mathcal{I}(\bm{\mathcal{T}}; \left(s, \eta\right))\right| \,\right] \cdot \textrm{d}t\cdot\textrm{d}s\right)\nonumber\\
&\qquad\qquad\qquad\qquad\le\; \bigintssss_{\left[T_1,T_2\right]^2} \E\left[\,\left|\mathcal{I}(\bm{\mathcal{T}}; \left(t, \eta\right))\right|\,\right]\cdot \E\left[ \,\left|\mathcal{I}(\bm{\mathcal{T}}; \left(s, \eta\right))\right| \,\right] \cdot \textrm{d}t\cdot\textrm{d}s\nonumber\\
&\qquad\qquad\qquad\qquad =\; \left(\bigintssss_{T_1}^{T_2} \E\left[\,\left|\mathcal{I}(\bm{\mathcal{T}}; \left(t, \eta\right))\right|\,\right] \textrm{d}t\right)^2\;=\;  \left(\E\left[\widehat{\mathcal{A}}\!\left(\bm{\mathcal{T}}; \eta\right)\right]\right)^2,\label{errorterm1}
\end{align}
where the last equation is again a consequence of Fubini's Theorem. This gives the first and main term in the estimate in the conclusion of the  statement. The error term comes from the integral over $\Sigma^{(2)}\!\left(\bm{\mathcal{T}}; \eta\right)$. To see this, fix $(s,t)\in \Sigma_{\bm{T}}\!\left(\bm{\mathcal{T}}; \eta\right)$. Then, from the Cauchy--Schwarz inequality, 
\begin{equation}\label{CSexp}
\E\left[  \,\left|\mathcal{I}(\bm{\mathcal{T}}; \left(t, \eta\right))\right| \cdot \left|\mathcal{I}(\bm{\mathcal{T}}; \left(s, \eta\right))\right|  \,\right]\;\le\; \sqrt{\E\left[  \,\left|\mathcal{I}(\bm{\mathcal{T}}; \left(t, \eta\right))\right|^2 \,\right]}\cdot\sqrt{\E\left[  \,\left|\mathcal{I}(\bm{\mathcal{T}}; \left(s, \eta\right))\right|^2 \,\right]}.
 \end{equation}
 Here, 
\begin{align} 
\E\left[  \,\left|\mathcal{I}(\bm{\mathcal{T}}; \left(t, \eta\right))\right|^2 \,\right]\;&\underset{\eqref{rangeloc}}{=}\; \E\left[\left(M(\bm{\mathcal{T}}; \left(t, \eta\right))-m(\bm{\mathcal{T}}; \left(t, \eta\right))+2\eta\right)^2\right]\nonumber\\
&\underset{\eqref{minmaxloc}}{=}\;\E\left[\left(\max_{0\le u\le t^+-t^-}\left(B\left(t^-+u\right)-B\left(t^-\right)\right) \right.\right.\nonumber\\
&\qquad \qquad \qquad\qquad\qquad\qquad\left. \left.- \min_{0\le u\le t^+-t^-}\left(B\left(t^-+u\right)-B\left(t^-\right)\right)+2\eta\right)^2\right] \nonumber\\
&\underset{(P3)\&(P4)}{=}\;\E\left[\left(\underbrace{\left(\max_{0\le v\le t^+-t^-}B\left(v\right)\right) - \left(\min_{0\le v\le t^+-t^-}B\left(v\right)\right)}_{= \;R\left(t^+-t^-\right)}+2\eta\right)^2\right] \nonumber\\
&=\; \E\left[R^2\left(t^+-t^-\right)\right]+ 4\eta\cdot \E\left[R\left(t^+-t^-\right)\right]+ 4\eta^2.\label{secmomfirst}
\end{align} 
Each of these expectations is estimated separately thanks to the explicit expression~\eqref{densityR} for the density $\rho_t$ of the random variable $R(t)$. Thus, 
\begin{align*}
 \E\left[R\left(t^+-t^-\right)\right]\;&=\; \int_0^{\infty}\sqrt{\frac{2}{\pi}}\cdot F'\left(\frac{x}{2\sqrt{t^+-t^-}}\right)\cdot\textrm{d}x\\
 &=\; 2\sqrt{\frac{2}{\pi}}\cdot\sqrt{t^+-t^-}\cdot\left(\int_0^\infty F'\right)\; =\; 2\sqrt{\frac{2}{\pi}}\cdot\sqrt{t^+-t^-}
\end{align*}
(recall that $F$ is a smooth cumulative distribution function). Similarly, 
\begin{align*}
 \E\left[R^2\left(t^+-t^-\right)\right]\;&=\; \int_0^{\infty}\sqrt{\frac{2}{\pi}}\cdot x\cdot F'\left(\frac{x}{2\sqrt{t^+-t^-}}\right)\cdot\textrm{d}x\\
 &=\; 4\sqrt{\frac{2}{\pi}}\cdot\left(t^+-t^-\right)\cdot\left(\int_0^\infty y\cdot F'(y)\cdot\textrm{d}y\right)\; \ll\;t^+-t^-,
\end{align*}
where the last inequality is easily deduced from the explicit form for $F'$ in~\eqref{densityF}. As a consequence, one infers from equation~\eqref{secmomfirst} that 
\begin{equation*}
\E\left[  \,\left|\mathcal{I}(\bm{\mathcal{T}}; \left(t, \eta\right))\right|^2 \,\right]\;\ll\; t^+-t^-+\eta\cdot \sqrt{t^+-t^-}+\eta^2\;\underset{\eqref{notatiembis}}{\ll}\; \eta.
\end{equation*}
With the help of inequality~\eqref{CSexp} and upon denoting by $\left|\; \cdot\; \right|_2$ the two-dimensional Lebesgue measure, this yields the following upper bound for the integral over the domain $\Sigma^{(2)}\!\left(\bm{\mathcal{T}}; \eta\right)$~: 
\begin{align}
&\bigintssss_{\Sigma^{(2)}\!\left(\bm{\mathcal{T}};\eta\right)}\E\left[\, \left|\mathcal{I}(\bm{\mathcal{T}}; \left(t, \eta\right))\right|\cdot \left|\mathcal{I}(\bm{\mathcal{T}}; \left(s, \eta\right))\right| \, \right]\cdot \textrm{d}t\cdot\textrm{d}s\nonumber\\
&\qquad \qquad \qquad \le \bigintssss_{\Sigma^{(2)}\!\left(\bm{\mathcal{T}}; \eta\right)} \sqrt{\E\left[  \,\left|\mathcal{I}(\bm{\mathcal{T}}; \left(t, \eta\right))\right|^2 \,\right]\cdot\E\left[  \,\left|\mathcal{I}(\bm{\mathcal{T}}; \left(s, \eta\right))\right|^2 \,\right]} \cdot \textrm{d}t\cdot\textrm{d}s \nonumber\\
&\qquad \qquad \qquad\ll\; \eta\cdot \left|\Sigma^{(2)}\!\left(\bm{\mathcal{T}; \eta}\right)\!\right|_2\nonumber\\
&\qquad \qquad \qquad\underset{\eqref{sigma2}}{\ll}\; \eta\cdot \left|\left\{T_1\le s\le t\le T_2\; :\; s\le t\le s+2\eta \right\}\right|_2\nonumber\\
&\qquad \qquad \qquad\ll\; \left(T_2-T_1\right)\cdot \eta^2. \label{errorterm2}
\end{align} 
The lemma then just follows upon putting together the integral decomposition~\eqref{decompint}, the upper bound~\eqref{errorterm1}  for the subintegral over the domain $\Sigma^{(1)}\!\left(\bm{\mathcal{T}; \eta}\right)$, and the above estimate~\eqref{errorterm2} for the subintegral over the domain $\Sigma^{(2)}\!\left(\bm{\mathcal{T}}; \eta\right)$ . 
\end{proof}
$\quad$

\begin{proof}[Completion of the proof of the Tubular Neighbourhood Area Theorem] The first moment estimate in the Tubular Neighbourhood Area Theorem is precisely the content of Proposition~\ref{firstmom}. As for the volume estimate, fix a real $\eta\in (0,1)$   and let $p\ge 2$ and $k\ge 0$ be integers such that $$\left(1-\frac{1}{p}\right)^{k+1}\;\le\; \eta\;<\; \left(1-\frac{1}{p}\right)^{k}.$$ For the sake of simplicity of notation, define the random variable $$\widehat{\mathcal{A}}^{(p)}\!\left(\bm{\mathcal{T}};k\right)\;=\; \widehat{\mathcal{A}}\!\left(\bm{\mathcal{T}}; \left(1-\frac{1}{p}\right)^{k}\right)$$ so that 
\begin{equation}\label{encavol}
\widehat{\mathcal{A}}^{(p)}\!\left(\bm{\mathcal{T}}; k+1\right) \;\le\; \widehat{\mathcal{A}}\!\left(\bm{\mathcal{T}};\eta\right)\;\le\; \widehat{\mathcal{A}}^{(p)}\!\left(\bm{\mathcal{T}};k\right).
\end{equation}
Fix $\delta>0$. It is then a consequence of Chebyshev's inequality that 
\begin{align*}
&\bP\left(\left|\widehat{\mathcal{A}}^{(p)}\!\left(\bm{\mathcal{T}};k\right)-\E\left[\widehat{\mathcal{A}}^{(p)}\!\left(\bm{\mathcal{T}};k\right)\right]\right|\ge \delta\cdot \E\left[\widehat{\mathcal{A}}^{(p)}\!\left(\bm{\mathcal{T}};k\right)\right] \right)\\
&\qquad \qquad\qquad\qquad \qquad \qquad\le\; \frac{1}{\delta^2}\cdot\left(\frac{\E\left[\left(\widehat{\mathcal{A}}^{(p)}\!\left(\bm{\mathcal{T}};k\right)\right)^2\right]}{\left(\E\left[\widehat{\mathcal{A}}^{(p)}\!\left(\bm{\mathcal{T}};k\right)\right]\right)^2}-1\right)\;\ll\; \frac{1}{\delta^2}\cdot \left(1-\frac{1}{p}\right)^{k},
\end{align*}
where the last inequality is  derived from Proposition~\ref{firstmom} and Lemma~\ref{secmom}. Since the right--hand side of this relation is the general term of a convergent series when summing over $k\ge 1$, the Borel--Cantelli lemma is easily seen to imply  that the sequence of random variables $\left(\widehat{\mathcal{A}}^{(p)}_{\left[T_1, T_2\right]}\left(k\right)/ \E\left[\widehat{\mathcal{A}}^{(p)}_{\left[T_1, T_2\right]}\left(k\right)\right]\right)_{k\ge 0}$ converges almost surely  to $1$ as $k$ tends to infinity.\\

\noindent Notice now that inequalities~\eqref{encavol} yield that
\begin{align*}
\frac{\widehat{\mathcal{A}}^{(p)}\!\left(\bm{\mathcal{T}};k+1\right)}{\E\left[\widehat{\mathcal{A}}^{(p)}\left(\bm{\mathcal{T}};k\right)\right]}\;\le\;  \frac{\widehat{\mathcal{A}}\!\left(\bm{\mathcal{T}};\eta\right)}{\E\left[\widehat{\mathcal{A}}\left(\bm{\mathcal{T}};\eta\right)\right]}\;\le\; \frac{\widehat{\mathcal{A}}^{(p)}\!\left(\bm{\mathcal{T}};k\right)}{\E\left[\widehat{\mathcal{A}}^{(p)}\!\left(\bm{\mathcal{T}};k+1\right)\right]}\cdotp
\end{align*}
It then follows from Proposition~\ref{firstmom} that, almost surely as $k$ tend to infinity,  
\begin{align*}
\sqrt{\frac{p-1}{p}}\;\le\; \liminf_{\eta\rightarrow 0^+}\left(\frac{\widehat{\mathcal{A}}\!\left(\bm{\mathcal{T}};\eta\right)}{\E\left[\widehat{\mathcal{A}}\!\left(\bm{\mathcal{T}};\eta\right)\right]}\right) \;\le\; \limsup_{\eta\rightarrow 0^+}\left(\frac{\widehat{\mathcal{A}}\!\left(\bm{\mathcal{T}};\eta\right)}{\E\left[\widehat{\mathcal{A}}\!\left(\bm{\mathcal{T}};\eta\right)\right]}\right) \;\le\;\sqrt{\frac{p}{p-1}}\cdotp
\end{align*}
Upon letting $p$ tend to infinity, one retrieves that, almost surely as $\eta$ tends to $0^+$, $$\widehat{\mathcal{A}}\!\left(\bm{\mathcal{T}};\eta\right)\;=\; \left(1+o(1)\right)\cdot \E\left[\widehat{\mathcal{A}}\;\left(\bm{\mathcal{T}};\eta\right)\right]\;=\; \left(\frac{4}{\sqrt{\pi}}+o(1)\right)\cdot \left(T_2-T_1\right)\cdot\sqrt{\eta},$$ where the last equation is again a consequence of Proposition~\ref{firstmom}.  This completes the proof of the Tubular Neighbourhood Area Theorem. 
\end{proof}

\section{Counting Rational Points}\label{sec3}

\noindent This section is devoted to the proofs of the First and Second Main Theorems, and also of the Auxiliary Theorem. This is achieved modulo three statements (Propositions~\ref{diagestimterm}, \ref{offdiagestimterm} and~\ref{smeuf}) concerned with moment estimates established in Sections~\ref{asymdiag}, \ref{asymoffdiag} and~\ref{secfin}, respectively (with the help of hitting probability estimates proved in the earlier Section~\ref{probsquare}).

\subsection{Mean--Variance Lemma and Probability Estimates}\label{sbscMVLPE}

\paragraph{Statements of the lemma.} The proofs of the First and Second Main Theorems, and also of the Auxiliary Theorem, rely on the Second Moment Method as formalised in the following Mean-Variance Lemma. This result, which dates back to the work by Rademacher on orthogonal series 
(see~\cite[Chap.~1]{Ha} and the references within for details), is well--known in the context of Metric Diophantine Approximation. It is then typically applied upon taking the probability space as the unit interval equipped with the Lebesgue measure. The novelty in the present situation is to depart from this framework by working with a  "genuine" probability space, namely the one which Brownian motion is defined over. This involves a wealth of new considerations different in nature from the standard Diophantine framework.

\begin{lem}[Mean--Variance Lemma] \label{ebc}
Fix a probability space $(X,\mathcal{B},\mu)$. Let $\left(f_q\right)_{q \ge 0}$ be a sequence of real--valued, nonnegative and $\mu$-measurable functions defined on $X$. Consider two sequences of real numbers $\left(\widehat{f}_q\right)_{q \ge 0 }$ and $\left(\varphi_q\right)_{q  \ge 0}$  such that for all $q\ge 0$, 
\begin{equation}\label{ineqfphiMVE} 
0\leq \widehat{f}_q \leq \varphi_q.  
\end{equation}
\noindent 
Assume also that for   any integers $0\le  Q_1<Q_2 $,  
\begin{equation} \label{ebc_condition1} 
\int_{X} \left(\sum_{q=Q_1}^{Q_2} \big( f_q(x) -  \widehat{f}_q \big) \right)^2\cdot \mathrm{d}\mu(x)\, \ll\,   \Phi(Q_2)-\Phi(Q_1-1),
\end{equation}
where $\Phi(-1)=0$ and where given an integer $Q\ge 0$, 
\begin{equation}\label{partiasumMVE}
\Phi(Q)\; :=\; \sum\limits_{q=0}^{Q}\varphi_q .
\end{equation} 
Then,  for every $\eta>0$ and for $\mu$-almost all $x\in X$,
\begin{equation} \label{ebc_conclusion}
\sum_{q=0}^Q f_q(x)\; =\; \sum_{q=0}^{Q}\widehat{f}_q\, +\, O\left(\Phi(Q)^{1/2}\cdot \log^{3/2+\eta}\Phi(Q)+\max_{1\leq q\leq Q}\widehat{f}_q\right).
\end{equation}
\end{lem}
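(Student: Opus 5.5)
This is the classical Rademacher–Menshov / Gál–Koksma argument, which I would adapt to an abstract probability space. Set $S(Q,x)=\sum_{q=0}^{Q}\bigl(f_q(x)-\widehat f_q\bigr)$. The goal is to show that almost surely $|S(Q,x)|\ll \Phi(Q)^{1/2}\log^{3/2+\eta}\Phi(Q)+\max_{q\le Q}\widehat f_q$.

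If $\Phi(Q)$ stays bounded, hypothesis~\eqref{ebc_condition1} gives $\sum_q\int(f_q-\widehat f_q)^2<\infty$ only termwise. So I would first reduce to the case $\Phi(Q)\to\infty$. In the bounded case, the hypothesis applied with $Q_1=0$ and all $Q_2$, together with Fatou's lemma, bounds $\int\sup_Q|S(Q,\cdot)|^2$ through the maximal-inequality argument below, and the conclusion follows trivially. I would handle this uniformly by working with $\log(\Phi+2)$.

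\textbf{Step 1 (dyadic skeleton in $\Phi$).} For each $k\ge1$, let $N_k$ be the largest $Q$ with $\Phi(Q)\le 2^k$. Consider the integer interval $(N_{k-1},N_k]$, relabelled as $\{0,\dots,L\}$. Decompose every initial segment $(N_{k-1},Q]$ into at most $k+1$ blocks that are dyadic with respect to the $\Phi$-mass: at level $j\le k$, the blocks are maximal runs whose $\Phi$-increment is about $2^{k-j}$. Each block at level $j$ contributes $\ll 2^{k-j}$ to~\eqref{ebc_condition1}, and the blocks at a fixed level are disjoint. Single indices whose $\varphi_q$ exceeds the block mass are isolated as singleton blocks. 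The leftover pieces at the finest level are partial blocks, which is where the $\max\widehat f_q$ term enters.

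\textbf{Step 2 (maximal inequality via Cauchy--Schwarz and Chebyshev).} For $N_{k-1}<Q\le N_k$, Cauchy--Schwarz over the at most $k+1$ levels gives
$$|S(Q)-S(N_{k-1})|^2\le (k+1)\sum_{j}\sum_{\text{blocks }B\text{ at level }j}\Bigl(\sum_{q\in B}(f_q-\widehat f_q)\Bigr)^2.$$
Integrating and using~\eqref{ebc_condition1} at every level gives $\int\max_Q|S(Q)-S(N_{k-1})|^2\ll k^2 2^k$. Together with $\int S(N_k)^2\ll 2^k$, Chebyshev yields
$$\mu\bigl(\max_{Q\le N_k}|S(Q)|>2^{k/2}k^{3/2+\eta}\bigr)\ll k^{-1-2\eta},$$
after summing the telescoped dyadic pieces more carefully (or by directly applying the block argument to $[0,N_k]$). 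This bound is summable in $k$, so Borel--Cantelli gives $\max_{Q\le N_k}|S(Q)|\le 2^{k/2}k^{3/2+\eta}$ for all large $k$ almost surely. Since $\Phi(Q)\asymp 2^k$ for $N_{k-1}<Q\le N_k$, up to the jump of a single term, this produces the $\Phi^{1/2}\log^{3/2+\eta}\Phi$ bound.

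\textbf{Step 3 (partial blocks and jumps).} The main obstacle is that $\Phi$ can jump by a large single $\varphi_q$, so an exact dyadic decomposition of the $\Phi$-mass is impossible. I would use monotonicity: nonnegativity of $f_q$ gives $S(Q)$ sandwiched between neighbouring skeleton values, up to $\pm\sum\widehat f_q$ over the remaining partial block. The finest partial block can be arranged to consist of a single index, because each level-$k$ block is either of mass at most $1$ or a singleton. Its $\widehat f$ contribution is then at most $1+\max_{q\le Q}\widehat f_q$, which accounts for the second error term in~\eqref{ebc_conclusion}. Finally, replacing $\eta$ by $\eta/2$ absorbs the constants.
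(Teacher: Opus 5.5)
Your outline is sound, but it follows a different route from the paper. The paper does not reprove the divergent case at all. It defers to \cite[Lemma~1.5]{Ha}, whose proof is the classical dyadic-chaining argument you sketch in Steps~1--3. Its own proof covers only the case $\Phi(Q)\to C<\infty$, where the conclusion is read as $\sum_{q}f_q(x)<\infty$ almost everywhere, and there it argues directly:
\begin{itemize}
\item \eqref{ebc_condition1} with $Q_1=0$ gives $\int\bigl(\sum_{q\le Q}(f_q-\widehat f_q)\bigr)^2\,\mathrm{d}\mu\ll1$ uniformly in $Q$;
\item opening the square with $0\le\sum_{q\le Q}\widehat f_q\le C$, and using $F_Q\le\max\{2C+1,F_Q^2-2CF_Q\}$ for $F_Q=\sum_{q\le Q}f_q\ge0$, yields $\int F_Q\,\mathrm{d}\mu\ll1$;
\item monotone convergence finishes.
\end{itemize}
Your self-contained version gives a complete proof of both cases and shows where the $\max\widehat f_q$ term comes from. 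The paper's version is shorter because it isolates the one point where the standard statement must be changed, namely dropping the assumption $\Phi\to\infty$.

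Two remarks on your write-up. First, in the bounded case you do not need the maximal inequality, and with an infinite last window ($N_k=\infty$) it is awkward to set up anyway. The bound $\int F_Q^2\,\mathrm{d}\mu\le2\int S(Q)^2\,\mathrm{d}\mu+2C^2\ll1$, the monotonicity of $Q\mapsto F_Q$, and monotone convergence already give $\sup_Q|S(Q,x)|<\infty$ almost everywhere. This is the paper's argument in second-moment form.

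Second, Step~3 is muddled. The leftover piece is a sub-run of a finest block of $\varphi$-mass at most $1$, not a single index, so its $\widehat f$-mass is at most $1$ by~\eqref{ineqfphiMVE}. If you really split off heavy indices as singleton blocks, no $\max\widehat f_q$ term is needed at all. With the plain grid $g_{j,i}=\max\{Q:\Phi(Q)-\Phi(N_{k-1})\le i2^{k-j}\}$, the level-$j$ block masses still telescope to at most $2^k$ despite jumps. The lower sandwich between $g_{k,i-1}$ and $g_{k,i}$ then costs $1+\widehat f_{g_{k,i-1}+1}$, which is exactly where $\max\widehat f_q$ enters.
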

 
\noindent The standard form of this statement --- see~\cite[Lemma~1.5]{Ha} for a proof --- requires the extra assumption that the partial sum~\eqref{partiasumMVE} should diverge to infinity. As it turns out, this assumption is unnecessary, and when  
\begin{equation}\label{convcaseMVE}
C\;:=\; \lim_{Q\rightarrow\infty} \Phi(Q)\;<\; \infty, 
\end{equation} 
the conclusion means that, for $\mu$--almost all $x\in X$, 
\begin{equation}\label{convconclu}
0\;\le\;  \sum_{q=1}^{\infty} f_q(x)\;<\;\infty.
\end{equation}

\begin{proof}[Proof of the Mean--Variance Lemma in the convergence case] Assume that the relation~\eqref{convcaseMVE} holds. From the assumption~\eqref{ineqfphiMVE}, one has then also that 
\begin{equation} \label{result_convergence}
0\;\le\; \sum_{q=0}^Q \widehat{f}_q\;\le\; C
\end{equation}
for all $Q\ge 0$. The first step in the proof is to show for all $Q\ge 0$,
\begin{equation} \label{int_result}
 \int_{X} \left(\sum_{q=0}^{Q} f_q(x)\right)\cdot  \mathrm{d}\mu(x)\; \ll \; 1.
\end{equation}
To see this, note that when the relation~\eqref{convcaseMVE} holds, the assumption~\eqref{ebc_condition1} becomes 
\[
\int_{X} \left(\sum_{q=0}^{Q} \big( f_q(x) -  \widehat{f}_q \big) \right)^2\cdot \mathrm{d}\mu(x)\;\ll\; 1.
\]
Upon opening the square, this is readily seen to yield from~\eqref{result_convergence} that 
\begin{equation} \label{int_X_sum_f_is_bounded}
\int_{X} \left(\left(\sum_{q=0}^{Q} f_q(x)  \right)^2-2C\cdot \left(\sum_{q=0}^{Q} f_q(x)\right)\right)\cdot \mathrm{d}\mu(x) \ll 1.
\end{equation}
Furthermore, the following inequality is easily verified by making a distinction of cases depending on whether the left--hand side is smaller or bigger than $2C+1$~: 
\begin{equation*} \label{ie_sum_fn}
\sum_{q=0}^{Q} f_q(x) \;\leq\; \max\left\{2C+1,\; \left(\sum_{q=0}^{Q} f_q(x)  \right)^2-2C\cdot  \left(\sum_{q=0}^{Q} f_q(x)  \right)\right\}.
\end{equation*}
The  bound~\eqref{int_result} is then obtained upon integrating this inequality and upon calling on the relation~\eqref{int_X_sum_f_is_bounded}.\\

\noindent Finally, the sought conclusion~\eqref{convconclu} is derived from the upper bound~\eqref{int_result} as a consequence of the Monotone Convergence Theorem. Indeed, this theorem yields that $$  \int_{X} \left(\sum_{q=0}^{\infty} f_q(x)\right)\cdot  \mathrm{d}\mu(x)\; \underset{\eqref{int_result}}{\ll} \; 1,$$ implying in particular that the nonnegative function $x\mapsto \sum_{q=1}^{\infty} f_q(x)$ is finite for $\mu$--almost every $x\in X$.\\
\end{proof}

\noindent The First and Second Main Theorems,  and also the Auxiliary Theorem, are deduced in the following subsections with the help of the Mean--Variance Lemma~\ref{ebc}. To this end, some further notation is  introduced. 

\paragraph{Notation.} A rectangle $\mathcal{R}$ in the plane is understood as a closed  set with nonempty interior of the form $\mathcal{R}=\left[a, b\right]\times\left[c, d\right]$ for some reals $a<b$ and $c<d$. It is then convenient to set $x_{\mathcal{R}}^{-}=a$, $x_{\mathcal{R}}^{+}=b$, $y_{\mathcal{R}}^{-}=c$ and $y_{\mathcal{R}}^{+}=d$ and, when there is no ambiguity in the choice of the rectangle $\mathcal{R}$, to further simplify these notations to $x^{-}= x_{\mathcal{R}}^{-}$, $x^{+}= x_{\mathcal{R}}^{+}$, $y^{-}= y_{\mathcal{R}}^{-}$ and $y^{+}= y_{\mathcal{R}}^{+}$. Thus, 
\begin{equation*}
\mathcal{R}\;=\; \left[x_{\mathcal{R}}^{-}, \;x_{\mathcal{R}}^{+}\right]\,\times\,\left[y_{\mathcal{R}}^{-}, \; y_{\mathcal{R}}^{+}\right]\;=\; \left[x^{-}, x^{+}\right]\,\times\,\left[y^{-}, y^{+}\right].
\end{equation*}
The characteristic function of the event that a Brownian motion defined over the proba\-bi\-lity space $\left(\Omega, \mathcal{F}, \mathbb{P}\right)$   should intersect $\mathcal{R}$ is the map
\begin{equation*}
\chi_{\mathcal{R}}~: \omega\in\Omega\;\mapsto\; 
\begin{cases}
1 &\textrm{if there exists }   t\in \left[x^{-}, x^{+}\right]\textrm{such that }  B(t)\in \left[y^{-},  y^{+}\right]; \\
0 & \textrm{otherwise.}
\end{cases}
\end{equation*}
The probability that Brownian motion should intersect  $\mathcal{R}$ is then   the quantity denoted by
\begin{equation}\label{defproR}
\ppi\left(\mathcal{R}\right)\;=\; \E\left[ \chi_{\mathcal{R}}\right]\;=\; \int_{\omega} \chi_{\mathcal{R}}(\omega)\cdot \textrm{d}\bP(\omega).
\end{equation}
Similarly, the notation $\ppi\left(\mathcal{R}, \mathcal{R}'\right)$ stands for the probability that Brownian motion should pass through the two given rectangles $\mathcal{R}$ and $\mathcal{R}'$; that is, 
\begin{equation*}
\ppi\left(\mathcal{R}, \mathcal{R}'\right) = \E\left[ \chi_{\mathcal{R}}\cdot \chi_{\mathcal{R'}}\right].
\end{equation*}

\noindent  In view of the counting results to establish, of particular interest is the case where rectangles are squares centered at rational points with side lengths determined by the appro\-ximation function $\xi$ considered in the statement of the First Main Theorem. With this in mind, given reals $T_2>T_1>0$, set $\bm{\mathcal{T}}=(T_1, T_2)$ and define 
\begin{equation}\label{defstrip}
\Q_{\xi}( \bm{\mathcal{T}})\;=\; \left\{\frac{\bm{s}}{q}=\left(\frac{s_1}{q}, \frac{s_2}{q}\right)\in\Q^2\;:\; \left[\frac{s_1}{q}-\frac{\xi(q)}{2q}, \frac{s_1}{q}+\frac{\xi(q)}{2q}\right]\cap\left[T_1, T_2\right]\;\neq\; \emptyset\right\}. 
\end{equation}
When $\bm{s}/q=(s_1/q, s_2/q)$ is a rational point in the plane with denominator $q\ge 1$ and numerators $s_1, s_2\in\Z$, let also 
\begin{equation}\label{defract}
\mathcal{R}_\xi\left(\bm{\mathcal{T}}; \frac{\bm{s}}{q}\right)\;=\; \left(\left[\frac{s_1}{q}-\frac{\xi(q)}{2q}, \; \frac{s_1}{q}+\frac{\xi(q)}{2q}\right]\cap \left[T_1, T_2\right] \right)\,\times\,\left[\frac{s_2}{q}-\frac{\xi(q)}{2q}, \; \frac{s_2}{q}+\frac{\xi(q)}{2q}\right].
\end{equation}
If $\bm{s}/q$ and $\bm{s}'/q'$ are two rational points with positive denominators, set for the sake of simplicity of notations
\begin{equation}\label{defpiprob}
\ppi_\xi\left(\bm{\mathcal{T}}; \frac{\bm{s}}{q}\right)\;=\; \ppi\left(\mathcal{R}_\xi\left(\bm{\mathcal{T}}; \frac{\bm{s}}{q}\right)\right) \end{equation}
and
\begin{equation}\label{defpiprobbis}
 \ppi_\xi\left(\bm{\mathcal{T}}; \left(\frac{\bm{s}}{q}, \frac{\bm{s}'}{q'}\right)\right)\;=\; \ppi\left(\mathcal{R}_\xi\left(\bm{\mathcal{T}}; \frac{\bm{s}}{q}\right), \mathcal{R}_\xi\left(\bm{\mathcal{T}}; \frac{\bm{s}'}{q'}\right)\right).
\end{equation}
$\quad$

\paragraph{Probability Estimates.} The First Main Theorem, and parts of the Second and the Auxiliary ones,  are derived from the Mean--Variance  Lemma~\ref{ebc}. The key results allowing one to show that the assumptions of the lemma  hold consist in determining the asymptotic behavior of two quantities,  the so--called \emph{diagonal} and \emph{off--diagonal pro\-ba\-bi\-lity terms}. Given integers $Q_2\ge Q_1\ge 1$, upon setting $\bm{\mathcal{Q}}=\left(Q_1, Q_2\right)$, they are respectively defined as

\begin{equation}\label{DPT}
\Delta_{\xi}\left(\bm{\mathcal{Q}} ; \bm{\mathcal{T}}  \right)
\;\underset{\eqref{defpiprob}}{=}\;
\sum_{q=Q_1}^{Q_2}
\sum_{\frac{\bm{s}}{q}\in \Q_{\xi}(\bm{\mathcal{T}})}
\ppi_\xi\!\left(\bm{\mathcal{T}}; \frac{\bm{s}}{q}\right).
\end{equation}
and
\begin{equation}\label{ODPT}
\Theta_{\xi}\left(\bm{\mathcal{Q}}; \bm{\mathcal{T}}\right)
\;\underset{\eqref{defpiprob}}{=}\;
\sum_{Q_1\le q,q'\le Q_2}
\sum_{\frac{\bm{s}}{q},\,\frac{\bm{s}'}{q'}\in \Q_{\xi}(\bm{\mathcal{T}})}
\ppi_\xi\left(
\bm{\mathcal{T}};
\left(\frac{\bm{s}}{q},\frac{\bm{s}'}{q'}\right)
\right).
\end{equation}

\noindent To this end, the probability that a realisation of a Brownian motion should pass through a given rectangle $\mathcal{R}$ is first determined accurately in Section~\ref{probsquare}. This yields the following sharp estimates for the above--introduced quantities. 
Their expressions depend on the parameter $V\!\left(\bm{\mathcal{T}}\right)$ introduced in the statement of the First Main Theorem, namely
\begin{equation}\label{defVT} 
V\!\left(\bm{\mathcal{T}}\right)\;=\; \max\left\{1,\;  \frac{1}{\sqrt{T_1}},\; T_2-T_1,\; \sqrt{T_2}-\sqrt{T_1},\; \log\left(\frac{T_2}{T_1}\right),\; \frac{1}{T_2-T_1}\right\},
\end{equation}
and are valid when 
\begin{equation}\label{asymptlageT_1T_2} 
Q_1\;\ge\; \max\left\{\frac{4}{T_1},\; \frac{1}{T_2-T_1},\; \frac{2}{T_2}\right\}.
\end{equation}

\begin{prop}[Estimation of the Diagonal Probability Term]\label{diagestimterm}
With the above notation and assumptions, in particular under the assumption that the inequality~\eqref{asymptlageT_1T_2} holds,  one has that 
\begin{align*}
\Delta_{\xi}\left(\bm{\mathcal{Q}}; \bm{\mathcal{T}}\right)\;=\; \frac{2\sqrt{2}}{\sqrt{\pi}}\cdot\left(T_2-T_1\right)&\cdot \sum_{q=Q_1}^{Q_2}q^{3/2}\cdot\sqrt{\xi(q)}\\
&+ O\!\left(V\!\left(\bm{\mathcal{T}}\right)\cdot\left(\sum_{q=Q_1}^{Q_2}\sqrt{q\cdot\xi(q)} \cdot\left(1+\sqrt{q\cdot\xi(q)}+\log q\right)\right)\right).
\end{align*}
\end{prop}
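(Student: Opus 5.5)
The plan is to estimate, for each fixed denominator $q$, the inner sum $\sum_{\bm{s}/q}\ppi_\xi(\bm{\mathcal{T}};\bm{s}/q)$ separately. The main term $\frac{2\sqrt2}{\sqrt\pi}(T_2-T_1)q^{3/2}\sqrt{\xi(q)}$ should come out of this, with an error $\ll V(\bm{\mathcal{T}})\sqrt{q\xi(q)}(1+\sqrt{q\xi(q)}+\log q)$. Summing over $Q_1\le q\le Q_2$ then gives the proposition. Write $h=\xi(q)/(2q)$ for the half side of the squares. For a fixed column $s_1$ with $s_1/q\in[T_1,T_2]$ away from the endpoints, the time window is $[t_0-h,t_0+h]$ with $t_0=s_1/q$. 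The event $\chi_{\mathcal R}=1$ means that the range $[m,M]$ of $B$ over this window meets $[s_2/q-h,s_2/q+h]$.

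\textbf{Summing over the column.} Summing the indicators over $s_2\in\Z$ counts the lattice points $s_2/q$ lying in the enlarged interval $[m-h,M+h]$. This count equals $q\,(M-m+2h)+O(1)$, deterministically. Taking expectations and using the computation of Proposition~\ref{firstmom} (the law of the range of $B$ over a window of length $2h$) gives
\[
\sum_{s_2}\ppi_\xi\!\left(\bm{\mathcal{T}};\tfrac{\bm s}{q}\right)=q\left(2\sqrt{\tfrac{2}{\pi}}\cdot 2\sqrt{2h}\cdot\tfrac12\cdot 2+2h\right)+O(1),
\]
up to bookkeeping of constants. Concretely, $\E[M-m]=2\sqrt{2\cdot 2h/\pi}$, so the main part is $q\cdot 4\sqrt{h/\pi}$, which equals $\frac{2\sqrt2}{\sqrt\pi}\sqrt{q\xi(q)}$, and the remainder is $O(\xi(q)+1)$. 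This exactness is the key point: no hitting-probability asymptotics are needed in the bulk, only the slicing idea of Section~\ref{sec2} applied to a lattice of vertical levels. The $O(1)$ boundary term of the lattice count can be kept exact by a Fubini/translation argument, but a crude $O(1)$ already suffices.

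\textbf{Summing over columns.} The number of $s_1$ with $s_1/q\in[T_1,T_2]$ is $q(T_2-T_1)+O(1)$. Multiplying gives the main term $\frac{2\sqrt2}{\sqrt\pi}(T_2-T_1)q^{3/2}\sqrt{\xi(q)}$ plus the following errors:
\begin{itemize}
\item $O(\sqrt{q\xi(q)})$ from the $O(1)$ columns;
\item $O(q(T_2-T_1)\,\xi(q))$, which is $\le V(\bm{\mathcal{T}})\,q\xi(q)$ and fits into $\sqrt{q\xi}\cdot\sqrt{q\xi}$;
\item $O(q(T_2-T_1))$ from the per-column $O(1)$.
\end{itemize}
The last error is too large and needs care. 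I would handle it by noting that the per-column $O(1)$ discrepancy is really bounded by the probability that the range meets a lattice cell boundary. Since $B(t_0)$ has density bounded by $1/\sqrt{2\pi t_0}\le 1/\sqrt{T_1}$, a shift-averaging argument over the fractional part of $qB(t_0)$ shows that the count equals $q(M-m+2h)$ in expectation up to $O(q h/\sqrt{T_1}+\dots)$. This is where the factor $1/\sqrt{T_1}$ in $V$ should enter. A cleaner route is to write the expected count as $\int\!\!\sum_{s_2}\mathbf 1\{\cdots\}$ against the density of $B(t_0)$ and apply Poisson summation. The error is then controlled by the total variation of the Gaussian density $p_{t_0}$ over $\R$, which is $\asymp 1/\sqrt{t_0}$, times $1/q$ per unit. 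Summing over $s_1$ should turn $\sum_{s_1}1/\sqrt{s_1/q}$ into terms of type $q(\sqrt{T_2}-\sqrt{T_1})$, possibly with $\log(T_2/T_1)$ from finer terms, matching the entries of $V$.

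\textbf{Endpoints and the main obstacle.} For the boundary columns, where the window is truncated by $[T_1,T_2]$, the same argument applies with a shorter window, and there are $O(1)$ of them, so they cost $O(\sqrt{q\xi})$. The condition $Q_1\ge 4/T_1$ ensures $h\le T_1/4$, so windows stay in positive time. The $\log q$ term should arise from the lattice-counting error once the density is smoothed at scale $1/q$. The main obstacle is exactly this lattice discrepancy. Without it, the naive $O(1)$ per column gives an error of order $q$, which swamps the claimed $\sqrt{q\xi}\log q$ bound. I would therefore first attempt the Poisson summation/Fourier decay bound for the expected count of points $s_2/q$ in a random interval whose left endpoint has a smooth law, exploiting $\widehat{p_{t_0}}(k q)$ decay.
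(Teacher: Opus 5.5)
Your strategy is sound and genuinely different from the paper's. The step you flag as the main obstacle does close with the Poisson summation you propose at the end, but not with the bounds you sketch before it. A per-column error of $O(qh/\sqrt{T_1})$, or an error sum $\sum_{s_1}1/\sqrt{s_1/q}\asymp q(\sqrt{T_2}-\sqrt{T_1})$, would be just as fatal as the $O(q(T_2-T_1))$ you rejected. Also, no $\log q$ arises in this approach.

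The clean way is as follows. For a column with time window $[t^-,t^+]\subset[T_1,T_2]$, put $X=B(t^-)$, $a=m-X-h$ and $b=M-X+h$. By independence of increments, $(a,b)$ is independent of $X\sim\mathcal{N}(0,t^-)$. What you need is the law of the position conditional on the shape of the path in the window; the unconditional density of $B(t_0)$ alone does not suffice, and the left end $t^-$ is where this conditioning is cleanest. Conditionally on $(a,b)$, the expected number of hit squares in the column is $\sum_{n\in\Z}f(n/q)$ with $f=p_{t^-}*\mathbf{1}_{[a,b]}$, and Poisson summation gives
\[
\Bigl|\sum_{n\in\Z}f(n/q)-q(b-a)\Bigr|\;\le\; q(b-a)\sum_{k\neq0}e^{-2\pi^2t^-q^2k^2}.
\]
Here $t^-q^2\ge T_1q^2\ge 4q$ by \eqref{asymptlageT_1T_2}. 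This is the real role of $q\ge 4/T_1$ in your argument; positivity of the windows is automatic, since the rectangles are cut to $[T_1,T_2]$.

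Hence each column contributes $q\,(\mathbb{E}[R(t^+-t^-)]+2h)$ up to a relative error $O(e^{-8\pi^2q})$. The total-variation route also works, provided you keep the factor $\mathbb{E}[b-a]\asymp\sqrt h+h$: the per-column error is then $\ll\sqrt h/\sqrt{t^-}$, which sums to $\sqrt{q\xi(q)}(\sqrt{T_2}-\sqrt{T_1})$. Summing over the $q(T_2-T_1)+O(1)$ full columns and the $O(1)$ truncated ones gives the main term with error $\ll(1+T_2-T_1)\sqrt{q\xi(q)}\,(1+\sqrt{q\xi(q)})$ per $q$, which is sharper than the statement.

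The paper instead works square by square:
\begin{itemize}
\item it computes each square's hitting probability in closed form (Proposition~\ref{proppasssq}, via Owen's $T$-function and Craig's formula);
\item it extracts the asymptotic $\frac{2}{\pi}\sqrt{\alpha/x^-}\,e^{-(y^-)^2/(2x^+)}$ of Corollary~\ref{coroutile} and sums it over $s_2$ and $s_1$ by comparison with Gaussian integrals;
\item it bounds the boundary rationals separately (Lemmas~\ref{asymptinhomg2} and~\ref{asymptinhomg3}).
\end{itemize}
Proposition~\ref{diagestimterm} is then the case $\bm\gamma=\bm 0$ of Proposition~\ref{asymptinhomg}. Your column summation reuses the slicing idea of Section~\ref{sec2}, bypasses all of this for the homogeneous diagonal term, and is much shorter.

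What the heavier route buys is generality. Proposition~\ref{asymptinhomg} holds with shifts $\bm\gamma$ and with $T_1$ allowed to be of order $1/q$ or smaller. This is exactly what Regime~1 of Section~\ref{asymoffdiag} needs: there the motion is restarted at the hitting point $\bm p$ and the effective starting time is of order $H_\xi(q,q')$. Then $T_1q'^2$ need not be large, and your exponential saving is lost. The individual hitting bounds of Corollary~\ref{coroutile} are also what drive Regimes~2 and~3.
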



\begin{prop}[Estimation of the Off--Diagonal Probability Term]\label{offdiagestimterm} Let $\mu\in (0,1/27)$. Assume here also that the lower bound~\eqref{asymptlageT_1T_2} holds.  Then, under the additional assumptions that 
\begin{equation*}
Q_1\;\ge\; 12^{1/(2\mu)}
\end{equation*}
and that the map $q\mapsto \xi(q)/q$ is nonincreasing, 
\begin{align*}
\Theta_{\xi}\left(\bm{\mathcal{Q}}; \bm{\mathcal{T}}\right)\;=\; &\left(\Delta_{\xi}\left(\bm{\mathcal{Q}}; \bm{\mathcal{T}}\right)\right)^2\\
&+O\left(V\!\left(\bm{\mathcal{T}}\right)^3\cdot \Delta_{\xi}\left(\bm{\mathcal{Q}}; \bm{\mathcal{T}}\right)\cdot\left(1+\sum_{q=Q_1}^{Q_2}\!\left(q\xi(q) +q^{\frac{1}{2}+\mu}\cdot\xi(q)^{\frac{1}{2}-\mu}\right)\right)\right)\\
&+O\left(V\!\left(\bm{\mathcal{T}}\right)\cdot \sum_{q=Q_1}^{Q_2}\sqrt{q\cdot \xi(q)}\cdot \sigma(q)\right).
\end{align*} 
Here, the implicit constant in the first error term depends on $\mu$, and  
\begin{equation}\label{sumdiv0}
\sigma(q)\;=\; \sum_{d|q}d
\end{equation}
denotes the sum of the divisors of an integer $q\ge 1$. Also, as $Q$ tends to infinity, 
\begin{equation}\label{defMxibis0}
\sum_{q=1}^{Q}\sqrt{q\cdot \xi(q)}\cdot \sigma(q)\;=\;   \frac{\pi^2}{6}\cdotp \Xi(Q)\;+\; O\left(\sum_{q=1}^{Q}\sqrt{q\xi(q)}\cdot\log q\right),
\end{equation}
where 
\begin{equation}\label{estimsub4asym0}
 \Xi(Q)\;=\; \sum_{q=1}^{Q}\sqrt{q^3\cdot \xi(q)}
\end{equation}
is the partial sum introduced in the First Main Theorem.
\end{prop}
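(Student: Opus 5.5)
Split $\Theta_{\xi}(\bm{\mathcal{Q}};\bm{\mathcal{T}})$ by the relative position of the time–projections of the two rectangles $\mathcal{R}=\mathcal{R}_\xi(\bm{\mathcal{T}};\bm{s}/q)$ and $\mathcal{R}'=\mathcal{R}_\xi(\bm{\mathcal{T}};\bm{s}'/q')$. There are three regimes:
\begin{itemize}
\item the diagonal pairs $\bm{s}/q=\bm{s}'/q'$ counted with multiplicity;
\item the pairs whose time intervals are \emph{well separated}, with gap at least a small power $\ge (q q')^{-\mu}$-type threshold beyond the interval lengths;
\item the remaining \emph{close} pairs whose time intervals overlap or nearly overlap.
\end{itemize}

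\textbf{Diagonal pairs.} Here $\ppi_\xi(\bm{\mathcal{T}};(\bm{s}/q,\bm{s}'/q'))=\ppi_\xi(\bm{\mathcal{T}};\bm{s}/q)$. A pair of rational points coincides as a point of $\Q^2$ exactly when $s/q=s'/q'$, and for fixed $q$ the number of equivalent representations with $q'\le Q_2$ is governed by divisors of $q$. Summing the single-rectangle hitting probabilities from Section~\ref{probsquare} (of order $\sqrt{\xi(q)/q}$ times a Gaussian density factor) over these coincidences produces the term $V(\bm{\mathcal{T}})\sum_q\sqrt{q\xi(q)}\,\sigma(q)$. The asymptotic \eqref{defMxibis0} is then elementary: write $\sigma(q)=q\sum_{d\mid q}1/d$, interchange sums, and use partial summation with the monotonicity of $\xi(q)/q$, so that $\sum_{d}1/d^2=\pi^2/6$ and the tails contribute the $\log q$ error.

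\textbf{Separated pairs.} If $\mathcal{R}$ lies strictly before $\mathcal{R}'$ in time with gap $g>0$, apply the Markov property at the first hitting time $\tau$ of $\mathcal{R}$. Then
\[
\ppi(\mathcal{R},\mathcal{R}')\le \ppi(\mathcal{R})\cdot\sup_{t\in[x^-_{\mathcal R},x^+_{\mathcal R}],\,y\in[y^-_{\mathcal R},y^+_{\mathcal R}]}\bP\bigl(y+B(\cdot-t)\ \text{hits}\ \mathcal{R}'\bigr),
\]
and the hitting estimate gives this conditional probability as roughly $\sqrt{\xi(q')/q'}\,p_{x_{\mathcal{R}'}-t}(y_{\mathcal{R}'}-y)$ up to a factor $1+O(\text{side}/\sqrt{g})$. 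Comparing with the unconditional $\ppi(\mathcal{R}')$, which involves $p_{x_{\mathcal{R}'}}(y_{\mathcal{R}'})$, we sum over $\bm{s}'$ first: the sum over $s_2'$ of the Gaussian values is a Riemann sum of step $1/q'$, hence equals $q'\int p=q'$ up to an $O(1+\sqrt{\text{var}})$-type error, and likewise for the unconditioned version. The main term of these sums reproduces $\Delta_\xi^2$ (one adds back the close and diagonal pairs to the product, at cost bounded as below). The Riemann-sum and side-length errors give the contributions $\Delta_\xi\cdot\sum_q q\xi(q)$ and $\Delta_\xi\cdot\sum_q q^{1/2+\mu}\xi(q)^{1/2-\mu}$, with the powers of $V(\bm{\mathcal{T}})$ coming from the ranges of the Gaussian variances $T_1\le t\le T_2$.

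\textbf{Close pairs (main obstacle).} These are pairs with time gap below $(q\xi(q))^{\mu}$-scale relative cutoffs but distinct points. I would bound $\ppi(\mathcal{R},\mathcal{R}')$ by $\ppi(\mathcal{R})$ times the probability of reaching $\mathcal{R}'$ within short time, which is $\ll$ a Gaussian tail in $|y_{\mathcal R'}-y_{\mathcal R}|/\sqrt{\text{time}}$. The rationals $\bm{s}'/q'$ with $|s_1'/q'-s_1/q|\le h$ then have to be counted via the spacing of Farey-type fractions: distinct fractions satisfy $|s/q-s'/q'|\ge 1/(qq')$. Here the condition $Q_1\ge 12^{1/(2\mu)}$ and the choice $\mu<1/27$ are used to make the exponents balance. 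The result is an extra factor absorbed into $V^3\Delta_\xi(1+\sum q^{1/2+\mu}\xi^{1/2-\mu})$. This counting of close rational pairs against the Brownian increment scale, uniformly in $q,q'$, is where I expect the real difficulty; the nonincreasing assumption on $\xi(q)/q$ lets us compare rectangle sizes for $q\le q'$.
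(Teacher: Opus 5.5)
Your diagonal and separated parts line up with the paper's Regimes~4 and~1: the strong Markov property at the first hit of $\mathcal{R}$, followed by a vertical lattice sum that is uniform in the restart point. Comparing conditional and unconditional column sums directly is a reasonable shortcut for the paper's route through $\mathcal{S}^{(\bm{\gamma})}_{\xi}$ and $\mathcal{O}_{\xi}$. Two caveats: you need the exact first-hit disintegration, since a $\sup$ alone gives no lower bound; and for coinciding centres $\ppi(\mathcal{R},\mathcal{R}')$ is the probability of the smaller, nested square.

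The genuine gap is the close-pair regime. You leave it open, and your set-up makes it unclosable, because you put the $\mu$-dependent cutoff (a $(qq')^{-\mu}$-type threshold) on the \emph{time} gap. Your own column computation shows the problem. Fix $\bm{s}/q$, and take a column $s_1'$ at time gap $g$ with $g\gg\xi(q')/q'$ and $q'\sqrt{g}\gg 1$. It contributes $\asymp\ppi_\xi(\bm{\mathcal{T}};\bm{s}/q)\cdot q'\sqrt{\xi(q')/q'}$ to the joint sum, exactly as it does to the product sum. A time window of width $G$ therefore carries a contribution of order $G\cdot\Delta_{\xi}(\bm{\mathcal{Q}};\bm{\mathcal{T}})^2$. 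With $G\approx(qq')^{-\mu}$ and $\xi\equiv 1/4$ this is of order $Q^{5-2\mu}$, while the admissible error is $O(Q^{9/2})$. A mere upper bound on the close class, which is all you propose, cannot be absorbed.

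The time cutoff must be the side-length scale $H_\xi(q,q')=2\max\{\xi(q)/q,\xi(q')/q'\}$. Adding back the product over close pairs then costs only $\Delta_\xi\cdot\sum_q q^{1/2}\xi(q)^{3/2}$. The exponent $\mu$ belongs in a \emph{vertical} cutoff $h=H_\xi(q,q')^{1/2-\mu}$.

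Among horizontally close pairs, two different mechanisms are then needed.
\begin{itemize}
\item If $|s_2'/q'-s_2/q|\ge h$, Corollary~\ref{coroutile} gives $\sqrt{\beta/H}\,\exp\bigl(-1/(48H^{2\mu})\bigr)$ times a Gaussian in the vertical offset. The hypothesis $Q_1\ge 12^{1/(2\mu)}$ is exactly what ensures $x^+\le 3H\le(h/2)^2\le(y^-)^2$, so that $x\mapsto x^{-1/2}e^{-y^2/(2x)}$ is increasing on the relevant range. The super-polynomial factor then absorbs all polynomial counting factors, giving $O(V(\bm{\mathcal{T}})^2\Delta_\xi)$.
\item If $|s_2'/q'-s_2/q|<h$, the Gaussian factor gives no saving. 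You must bound $\ppi(\mathcal{R},\mathcal{R}')$ by the hitting probability of the square of side $\min\{\xi(q)/q,\xi(q')/q'\}$, and the whole burden falls on counting pairs \emph{bilinearly}. With $s_1,s_1'$ in ranges of lengths $q,q'$, the condition $1\le|q's_1-qs_1'|\le A$ has at most $6A$ solutions, and $q's_1=qs_1'$ has at most $\gcd(q,q')$. This gives $36(qq')^2Hh+6qq'(H+h)\gcd(q,q')$ pairs.
\end{itemize}

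A per-point count is not enough, even sharpened by the Farey gap $|s/q-s'/q'|\ge 1/(qq')$. It ignores that equal abscissae occur only for a proportion $\gcd(q,q')/q$ of the $s_1$'s. It leaves a term of order $\sum_q q^{5/2}\xi(q)^{1/2}$, which for $\xi(q)=q^{-3}$ is $\asymp Q^2$. That is the size of $\Delta_\xi^2$ itself, against an admissible error of $Q^{1+4\mu}$.

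Finally, the term $q^{1/2+\mu}\xi(q)^{1/2-\mu}$ comes from the $h\gcd(q,q')$ part via the swap $(q/\xi(q))^{\mu}\le(q'/\xi(q'))^{\mu}$ for $q\le q'$; this is where monotonicity enters. The restriction $\mu<1/27$ plays no role here; it is only needed downstream, so that $9(1+3\mu)/5<2$.
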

 
 \noindent As detailed in Section~\ref{asymoffdiag} (see the comment just following Proposition~\ref{propestimregi} therein), the assumption of  monotonicity of the map $q\mapsto \xi(q)/q$, although it cannot be entirely removed, is used very weakly in the proof of this statement.\\
 
 \noindent To establish the First Main Theorem, the Mean--Variance Lemma~\ref{ebc} is applied with the help of the above two propositions to a sequence of measurable functions $\left(f_q\right)_{q\ge 1}$  depending on the approximation function $\xi$ part of the statement of theorem. The proof of the Mean--Variance Lemma~\ref{ebc}  in~\cite[Lemma~1.5]{Ha} and the related work on effectivity carried out by Lee and Scoones~\cite{LS} make it clear that the implicit constant in the error term~\eqref{ebc_conclusion} then heavily depends on $\xi$. This constitutes a major drawback when dealing with  the almost sure estimates in the Second Main Theorem. Indeed, one then considers the family $\left(\xi_Q^{(\delta)}\right)_{Q\ge 1}$ of approximation functions as introduced in~\eqref{defpsideltaq}. The implicit constant in the above--mentioned error term becomes in this case a function of (a large power of) $Q$, meaning that the conclusion of the Mean--Variance Lemma~\ref{ebc}  becomes irrelevant. \\

\noindent To overcome this difficulty, the proof of the Second Main Theorem relies on separate  moment estimates captured in the following statement~:

%

\begin{prop}[First and Second Moment Estimates for Uniform Counting]\label{smeuf}
Keep the notation of the Second Main Theorem, and recall the definition of the map $\xi^{(\delta)}_{Q}$ in~\eqref{defpsideltaq} and of the corresponding counting function $\mathcal{N}^{\,\flat}\!\left(\bm{\mathcal{T}};\left(\delta, Q\right)\right)$ in~\eqref{defcountrand1}. Assume that 
\begin{equation}\label{assumpsecmom}
\delta\cdot Q^5=\delta(Q)\cdot Q^5\ge 1.
\end{equation} 
Then, there exist an integer $Q_{\bm{\mathcal{T}}}\ge 2$ and constants $A_{\bm{\mathcal{T}}}, K_{\bm{\mathcal{T}}}>0$ such that
\begin{equation}\label{exp0}
\left|\E\left[\mathcal{N}^{\,\flat}\!\left(\bm{\mathcal{T}};\left(\delta, Q\right)\right)\right] - \frac{{4}}{3\sqrt{\pi}}\cdot\left(T_2-T_1\right)\cdot \sqrt{\delta Q^5}\right| \le A_{\bm{\mathcal{T}}}\cdot \left(\sqrt{\delta Q^3}\cdot \log\left(Q\right)+\delta Q^2\right)
\end{equation}
and
\begin{equation}\label{var0}
\V\left[\mathcal{N}^{\,\flat}\!\left(\bm{\mathcal{T}};\left(\delta, Q\right)\right)\right]\le K_{\bm{\mathcal{T}}}\cdot\left(\sqrt{\delta Q^5}+\delta Q^4\log\left(2Q\right)+\sqrt{\delta^3 Q^9}\right).\\
\end{equation}
\end{prop}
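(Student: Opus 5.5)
The plan is to write the counting function as a sum of indicators, $\mathcal{N}^{\,\flat}(\bm{\mathcal{T}};(\delta,Q))=\sum_{q\le Q}\sum_{\bm{s}/q\in\Q_{\xi}(\bm{\mathcal{T}})}\chi_{\mathcal{R}_\xi(\bm{\mathcal{T}};\bm{s}/q)}$ with $\xi=\xi^{(\delta)}_Q$. Since the sup-norm $\eta$-neighbourhood condition $\dist(\bm{\rho},\mathcal{G})\le \xi(q)/(2q)$ is exactly the event that the graph meets the square of side $\xi(q)/q$ centred at $\bm{\rho}$, this identity holds, up to the boundary clipping already built into $\mathcal{R}_\xi$ and up to the finitely many small denominators excluded by~\eqref{asymptlageT_1T_2}. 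Their contribution is $O_{\bm{\mathcal{T}}}(1)$, which is absorbed into $A_{\bm{\mathcal{T}}}$ and $Q_{\bm{\mathcal{T}}}$. Then $\E[\mathcal{N}^{\,\flat}]=\Delta_{\xi}(\bm{\mathcal{Q}};\bm{\mathcal{T}})$ and $\E[(\mathcal{N}^{\,\flat})^2]=\Theta_{\xi}(\bm{\mathcal{Q}};\bm{\mathcal{T}})$ with $\bm{\mathcal{Q}}=(Q_{\bm{\mathcal{T}}},Q)$. Both moments therefore follow from Propositions~\ref{diagestimterm} and~\ref{offdiagestimterm} applied to the particular function $\xi^{(\delta)}_Q$, with all constants depending only on $\bm{\mathcal{T}}$ and not on $\xi$. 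This uniformity is the point of the statement, and it holds because those propositions have explicit error terms.

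For~\eqref{exp0}, I substitute $\xi(q)=2\delta q/Q$ when $q\le Q$. The main term becomes $\frac{2\sqrt2}{\sqrt\pi}(T_2-T_1)\sum_{q\le Q}q^{3/2}\sqrt{2\delta q/Q}=\frac{4}{\sqrt\pi}(T_2-T_1)\sqrt{\delta/Q}\sum_{q\le Q} q^2$. Using $\sum q^2=Q^3/3+O(Q^2)$, this equals $\frac{4}{3\sqrt\pi}(T_2-T_1)\sqrt{\delta Q^5}+O(\sqrt{\delta Q^3})$. In the error term of Proposition~\ref{diagestimterm}, $\sqrt{q\xi(q)}=q\sqrt{2\delta/Q}$. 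Hence $\sum_q\sqrt{q\xi(q)}(1+\log q)\ll\sqrt{\delta/Q}\,Q^2\log Q=\sqrt{\delta Q^3}\log Q$, and $\sum_q q\xi(q)\ll(\delta/Q)Q^3=\delta Q^2$. This gives~\eqref{exp0}. One point needs checking: the map $q\mapsto\xi(q)/q=2\delta/Q$ is constant, hence nonincreasing, for $q\le Q$. This holds because only $q\le Q$ enter the count, so the hypothesis of Proposition~\ref{offdiagestimterm} is met.

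For~\eqref{var0}, I write $\V=\Theta_\xi-\Delta_\xi^2$ and bound the error terms of Proposition~\ref{offdiagestimterm}, with $\mu$ fixed (say $\mu=1/30$) and $V(\bm{\mathcal{T}})^3$ absorbed into $K_{\bm{\mathcal{T}}}$. The divisor term is $\sum_q\sqrt{q\xi(q)}\sigma(q)\ll\sqrt{\delta/Q}\sum_{q\le Q}q\,\sigma(q)\ll\sqrt{\delta/Q}\,Q^3=\sqrt{\delta Q^5}$, using $\sum_{q\le Q}q\sigma(q)\ll Q^3$ (or~\eqref{defMxibis0}). The first error term is $\Delta_\xi\cdot(1+S)$ with $\Delta_\xi\ll\sqrt{\delta Q^5}$, which uses $\delta Q^5\ge1$ to absorb the lower-order terms. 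Here $S=\sum_q q\xi(q)+\sum_q q^{1/2+\mu}\xi(q)^{1/2-\mu}$. The first sum is $\ll\delta Q^2$, and the product $\sqrt{\delta Q^5}\,\delta Q^2=\sqrt{\delta^3Q^9}$ is the third term of~\eqref{var0}. The second sum is $(\delta/Q)^{1/2-\mu}\sum_q q\ll \delta^{1/2-\mu}Q^{3/2+\mu}$, and multiplying by $\sqrt{\delta Q^5}$ gives $\delta^{1-\mu}Q^{4+\mu}=\delta Q^4\,(\delta Q)^{-\mu}$.

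This last product is the main obstacle. Once $\delta Q\ll 1$ (in particular near the critical regime $\delta\approx Q^{-5}$), it exceeds $\delta Q^4\log(2Q)$ by a power of $Q$. It is, however, dominated by $\sqrt{\delta Q^5}$ precisely when $\delta Q^3\le (\delta Q)^{2\mu}$, and by $\sqrt{\delta^3Q^9}$ when $\delta Q$ is large. So I would split into regimes. In each regime I would compare it with the three target terms, choosing $\mu$ small depending on the regime if necessary. In the range where no such comparison closes, I would not use Proposition~\ref{offdiagestimterm} as a black box; instead I would revisit its proof, where the $\mu$-term comes from pairs of distinct rationals at the same scale. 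For the uniform function $\xi^{(\delta)}_Q$ I would redo that pair count directly, with the exact product hitting bound from Section~\ref{probsquare}. The goal is to replace the lossy $\mu$-saving by a dyadic sum over the gap $|s_1/q-s_1'/q'|$. With the squares of common side $2\delta/Q$, that dyadic sum should produce exactly the $\delta Q^4\log(2Q)$ term.
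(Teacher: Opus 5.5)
\noindent Your treatment of \eqref{exp0} is essentially right, with one correction. The denominators $q<Q_{\bm{\mathcal{T}}}$ must be shown to contribute $O_{\bm{\mathcal{T}}}(\sqrt{\delta/Q})$ to the mean, not just $O_{\bm{\mathcal{T}}}(1)$. This is true, since their squares have side $2\delta/Q$, and it is needed because under $\delta Q^5\ge 1$ alone the right-hand side of \eqref{exp0} can be as small as $Q^{-1}\log Q$.

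The genuine gap is \eqref{var0}. Used as a black box, Proposition~\ref{offdiagestimterm} leaves the term $\delta^{1-\mu}Q^{4+\mu}=\delta Q^4\,(\delta/Q)^{-\mu}$ (not $(\delta Q)^{-\mu}$). This exceeds $\delta Q^4\log(2Q)$ by a factor $\gg Q^{\mu}/\log Q$ for every admissible $\delta$. Writing $\delta=Q^{-a}$, it is dominated by $\sqrt{\delta Q^5}$ or $\sqrt{\delta^3Q^9}$ only for $a\ge(3+2\mu)/(1-2\mu)$ or $a\le(1-2\mu)/(1+2\mu)$. So for every $\mu>0$ it is fatal on the whole range $Q^{-3}\le\delta\le Q^{-1}$, and no regime-dependent choice of $\mu$ helps. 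The failure is not near $\delta\approx Q^{-5}$, where $\sqrt{\delta Q^5}$ wins. For that middle range you only state an intention, and it targets the wrong variable. Pairs in Regimes~2 and~3 already have horizontal gap $<H$, so they interact over a time window of length $\lesssim H$. Refining that gap does not lower their weight, which is governed by the vertical gap relative to $\sqrt H$.

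\noindent The loss sits in the vertical cutoff $h^{(\mu)}=H^{1/2-\mu}$ separating Regimes~2 and~3. Regime~3 counts $\ll (qq')^2Hh^{(\mu)}+qq'(H+h^{(\mu)})\gcd(q,q')$ pairs per unit box. Each pair is weighted only by the hitting probability $\ll\sqrt{\delta_\xi(q,q')}$ of the smaller square, and $h^{(\mu)}/\sqrt H=H^{-\mu}$ is exactly your surplus.

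The missing idea, which is what the paper does, is this. For $\xi=\xi^{(\delta)}_Q$ the polynomial factors to be beaten in Regime~2 are powers of $Q$, so the cutoff can be lowered to $h=C_0\sqrt H\log(2Q)$, with $H=4\delta/Q$ and $C_0$ large enough that the chain \eqref{defe} survives for $Q\ge Q_0$. Then:
\begin{itemize}
\item The Regime~2 conditional hitting probability carries the factor $\exp\bigl(-(C_0\log 2Q)^2/24\bigr)$. This absorbs the $O(Q^3)$ factor from summing over $q'$ and $\bm{s}'$, so the Regime~2 subsum is $\ll\Delta_\xi\ll\sqrt{\delta Q^5}$.
\item The arithmetic bound \eqref{estimsub3}, whose proof does not depend on the shifts, now gives $\delta^2Q^4\log(2Q)+\sqrt{\delta^3Q^7}+\delta Q^4\log(2Q)$. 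The logarithm is $h/\sqrt H$ in the $\gcd$ terms.
\item Regimes~1 and~4 of Proposition~\ref{propestimregi} give $\Delta_\xi^2+O(\delta Q^4\log Q+\sqrt{\delta^3Q^9})$ and $O(\sqrt{\delta Q^5})$ here.
\item For the small denominators, split $\V[X+Y]\le2\V[X]+2\V[Y]$ with $\E[Y^2]=O_{\bm{\mathcal{T}}}(1)$, which is absorbed via $\delta Q^5\ge1$.
\end{itemize}
Together these yield \eqref{var0}. An alternative repair would be a dyadic decomposition of the \emph{vertical} gap in units of $\sqrt H$, combining the Regime~3 arithmetic count with the Regime~2 conditional decay on each shell. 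A horizontal decomposition would not work.
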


$\quad$

\noindent The  First  Main Theorem is derived from the Mean--Variance  Lemma~\ref{ebc} in the next Subsection~\ref{completionproof} with the help of Propositions~\ref{diagestimterm} and~\ref{offdiagestimterm} (the (off)--diagonal probability terms estimates). The  Second  Main Theorem and the Auxiliary Theorem are then inferred from the above Proposition~\ref{smeuf} in Subsection~\ref{completionproofbis}.\\ 

\noindent Propositions~\ref{diagestimterm} and~\ref{offdiagestimterm} are  established in Sections~\ref{asymdiag} and~\ref{asymoffdiag}, respectively. As for Proposition~\ref{smeuf}, it is proved in the final Section~\ref{secfin}.

\subsection{Derivation of the First Main Theorem  from the (Off--)Diagonal Pro\-ba\-bi\-li\-ty Terms Estimates}\label{completionproof}

\subsubsection{Choice of the Parameters.}  The Mean--Variance Lemma~\ref{ebc} is applied with the fol\-lo\-wing choice of parameters~: 

\begin{itemize}
\item the probability space $(X,\mathcal{B},\mu)$ is the space $\left(\Omega, \mathcal{F}, \mathbb{P}\right)$ which Brownian motion is defined over ;

\item all sequences in the statement are set to vanish unless the index $q$ is such that $q\;\ge\; Q_1^*-1$, where
\begin{equation}\label{defQ*1}
\quad Q_1^*\;:=\; \max\left\{\left\lceil \frac{4}{T_1} \right\rceil, \left\lceil  \frac{1}{T_2-T_1}\right\rceil, \left\lceil \frac{2}{T_2} \right\rceil, \left\lceil 12^{1/(2\mu)}\right\rceil, \left\lceil \frac{V\!\left(\bm{\mathcal{T}}\right)^2}{\left(T_2-T_1\right)^2}\right\rceil \right\}.
\end{equation}
When $q=Q_1^*-1$,  the map $f_{\left(Q_1^*-1\right)}$ is set to be constant equal to 1; when  $q\ge Q_1^*$, the map $f_q~: \Omega\rightarrow\R_{\ge 0}$ counts the number of times  a realisation of  Brownian motion passes through a square centered at a rational point with denominator $q$. More precisely, the squares under consideration are  prescribed to be centered at rational points $\bm{s}/q=(s_1/q, s_2/q) \in \Q_{\xi}(\bm{\mathcal{T}})$  
and to have side length $\xi(q)/q$. 
With the above--introduced notation, this is saying that 
\begin{align}\label{deffq}
f_{\left(Q_1^*-1\right)}\;=\; 1\qquad \textrm{and that}\qquad  f_q\;=\; \sum_{\frac{\bm{s}}{q}\in \Q_{\xi}( \bm{\mathcal{T}})} \chi_{\mathcal{R}_\xi\left(\bm{s}/q\right)} \quad\textrm{when } q\ge Q_1^*. 
\end{align}

\item when $q=Q_1^*-1$, set $ \widehat{f}_{\left(Q_1^*-1\right)}=1 $; when   $q\ge Q_1^*$, the real number $ \widehat{f}_q $ is taken as the expectation of $f_q$. In other words, 
\begin{equation}\label{dehtfq}
\widehat{f}_{\left(Q_1^*-1\right)}\;=\; 1 \qquad \textrm{and}\qquad  \widehat{f}_q\;\underset{\eqref{deffq}}{=}\; \E\left[f_q\right] \;\underset{\eqref{defpiprob}}{=}\;  \sum_{\frac{\bm{s}}{q}\in \Q_{\xi}(\bm{\mathcal{T}})} \ppi_\xi\!\left(\frac{\bm{s}}{q}\right) \quad\textrm{when } q\ge Q_1^*.
\end{equation}

\item given a parameter $\mu>0$ and  an integer $q\ge Q_1^*-1$, the real $\varphi_q$ is defined by setting 
\begin{equation}\label{condiboundary}
\varphi_{\left(Q_1^*-1\right)}= \widehat{f}_{\left(Q_1^*-1\right)}\;\underset{\eqref{dehtfq}}{=}\; 1
\end{equation} 
and, when $q\ge Q_1^*$, 
\begin{align}\label{defvarphiq}
\varphi_{q}\;=\; &\sqrt{q\cdot \xi(q)}\cdot \sigma(q)  \;+\;  \left(\Delta^*_{\xi}\left(q, \bm{\mathcal{T}}\right) - \Delta^*_{\xi}\left(q-1, \bm{\mathcal{T}}\right)\right) \nonumber \\
& + \left(\left(\Delta^*_{\xi}\left(q, \bm{\mathcal{T}}\right)\right)^{9(1+3\mu)/5} -\left(\Delta^*_{\xi}\left(q-1, \bm{\mathcal{T}}\right)\right)^{9(1+3\mu)/5}\right).
\end{align}
Here,
\begin{equation} \label{defdelta*} 
\Delta^*_{\xi}\left(Q, \bm{\mathcal{T}}\right)\;=\;\sum_{q=Q_1^*}^{Q}\widehat{f}_q
\end{equation}
(in other words, $ \Delta^*_{\xi}\left(Q, \bm{\mathcal{T}}\right)$ is the Diagonal Probability Term $\Delta_{\xi}\left(\bm{\mathcal{Q}}; \bm{\mathcal{T}}\right)$ when $\bm{\mathcal{Q}}=\left(Q_1^*, Q\right)$). 
\end{itemize}

\subsubsection{Checking the First Domination  Assumption in the Mean--Variance Lemma.} Since all quantities vanish when $q\le Q_1^*-2$, it is enough to check the assumption~\eqref{ineqfphiMVE} when $q\ge Q_1^*-1$.  It clearly holds when $q=Q_1^*-1$ since, by definition in~\eqref{condiboundary},  $\widehat{f}_{(Q_1^*-1)}=1$ and $\varphi_{(Q_1^*-1)}=1$. Assume therefore that $q\ge Q_1^*$. From the Mean Value Theorem applied to the map $x\mapsto x^{9/5}$,
\begin{align*}
\varphi_q\;&\underset{\eqref{defvarphiq}}{\ge}\; \left(\widehat{f}_q+\sum_{k=0}^{q-1}\widehat{f}_k\right)^{9/5}-\left(\sum_{k=0}^{q-1}\widehat{f}_k\right)^{9/5}\\
&\ge\; \frac{9}{5}\cdot \widehat{f}_q\cdot \min_{q\ge 0}\left(\sum_{k=0}^{q}\widehat{f}_k\right)^{4/5}\;\ge\; \frac{9}{5}\cdot \widehat{f}_q\cdot  \widehat{f}_{\left(Q_1^*-1\right)} \;\underset{\eqref{dehtfq}}{\ge}\;  \widehat{f}_q,
\end{align*}
which completes the checking.

\subsubsection{Checking the Second Domination  Assumption  in the Mean--Variance Lemma.}\label{sbsccheck}

\paragraph{$\bullet$}  The proof of the inequality~\eqref{ebc_condition1} is achieved in several steps starting with two auxiliary results. The first one is an estimate proved  in Section~\ref{asymdiag}~:

\begin{prop}\label{diagestimtermbis}
Given an integer $q\ge 1$, one has that $$\left(T_2-T_1\right)\cdot  \sqrt{q^3\xi(q)}\;\ll\; \widehat{f}_q\;\underset{\eqref{defVT}}{\ll}\; V\!\left(\bm{\mathcal{T}}\right)\cdot  \sqrt{q^3\xi(q)}.$$ Here,   the implicit constant in the upper bound is absolute and the one in the lower bound becomes so provided that $q\ge  \left(V\!\left(\bm{\mathcal{T}}\right)/(T_2-T_1)\right)^2 $.
\end{prop}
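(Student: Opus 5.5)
The plan is to estimate each hitting probability $\ppi_\xi(\bm{\mathcal{T}};\bm{s}/q)$ individually, then sum over the admissible numerators $\bm{s}=(s_1,s_2)$. Write $\epsilon=\xi(q)/q$ for the side length. Fix $s_1$ with $s_1/q\in[T_1,T_2]$ up to $\epsilon/2$; there are $\asymp (T_2-T_1)q+1$ such values. For each one, the time window $[x^-,x^+]$ has length at most $\epsilon$ and sits at time $t\asymp s_1/q\ge T_1$.

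\textbf{Single-rectangle probability.} The event that $B$ hits $[x^-,x^+]\times[y^-,y^+]$ is decomposed according to $B(x^-)$. Either $B(x^-)$ already lies in $[y^-,y^+]$, or it lies at distance $u>0$ from the interval and the maximum (or minimum) of the increment over a window of length $\ell\le\epsilon$ exceeds $u$. By the Law of Maximum \eqref{Mt} and properties (P2)--(P4), this gives
\[
\ppi(\mathcal{R})=\int_{\R}p_{x^-}(y)\,\bP\bigl(\text{hit}\mid B(x^-)=y\bigr)\,\mathrm{d}y
\;\le\;\|p_{x^-}\|_\infty\Bigl(\epsilon+2\int_0^\infty 2\,\bP\bigl(B(\ell)>u\bigr)\,\mathrm{d}u\Bigr)
\;\ll\;\frac{1}{\sqrt{x^-}}\bigl(\epsilon+\sqrt{\epsilon}\bigr).
\]
When $x^-$ is close to $T_1$ the bound is kept in the same form. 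The condition on $Q_1$ guarantees $x^-\ge T_1/2$ whenever needed.

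\textbf{Summing over $s_2$.} Rather than summing the pointwise bound, it is cleaner to sum exactly over $s_2$. The squares of side $\epsilon$ centered at $s_2/q$ have spacing $1/q$, so
\[
\sum_{s_2}\ppi\;\le\;\E\bigl[\sharp\{s_2:\ \text{range of } B \text{ on } [x^-,x^+]\text{ meets the }s_2\text{-th square}\}\bigr]
\;\le\;\E\Bigl[1+q\bigl(R(\ell)+\epsilon\bigr)\Bigr]
\;\ll\;1+q\sqrt{\epsilon},
\]
using the first moment of the spread $R$ computed in Section~\ref{sec2}. This is too crude, because the constant term $1$ would dominate.

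The better route is to integrate against the density of $B(x^-)$, which is bounded by $(2\pi x^-)^{-1/2}\le (2\pi T_1)^{-1/2}$. The total mass then gives
\[
\sum_{s_2}\ppi\;\le\;(2\pi x^-)^{-1/2}\,q\,\bigl(\epsilon+\E[R(\ell)]\bigr)+O(\text{boundary})\;\ll\;\frac{q\sqrt{\epsilon}}{\sqrt{T_1}}.
\]
Here $q\sqrt{\epsilon}=\sqrt{q\,\xi(q)}$ and $\epsilon\le\sqrt{\epsilon}$ because $\xi<1/2$. To see this, write $\sum_{s_2}\ppi=\E[\sharp\{s_2\}]$ and note that the count equals $\sum_{s_2}$ of an indicator. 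Conditioning on $B(x^-)=y$, the indicator is nonzero only for $y$ in an interval of length $\epsilon+R$ around $s_2/q$. Summing over $s_2$ and integrating in $y$ against the bounded density then yields the bound $\ll T_1^{-1/2}\,q\,(\epsilon+\E R)$.

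\textbf{Upper bound.} Summing over the $\ll (T_2-T_1)q+1$ values of $s_1$ gives
\[
\widehat f_q\;\ll\;\frac{(T_2-T_1)q+1}{\sqrt{T_1}}\sqrt{q\,\xi(q)}\;\ll\;V(\bm{\mathcal{T}})\sqrt{q^3\xi(q)},
\]
since $q\ge 1/(T_2-T_1)$ makes the $+1$ harmless. The factor $T_1^{-1/2}(T_2-T_1)$ looks larger than $V$, but it is absorbed by replacing $T_1^{-1/2}$ with the average of $t^{-1/2}$ over $[T_1,T_2]$. That average is $\sqrt{T_2}-\sqrt{T_1}$ up to a factor $2$, which is one of the entries of $V$.

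\textbf{Lower bound.} This is the main obstacle, because the Gaussian density is not bounded below uniformly in $y$. The plan is to restrict to $s_1$ and to $s_2$ with $|s_2/q|\le\sqrt{T_2}$. On that range $p_{x^-}(y)\gg T_2^{-1/2}$. The lower bound then comes from two facts: conditioned on $B(x^-)=y$ at distance $u\le\sqrt{\epsilon}$ below the square, the probability of hitting the square is at least $\bP\bigl(B(\ell)>u\bigr)\gg 1$ when $\ell\ge\epsilon/2$; and overshooting the square is controlled using $\xi<1/2$. Integrating gives $\ppi\gg T_2^{-1/2}\sqrt{\epsilon}$ for each such rectangle. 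There are $\gg \sqrt{T_2}\,q$ values of $s_2$ and $\gg (T_2-T_1)q$ values of $s_1$ with full windows. Multiplying, this yields $\widehat f_q\gg (T_2-T_1)\,q^2\sqrt{\epsilon}=(T_2-T_1)\sqrt{q^3\xi(q)}$. The condition $q\ge \bigl(V/(T_2-T_1)\bigr)^2$ is used to discard the truncated windows at $T_1$ and $T_2$, and the lattice effects at scale $1/q$, making the constant absolute.
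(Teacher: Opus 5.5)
\textbf{Verdict: genuine gap.} Your route differs from the paper's: the paper reads both bounds off the expansion in Proposition~\ref{asymptinhomg}, which rests on the exact hitting formula and Corollary~\ref{coroutile}. However, your central step, the summation over $s_2$, is wrong.

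\textbf{Upper bound.} The bound $\sum_{s_2}\ppi\le(2\pi x^-)^{-1/2}\,q\,(\epsilon+\E[R(\ell)])\ll q\sqrt{\epsilon}/\sqrt{T_1}$ is false for large times. A full column contributes $\asymp q\sqrt{\epsilon}$ wherever it sits (see the identity below, or Lemma~\ref{asymptinhomg3}). Take $T_2-T_1=1$ and $T_1\to\infty$, so that $V(\bm{\mathcal{T}})=1$. Your chain then gives $\widehat f_q\ll T_1^{-1/2}\sqrt{q^3\xi(q)}$, which contradicts the lower bound you go on to prove.

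The justification does not produce that bound either. Bounding $p_{x^-}$ by its supremum and summing the interval lengths $\epsilon+R$ over all $s_2\in\Z$ diverges. You have tacitly kept only $\asymp q$ values of $s_2$, whereas $B(x^-)$ is spread over $\asymp q\sqrt{x^-}$ of them. That missing factor $\sqrt{x^-}$ cancels $\|p_{x^-}\|_\infty$ exactly. Your final upper bound is true, but not by the argument given.

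\textbf{Lower bound.} A separate slip affects this part. The claim $p_{x^-}(y)\gg T_2^{-1/2}$ for $|y|\le\sqrt{T_2}$ fails on columns with $x^-\ll T_2$. There the factor $e^{-T_2/(2x^-)}$ appears, and no condition on $q$ controls it when $T_2/T_1$ is large.

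\textbf{The fix.} The density has to enter additively, not multiplicatively. Fix a column, with rectangles $\mathcal{R}_{s_2}$ and time window $[x^-,x^-+\ell]$. Set $W(u)=B(x^-+u)-B(x^-)$, with extrema $m_W\le 0\le M_W$ and range $R_W=M_W-m_W$; $W$ is independent of $B(x^-)$. Then
\[
\sum_{s_2\in\Z}\ppi\!\left(\mathcal{R}_{s_2}\right)\;=\;\E\left[\int_0^{\epsilon+R_W}\sum_{k\in\Z}p_{x^-}\!\left(\frac{k}{q}-\frac{\epsilon}{2}-M_W+t\right)\mathrm{d}t\right].
\]
Since $p_{x^-}$ is unimodal, its Riemann sums of mesh $1/q$ lie between $q-2\|p_{x^-}\|_\infty$ and $q+2\|p_{x^-}\|_\infty$. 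Hence each column contributes $(\epsilon+\E[R(\ell)])\,\bigl(q+O((x^-)^{-1/2})\bigr)$.

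For the upper bound, sum over the at most $(T_2-T_1)q+2$ columns. Use $\sum_{s_1}(x^-)^{-1/2}\ll T_1^{-1/2}+q(\sqrt{T_2}-\sqrt{T_1})$; this is where $\int_{T_1}^{T_2}t^{-1/2}\,\mathrm{d}t=2(\sqrt{T_2}-\sqrt{T_1})$ is genuinely needed, and note it is the integral, not the average. This gives $\widehat f_q\ll V(\bm{\mathcal{T}})\,q^2\sqrt{\epsilon}=V(\bm{\mathcal{T}})\sqrt{q^3\xi(q)}$ for every $q\ge 1$. No condition such as $q\ge 1/(T_2-T_1)$ is needed, since $V\ge 1$.

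For the lower bound, use full-window columns, where $\E[R(\epsilon)]=2\sqrt{2\epsilon/\pi}$. The lower Riemann bound gives a contribution $\ge 2\sqrt{2\epsilon/\pi}\,\bigl(q-2(2\pi x^-)^{-1/2}\bigr)\gg q\sqrt{\epsilon}$ once $q\sqrt{x^-}$ exceeds an absolute constant. If needed, restrict to columns with $s_1/q\ge\max\{T_1,T_2/2\}$; there are still $\gg (T_2-T_1)q$ of them. Summed over all columns, this even recovers the paper's leading term $\frac{2\sqrt2}{\sqrt\pi}(T_2-T_1)\sqrt{q^3\xi(q)}$.

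Once repaired this way, your argument is a self-contained and more elementary alternative to the paper's deduction. It uses only the law of the maximum, the mean range from Section~\ref{sec2} and independence of increments, instead of the exact hitting-probability formula.
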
 

\noindent The second auxiliary statement is a lemma on real sequences~: 

\begin{lem}[Elementary Lemma on Real Sequences]\label{elemlem}
Let $\left(a_q\right)_{q\ge 1}$ be a sequence of positive reals defining the partial sums $A_n=\sum_{1\le q\le n}a_q$ when $n\ge 1$. Set conventionally $A_0=0$. Then, given a real parameter $0\le \eta\le 1/2$ and integers $1\le u\le v$, $$0\;\le\; \sum_{q=u}^{v}\frac{a_q}{A_q^\eta} \;\le\;2\cdot \left(A_v^{1-\eta}-A_{u-1}^{1-\eta}\right).$$
\end{lem}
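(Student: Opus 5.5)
The plan is a term-by-term comparison of the sum with a telescoping sum, using the concavity of $x\mapsto x^{1-\eta}$ on $[0,\infty)$ (recall that $1-\eta\in[1/2,1]$). The lower bound $0\le\sum_{q=u}^{v}a_q/A_q^\eta$ is immediate, since each term is positive ($a_q>0$ and $A_q\ge a_q>0$ for $q\ge 1$). So everything rests on the upper bound, and it suffices to prove that for every $q\ge 1$,
\begin{equation*}
\frac{a_q}{A_q^\eta}\;\le\; 2\cdot\left(A_q^{1-\eta}-A_{q-1}^{1-\eta}\right).
\end{equation*}
Summing this over $u\le q\le v$ then telescopes to exactly $2\left(A_v^{1-\eta}-A_{u-1}^{1-\eta}\right)$.

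For the term-wise inequality, I would apply the Mean Value Theorem to $g(x)=x^{1-\eta}$ on $[A_{q-1},A_q]$. This is an interval with $A_q-A_{q-1}=a_q>0$, and $A_{q-1}\ge 0$, with $A_0=0$ allowed. It gives
\begin{equation*}
A_q^{1-\eta}-A_{q-1}^{1-\eta}\;=\;(1-\eta)\,\theta^{-\eta}\,a_q
\end{equation*}
for some $\theta\in(A_{q-1},A_q)$. Since $\eta\ge 0$, the map $\theta\mapsto\theta^{-\eta}$ is nonincreasing, so $\theta^{-\eta}\ge A_q^{-\eta}$. Hence
\begin{equation*}
A_q^{1-\eta}-A_{q-1}^{1-\eta}\;\ge\;(1-\eta)\,\frac{a_q}{A_q^{\eta}}\;\ge\;\frac{1}{2}\cdot\frac{a_q}{A_q^\eta},
\end{equation*}
where the last step uses $\eta\le 1/2$. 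This is the desired inequality.

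There is no real obstacle. The only point to watch is the endpoint case $q=1$, where $A_0=0$ and $x^{1-\eta}$ is not differentiable at $0$ when $\eta>0$. This is handled either by noting that the Mean Value Theorem only needs continuity on the closed interval and differentiability on the open interval $(0,A_1)$, or by checking directly that $A_1^{1-\eta}-0=a_1^{1-\eta}=a_1/A_1^{\eta}$. The case $\eta=0$ is trivial, since the sum is then $A_v-A_{u-1}$.
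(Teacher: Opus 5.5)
Your proof is correct, and its skeleton is the same as the paper's: both arguments reduce the lemma to the per-term bound $a_q/A_q^{\eta}\le 2\bigl(A_q^{1-\eta}-A_{q-1}^{1-\eta}\bigr)$ and then telescope. The difference lies in how that per-term bound is obtained.

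The paper writes $a_q/A_q^{\eta}=(A_q-A_{q-1})/A_q^{\eta}$ as the increment $A_q^{1-\eta}-A_{q-1}^{1-\eta}$ plus the remainder $A_{q-1}^{1-\eta}\bigl(1-(A_{q-1}/A_q)^{\eta}\bigr)$. It then bounds the remainder by a second copy of the increment, using the elementary inequality $1-x^{-\eta}\le x^{1-\eta}-1$ for $x=A_q/A_{q-1}\ge 1$. That inequality is exactly where $\eta\le 1/2$ enters: once $\eta>1/2$ it fails for $x$ slightly larger than $1$.

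Your Mean Value Theorem step is shorter and gives the sharper bound $a_q/A_q^{\eta}\le (1-\eta)^{-1}\bigl(A_q^{1-\eta}-A_{q-1}^{1-\eta}\bigr)$. It therefore shows the lemma holds for every $\eta\in[0,1)$ with constant $(1-\eta)^{-1}$; the hypothesis $\eta\le 1/2$ is used only to replace this constant by $2$. Your treatment of the endpoint $q=1$, where $A_0=0$, is also fine.
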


\begin{proof}
Note first that $$\sum_{q=u}^{v}\frac{a_q}{A_q^\eta}\;=\; \sum_{q=u}^{v}\frac{A_q-A_{q-1}}{A_q^\eta}\;=\; A_v^{1-\eta}-A_{u-1}^{1-\eta}\;+\; \sum_{q=u}^vA_{q-1}^{1-\eta}\cdot\left(1-\frac{A_{q-1}^\eta}{A_q^\eta}\right).$$ Furthermore, 
it is elementary to check that 
$$1-\frac{1}{x^\eta}\;\le\; x^{1-\eta}-1\qquad \textrm{when}\qquad x\ge 1\qquad \textrm{ and }\qquad 0\le \eta\le \frac{1}{2}\cdotp$$ Specialising this identity to the case where $x=A_q/A_{q-1}\ge 1$ when $q\ge 2$, a term by term comparison of the sums below yields that $$0\;\le\; \sum_{q=u}^vA_{q-1}^{1-\eta}\cdot\left(1-\frac{A_{q-1}^\eta}{A_q^\eta}\right)\;\le\; \sum_{q=u}^v \left(A_q^{1-\eta}-A_{q-1}^{1-\eta}\right)\;=\; A_v^{1-\eta}-A_{u-1}^{1-\eta}.$$ This completes the proof. 
\end{proof}

\paragraph{$\bullet$} To show that the Domination  Assumption~\eqref{ebc_condition1} holds, fix integers  $Q_2>Q_1\ge Q_1^*$, where the quantity  $Q_1^*$ is defined in~\eqref{defQ*1}. Then, with the above choice of parameters and upon opening the square,
\begin{align}\label{condimeanvardom2}
\int_{\Omega} \left(\sum_{q=Q_1}^{Q_2} \big( f_q(\omega) -  \widehat{f}_q \big) \right)^2\cdot \mathrm{d}\mu(\omega)\;=\; \int_{\Omega} \left(\sum_{q=Q_1}^{Q_2} f_q(\omega)  \right)^2\cdot \mathrm{d}\mu(\omega)-\left(\sum_{q=Q_1}^{Q_2}   \widehat{f}_q \right)^2.
\end{align}
Here, 
\begin{align}\label{deltaexpec}
\sum_{q=Q_1}^{Q_2}   \widehat{f}_q\;\underset{\eqref{dehtfq}}{=}\; \sum_{q=Q_1}^{Q_2}   \sum_{\frac{\bm{s}}{q}\in \Q_{\xi}(\bm{\mathcal{T}})} \ppi_\xi\!\left(\frac{\bm{s}}{q}\right) \;\underset{\eqref{DPT}}{=}\; \Delta_{\xi}\left(\bm{\mathcal{Q}} ; \bm{\mathcal{T}}  \right). 
\end{align}
Furthermore, given that 
two squares centered at rationals with the same denominator $q$ and with side length $\xi(q)/q\le1/q$ can only overlap on their boundaries, one easily checks upon successively opening the various squares that
\begin{align}\label{thetafomeg}
 \int_{\Omega} \left(\sum_{q=Q_1}^{Q_2} f_q(\omega)  \right)^2\cdot \mathrm{d}\mu(\omega)\;&=\; \sum_{Q_1\le q,  q'\le Q_2}\; \sum_{\frac{\bm{s}}{q}, \frac{\bm{s}'}{q'}\in \Q_{\xi}( \bm{\mathcal{T}})} \ppi_\xi\left(\bm{\mathcal{T}}; \left(\frac{\bm{s}}{q}, \frac{\bm{s}'}{q'}\right)\right)\nonumber \\
 &\underset{\eqref{ODPT}}{=}\; \Theta_{\xi}\left(\bm{\mathcal{Q}}; \bm{\mathcal{T}}\right).
 \end{align}
 
\paragraph{$\bullet$} From Propositions~\ref{offdiagestimterm}, the relation~\eqref{condimeanvardom2} thus yields that 
\begin{align}
&\int_{\Omega} \left(\sum_{q=Q_1}^{Q_2} \big( f_q(\omega) -  \widehat{f}_q \big) \right)^2\cdot \mathrm{d}\mu(\omega)\nonumber \\
&\ll V\!\left(\bm{\mathcal{T}}\right)^3\cdot \left( \Delta_{\xi}\left(\bm{\mathcal{Q}}; \bm{\mathcal{T}}\right)\cdot\left(1+\sum_{q=Q_1}^{Q_2}\!\!\left(q\xi(q) +q^{\frac{1}{2}+\mu}\cdot\xi(q)^{\frac{1}{2}-\mu}\right)\right)+ \sum_{q=Q_1}^{Q_2}\!\!\sqrt{q\cdot \xi(q)}\cdot \sigma(q)\right)\nonumber\\
&\ll V\!\left(\bm{\mathcal{T}}\right)^3\cdot \left( \Delta_{\xi}\left(\bm{\mathcal{Q}}; \bm{\mathcal{T}}\right)\cdot\left(1+\sum_{q=Q_1}^{Q_2} q\cdot\xi(q)^{\frac{1}{2}-\mu}\right)+ \sum_{q=Q_1}^{Q_2}\!\!\sqrt{q\cdot \xi(q)}\cdot \sigma(q)\right) \nonumber\\
&\ll V\!\left(\bm{\mathcal{T}}\right)^3\cdot \left( \Delta_{\xi}\left(\bm{\mathcal{Q}}; \bm{\mathcal{T}}\right)\cdot \left(1+  \sum_{q=Q_1}^{Q_2} q^{1+6\mu}\cdot\sqrt{\xi(q)}\right)+ \sum_{q=Q_1}^{Q_2}\!\!\sqrt{q\cdot \xi(q)}\cdot \sigma(q)\right).\label{upboun6mu}
\end{align}
To see the reason why this last inequality holds, partition the index of summation into the two subsets
\begin{align*}
\mathcal{I}_\xi^{(1)}\left(\bm{\mathcal{Q}}\right)= \left\{Q_1\le q\le Q_2 : \xi(q)\le q^{-6}\right\}\quad \textrm{and}\quad \mathcal{I}^{(2)}_\xi\left(\bm{\mathcal{Q}}\right)= \left\{Q_1\le q\le Q_2 : \xi(q)> q^{-6}\right\}.
\end{align*}
Then, 
\begin{align*}
\sum_{q=Q_1}^{Q_2} q\cdot\xi(q)^{\frac{1}{2}-\mu}\le \sum_{q\in \mathcal{I}_\xi^{(1)}\left(\bm{\mathcal{Q}}\right)} q\cdot\xi(q)^{\frac{1}{2}-\mu} + \sum_{q\in \mathcal{I}_\xi^{(2)}\left(\bm{\mathcal{Q}}\right)} q\cdot\xi(q)^{\frac{1}{2}-\mu} \ll 1+ \sum_{q=Q_1}^{Q_2}  q^{1+6\mu}\cdot\xi(q)^{\frac{1}{2}}.
\end{align*}

\paragraph{$\bullet$} Returning to the bound~\eqref{upboun6mu}, the next goal is to show that the first sum therein meets the inequality 
\begin{equation}\label{simpleerror}
 \sum_{q=Q_1}^{Q_2} q^{1+6\mu}\cdot\sqrt{\xi(q)}\;\ll\; V\!\left(\bm{\mathcal{T}}\right)^{6(1-2\mu)/5} \cdot \sum_{q=Q_1}^{Q_2}\frac{\widehat{f}_q}{\Delta^*_{\xi}\left(q, \bm{\mathcal{T}}\right)^{(1-12\mu)/5}}, 
 \end{equation}
where $\Delta^*_{\xi}\left(q, \bm{\mathcal{T}}\right)$ is defined in~\eqref{defdelta*}. To establish this relation, note that in view of Proposition~\ref{diagestimterm}, one has the trivial bound $$\Delta^*_{\xi}\left(q, \bm{\mathcal{T}}\right)\;\ll\; V\!\left(\bm{\mathcal{T}}\right)\cdot q^{5/2}$$ in such a way that $$\left(\frac{\Delta^*_{\xi}\left(q, \bm{\mathcal{T}}\right)}{V\!\left(\bm{\mathcal{T}}\right)}\right)^{(1-12\mu)/5}\;\ll\; q^{1/2-6\mu}.$$
As a consequence, 
\begin{align*}
 \sum_{q=Q_1}^{Q_2} q^{1+6\mu}\cdot\sqrt{\xi(q)}\;&\ll\; V\!\left(\bm{\mathcal{T}}\right)^{(1-12\mu)/5} \sum_{q=Q_1}^{Q_2}\frac{\sqrt{q^3\xi(q)}}{\Delta^*_{\xi}\left(q, \bm{\mathcal{T}}\right)^{(1-12\mu)/5}}\cdotp
\end{align*}
The bound~\eqref{simpleerror} then follows from the conclusion of Proposition~\ref{diagestimtermbis} and from the definition of the quantity $V\!\left(\bm{\mathcal{T}}\right)$ in~\eqref{defVT}.

\paragraph{$\bullet$} From the Elementary Lemma~\ref{elemlem} on Real Sequences applied with the choice of exponent $\eta=(1-12\mu)/5$, 
\begin{equation*}
\sum_{q=Q_1}^{Q_2}\frac{\widehat{f}_q}{\Delta^*_{\xi}\left(q, \bm{\mathcal{T}}\right)^{(1-12\mu)/5}} \;\ll\; \left(\Delta^*_{\xi}\left(Q_2, \bm{\mathcal{T}}\right)\right)^{4(1+3\mu)/5} - \left(\Delta^*_{\xi}\left(Q_1-1, \bm{\mathcal{T}}\right)\right)^{4(1+3\mu)/5}.
\end{equation*}
The bounds~\eqref{upboun6mu} and~\eqref{simpleerror} then imply that 
\begin{align}
&\int_{\Omega} \left(\sum_{q=Q_1}^{Q_2} \big( f_q(\omega) -  \widehat{f}_q \big) \right)^2\cdot \mathrm{d}\mu(\omega)\label{varint} \\
&\underset{\eqref{defdelta*}}{\ll} V\!\left(\bm{\mathcal{T}}\right)^{(21-12\mu)/5}\cdot \left(\sum_{q=Q_1}^{Q_2}\!\!\sqrt{q\cdot \xi(q)}\cdot \sigma(q) \;+ \right. \nonumber\\
&\left. \left(\Delta^*_{\xi}\left(Q_2, \bm{\mathcal{T}}\right)-\Delta^*_{\xi}\left(Q_1-1, \bm{\mathcal{T}}\right)\right)\cdot \left(1+  \left(\Delta^*_{\xi}\left(Q_2, \bm{\mathcal{T}}\right)\right)^{4(1+3\mu)/5} - \left(\Delta^*_{\xi}\left(Q_1-1, \bm{\mathcal{T}}\right)\right)^{4(1+3\mu)/5}\right)\right)\nonumber\\
&\ll\; \sum_{q=Q_1}^{Q_2} \varphi_q, \label{varintbis}
\end{align}
where this last inequality follows from the definition of the sequence $\left(\varphi_q\right)_{q\ge 1}$  in~\eqref{defvarphiq}. This confirms that the second Domination  Assumption~\eqref{ebc_condition1} indeed holds.

\subsubsection{Completion of the Proof of the First Main Theorem  from Propositions~\ref{diagestimterm} and~\ref{offdiagestimterm}}\label{compprFMT}

\begin{proof}[Proof of the First Main Theorem]
From the definition of the sequences $\left(f_q\right)_{q\ge 1}$, $\left(\widehat{f}_q\right)_{q\ge 1}$ and $\left(\varphi_q\right)_{q\ge 1}$ in \eqref{deffq}, \eqref{dehtfq} and~\eqref{defvarphiq},  respectively, it follows from the conclusion of the Mean--Variance Lemma~\ref{ebc} that, almost surely and for all $\mu\in (0, 1/27)$ and all integers $Q\ge 1$,
\begin{align*}
& \widehat{\mathcal{N}}_{ \xi}\left(\bm{\mathcal{T}}; Q\right)\;\underset{\eqref{defdelta*}}{=}\; \Delta^*_{\xi}\left(Q, \bm{\mathcal{T}}\right)\\
&+O\left(\left(\Delta^*_{\xi}\left(Q, \bm{\mathcal{T}}\right)\;+\; \Delta^*_{\xi}\left(Q, \bm{\mathcal{T}}\right)^{9(1+3\mu)/5}\;+\; \sum_{q=1}^{Q}\sqrt{q\cdot \xi(q)}\cdot \sigma(q)\right)^{1/2}\times\right.\\
&\qquad \qquad \quad \quad \left.\left(\log\left(\Delta^*_{\xi}\left(Q, \bm{\mathcal{T}}\right)\; +\; \Delta^*_{\xi}\left(Q, \bm{\mathcal{T}}\right)^{9(1+3\mu)/5}\; +\; \sum_{q=1}^{Q}\sqrt{q\cdot \xi(q)}\cdot \sigma(q)\right)\right)^{3/2+\mu}\right)\\
&+O\left(\max_{1\le q\le Q} \widehat{f}_q\right).
\end{align*}
From Propositions~\ref{diagestimterm} and~\ref{diagestimtermbis}, and from the equation~\eqref{defMxibis0} stated in Proposition~\ref{offdiagestimterm}, this reduces to the asymptotic expansion stated in the First Main Theorem under the assumption of the monotonicity of the map $q\mapsto \xi(q)/q$ when $q\ge Q_1^*$. Here, the integer $Q_1^*$ is  defined in~\eqref{defQ*1}, and the condition $q\ge Q_1^*$   is the one under which the assumptions of the Mean--Variance Lemma involve implicit constants not depending on $\bm{\mathcal{T}}$ (from Proposition~\ref{diagestimtermbis}).
\end{proof}

\subsection{Derivation of the Second Main Theorem and of the Auxiliary Theorem  from the Moments Estimate for Uniform Counting}\label{completionproofbis}

\noindent The proofs of the Second Main Theorem  and of the Auxiliary Theorem share several features and are thus proved together in this section. Each of the claims they are made of is proved separately and successively.

\begin{proof}[Proof of the Asymptotic Uniform Counting Estimate~\eqref{asymunifestim} valid for Large Values of $\delta Q^5$] Let $j{\ge 1}$ be  an exponent and $Q\ge 1$ be an integer in the dyadic block $2^j\le Q<2^{j+1}$. Given a parameter $\eta>0$, set throughout this proof
\begin{equation}\label{defvarthetaj}
\vartheta_j=j^{-\eta/16}.
\end{equation}

\noindent Fix a subset 
\begin{equation}\label{cardQj}
\mathcal{Q}_j\subset \left[2^j, {2^{j+1}}\right]\cap \N\qquad\textrm{with cardinality }\qquad \#\mathcal{Q}_j\ll \vartheta^{-1}_j
\end{equation}  
such that for every $Q\in  \left[2^j, 2^{j+1}-1\right]$, there exist integers $Q_\pm\in \mathcal{Q}_j$ meeting the relations 
\begin{align}\label{sqpm}
Q_-\le Q\le Q_+\qquad \textrm{and}\qquad \frac{Q_+}{Q_-}=1+O\left(\vartheta_j\right),
\end{align}
where the implicit constant is absolute. These conditions can for instance be achieved upon choosing 

\begin{equation*}
\mathcal{Q}_j:=\left\{
\left\lfloor 2^j(1+\vartheta_j)^k\right\rfloor \;:\;  0\le k\le K_j \right\} \cup \left\{2^j, {2^{j+1}}\right\}, 
\end{equation*}
where
\begin{equation*}
K_j=\max\left\{k\ge 0\; :\; 2^j(1+\vartheta_j)^k<2^{j+1}\right\}.
\end{equation*}
Given constants $c_1, c_2>0$, define also $\mathcal{R}_j$ as the finite geometric progression 
\begin{equation}\label{progrgeomrj}
\mathcal{R}_j=
\left\{
c_1j^{2+\eta}2^{-6(j+1)}(1+\vartheta_j)^\ell\;:\; 0\le \ell\le m_j
\right\},
\end{equation}
where $m_j$ is the largest integer $\ell$ for which $c_1j^{2+\eta}2^{-6(j+1)}(1+\vartheta_j)^\ell\le c_22^{-j}$. Thus, 
\begin{equation}\label{cardiinclurj}
\#\mathcal{R}_j\ll j\vartheta_j^{-1} \qquad \textrm{and}\qquad  \mathcal{R}_j\subset \left[c_1j^{2+\eta}2^{-6(j+1)}, \; c_22^{-j}\right].
\end{equation}

\noindent Here, in view of the assumption  $\delta<1/4$ and the assumption~\eqref{lowboudelB} that $\delta(Q)\cdot Q^5\gg\left(\log Q\right)^{2+\eta}$, the constants $c_1$ and $c_2$ can be and are chosen so that for any integer $j\ge 0$, 
\begin{equation}\label{ineqdelatqQ}
c_1j^{2+\eta}2^{-6(j+1)}\le \frac{\delta(Q)}{Q} \le c_22^{-j}\qquad \textrm{for any }\qquad Q\in  \left[2^j, 2^{j+1}-1\right].
\end{equation}

\noindent Given a sufficiently small parameter $c_3>0$, let 
\begin{equation*}
\mathfrak{F}_j=\left\{\left(Q', r'\right)\in \mathcal{Q}_j\times \mathcal{R}_j\;:\; 0<\delta'<\frac{1}{4}\quad\textrm{and}\quad\sqrt{\delta'\cdot\left(Q'\right)^5}\ge c_3j^{1+\eta/2}\right\},
\end{equation*}
where 
\begin{equation}\label{deltaQR}
\delta'=Q'r'. 
\end{equation}
It is immediate that when $Q$ lies in the range $\left[2^j, 2^{j+1}-1\right]$, the assumption~\eqref{lowboudelB} that $\delta(Q)\cdot Q^5\gg\left(\log Q\right)^{2+\eta}$ implies that $\sqrt{\delta\cdot Q^5}\gg j^{1+\eta/2}$. As a consequence, upon adjusting the parameter $c_3$ accordingly, the inclusion in~\eqref{cardiinclurj} combined with the inequalities in~\eqref{ineqdelatqQ}, the assumptions that $\limsup_{Q\rightarrow\infty}\delta(Q)<1/4$ and the clear fact that $\lim_{j\rightarrow \infty }\vartheta_j=0$ altogether yield that every planar point $\left(Q, \delta(Q)/Q\right)$ lies in a rectangle whose vertices are in the set $\mathfrak{F}_j$ provided that $Q$ is in the dyadic range under consideration. \\

\noindent Fix then a point $\left(Q', r'\right)\in \mathfrak{F}_j$. From Chebyshev's inequality and from the variance estimate for Uniform Counting stated in Proposition~\ref{smeuf},
\begin{align}
&\bP\left(\left| \mathcal{N}^{\,\flat}\!\left(\bm{\mathcal{T}};\left(\delta', Q'\right)\right)- \E\left[\mathcal{N}^{\,\flat}\!\left(\bm{\mathcal{T}};\left(\delta', Q'\right)\right)\right]\right|\ge \vartheta_j c_3^{-1}\sqrt{\delta'\cdot \left(Q'\right)^5}\right)\ll \frac{{\V\left[\mathcal{N}^{\,\flat}\left(\bm{\mathcal{T}};\left(\delta', Q'\right)\right)\right]}}{\vartheta_j^{2}\cdot\delta'\cdot \left(Q'\right)^5}\nonumber\\
&\ll \vartheta_j^{-2}\cdot \left(\frac{1}{\sqrt{\delta'\cdot \left(Q'\right)^5}}+\frac{\log\left(2Q'\right)}{Q'}+\sqrt{\frac{\delta'}{Q'}}\right).\label{majprobacountunif}
\end{align}
The goal is to show that this last quantity is summable when the pair $\left(Q', r'\right)=\left(Q', \delta'/Q'\right)$ varies in the set $\mathfrak{F}_j$ and the integer $j$ over all natural integers. To this end, taking into account the fact that for a fixed value of $Q'\in \mathcal{Q}_j$, the quantity $r'=\delta'/Q'$ varies in a geometric progression with ratio $(1+\theta_j)$, one first obtains that 
\begin{align*}
\sum_{\left(Q', r'\right)\in \mathfrak{F}_j} \! \frac{1}{\sqrt{\delta'\cdot \left(Q'\right)^5}}&\underset{\eqref{deltaQR}}{\le} \sum_{Q'\in \mathcal{Q}_j}\frac{1}{\left(Q'\right)^3}\sum_{r'\in  \mathcal{R}_j}\frac{1}{\sqrt{r'}}\underset{\eqref{progrgeomrj}}{\ll} \frac{1}{j^{1+\eta/2}\cdot\left(1-{\left(1+\vartheta_j\right)^{-\frac{1}{2}}}\right)} \sum_{Q'\in \mathcal{Q}_j}\frac{2^{3j}}{\left(Q'\right)^3}\\
&\underset{\eqref{cardQj}}{\ll}\; \!\frac{1}{\vartheta_j^2\cdot j^{1+\eta/2}}\cdotp
\end{align*}
Moreover, 
\begin{align*}
\sum_{\left(Q', r'\right)\in \mathfrak{F}_j}  \frac{\log\left(2Q'\right)}{Q'}\underset{\eqref{cardQj}}{\le} \frac{j+2}{2^j}\cdot  \#\mathcal{Q}_j\cdot  \#\mathcal{R}_j \underset{\eqref{cardQj} \& \eqref{cardiinclurj}}{\ll} \frac{j^2}{2^j\cdot \vartheta_j^2}
\end{align*}
and the geometric summation over $r'$ yields
\begin{align*}
\sum_{\left(Q', r'\right)\in \mathfrak{F}_j} \sqrt{r'}\underset{\eqref{progrgeomrj}}{\ll} \#\mathcal{Q}_j\cdot \frac{2^{-{j/2}}}{\vartheta_j} \underset{\eqref{cardQj}}{\ll} \frac{2^{-{j/2}}}{\vartheta_j^2}\cdotp 
\end{align*}
As a consequence, putting together the last three estimates, 
\begin{align*}
\sum_{j=0}^\infty \sum_{\left(Q', r'\right)\in \mathfrak{F}_j}  &\bP\left(\left| \mathcal{N}^{\,\flat}\!\left(\bm{\mathcal{T}};\left(\delta', Q'\right)\right)- {\E\left[\mathcal{N}^{\,\flat}\!\left(\bm{\mathcal{T}};\left(\delta', Q'\right)\right)\right]}\right|\ge \vartheta_j c_3^{-1}\sqrt{\delta'\cdot \left(Q'\right)^5}\right)\\
& \underset{\eqref{defvarthetaj} \& \eqref{majprobacountunif}}{\ll }\sum_{j=0}^\infty \frac{1}{j^{1+{\eta/4}}}<\infty. 
\end{align*}
From the Borel--Cantelli Lemma, for all large $j\ge 0$ and all $\left(Q', r'\right)= \left(Q', \delta'/Q'\right)\in \mathfrak{F}_j$, 
\begin{align}
\mathcal{N}^{\,\flat}\!\left(\bm{\mathcal{T}};\left(\delta', Q'\right)\right)&= \E\left[\mathcal{N}^{\,\flat}\!\left(\bm{\mathcal{T}};\left(\delta', Q'\right)\right)\right] +O\left(\vartheta_j \sqrt{\delta'\cdot \left(Q'\right)^5}\right)\nonumber\\
&=  \frac{{4}}{3\sqrt{\pi}}\cdot\left(T_2-T_1\right)\cdot \sqrt{\delta' \cdot\left(Q'\right)^5}+  O\left(\vartheta_j \sqrt{\delta'\cdot \left(Q'\right)^5}\right) \nonumber\\
&\qquad \qquad \qquad\qquad + O \left(\sqrt{\delta' \cdot\left(Q'\right)^3}\cdot \log\left(2Q'\right)+\delta'\cdot \left(Q'\right)^2\right),\label{asympnprdq}
\end{align}
where the last upper bound follows from the first moment estimate for Uniform Counting stated in Proposition~\ref{smeuf}. \\

\noindent The final goal is to remove the restriction that $\left(Q', r'\right)= \left(Q', \delta'/Q'\right)\in \mathfrak{F}_j$ and thus to  pass  to the set of all integers $Q\ge 1$ and all reals $\delta>0$. To this end, given a pair $\left(Q, \delta\right)$, set $r=\delta/Q$. Let $j\ge 0$ be chosen so that  $Q\in  \left[2^j, 2^{j+1}-1\right]$, and choose integers $Q_\pm$ such that the relations~\eqref{sqpm} are met. Similarly, let $r_\pm$ be reals such that $\left(Q_\pm, r_\pm\right)\in \mathfrak{F}_j$, 
\begin{align}\label{srpm}
r_-\le r\le r_+\qquad \textrm{and}\qquad \frac{r_+}{r_-}\underset{\eqref{progrgeomrj}}{=}1+\vartheta_j.
\end{align}
Setting here again $\delta_\pm=Q_\pm r_\pm$, it is immediate from the definition of the counting function $\mathcal{N}^{\,\flat}\!\left(\bm{\mathcal{T}};\left(\delta, Q\right)\right)$ that 
\begin{equation}\label{ineqcoununifbegrid}
\mathcal{N}^{\,\flat}\!\left(\bm{\mathcal{T}};\left(\delta_-, Q_-\right)\right)\;\le\; \mathcal{N}^{\,\flat}\!\left(\bm{\mathcal{T}};\left(\delta, Q\right)\right)\;\le\;  \mathcal{N}^{\,\flat}\!\left(\bm{\mathcal{T}};\left(\delta_+, Q_+\right)\right). 
\end{equation}
Furthermore, from the asymptotic bound~\eqref{asympnprdq}, 
\begin{equation*}
\mathcal{N}^{\,\flat}\!\left(\bm{\mathcal{T}};\left(\delta_\pm, Q_\pm\right)\right)\;=\; \frac{{4}}{3\sqrt{\pi}}\cdot\left(T_2-T_1\right)\cdot \sqrt{\delta_\pm \cdot\left(Q_\pm\right)^5}\cdot\left(1+o(1)\right),
\end{equation*}
where the error term is uniform in the value of the integer $j\ge 0$. From the above definition of the quantities $\delta_\pm$ and from the fact that the sequence $\left(\vartheta_j\right)_{j\ge0}=\left(j^{-\eta/16}\right)_{j\ge0}$ vanishes at infinity, it is then a plain consequence of relations~\eqref{sqpm} and~\eqref{srpm} framing the pair $\left(Q;r\right)$ within suitable points of the grid $\mathfrak{F}_j$ that  inequalities~\eqref{ineqcoununifbegrid} imply that 
\begin{equation*}
\mathcal{N}^{\,\flat}\!\left(\bm{\mathcal{T}};\left(\delta, Q\right)\right)\;=\; \frac{{4}}{3\sqrt{\pi}}\cdot\left(T_2-T_1\right)\cdot \sqrt{{\delta \cdot Q^5}}\cdot\left(1+o(1)\right).
\end{equation*}
This completes the proof of the Asymptotic Uniform Counting Estimate~\eqref{asymunifestim}.
\end{proof}

\noindent The proofs of the remaining claims part of the Second Main Theorem  and of the Auxiliary Theorem rely on the Diagonal and Off--Diagonal Probability Estimates established in Propositions~\ref{diagestimterm} and~\ref{offdiagestimterm}, respectively. To make the link between them and the Uniform Counting function $\mathcal{N}^{\,\flat}\!\left(\bm{\mathcal{T}};\left(\delta, Q\right)\right)$ under consideration, recall from the relation~\eqref{defcountrand1} that, upon considering the   approximation function $\xi_Q^{(\delta)}$  defined in~\eqref{defpsideltaq}, one has that $\mathcal{N}^{\,\flat}\!\left(\bm{\mathcal{T}};\left(\delta, Q\right)\right)= \widehat{\mathcal{N}}_{\xi^{(\delta)}_{Q}}\!\left(\bm{\mathcal{T}}; Q\right)$. Here, $\widehat{\mathcal{N}}_{\xi^{(\delta)}_{Q}}\!\left(\bm{\mathcal{T}}; Q\right)$ is the counting function involved in the statement of the First Main Theorem. \\

\noindent Specialising the definitions of the Diagonal 
Probability Terms $\Delta_{\xi}\left(\bm{\mathcal{Q}}; \bm{\mathcal{T}}\right)$ 
given in~\eqref{DPT} 
to the case of the approximation function $\xi=\xi_Q^{(\delta)}$, 
equation~\eqref{deltaexpec}
makes it clear that 
\begin{align}\label{expdeldtqdel}
\E\left[\mathcal{N}^{\,\flat}\!\left(\bm{\mathcal{T}};\left(\delta, Q\right)\right)\right]\;=\; \Delta^*_{\delta}\left(Q; \bm{\mathcal{T}}\right). 
\end{align}
Here, for the sake of simplicity of notation, one has set   $\Delta^*_{\delta}\left(Q; \bm{\mathcal{T}}\right)=\Delta_{\xi_Q^{(\delta)}}\left(\bm{\mathcal{Q}}; \bm{\mathcal{T}}\right)$ when $\bm{\mathcal{Q}}=\left(1, Q\right)$. 
It then follows from a short calculation based on the conclusion of Proposition~\ref{diagestimterm}  that 
\begin{align}
\E\left[\mathcal{N}^{\,\flat}\!\left(\bm{\mathcal{T}};\left(\delta, Q\right)\right)\right]\;&=\;  \frac{{4}}{3\sqrt{\pi}}\cdot\left(T_2-T_1\right)\cdot\sqrt{\delta Q^5}+ O\left(V\!\left(\bm{\mathcal{T}}\right)\cdot\left(\sqrt{\delta Q^3}\cdot \log\left(Q\right)+\delta Q^2\right)\right) \label{asympexpbis} \\
&=\; \frac{{4}}{3\sqrt{\pi}}\cdot\left(T_2-T_1\right)\cdot\sqrt{\delta Q^5}\cdot\left(1+O\left(V\!\left(\bm{\mathcal{T}}\right)\cdot\left(\frac{\log Q}{Q}+\sqrt{\frac{\delta}{Q}}\right)\right)\right),\label{asympexp}
\end{align}
where the last equation holds under the assumption that $\delta Q^5=\delta(Q)\cdot Q^5\rightarrow \infty$.\\

\begin{proof}[Proof of the Probability Asymptotic Estimate~\eqref{prob1}  valid for Large Values of $\delta Q^5$] The variance of the random variable $\mathcal{N}^{\,\flat}\!\left(\bm{\mathcal{T}};\left(\delta, Q\right)\right)$ is the integral~\eqref{varint}, and the domination condition~\eqref{varintbis} then implies (this is the calculation carried out in the proof of the First Main Theorem in Subsection~\ref{compprFMT}) that 
\begin{align*}
\V\left[\mathcal{N}^{\,\flat}\!\left(\bm{\mathcal{T}};\left(\delta, Q\right)\right)\right]\;\ll\; \Delta^*_{\delta}\left(Q, \bm{\mathcal{T}}\right)\;+\; \Delta^*_{\delta}\left(Q, \bm{\mathcal{T}}\right)^{9(1+3\mu)/5}\;+\; \sum_{q=1}^{Q}\sqrt{q\cdot \xi_Q^{(\delta)}}\cdot \sigma(q)
\end{align*}
for all $\mu\in (0,1/27)$, where $\sigma$ is here again the sum of divisors function. From the above relations~\eqref{expdeldtqdel} and \eqref{asympexp}, and from the equation~\eqref{defMxibis0} stated in Proposition~\ref{offdiagestimterm} to estimate the above sum   involving the divisors function, one obtains that 
\begin{align}\label{estimvaras}
\V\left[\mathcal{N}^{\,\flat}\!\left(\bm{\mathcal{T}};\left(\delta, Q\right)\right)\right]\;\ll\;   \sqrt{\delta Q^5}+\left(\sqrt{\delta Q^5}\right)^{9(1+3\mu)/5}+\sqrt{\delta}\cdot Q^{3/2}\cdot \log Q.
\end{align}

\noindent Fix then $\varepsilon>0$. From Chebyshev's inequality, 
\begin{align*}
\bP\!\left(\left|\mathcal{N}^{\,\flat}\!\left(\bm{\mathcal{T}};\left(\delta, Q\right)\right)- \E\left[\mathcal{N}^{\,\flat}\!\left(\bm{\mathcal{T}};\left(\delta, Q\right)\right)\right]\right|\ge \varepsilon\cdot \E\left[\mathcal{N}^{\,\flat}\!\left(\bm{\mathcal{T}};\left(\delta, Q\right)\right)\right]\right)\le \frac{\V\!\left[\mathcal{N}^{\,\flat}\!\left(\bm{\mathcal{T}};\left(\delta, Q\right)\right)\right]}{ \left(\varepsilon\cdot \E\!\left[\mathcal{N}^{\,\flat}\!\left(\bm{\mathcal{T}};\left(\delta, Q\right)\right)\right]\right)^2}\cdotp
\end{align*}
Under the divergence  assumption that $\delta Q^5=\delta(Q)\cdot Q^5\rightarrow \infty$ as $Q$ tends to infinity, it follows from  relations~\eqref{asympexp} and~\eqref{estimvaras} that the right--hand side vanishes at infinity. This yields the sought Probability Asymptotic Estimate~\eqref{prob1} and thus completes the proof.
 \end{proof}

$\quad$

\noindent It remains to establish the   Uniform Counting Relation~\eqref{asymunifestimbis} and the Probability  Upper Bound~\eqref{prob2},  both concerned with the case where $\delta(Q)\cdot Q^5$ takes small values. \\

\begin{proof}[Proof of the  Uniform Counting relation~\eqref{asymunifestimbis} valid for Small Values of $\delta Q^5$] \sloppy  The sought relation is proved in the following slightly stronger form~: letting for a given $j\ge 1$ 
\begin{equation*}
R_j\;=\; \max_{2^j\le Q<2^{j+1} } \delta(Q)\cdot Q^5,
\end{equation*}
if 
\begin{equation}\label{assumptionconvuncount}
\sum_{j=1}^{\infty}\sqrt{R_j}\;<\; \infty,
\end{equation}
then almost surely for all sufficiently large $Q\ge 1$, 
\begin{equation}\label{eqn=0}
\mathcal{N}^{\,\flat}\!\left(\bm{\mathcal{T}};\left(\delta, Q\right)\right)\;=\; 0. 
\end{equation}
Since when $\delta(Q)\cdot Q^5\ll \left(\log Q\right)^{-2-\eta}$ for some $\eta>0$ the assumption~\eqref{assumptionconvuncount} is met, the Uniform Counting Equation~\eqref{asymunifestimbis} becomes a consequence of this claim. To see that the latter holds, note that when $2^j\le Q<2^{j+1}$ for some $j\ge 1$, 
\begin{equation*}
\delta(Q)\;=\; \frac{\delta(Q)\cdot Q^5}{Q^5}\;\le\; \frac{R_j}{2^{5j}}\cdotp
\end{equation*}
As a consequence, if $\mathfrak{A}_j$ denotes the event that there exists some integer $Q\in\left[2^j, 2^{j+1}-1\right]$ for which $\mathcal{N}^{\,\flat}\!\left(\bm{\mathcal{T}};\left(\delta, Q\right)\right)\ge 1$, then, clearly, 
\begin{align*}
\bP\!\left(\mathfrak{A}_j\right)\;\le\;  \bP\!\left(\mathcal{N}^{\,\flat}\!\left(\bm{\mathcal{T}};\left({\frac{2R_j}{2^{5j}}}, 2^{j+1}\right)\right)\ge 1\right).
\end{align*} 
By Markov's Inequality, this yields that 
\begin{align*}
\sum_{j\ge 1} \bP\left(\mathfrak{A}_j\right)\;{\ll}\;  \sum_{j\ge 1}\E\left[\mathcal{N}^{\,\flat}\!\left(\bm{\mathcal{T}};\left(\frac{2 R_j}{2^{5j}}, 2^{j+1}\right)\right)\right] \;  \underset{\eqref{expdeldtqdel} \& \eqref{asympexpbis}}{\ll} \;  \sum_{j\ge 1} \sqrt{R_j}.
\end{align*}
When this last sum converges, the Borel--Cantelli Lemma implies that the events $\left(\mathfrak{A}_j\right)_{j\ge 1}$ occur only finitely often, whence the equation~\eqref{eqn=0}. This concludes the proof.  
\end{proof}

$\quad $

\begin{proof}[Proof of the Probability  Upper Bound~\eqref{prob2} valid for Small Values of $\delta Q^5$]
This is an immediate consequence of Markov's Inequality since it implies that 
\begin{equation*}
\bP\left(\mathcal{N}^{\,\flat}\!\left(\bm{\mathcal{T}};\left(\delta, Q\right)\right)\ge 1\right)\;\le\; \E\left[\mathcal{N}^{\,\flat}\!\left(\bm{\mathcal{T}};\left(\delta, Q\right)\right)\right] \; \underset{\eqref{expdeldtqdel} \& \eqref{asympexpbis}}{\ll} \; \left(T_2-T_1\right)\cdot \sqrt{\delta Q^5}. 
\end{equation*}
The claim follows under the assumption that the quantity $\delta Q^5=\delta(Q)\cdot Q^5$ vanishes at infinity.
\end{proof}

\section{On the Probability that a Brownian Trajectory Hits a Rectangle}\label{probsquare}

\noindent This section establishes the main auxiliary statement needed to prove Propositions~\ref{diagestimterm}  (concerned with estimating the diagonal probability term), Propositon~\ref{offdiagestimterm} (concerned with estimating  the off--diagonal probability term) and Proposition~\ref{smeuf} (concerned with moment estimates for uniform counting). These three propositions  are then established  in Sections~\ref{asymdiag}, \ref{asymoffdiag} and~\ref{secfin}, respectively. \\

\noindent All these proofs rely on very subtle distinctions of regimes and lengthy calculations which are detailed whenever needed but not when the arguments are repeated.  Throughout this section, fix a rectangle $\mathcal{R}$  in the plane such that, in the  notation introduced in Subsection~\ref{sbscMVLPE}, one has that $x^-_{\mathcal{R}}=x^-\ge 0$. Set for the sake of simplicity of notation\textsuperscript{8}\let\thefootnote\relax\footnotetext{\textsuperscript{8}Recall that by definition, a rectangle is understood as a set with nonempty interior. As a consequence, $x^-<x^+$ and $y^-<y^+$.} 
\begin{equation}\label{defalphabeta}
\alpha\;=\; \alpha_{\mathcal{R}}\;=\; x^+-x^->0\qquad\qquad \textrm{ and }\qquad\qquad \beta\;=\; \beta_{\mathcal{R}}\;=\;  y^+-y^- >0.
\end{equation}

\noindent The \emph{complementary error function} 
\begin{equation}\label{erfc}
\erfc  : x\in\R\;\mapsto\; \frac{2}{\sqrt{\pi}}\cdot\int_{x}^{\infty}e^{-t^2}\cdot\textrm{d}t
\end{equation}
plays a crucial role in the various identities and estimations established hereafter. Note that $$\erfc(0)=1\qquad\qquad  \textrm{ and }\qquad\qquad  \erfc(\infty)\; :=\; \lim_{x\rightarrow\infty} \erfc(x)\;=\; 0.$$ Recall also the definition of the parametric family of Gaussian density functions $\left\{p_t\right\}_{t> 0}$ introduced in~\eqref{def_p}. It is extended to the case where $t=0$ upon defining $p_0$ as the Dirac mass at the origin. This is saying that for any map $f~: \R\mapsto\R$, $$\int_\R p_0\cdot f\;=\; f(0).$$

\noindent  A fraction such as $y/x$ is understood to take the value $\infty$ when $y> 0$ and $x=0$ (this is consistent with the fact that only nonnegative values of the denominators of such fractions are considered in this section). 

\subsection{The Main Estimates}

\noindent The main results established in this section read as follows~: 

\begin{prop}[Probability of Hitting a Rectangle]\label{proppasssq}  Recall that  $\mathcal{R}$ stands for a rectangle in the plane such that  $x^-_{\mathcal{R}}=x^-\ge 0$. Then, assuming first that $y^+\ge 0$, the hitting probability $\ppi\left(\mathcal{R}\right)$ is given by the closed--form identity 
\begin{align*}
\ppi\left(\mathcal{R}\right)
\;=\; \erfc\left(\frac{\max\{y^-, 0\}}{\sqrt{2x^+}}\right)\\
+\;  \int_{0}^{\infty} p_{\alpha}(u)\cdot &\left(\textrm{\emph{sgn}}\left(y^--u\right)\cdot \erfc\left(\frac{\left|y^--u\right|}{\sqrt{2x^-}}\right) -\erfc\left(\frac{\left|y^++u\right|}{\sqrt{2x^-}}\right)\right)\cdot \textrm{d}u, \label{compactestimprob2}
\end{align*}
where $\textrm{\emph{sgn}}$ stands for the sign function. This identity  remains valid when $y^+\le 0$ upon switching the role of the parameters $y^{\pm}$ according to the substitution $\left(y^-, y^+\right)\mapsto\left(-y^+, -y^-\right)$.
\end{prop}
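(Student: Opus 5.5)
The plan is to condition on the value $B(x^-)$ at the left endpoint of the time window, and then use the Markov property together with the reflection principle on the remaining interval of length $\alpha=x^+-x^-$. Assume first $y^+\ge 0$. Write $\chi_{\mathcal{R}}=1$ if and only if the path restricted to $[x^-,x^+]$ meets $[y^-,y^+]$. By continuity, this happens in exactly two ways. Either $B(x^-)\in[y^-,y^+]$ already. Or $B(x^-)=z>y^+$ and the increment process $W(u)=B(x^-+u)-B(x^-)$, $0\le u\le\alpha$, satisfies $\min W\le y^+-z$. Or $B(x^-)=z<y^-$ and $\max W\ge y^--z$. By (P4), $W$ is a Brownian motion independent of $B(x^-)\sim p_{x^-}$. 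The Law of Maximum~\eqref{Mt} gives $\bP(\max_{[0,\alpha]}W\ge a)=\bP(|W(\alpha)|\ge a)=2\int_a^\infty p_\alpha$ for $a\ge0$, and symmetrically for the minimum. Hence
\begin{equation*}
\ppi(\mathcal{R})=\int_{y^-}^{y^+}p_{x^-}(z)\,\mathrm{d}z+\int_{y^+}^{\infty}p_{x^-}(z)\,2\!\int_{z-y^+}^{\infty}p_\alpha(u)\,\mathrm{d}u\,\mathrm{d}z+\int_{-\infty}^{y^-}p_{x^-}(z)\,2\!\int_{y^--z}^{\infty}p_\alpha(u)\,\mathrm{d}u\,\mathrm{d}z .
\end{equation*}
The degenerate case $x^-=0$, where $p_0$ is the Dirac mass, is covered by the same formula.

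Next, apply Fubini to each double integral so that the outer variable is $u\in(0,\infty)$ with weight $p_\alpha(u)$ and the inner integral is a Gaussian tail in $z$. The second term becomes $2\int_0^\infty p_\alpha(u)\,\bP(y^+<B(x^-)\le y^++u)\,\mathrm{d}u$. The third becomes $2\int_0^\infty p_\alpha(u)\,\bP(y^--u\le B(x^-)<y^-)\,\mathrm{d}u$. Each probability is then expressed through $\erfc$, using the identity $\bP(B(t)>a)=\tfrac12\erfc(a/\sqrt{2t})$ for $a\ge 0$ together with symmetry (P1). The remaining task is to recombine the pieces with $\int_0^\infty 2p_\alpha=1$.

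Concretely, $\bP(y^+<B\le y^++u)=\tfrac12\erfc(y^+/\sqrt{2x^-})-\tfrac12\erfc((y^++u)/\sqrt{2x^-})$, using $y^+\ge0$. Integrating the constant part against $2p_\alpha$ simply returns $\tfrac12\erfc(y^+/\sqrt{2x^-})$, and this merges with the first term. For the third term, the sign of $y^--u$ decides whether the tail is written as $\erfc$ or $2-\erfc$, which is where $\mathrm{sgn}(y^--u)\,\erfc(|y^--u|/\sqrt{2x^-})$ enters. The constant piece depends on the sign of $y^-$; handling it separately for $y^-\ge0$ and $y^-<0$ should produce exactly $\erfc(\max\{y^-,0\}/\sqrt{2x^-})$. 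I then expect to have to reconcile this with the stated $\erfc(\max\{y^-,0\}/\sqrt{2x^+})$. To do so, I will recheck which terms are grouped: the stated leading term should come from $\bP(\max_{[0,x^+]}B\ge y^-)$ when $y^-\ge 0$, computed directly over the whole interval $[0,x^+]$ via~\eqref{Mt}. So, alternatively, I will decompose by the event of reaching level $y^-$ before time $x^+$, and then subtract the paths that overshoot above $y^+$ without entering the strip during $[x^-,x^+]$.

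The main obstacle is this bookkeeping. I need to choose the decomposition whose pieces combine into the precise closed form, with the first term evaluated at $\sqrt{2x^+}$ and the $u$-integral at $\sqrt{2x^-}$, and I must track the signs correctly in the cases $y^-\ge0$ and $y^-<0$. For each case I will verify the identity by checking the limits $\alpha\to0$ and $x^-\to0$. The case $y^+\le0$ then follows from (P1): reflecting $B\mapsto -B$ maps $\mathcal{R}$ to the rectangle with vertical side $[-y^+,-y^-]$.
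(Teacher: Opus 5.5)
Your conditioning step reproduces the paper's decomposition $\ppi(\mathcal R)=\mu(\mathcal R)+\nu(\mathcal R)+\lambda(\mathcal R)$ (Lemma~\ref{decompoprob1}). Finishing with a direct Fubini swap is sound, and more elementary than the paper's route: the paper fixes $\alpha,y^\pm$, differentiates in $x^-$ via the heat equation, integrates by parts, and integrates back from $x^-=0$.

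The genuine gap is the last step of your plan. You cannot reconcile your computation with $\erfc(\max\{y^-,0\}/\sqrt{2x^+})$, because the identity with that first term is false whenever $x^->0$ and $y^->0$. Your guess $\sqrt{2x^-}$ for the constant piece is also wrong.

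Do the bookkeeping. For $x^->0$, set $\Psi(a)=\mathrm{sgn}(a)\,\erfc(|a|/\sqrt{2x^-})$ with $\mathrm{sgn}(0)=1$. Then $\bP(B(x^-)>a)=\tfrac12\Psi(a)+\mathbf 1_{\{a<0\}}$ for every real $a$. Using $y^+\ge 0$ and $\int_0^\infty 2p_\alpha=1$, this gives
\begin{align*}
\mu(\mathcal R)&=\tfrac12\Psi(y^-)+\mathbf 1_{\{y^-<0\}}-\tfrac12\erfc\bigl(y^+/\sqrt{2x^-}\bigr),\\
\lambda(\mathcal R)&=\tfrac12\erfc\bigl(y^+/\sqrt{2x^-}\bigr)-\int_0^\infty p_\alpha(u)\,\erfc\bigl((y^++u)/\sqrt{2x^-}\bigr)\,\mathrm{d}u,\\
\nu(\mathcal R)&=\int_0^\infty p_\alpha(u)\,\Psi(y^--u)\,\mathrm{d}u+2\int_0^\infty p_\alpha(u)\,\mathbf 1_{\{u>y^-\}}\,\mathrm{d}u-\tfrac12\Psi(y^-)-\mathbf 1_{\{y^-<0\}}.
\end{align*}
Adding up, everything cancels except the stated $u$-integral and
\[
2\int_0^\infty p_\alpha(u)\,\mathbf 1_{\{u>y^-\}}\,\mathrm{d}u=\erfc\bigl(\max\{y^-,0\}/\sqrt{2\alpha}\bigr).
\]
Your route therefore proves the identity with $\sqrt{2\alpha}=\sqrt{2(x^+-x^-)}$ in the first term.

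This corrected version, not the printed one, is the true statement, as the $\alpha\to0^+$ check you planned shows. Fix $x^->0$ and $y^->0$. By continuity of paths, $\ppi(\mathcal R)\to\bP(y^-\le B(x^-)\le y^+)=\tfrac12\erfc(y^-/\sqrt{2x^-})-\tfrac12\erfc(y^+/\sqrt{2x^-})$.
\begin{itemize}
\item The $\sqrt{2\alpha}$ formula reproduces this limit: its first term tends to $0$, and $\int_0^\infty p_\alpha f\to\tfrac12 f(0)$.
\item The printed right-hand side tends to this value plus $\erfc(y^-/\sqrt{2x^-})>0$.
\end{itemize}

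The slip is in the last step of the paper's own proof. The translated rectangle $\mathcal R(0)$ is $[0,\alpha]\times[y^-,y^+]$, so $\ppi(\mathcal R(0))=\bP(M(\alpha)\ge\max\{0,y^-\})$, not $\bP(M(x^+)\ge\max\{0,y^-\})$. The two agree only when $x^-=0$.

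So complete your computation and state the identity with $\sqrt{2\alpha}$. Drop the alternative decomposition through $\bP(\max_{[0,x^+]}B\ge y^-)$: it could only reach the printed formula by making an error. Your reflection argument for $y^+\le 0$ is fine as described. The slip does not propagate, since Proposition~\ref{proppasssqbis} is derived from the lemmata rather than from this closed form.
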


\noindent Although compact and self--contained, the above formula does not make it clear the way the probability $\ppi\left(\mathcal{R}\right)$ behaves as a function of the various parameters it involves. This is made explicit in the following statement where, for the sake of simplicity, one adopts the short--hand notation 
\begin{equation}\label{defak}
A\!\left(a,x\right)\;=\;\sqrt{\frac{a}{x}}\quad \textrm{and}\quad K(u,v)\;=\; \frac{u^2}{2v}\quad \textrm{when}\quad a,u\ge 0\quad \textrm{and}\quad x,v>0.
\end{equation}
It is also convenient to set
\begin{itemize}
\item $E(x,y)= \exp\left(-K(y,x)\right)$ when $y\ge 0$ and  $x>0$;
\item  $F\left(a, x,s,y\right)=\min\left\{\pi/2, \;A\left(a, x\right),\; \sqrt{\pi/2}\cdot\sqrt{s}/y\right\}$ when  $a,s\ge 0$ and  $x,y>0$;
\item $G\left(\beta, x, y\right)=\min\left\{\beta/\sqrt{2x}, \;2/\left(\sqrt{\pi}\cdot\left(1+\sqrt{K(y,x)}\right)\right)\right\}$ when $\beta, y\ge0$ and $x>0$;
\item $H(a, x)=\arctan A(a,x)-A(a,x)$ when $a\ge 0$ and $x>0$.
\end{itemize}

\begin{prop}[Estimation of the  Probability of Hitting a Rectangle]\label{proppasssqbis} Set for the sake of simplicity of notation $\bm{x}=\left(x^-, x^+\right)$, $\bm{y}=\left(y^-, y^+\right)$ and $\bm{\varphi}=\left(\alpha, \beta\right)$. The probability $\ppi\left(\mathcal{R}\right)$ can then be estimated according to this distinction of cases~: 

 \paragraph{$\bullet $ Case~1.} Under the assumption that $x^->0$ and $y^->0$, 
\begin{align}\label{genboundcase1}
\ppi\left(\mathcal{R}\right)\;\le\; \frac{E\left(x^+, y^-\right)}{\pi}\cdot\left(2\cdot F\left(\alpha, x^-, x^+, y^-\right)+ \frac{\pi}{2}\cdot G\left(\beta, x^+, y^-\right)\right) + E\left(x^-, y^-\right)\cdot G\left(\beta, x^-, y^-\right).
\end{align}
This bound can be put in the more refined form 
\begin{align}\label{ppi1a}
\ppi\left(\mathcal{R}\right)\;=\; \frac{E\left(x^+, y^-\right)}{\pi}\cdot U_{\sigma, \tau}\left(\bm{x}, y^-, \bm{\varphi}\right)\;&+\; \frac{E\left(x^+, y^+\right)}{\pi}\cdot V_{\theta}\left(\bm{x}, y^+, \alpha\right)\nonumber\\
&+\; \frac{E\left(x^-, y^-\right)}{2}\cdot  W_{\zeta}\left(x^-, y^-, \beta\right)
\end{align}
for some parameters $$\sigma=\sigma\left(\mathcal{R}\right),\quad \tau=\tau\left(\mathcal{R}\right),\quad  \theta=\theta\left(\mathcal{R}\right)\in \left[0,1\right]\qquad \textrm{and}\qquad \zeta=\zeta\left(\mathcal{R}\right)\in \left[0,2\right]$$ in the following regimes, which determine the values of the three functions $U_{\sigma, \tau}\left(\bm{x}, y^-, \bm{\varphi}\right)$, $V_{\theta}\left(\bm{x}, y^+, \alpha\right)$ and $W_{\zeta}\left(x^-, y^-, \beta\right)$~: 
\begin{itemize}
\item[$\textbf{1.(a)}$] when $K\left(y^+, x^+\right)\cdot A^2\left(\alpha, x^-\right)\le 1$, one can set
\begin{align*}
&\Diamond\;  U_{\sigma, \tau}\left(\bm{x}, y^-, \bm{\varphi}\right)=\arctan A\left(\alpha, x^-\right)+\sigma\cdot K\left(y^-, x^+\right)\cdot H\left(\alpha, x^-\right)+\frac{\pi\tau}{2}\cdot G\left(\beta, x^+, y^-\right),\\
&\Diamond\;  V_{\theta}\left(\bm{x}, y^+, \alpha\right)\;=\; \arctan A\left(\alpha, x^-\right)+\theta\cdot K\left(y^+, x^+\right)\cdot H\left(\alpha, x^-\right)\qquad \textrm{and}\\
& \Diamond\;  W_{\zeta}\left(x^-, y^-, \beta\right)\;=\;\zeta\cdot G\left(\beta, x^-, y^-\right).
\end{align*}
Under the additional assumption that $A\left(\alpha, x^-\right)< 1$, the values 
\begin{align*}
&\star U_{\sigma, \tau}\left(\bm{x}, y^-, \bm{\varphi}\right)\;=\\
&\quad A\left(\alpha, x^-\right)\cdot\left(1-\frac{\sigma}{3}\cdot A^2\left(\alpha, x^-\right)\cdot\left(1+K\left(y^-, x^+\right)\right)\right)+\tau\cdot \frac{\pi}{2}\cdot G\left(\beta, x^+, y^-\right)\quad \textrm{and}\\
&\star V_{\theta}\left(\bm{x}, y^+, \alpha\right)\;=\;   A\left(\alpha, x^-\right)\cdot \left(1-\frac{\theta}{3}\cdot A^2\left(\alpha, x^-\right)\cdot\left(1+K\left(y^+, x^+\right)\right)\right)
\end{align*}
are also admissible. 

\item[$\textbf{1.(b)}$]  when $K\left(y^-, x^+\right)\cdot A^2\left(\alpha, x^-\right)\le 1\le K\left(y^+, x^+\right)\cdot A^2\left(\alpha, x^-\right)$, one can set
\begin{align*}
&\Diamond\;  U_{\sigma, \tau}\left(\bm{x}, y^-, \bm{\varphi}\right)=\arctan A\left(\alpha, x^-\right)+\sigma\cdot K\left(y^-, x^+\right)\cdot H\left(\alpha, x^-\right)+\frac{\pi\tau}{2}\cdot G\left(\beta, x^+, y^-\right),\\
&\Diamond\;  V_{\theta}\left(\bm{x}, y^+, \alpha\right)\;=\; \theta\cdot  F\left(\alpha, x^-, x^+, y^+\right)\qquad \textrm{and}\\
&\Diamond\;  W_{\zeta}\left(x^-, y^-, \beta\right)\;=\;\zeta\cdot G\left(\beta, x^-, y^-\right).
\end{align*}
Under the additional assumption that $A\left(\alpha, x^-\right)< 1$, the value
\begin{align*}
&\star U_{\sigma, \tau}\left(\bm{x}, y^-, \bm{\varphi}\right)\;=\\ 
&\qquad  A\left(\alpha, x^-\right)\cdot\left(1-\frac{\sigma}{3}\cdot A^2\left(\alpha, x^-\right)\cdot\left(1+K\left(y^-, x^+\right)\right)\right)+\tau\cdot \frac{\pi}{2}\cdot G\left(\beta, x^+, y^-\right) 
\end{align*}
is also admissible
\end{itemize}

\paragraph{$\bullet $ Case~2.} The cases where either or both of the coordinates $x^-$ or $y^-$ vanish can be dealt with as follows~: 
\begin{itemize}
\item[$\textbf{2.(a)}$] under the assumption that $x^->0$ and $y^-=0$, the estimates of Case~1 hold true upon setting $y^+=\beta$ and upon taking the limit $y^-\rightarrow 0^+$;
\item[$\textbf{2.(b)}$] under the assumption that $x^-=0$ and $y^->0$, the estimates of Case~1 hold true upon setting $x^+=\alpha$ and upon taking the limit $x^-\rightarrow 0^+$;
\item[$\textbf{2.(c)}$] under the assumption that $x^-=y^-=0$, one has that $\ppi\left(\mathcal{R}\right)=1$.
\end{itemize}

\paragraph{$\bullet $ Case~3.} Under the assumption that $x^-\ge 0$ and $y^+\le 0$, one has that $\ppi\left(\mathcal{R}\right)=\ppi\left(-\mathcal{R}\right)$, where the probability $\ppi\left(-\mathcal{R}\right)$ can be estimated from Cases~1 and~2. (Here, $-\mathcal{R}$ denotes the reflection of the rectangle $\mathcal{R}$ with respect to the abscissa axis $\left\{y=0\right\}$ in the $Oty$--plane.)

\paragraph{$\bullet $ Case~4.} The case where $y^-\le 0\le y^+$ can be dealt with as follows~:
\begin{itemize}
\item[$\textbf{4.(a)}$] under the assumption that $x^->0$, decompose the rectangle $\mathcal{R}$ into the two subrectangles $\mathcal{R}^+=\mathcal{R}\cap \left\{y\ge 0\right\}$ and $\mathcal{R}^-=\mathcal{R}\cap \left\{y\le 0\right\}$ in the $Oty$--plane. Then, $$\ppi\left(\mathcal{R}\right)\;=\; \ppi\left(\mathcal{R}^+\right)+\ppi\left(\mathcal{R}^-\right)-\left(1-\frac{2}{\pi}\cdot\arcsin\sqrt{\frac{x^-}{x^+}}\right), $$ where the probabilities $\ppi\left(\mathcal{R}^+\right)$ and $\ppi\left(\mathcal{R}^-\right)$ can be estimated from Cases~1, 2 and~3.
\item[$\textbf{4.(b)}$] under the assumption that $x^-=0$, one has that $\ppi\left(\mathcal{R}\right)=1$
\end{itemize} 
\end{prop}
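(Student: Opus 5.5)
Our plan is to derive every case from the closed-form identity of Proposition~\ref{proppasssq}, after rewriting it in terms of the joint law of $\left(B(x^-), \max_{[x^-,x^+]}B\right)$. Conditioning on $B(x^-)=v$ and using the Markov property together with the reflection principle \eqref{Mt}, the event of hitting $\mathcal{R}$ when $y^->0$ splits into two pieces. The first piece is that $B(x^-)\in[y^-,y^+]$. Its probability is $\int_{y^-}^{y^+}p_{x^-}$, which is at most $\frac{E(x^-,y^-)}{2}\cdot\zeta\, G(\beta,x^-,y^-)$ for some $\zeta\in[0,2]$. This follows from the two elementary bounds $\int_{y^-}^{y^+}p_{x^-}\le \beta/\sqrt{2\pi x^-}\,e^{-K(y^-,x^-)}$ and the Mills-ratio bound $\erfc(z)\le 2e^{-z^2}/(\sqrt{\pi}(1+z))$, applied with $z=\sqrt{K(y^-,x^-)}$. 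The second piece is that $B(x^-)<y^-$ and the increment started at $v$ reaches level $y^-$ within time $\alpha$ (the symmetric event $B(x^-)>y^+$ with the path going down to $y^+$ is handled in the same way). By the reflection principle this piece equals
\[
\int_{-\infty}^{y^-}p_{x^-}(v)\,\erfc\!\left(\frac{y^--v}{\sqrt{2\alpha}}\right)\mathrm{d}v
\;+\;\int_{y^+}^{\infty}p_{x^-}(v)\,\erfc\!\left(\frac{v-y^+}{\sqrt{2\alpha}}\right)\mathrm{d}v .
\]
We first check that this matches Proposition~\ref{proppasssq}, up to a Fubini swap of the roles of $p_\alpha$ and $\erfc$. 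Each of the two integrals is then a two-dimensional Gaussian integral over a wedge.

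The key computation writes each double Gaussian integral in polar coordinates after the scaling $(v/\sqrt{x^-}, w/\sqrt{\alpha})$. The integral becomes a sector whose angle is $\arctan A(\alpha,x^-)$, which is the origin of all the $\arctan A$ terms, shifted by the distance $y^-$ (resp.\ $y^+$) from the origin. Along a ray at angle $\phi$, the radial Gaussian tail starting from the line at distance $y/\sqrt{x^+}$ (note $x^-+\alpha=x^+$) produces the factor $E(x^+,y)=e^{-K(y,x^+)}$. The remaining angular integrand is then $e^{-K(y,x^+)\tan^2\phi\cdot(\cdots)}$. We will expand it using $1-u\le e^{-u}\le 1$. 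This gives $\arctan A+\sigma K H$ in regime 1.(a) and the first part of 1.(b), since $\int_0^{\arctan A}\tan^2=A-\arctan A=-H$ and the condition $K\cdot A^2\le1$ keeps $u\le1$. In the complementary regime, $K(y^+,x^+)A^2\ge1$, we bound the angular integral instead by $\min\{\pi/2, A, \sqrt{\pi/2}\sqrt{x^+}/y^+\}=F$. The last of these terms comes from a Gaussian integral in $\tan\phi$. The correction term $\tfrac{\pi\tau}{2}G(\beta,x^+,y^-)$ accounts for the strip $[y^-,y^+]$ that lies between the two wedges. The starred variants follow from $\arctan A= A-A^3/3+\dots$ together with $\arctan A-A\ge -A^3/3$ when $A<1$. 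Finally, the bound \eqref{genboundcase1} is obtained by taking all parameters at their maximum and using $F$ for both wedges.

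For Cases~2--4 we argue by limits and symmetry. Case~2.(a) and~2.(b) follow from monotone continuity of $\ppi$ in the endpoints, and Case~2.(c) holds because $B(0)=0\in\mathcal{R}$. Case~3 is property (P1). For 4.(a) we use inclusion--exclusion:
\[
\ppi(\mathcal{R})=\ppi(\mathcal{R}^+)+\ppi(\mathcal{R}^-)-\ppi(\mathcal{R}^+,\mathcal{R}^-).
\]
Since the path is continuous, it hits both half-rectangles exactly when it hits the line $y=0$ during $[x^-,x^+]$. By Lévy's arcsine law for the last zero before $x^+$, this probability is $1-\tfrac{2}{\pi}\arcsin\sqrt{x^-/x^+}$. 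Case~4.(b) is again immediate.

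We expect the main obstacle to be the sector computation of Case~1. The difficulty is tracking the exact constants so that they land in the prescribed ranges $\sigma,\tau,\theta\in[0,1]$ and $\zeta\in[0,2]$, and correctly splitting the contribution of the strip $[y^-,y^+]$ between the $U$ and $W$ terms. We would do this first for the one-sided wedge with $y^+=\infty$, and then bound the strip correction separately by $G$.
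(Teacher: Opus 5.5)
Your proposal follows the paper's proof essentially step for step: the split according to $B(x^-)$ is Lemma~\ref{decompoprob1}, and the wedge integrals in polar coordinates produce Owen's $T$-function exactly as in Lemmata~\ref{lemnulambda11}--\ref{declaR1}; your $1-u\le e^{-u}\le 1$, $\arctan$ and Gaussian-tail bounds are Lemma~\ref{boundsowen}, the $\mathrm{erfc}$ differences and $\mu(\mathcal{R})$ are handled by the Mills-ratio bound as in Corollary~\ref{asymptdecomposumnulambda1} and Lemma~\ref{probmuR1}, and Cases~2--4 are treated exactly as you describe. On the constants you flag, two remarks help: the vertical edges of the two wedges contribute exactly $-\mu(\mathcal{R})$, so that with $A=A(\alpha,x^-)$ one has $\ppi(\mathcal{R})=\frac{1}{\pi}E(x^+,y^-)\,T\bigl(A,K(y^-,x^+)\bigr)+\frac{1}{\pi}E(x^+,y^+)\,T\bigl(A,K(y^+,x^+)\bigr)+\bP\bigl(B(x^+)\in[y^-,y^+]\bigr)$, and this last probability is in general only bounded by $\frac{2}{\sqrt{\pi}}\cdot\frac{1}{2}E(x^+,y^-)\,G(\beta,x^+,y^-)$ (sharp as $\beta\to0$), so it does not fit into the $\tau\le1$ term on its own and must borrow from the $W$-slack in regimes~1.(a)--(b) (where $E(x^-,y^-)\ge e^{-1}E(x^+,y^-)$ and $G(\beta,x^-,y^-)\ge\frac{1}{2}G(\beta,x^+,y^-)$), whereas for \eqref{genboundcase1} it is cleaner to use $\partial_{y^+}\ppi(\mathcal{R})=p_{x^+}(y^+)\,\mathrm{erfc}\bigl(A\sqrt{K(y^+,x^+)}\bigr)\le p_{x^+}(y^+)\,e^{-A^2K(y^+,x^+)}$, which integrates from $y^+=y^-$ to $\ppi(\mathcal{R})\le\frac{2}{\pi}E(x^+,y^-)\,T\bigl(A,K(y^-,x^+)\bigr)+\mu(\mathcal{R})$ and hence to \eqref{genboundcase1} via $T\le F$ and $\mu(\mathcal{R})\le E(x^-,y^-)\,G(\beta,x^-,y^-)$.
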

 
 \noindent This proposition  makes it clear that the key quantities controlling the probability $\ppi\left(\mathcal{R}\right)$ at fine scales are, on the one hand, the product $K\left(y^+, x^+\right)\cdot A^2\left(\alpha, x^-\right)$ and, on the other, the real $A\left(\alpha, x^-\right)$ (restricting to Case~1 without loss of generality). This leads one to this simplified statement providing the order of magnitude of $\ppi\left(\mathcal{R}\right)$ when $\alpha, \beta\rightarrow 0^+$ and when the coordinates $x^-$ and $y^-$ are held fixed. This is the form of estimates needed to check the validity of the assumptions in the Mean--Variance Lemma~\ref{ebc}.
 
 \begin{coro}[Asymptotics of the hitting probability as $\alpha, \beta\rightarrow 0^+$]\label{coroutile} Assume that $x^-, y^->0$. The probability $\ppi\left(\mathcal{R}\right)$ can then be bounded as 
 \begin{align}\label{mainupbprob}  
 \ppi\left(\mathcal{R}\right)\ll \exp\left(-\frac{\left(y^-\right)^2}{2x^+}\right)\cdot \left(\min\left\{1, \;\sqrt{\frac{\alpha}{x^-}}, \frac{\sqrt{x^+}}{y^-}\right\}+\frac{\beta}{\sqrt{x^+}}\right)+ \exp\left(-\frac{\left(y^-\right)^2}{2x^-}\right)\cdot \frac{\beta}{\sqrt{x^-}}\cdotp
 \end{align}
 Under the additional assumption that 
 \begin{equation}\label{refcondi}
 0\; <\; \alpha\; <\; x^- \qquad \textrm{and that }\qquad  0\; <\; y^-\; <\;  -\beta+\sqrt{\frac{2x^-\left(\alpha+x^-\right)}{\alpha}},
 \end{equation}
 one has the more refined estimate 
 \begin{align}
 &\left|\ppi\left(\mathcal{R}\right)-\frac{2}{\pi}\cdot \sqrt{\frac{\alpha}{x^-}}\cdot\exp\left(-\frac{\left(y^-\right)^2}{2x^+}\right)\right|\nonumber\\
 &\ll\;   \exp\left(-\frac{\left(y^-\right)^2}{2x^+}\right)\cdot \left(\sqrt{\frac{\alpha}{x^-}}\cdot \min\left\{1, \beta\cdot \frac{y^+}{x^+}\right\}+\left(\frac{\alpha}{x^-}\right)^{3/2}\cdot\max\left\{1, \frac{\left(y^-\right)^2}{2x^+}\right\}+\frac{\beta}{\sqrt{2x^+}}\right)\nonumber\\
 &\quad + \;\exp\left(-\frac{\left(y^-\right)^2}{2x^-}\right)\cdot \frac{\beta}{\sqrt{2x^-}}\cdotp
 \label{refestimcoro} 
 \end{align}
The remaining cases where $x^\pm=0$, $y^\pm=0$, $y^+\le0$ or $y^-\le 0 \le y^+$ are covered in the same way as in Cases~2, 3 and~4 of Proposition~\ref{proppasssqbis}.
 \end{coro}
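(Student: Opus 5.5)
The corollary follows from Proposition~\ref{proppasssqbis}, Case~1, by replacing each auxiliary function $E,F,G,H$ with its order of magnitude.

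\textbf{The crude bound~\eqref{mainupbprob}.} Start from the inequality~\eqref{genboundcase1}. Recall that
\[
E(x^\pm,y^-)=\exp\bigl(-(y^-)^2/(2x^\pm)\bigr), \qquad F(\alpha,x^-,x^+,y^-)\le \min\bigl\{\pi/2,\ \sqrt{\alpha/x^-},\ \sqrt{\pi/2}\cdot\sqrt{x^+}/y^-\bigr\},
\]
and $G(\beta,x,y^-)\le \beta/\sqrt{2x}$. Substituting these three bounds into~\eqref{genboundcase1} and absorbing the absolute constants gives~\eqref{mainupbprob} directly.

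\textbf{The refined estimate~\eqref{refestimcoro}.} First check that the hypotheses~\eqref{refcondi} place us in regime~1.(a) with the additional assumption $A(\alpha,x^-)<1$. The condition $\alpha<x^-$ gives $A<1$. The second condition gives $y^+=y^-+\beta<\sqrt{2x^-(\alpha+x^-)/\alpha}$. Since $x^+=x^-+\alpha$, this is exactly $K(y^+,x^+)\cdot A^2(\alpha,x^-)=(y^+)^2\alpha/(2x^+x^-)<1$.

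Use the starred admissible values of $U$ and $V$ in~\eqref{ppi1a}. Since $\sigma,\theta\in[0,1]$, $\tau\in[0,1]$ and $\zeta\in[0,2]$, one gets
\[
\ppi(\mathcal{R})=\frac{E(x^+,y^-)+E(x^+,y^+)}{\pi}\cdot A+O\Bigl(E(x^+,y^-)\bigl(A^3(1+K(y^-,x^+))+G(\beta,x^+,y^-)\bigr)+E(x^+,y^+)\,A^3(1+K(y^+,x^+))+E(x^-,y^-)\,G(\beta,x^-,y^-)\Bigr).
\]

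The main term is $\tfrac{2}{\pi}A\,E(x^+,y^-)$. The remaining task is to compare $E(x^+,y^+)$ with $E(x^+,y^-)$:
\[
0\le E(x^+,y^-)-E(x^+,y^+)=E(x^+,y^-)\bigl(1-e^{-((y^+)^2-(y^-)^2)/(2x^+)}\bigr)\le E(x^+,y^-)\min\{1,\ \beta y^+/x^+\}.
\]
This uses $(y^+)^2-(y^-)^2=\beta(y^++y^-)\le 2\beta y^+$ and $1-e^{-u}\le\min\{1,u\}$. This produces the error term $A\min\{1,\beta y^+/x^+\}$.

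Next, the cubic terms. Bound $E(x^+,y^+)\,A^3(1+K(y^+,x^+))$ by showing $E(x^+,y^+)(1+K(y^+,x^+))\ll E(x^+,y^-)\max\{1,K(y^-,x^+)\}$. Write $K(y^+)=K(y^-)+\Delta$, where $\Delta=K(y^+,x^+)-K(y^-,x^+)\ge 0$. Then
\[
(1+K(y^-)+\Delta)\,e^{-\Delta}\le 1+K(y^-)+\sup_{\Delta\ge 0}\Delta e^{-\Delta}\ll\max\{1,K(y^-)\}.
\]
The $G$-terms are bounded by $\beta/\sqrt{2x^\pm}$. Collecting everything yields~\eqref{refestimcoro}.

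\textbf{Remaining cases.} These are inherited verbatim from Cases~2--4 of Proposition~\ref{proppasssqbis}: take limits, reflect, and use the decomposition formula.

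\textbf{Main obstacle.} The only genuinely delicate point is verifying that~\eqref{refcondi} places us in the starred regime of 1.(a), and controlling $K(y^+,x^+)$ by $K(y^-,x^+)$ as above. Everything else is routine bookkeeping.
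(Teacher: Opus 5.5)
Your proposal is correct and follows essentially the same route as the paper. The crude bound comes from substituting the definitions of $E$, $F$, $G$ into~\eqref{genboundcase1}; the refined bound comes from checking that~\eqref{refcondi} is exactly regime 1.(a) with $A(\alpha,x^-)<1$, using the starred values of $U$ and $V$, and comparing $E(x^+,y^+)$ with $E(x^+,y^-)$ (the paper via the Mean Value Theorem, you via $1-e^{-u}\le\min\{1,u\}$). Your explicit absorption of $E(x^+,y^+)\,A^3(1+K(y^+,x^+))$ into $E(x^+,y^-)\max\{1,K(y^-,x^+)\}$ through $\Delta e^{-\Delta}\le e^{-1}$ fills in a step the paper leaves implicit.
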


\begin{proof}[Deduction of Corollary~\ref{coroutile} from Proposition~\ref{proppasssqbis}] The inequality~\eqref{genboundcase1} implies that 
\begin{align*}
\ppi\left(\mathcal{R}\right)\;\ll\;  &\exp\left(-\frac{\left(y^-\right)^2}{2x^+}\right)\cdot \left(\min\left\{1, \;\sqrt{\frac{\alpha}{x^-}}, \frac{\sqrt{x^+}}{y^-}\right\}+\min\left\{\frac{\beta}{\sqrt{2x^+}}, \frac{1}{1+\frac{y^-}{\sqrt{2x^+}}}\right\}\right)\\
&+ \exp\left(-\frac{\left(y^-\right)^2}{2x^-}\right)\cdot \min\left\{\frac{\beta}{\sqrt{2x^-}}, \frac{1}{1+\frac{y^-}{\sqrt{2x^-}}}\right\},
\end{align*}
whence the first claim. To deal with the more refined estimate~\eqref{refestimcoro}, consider the situation where the inequalities $K\left(y^+, x^+\right)\cdot A^2\left(\alpha, x^-\right)\le 1$ and $A\left(\alpha, x^-\right)< 1$ hold simultaneously. A short calculation shows that they are equivalent to claiming that 
\begin{align*}
\alpha\; <\; x^-\qquad \textrm{and}\qquad \alpha\cdot\left(\beta^2+2\beta y^-+\left(y^-\right)^2-2x^-\right)\; <\; 2\left(x^-\right)^2.
\end{align*}
Solving this system in $y^->0$ translates into the condition~\eqref{refcondi}. The corresponding refined estimate in Case~1.(a) then implies that 
\begin{align*}
&\left| \ppi\left(\mathcal{R}\right)-\frac{2}{\pi}\cdot \exp\left(-\frac{\left(y^-\right)^2}{2x^+}\right)\cdot \sqrt{\frac{\alpha}{x^-}} \right|\;\ll\; \sqrt{\frac{\alpha}{x^-}}\cdot\left( \exp\left(-\frac{\left(y^-\right)^2}{2x^+}\right)- \exp\left(-\frac{\left(y^+\right)^2}{2x^+}\right)\right)\\
&+\;\left(\frac{\alpha}{x^-}\right)^{3/2}\cdot \exp\left(-\frac{\left(y^-\right)^2}{2x^+}\right)\cdot\left(1+\frac{\left(y^-\right)^2}{2x^+}\right)\;+\; \exp\left(-\frac{\left(y^-\right)^2}{2x^+}\right)\cdot \frac{\beta}{\sqrt{2x^+}}\\
& +\;\left(\frac{\alpha}{x^-}\right)^{3/2}\cdot \exp\left(-\frac{\left(y^+\right)^2}{2x^+}\right)\cdot\left(1+\frac{\left(y^+\right)^2}{2x^+}\right)\;+\; \exp\left(-\frac{\left(y^-\right)^2}{2x^-}\right)\cdot \frac{\beta}{\sqrt{2x^-}}\cdotp
\end{align*}
From the Mean Value Theorem, the first term is bounded as 
\begin{align*}
\sqrt{\frac{\alpha}{x^-}}\cdot\left( \exp\left(-\frac{\left(y^-\right)^2}{2x^+}\right)- \exp\left(-\frac{\left(y^+\right)^2}{2x^+}\right)\right)\ll \sqrt{\frac{\alpha}{x^-}}\cdot \exp\left(-\frac{\left(y^-\right)^2}{2x^+}\right)\cdot \min\left\{1, \beta\cdot\frac{y^+}{x^+}\right\}.
\end{align*}
This completes the deduction of the corollary from Proposition~\ref{proppasssqbis}.
\end{proof}

\noindent The remainder of this section is devoted to the proofs of Propositions~\ref{proppasssq} and ~\ref{proppasssqbis}, which  break into a succession of lemmata. 
The results established in all the lemmata are put together at the end of the section to derive the two propositions 
from them.

\subsection{Decomposition of the Probability of Hitting  a Rectangle}

\begin{lem}[Decomposition of the Probability]\label{decompoprob1} The hitting probability $\ppi\left(\mathcal{R}\right)$ can be expressed as 
\begin{equation*}
\ppi\left(\mathcal{R}\right)\;=\; \mu\left(\mathcal{R}\right)+\nu\left(\mathcal{R}\right)+\lambda\left(\mathcal{R}\right),
\end{equation*} 
where 
\begin{equation*}
 \mu\left(\mathcal{R}\right)\;=\; \int_{y^-}^{y^+}p_{x^-}(u)\cdot\textrm{d}u,\qquad \qquad 
 \nu\left(\mathcal{R}\right)\;=\; 2\cdot\int_{-\infty}^{y^-}p_{x^-}(u)\cdot\int_{y^--u}^{\infty}p_{\alpha}(v)\cdot\textrm{d}v\cdot\textrm{d}u
\end{equation*}
and 
 \begin{equation*}
 \lambda\left(\mathcal{R}\right)\;=\; 2\cdot\int^{+\infty}_{y^+}p_{x^-}(u)\cdot\int_{u-y^+}^{\infty}p_{\alpha}(v)\cdot\textrm{d}v\cdot\textrm{d}u.
\end{equation*}
\end{lem}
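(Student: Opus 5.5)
We condition on the value of Brownian motion at the left endpoint $x^-$ of the time window and apply the strong Markov property together with the reflection principle. Write $u=B(x^-)$, which has density $p_{x^-}$. The event $\{\chi_{\mathcal{R}}=1\}$ is the event that the path $s\in[x^-,x^+]\mapsto B(s)$ meets the interval $[y^-,y^+]$. We split it according to the position of $u$ into three disjoint cases: $u\in[y^-,y^+]$, $u<y^-$ and $u>y^+$.

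\emph{Case $u\in[y^-,y^+]$.} The rectangle is hit at time $x^-$ itself, so this event contributes exactly $\bP\left(B(x^-)\in[y^-,y^+]\right)=\int_{y^-}^{y^+}p_{x^-}(u)\,\textrm{d}u=\mu(\mathcal{R})$.

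\emph{Case $u<y^-$.} By continuity, the path meets $[y^-,y^+]$ during $[x^-,x^+]$ if and only if it reaches the level $y^-$ during that time. This is the event
\begin{equation*}
\max_{0\le v\le\alpha}\left(B(x^-+v)-B(x^-)\right)\;\ge\; y^--u.
\end{equation*}
By (P4), the process $v\mapsto B(x^-+v)-B(x^-)$ is a standard Brownian motion independent of $B(x^-)$, by (P3) its law is that of $B$ restricted to $[0,\alpha]$, and it is independent of $u$. The Law of Maximum \eqref{Mt} therefore gives the conditional probability
\begin{equation*}
\bP\left(M(\alpha)\ge y^--u\right)\;=\;2\,\bP\left(B(\alpha)> y^--u\right)\;=\;2\int_{y^--u}^{\infty}p_\alpha(v)\,\textrm{d}v.
\end{equation*}
Since $y^--u>0$, the distinction between $\ge$ and $>$ is immaterial, the law being continuous. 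Integrating against $p_{x^-}(u)$ over $u<y^-$ yields $\nu(\mathcal{R})$.

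\emph{Case $u>y^+$.} Symmetrically, the path must go down to $y^+$. Using the minimum version of \eqref{Mt}, or (P1), the conditional probability is $2\int_{u-y^+}^\infty p_\alpha(v)\,\textrm{d}v$, and integrating over $u>y^+$ gives $\lambda(\mathcal{R})$.

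Summing the three disjoint contributions by the law of total probability proves the lemma. The boundary events $u=y^\pm$ have probability zero when $x^->0$. When $x^-=0$, $p_0$ is the Dirac mass and the same computation holds with $u=0$.

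The only point needing care is the conditioning step. Independence of the increments after $x^-$ from $B(x^-)$ (property (P4), extended to the whole future increment process, i.e. the simple Markov property at the deterministic time $x^-$) is what lets us replace the conditional probability by a function of $u$ and then integrate with Fubini. Everything else is routine.
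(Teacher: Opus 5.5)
Your proposal is correct and is essentially the paper's proof: the paper splits the hitting event into the same three disjoint sub-events according to whether $B(x^-)$ lies in $[y^-,y^+]$, below $y^-$ or above $y^+$. In the two outer cases it conditions on $B(x^-)=u$, invokes (P3), (P4) and the Law of Maximum~\eqref{Mt} to obtain $2\int_{y^--u}^{\infty}p_\alpha(v)\,\textrm{d}v$ (resp.\ its mirror image for the minimum), and integrates against $p_{x^-}$, read as a Dirac mass when $x^-=0$. Your extra remarks (the continuity argument, $\ge$ versus $>$ being immaterial, and only the simple Markov property at the deterministic time $x^-$ being needed) are accurate refinements of that same argument.
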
  

\begin{proof}
Let $E(\mathcal{R})$ be the event that a Brownian motion passes through the rectangle $\mathcal{R}$~: $$E(\mathcal{R})\;=\; \left\{\omega\in\Omega\;\middle|\; \exists t\in\left[x^-, x^+\right]\;:\; B(t)\in \left[y^-, y^+\right] \right\}.$$ The event $E(\mathcal{R})$ can be decomposed as a pairwise disjoint union of three sub-events~:  $$E(\mathcal{R})\;=\; E_1(\mathcal{R})\cup E_2(\mathcal{R})\cup E_3(\mathcal{R}).$$ Here, 
\begin{itemize}

\item $E_1(\mathcal{R})$ is the sub-event of $E(\mathcal{R})$ defined by the additional restriction that $B(x^-)\in\left[y^-, y^+\right]$. Its probability is plainly given by the quantity $\mu(\mathcal{R})$;

\item $E_2(\mathcal{R})$ is the sub-event of $E(\mathcal{R})$ defined by the additional restriction that $B(x^-)<y^-$. To compute its probability, note first that when $B(x^-)<y^-$,  the trajectory of Brow\-nian motion hits the rectangle $\mathcal{R}$ if, and only if, its maximum over the interval $\left[x^-, x^+\right]$ is at least $y^-$. Fix then a real number $u<y^-$ and consider the event $A(u)=\left\{\omega\in\Omega\;\middle|\; B(x^-)=u\right\}$. From the Law of Increments (P3) and the property of Independence of Increments (P4), it is easily seen that the conditional pro\-ba\-bility $\bP\left(E_2(\mathcal{R})\;\middle|\; A(u)\right)$ equals the probability that the maximum of a Brownian motion should be at least $y^--u$ over the interval $\left[0, \alpha\right]$ (where $\alpha$ is the quantity defined  in~\eqref{defalphabeta}). From the Law of Maximum, one thus obtains that $$\bP\left(E_2(\mathcal{R})\;\middle| \; A(u)\right)\;=\; 2\cdot \int_{y^--u}^{\infty}p_{\alpha}(v)\cdot\textrm{d}v.$$ To deconditionate the probability, integrate the above quantity with respect to the density probability function of the event $A(u)$, namely with respect to the map $u\mapsto p_{x^-}(u)$ (seen as a Dirac distribution when $x^-=0$) over the admissible values $u<y^-$. This gives the stated expression for the probability $\nu(\mathcal{R})$.

\item $E_3(\mathcal{R})$ is the sub-event of $E(\mathcal{R})$ defined by the additional restriction that $B(x^-)>y^+$. Conditionally on this event, the trajectory of Brow\-nian Motion  hits the rectangle $\mathcal{R}$ if, and only if, its minimum over the interval $\left[x^-, x^+\right]$  is at most $y^+$. 
The argument to compute the probability of the sub--event $E_3(\mathcal{R})$  is then the same as the one developed in the previous case upon relying on the Law of Maximum (which, as stated in~\eqref{Mt}, also deals with the probability distribution of the minimum of a Brownian motion).
\end{itemize}
This completes the proof of the lemma.
\end{proof}

\noindent When $x^->0$, the estimation of the probabilities $ \nu\left(\mathcal{R}\right)$ and  $\lambda\left(\mathcal{R}\right)$ introduced in Lemma~\ref{decompoprob1} requires a refined analysis developed over the next few statements. To this end, upon making suitable changes of variables, integration is performed in several places over the domain 
\begin{equation}\label{dedomd1}
\mathfrak{D}\left(x^-, y^-, \alpha\right)\;=\; \mathfrak{D}_\ell\!\left(x^-, y^-, \alpha\right)\;\cup\; \mathfrak{D}_r\!\left(x^-, y^-, \alpha\right),
\end{equation} 
where 
\begin{equation*}\label{dedomdl}
\mathfrak{D}_\ell\!\left(x^-, y^-, \alpha\right)\;=\;\left\{\left(u,v\right)\in\R^2\;:\; u\;\le\; \frac{y^-}{\sqrt{2\cdot x^-}}, \quad \frac{y^--u\cdot\sqrt{2\cdot x^-}}{\sqrt{2\cdot \alpha}}\;\le\; v\right\}
\end{equation*} 
and
\begin{equation*}\label{dedomdr}
\mathfrak{D}_r\!\left(x^-, y^-, \alpha\right)\;=\; \left\{\left(u,v\right)\in\R^2\;:\;  \frac{y^+}{\sqrt{2\cdot x^-}}\;\le\; u, \quad \frac{u\cdot\sqrt{2\cdot x^-}-y^+}{\sqrt{2\cdot \alpha}}\;\le\; v\right\}.
\end{equation*} 
\noindent The domain $\mathfrak{D}\left(x^-, y^-, \alpha\right)$ with its left and right components $\mathfrak{D}_\ell\!\left(x^-, y^-, \alpha\right)$ and $\mathfrak{D}_r\!\left(x^-, y^-, \alpha\right)$ is represented in Figure~\ref{figure_of_D1}. 

\begin{figure}[h!]
    \centering
\begin{tikzpicture}[scale=1.8]  
    \draw[thick,->] (-0.5,0) -- (3.5,0) node[right] {\textit{u}};
    \draw[thick,->] (0,-0.5) -- (0,4) node[above] {\textit{v}};

    \fill[blue!15,opacity=0.5] (0.7,0) -- (0.7,4) -- (-0.3,4) -- cycle;

    \fill[blue!15,opacity=0.5] (2.2,0) -- (2.2,4) -- (3.2,4) -- cycle;

    \draw[thick,blue] (0.7,0) -- (0.7,4);
    \draw[thick,blue] (2.2,0) -- (2.2,4);

    \draw[thick,blue] (0.7,0) -- (-0.3,4);  
    \draw[thick,blue] (2.2,0) -- (3.2,4);   

    \node[below] at (0.7,0) {$\frac{y^-}{\sqrt{2\cdot x^-}}$};
    \node[below] at (2.2,0) {$\frac{y^+}{\sqrt{2\cdot x^-}}$};
    \end{tikzpicture}
    
    \caption{Domain $\mathfrak{D}\left(x^-, y^-, \alpha\right)$ with its left and right components $\mathfrak{D}_\ell\!\left(x^-, y^-, \alpha\right)$ and $\mathfrak{D}_r\!\left(x^-, y^-, \alpha\right)$. Either or both of the quantities $\frac{y^-}{\sqrt{2\cdot x^-}}$ and $\frac{y^+}{\sqrt{2\cdot x^-}}$ may be negative.}
    \label{figure_of_D1}

\end{figure}

\begin{lem}[Polar-type Representations for the Probabilities $\nu(\mathcal{R})$ and $\lambda(\mathcal{R})$ when $x^->0$]\label{lemnulambda11}  The proba\-bilities $\nu(\mathcal{R})$ and $\lambda(\mathcal{R})$   introduced in Lemma~\ref{decompoprob1} admit the following alternative representations depending on  the domain $\mathfrak{D}\left(x^-, y^-, \alpha, \beta\right)$ defined in~\eqref{dedomd1} when $x^->0$~: 
\begin{equation*}
\nu(\mathcal{R})\;=\; \frac{2}{\pi}\cdot \underset{\mathfrak{D}_\ell\left(x^-, y^-, \alpha\right)}{\int\int} e^{-u^2-v^2}\cdot \textrm{d}u\cdot \textrm{d}v\qquad \textrm{and}\qquad \lambda(\mathcal{R})\;=\; \frac{2}{\pi}\cdot \underset{\mathfrak{D}_r\left(x^-, y^-, \alpha\right)}{\int\int} e^{-u^2-v^2}\cdot \textrm{d}u\cdot \textrm{d}v.
\end{equation*}
In particular, 
\begin{equation*}
\nu(\mathcal{R})+\lambda(\mathcal{R})\;=\; \frac{2}{\pi}\cdot \underset{\mathfrak{D}\left(x^-, y^-, \alpha\right)}{\int\int} e^{-u^2-v^2}\cdot \textrm{d}u\cdot \textrm{d}v.
\end{equation*}
\end{lem}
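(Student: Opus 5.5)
The plan is a direct change of variables in the double integrals of Lemma~\ref{decompoprob1}, which is legitimate since $x^->0$ and $\alpha>0$, so both densities $p_{x^-}$ and $p_\alpha$ are genuine Gaussian functions given by~\eqref{def_p}. I treat $\nu(\mathcal{R})$ in detail; $\lambda(\mathcal{R})$ is handled identically.

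\textbf{The term $\nu(\mathcal{R})$.} Starting from
\[
\nu(\mathcal{R})=2\int_{-\infty}^{y^-}\int_{y^--u}^{\infty}p_{x^-}(u)\, p_\alpha(v)\,\textrm{d}v\,\textrm{d}u,
\]
I substitute $u=\sqrt{2x^-}\,u'$ and $v=\sqrt{2\alpha}\,v'$. The Jacobian is $\sqrt{2x^-}\cdot\sqrt{2\alpha}$. The product of densities becomes
\[
p_{x^-}(u)\,p_\alpha(v)=\frac{1}{2\pi\sqrt{x^-\alpha}}\,e^{-u'^2-v'^2}.
\]
Multiplying these gives
\[
p_{x^-}(u)\,p_\alpha(v)\,\textrm{d}u\,\textrm{d}v=\frac{2\sqrt{x^-\alpha}}{2\pi\sqrt{x^-\alpha}}\,e^{-u'^2-v'^2}\,\textrm{d}u'\,\textrm{d}v'=\frac{1}{\pi}\,e^{-u'^2-v'^2}\,\textrm{d}u'\,\textrm{d}v'.
\]
Together with the prefactor $2$, this yields the constant $2/\pi$. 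The constraint $u\le y^-$ becomes $u'\le y^-/\sqrt{2x^-}$. The constraint $v\ge y^--u$ becomes $v'\ge (y^--u'\sqrt{2x^-})/\sqrt{2\alpha}$. These are exactly the inequalities defining $\mathfrak{D}_\ell(x^-,y^-,\alpha)$. Since the integrand is nonnegative, Tonelli's theorem lets me write the iterated integral as a double integral over this region.

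\textbf{The term $\lambda(\mathcal{R})$.} The same substitution applies. The constraint $u\ge y^+$ becomes $u'\ge y^+/\sqrt{2x^-}$. The constraint $v\ge u-y^+$ becomes $v'\ge (u'\sqrt{2x^-}-y^+)/\sqrt{2\alpha}$. This is the region $\mathfrak{D}_r(x^-,y^-,\alpha)$. One notational point: $\mathfrak{D}_r$ depends on $y^+$ as well, which the notation leaves implicit.

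\textbf{The sum formula.} The identity for $\nu(\mathcal{R})+\lambda(\mathcal{R})$ follows from additivity of the integral, provided $\mathfrak{D}_\ell$ and $\mathfrak{D}_r$ overlap in at most a null set. This holds because the $u'$-ranges are $u'\le y^-/\sqrt{2x^-}$ and $u'\ge y^+/\sqrt{2x^-}$, and $y^-<y^+$ makes these disjoint.

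No step is a real obstacle. The only care needed is bookkeeping: the normalising constants must be tracked correctly, and the region must be described in the correct orientation, with $u$ on the abscissa and the slanted boundary lines as in Figure~\ref{figure_of_D1}.
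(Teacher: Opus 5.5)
Your proposal is correct and follows the same route as the paper. Both use the rescaling $u\mapsto u/\sqrt{2x^-}$, $v\mapsto v/\sqrt{2\alpha}$ in the iterated integrals of Lemma~\ref{decompoprob1}, where the Jacobian $2\sqrt{x^-\alpha}$ cancels the Gaussian normalisation and leaves the constant $2/\pi$. Your extra remarks are accurate and fill in points the paper leaves unstated: Tonelli applies because the integrand is nonnegative, $\mathfrak{D}_r$ depends implicitly on $y^+$, and the $u$-ranges are disjoint since $y^-<y^+$, which gives the formula for the sum.
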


\noindent These easily--obtained concise expressions considerably simplify the calculations involved in the proof of Proposition~\ref{proppasssqbis}.

\begin{proof}
From Lemma~\ref{decompoprob1}, 
 \begin{align*}
 \nu\left(\mathcal{R}\right)\;&=\; 2\cdot\int_{-\infty}^{y^-}p_{x^-}(u)\cdot\int_{y^--u}^{\infty}p_{\alpha}(v)\cdot\textrm{d}v\cdot\textrm{d}u\\
 &\underset{\eqref{def_p}}{=}\; \frac{1}{\pi\cdot\sqrt{x^-\cdot\alpha}} \int_{-\infty}^{y^-}\int_{y^--u}^{\infty}\exp\left(-\frac{u^2}{2\cdot x^-}-\frac{v^2}{2\cdot\alpha}\right)\cdot\textrm{d}v\cdot\textrm{d}u.
\end{align*}
The expression for $ \nu\left(\mathcal{R}\right)$ then follows from the change of variables $u\mapsto u/\sqrt{2\cdot x^-}$ and $v\mapsto v/\sqrt{2\alpha}$. The same change of variables in the integrals defining $ \lambda(\mathcal{R})$ also yields the expression stated  for this quantity.
\end{proof} 

\noindent The polar--type representations for the probabilities $\nu(\mathcal{R})$ and $\lambda(\mathcal{R})$ given in the above lemma yield an explicit form of these quantities when $x^->0$ and $y^->0$ . This is achieved in the next two statements with the help of \emph{Owen's T--function}~\cite{O} 
\begin{equation}\label{owen}
T~: (a,k)\in \left(\R_{\ge 0}\right)^2 \;\mapsto\; \int_0^a\frac{\exp\left(-kt^2\right)}{1+t^2}\cdot\textrm{d}t.
\end{equation}  
Their proofs both rely on the so--called Craig's Formula~\cite[p.271]{St} claiming that for any $k>0$, 
\begin{equation}\label{craig}
\erfc\left(\sqrt{k}\right)\;=\; \frac{2}{\pi}\cdotp\int_0^{\pi/2}\exp\left(-\frac{k}{\cos^2\theta}\right)\cdot \textrm{d}\theta.
\end{equation}

\begin{lem}[Decomposition of the Probability $\nu(\mathcal{R})$  when $x^->0$ and $y^->0$]\label{decnuR1}
Assume that $x^->0$ and $y^->0$. Then, the probability  $\nu(\mathcal{R})$  defined in Lemma~\ref{decompoprob1} can be expressed as 
\begin{align*}
\nu(\mathcal{R})\;=\; \frac{1}{\pi}\cdot\exp\left(-K\left(y^-, x^+\right)\right)\cdot T\left(A\left(\alpha,x^-\right), K\left(y^-, x^+\right)\right)\\
+\frac{1}{2}\cdot\left(\erfc\left(\sqrt{K\left(y^-, x^+\right)}\right)-\erfc\left(\sqrt{K\left(y^-, x^-\right)}\right)\right).
\end{align*}
Here, the functions $A$ and $K$ are those introduced in~\eqref{defak}.
\end{lem}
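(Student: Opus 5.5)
The plan is to start from the polar-type representation of Lemma~\ref{lemnulambda11}, namely $\nu(\mathcal{R})=\frac{2}{\pi}\iint_{\mathfrak{D}_\ell(x^-,y^-,\alpha)}e^{-u^2-v^2}\,\mathrm{d}u\,\mathrm{d}v$, and to evaluate this Gaussian mass of a wedge by integrating in polar coordinates centred at the origin. Set $a=y^-/\sqrt{2x^-}>0$ and $A=A(\alpha,x^-)=\sqrt{\alpha/x^-}$. The constraint defining $\mathfrak{D}_\ell$ then reads $u\le a$ and $u+Av\ge a$. So $\mathfrak{D}_\ell$ is the wedge with vertex $(a,0)$ bounded by the vertical ray $\{u=a,\ v\ge 0\}$ and by the line $L:\ u+Av=a$. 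Since $v\ge (a-u)/A\ge 0$ on it, the wedge lies in the closed upper half-plane.

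The first key observation is that the distance from the origin to $L$ is
\[
d=\frac{a}{\sqrt{1+A^2}}=\frac{y^-}{\sqrt{2(x^-+\alpha)}}=\frac{y^-}{\sqrt{2x^+}},
\]
so $d^2=K(y^-,x^+)$, while $a^2=K(y^-,x^-)$. Let $\psi=\arctan A\in(0,\pi/2)$ be the polar angle of the unit normal to $L$. For a direction $\theta\in(0,\pi)$:
\begin{itemize}
\item when $\theta\in(0,\pi/2)$, the ray enters the wedge at radius $a/(\cos\theta+A\sin\theta)$, where it crosses $L$;
\item when $\theta\in(0,\pi/2)$, it leaves the wedge at radius $a/\cos\theta$, where it crosses $\{u=a\}$;
\item when $\theta\in[\pi/2,\pi)$ and $\cos\theta+A\sin\theta>0$, it enters at $L$ and never leaves.
\end{itemize}
Using $\int_R^\infty e^{-r^2}r\,\mathrm{d}r=\frac12 e^{-R^2}$, this gives
\[
\iint_{\mathfrak{D}_\ell}e^{-u^2-v^2}=\frac12\int_{\{\theta\in(0,\pi):\,\cos\theta+A\sin\theta>0\}}e^{-d^2/\cos^2(\theta-\psi)}\,\mathrm{d}\theta\;-\;\frac12\int_0^{\pi/2}e^{-a^2/\cos^2\theta}\,\mathrm{d}\theta .
\]
Craig's Formula~\eqref{craig} identifies the second integral as $\frac{\pi}{4}\erfc(a)$. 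For the first integral, the full half-plane beyond $L$ has mass $\frac{\pi}{2}\erfc(d)$, again by~\eqref{craig} after symmetrising the angular range $(\psi-\pi/2,\psi+\pi/2)$ around the normal. From this I subtract the portion with $\theta\in(\psi-\pi/2,0)$, i.e.\ with $\phi=\psi-\theta\in(\psi,\pi/2)$. That portion equals $\frac12\bigl(\int_0^{\pi/2}-\int_0^{\psi}\bigr)e^{-d^2/\cos^2\phi}\,\mathrm{d}\phi$.

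The step needing care is converting $\int_0^{\psi}e^{-d^2/\cos^2\phi}\,\mathrm{d}\phi$ into Owen's function. I will substitute $t=\tan\phi$, so that $\mathrm{d}\phi=\mathrm{d}t/(1+t^2)$ and $1/\cos^2\phi=1+t^2$. This gives $e^{-d^2}\int_0^{A}e^{-d^2t^2}/(1+t^2)\,\mathrm{d}t=e^{-d^2}\,T(A,d^2)$, by~\eqref{owen}. Collecting terms, the wedge mass is
\[
\frac{\pi}{2}\erfc(d)-\frac12\Bigl(\frac{\pi}{2}\erfc(d)-e^{-d^2}T(A,d^2)\Bigr)-\frac{\pi}{4}\erfc(a).
\]
Multiplying by $2/\pi$ yields
\[
\frac{1}{\pi}e^{-K(y^-,x^+)}\,T\bigl(A(\alpha,x^-),K(y^-,x^+)\bigr)+\frac12\Bigl(\erfc\sqrt{K(y^-,x^+)}-\erfc\sqrt{K(y^-,x^-)}\Bigr),
\]
as claimed.

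The main obstacle is purely geometric bookkeeping: I must correctly identify, for each direction, the entry and exit radii and the exact angular ranges. In particular, the sign condition $\cos\theta+A\sin\theta>0$ must match the half-plane range $(\psi-\pi/2,\psi+\pi/2)$, and the range $\psi+\pi/2\le\pi$ must be accounted for. I will check these ranges and the positivity of $a$, $d$ (which is where $x^->0$ and $y^->0$ enter) before assembling the constants.
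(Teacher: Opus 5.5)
Your proposal is correct and follows the paper's proof essentially step for step: the same polar decomposition about the origin, with entry radius $\sqrt{K(y^-,x^+)}/\cos(\theta-\psi)$ and exit radius $\sqrt{K(y^-,x^-)}/\cos\theta$, Craig's formula for the exit term, and the substitution $t=\tan\phi$ to produce Owen's $T$-function. The only cosmetic difference is the treatment of $\int_{-\psi}^{\pi/2}$: you write it as a full half-plane mass minus a complementary sector, whereas the paper splits it directly by parity into $\int_0^{\psi}+\int_0^{\pi/2}$.
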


\begin{proof}
The quantity $\nu(\mathcal{R})$ is given in the Polar--type Representation Lemma~\ref{lemnulambda11}  as an integral over the domain $\mathfrak{D}_\ell\!\left(x^-, y^-, \alpha\right)$ introduced  in~\eqref{dedomd1}. This domain (see  Figure~\ref{figure_of_D1}) is bounded by the two lines $$\left(\mathcal{L}_1\right)~: y\;=\; \frac{y^--x\cdot\sqrt{2x^-}}{\sqrt{2\alpha}}\qquad \textrm{and}\qquad \left(\mathcal{L}_2\right)~: x\;=\; \sqrt{K\left(y^-, x^-\right)}\cdot$$ In the polar coordinates $\left(x,y\right)=\left(r\cdot \cos\theta, r\cdot \sin\theta\right)$, the equations of these lines are $$\left(\mathcal{L}_1\right): r\;=\;r_1\left(\theta\right)\;=\; \frac{y^-}{\sqrt{2\alpha}\cdot \sin\theta+\sqrt{2x^-}\cos\theta}\;=\;   \frac{\sqrt{K\left(y^-, x^+\right)}}{\cos\left(\theta-\tau(\alpha, x^-)\right)}$$ and $$ \left(\mathcal{L}_2\right): r\;=\; r_2\left(\theta\right)\;=\; \frac{\sqrt{K\left(y^-, x^-\right)}}{\cos\theta}$$ respectively, where $$\tau(\alpha, x^-)\;=\; \arctan \; A(\alpha, x^-).$$ Under the assumption that $y^->0$,  the integral expressing $\nu(\mathcal{R})$ in  the Polar--type  Representation Lemma~\ref{lemnulambda11} then becomes 
\begin{align}
\nu(\mathcal{R})\;&=\; \frac{2}{\pi}\cdot\int_{0}^{\frac{\pi}{2}}\textrm{d}\theta\int_{r_1(\theta)}^{r_2(\theta)}r\cdot e^{-r^2}\cdot \textrm{d}r\;+\; \frac{2}{\pi}\cdot \int_{\frac{\pi}{2}}^{\frac{\pi}{2}+\tau(\alpha, x^-)}\textrm{d}\theta\int_{r_1(\theta)}^\infty r\cdot e^{-r^2}\cdot\textrm{d}r.\nonumber \\
&=\; \underbrace{\frac{1}{\pi}\cdot\int_{0}^{\frac{\pi}{2}+\tau(\alpha, x^-)} e^{-r_1(\theta)^2}\cdot \textrm{d}\theta}_{=\;\nu_{1}\left(x^-, y^-, \alpha\right)}\;-\; \underbrace{\frac{1}{\pi}\cdot \int_{0}^{\frac{\pi}{2}}e^{-r_2(\theta)^2}\cdot\textrm{d}\theta}_{=\;\nu_{2}\left(x^-, y^-, \alpha\right)}.\label{decomvu121}
\end{align} 
From Craig's Formula~\eqref{craig}, 
\begin{equation}\label{idvu11}
\nu_{2}\left(x^-, y^-, \alpha\right)\;=\; \frac{1}{2}\cdot \erfc\left(\sqrt{K\left(y^-, x^-\right)}\right).
\end{equation}
Also, the polar equation of the line  $\left(\mathcal{L}_1\right)$ yields 
\begin{align*}
\nu_{1}\left(x^-, y^-, \alpha\right)\;&=\; \frac{1}{\pi}\cdot\int_{0}^{\frac{\pi}{2}+\tau(\alpha, x^-)} \exp\left(- K\left(y^-, x^+\right)\cdot \frac{1}{\cos^2\left(\theta-\tau(\alpha, x^-)\right)}\right)\cdot \textrm{d}\theta\\
&=\; \frac{1}{\pi}\cdot\int_{-\tau(\alpha, x^-)}^{\frac{\pi}{2}} \exp\left(- K\left(y^-, x^+\right)\cdot \frac{1}{\cos^2\varphi}\right)\cdot \textrm{d}\varphi.
\end{align*}
From the parity of the integrand,  one obtains
\begin{align*}
\nu_{1}\left(x^-, y^-, \alpha\right)\;&=\;  \frac{1}{\pi}\cdot\int_{0}^{\tau(\alpha, x^-)} \exp\left(- K\left(y^-, x^+\right)\cdot \frac{1}{\cos^2\varphi}\right)\cdot \textrm{d}\varphi \\
&\qquad + \frac{1}{\pi}\cdot\int_{0}^{\frac{\pi}{2}} \exp\left(- K\left(y^-, x^+\right)\cdot \frac{1}{\cos^2\varphi}\right)\cdot \textrm{d}\varphi .
\end{align*}
The change of variables $t=\tan \varphi$ in the first integral and the use of Craig's Formula~\eqref{craig}  in the second one then imply that
\begin{align}
\nu_{1}\left(x^-, y^-, \alpha\right)\;&=\; \frac{1}{\pi}\cdot \exp\left(-K\left(y^-, x^+\right)\right)\cdot  \bigintsss_{0}^{\sqrt{\frac{\alpha}{x^-}}}\frac{\exp\left(- K\left(y^-, x^+\right)\cdot t^2\right)}{1+t^2}\cdot\textrm{d}t\nonumber\\
&\qquad + \frac{1}{2}\cdot \erfc\left( \sqrt{K\left(y^-, x^+\right)}\right)\cdotp\label{idvu21}
\end{align}
The statement is then a consequence of the decomposition~\eqref{decomvu121} and the identities~\eqref{idvu11} and~\eqref{idvu21}.
\end{proof}

\begin{lem}[Decomposition of the Probability $\lambda(\mathcal{R})$   when $x^->0$ and $y^->0$]\label{declaR1}
Assume that $x^->0$ and $y^->0$. Then, the probability  $\lambda(\mathcal{R})$  defined in Lemma~\ref{decompoprob1} can be expressed as 
\begin{align*}
\lambda(\mathcal{R})\;=\; \frac{1}{\pi}\cdot\exp\left(-K\left(y^+, x^+\right)\right)\cdot T\left(A\left(\alpha, x^-\right), K\left(y^+, x^+\right)\right)\\
 -\frac{1}{2}\cdot\left(\erfc\left(\sqrt{K\left(y^+, x^+\right)}\right)-\erfc\left(\sqrt{K\left(y^+, x^-\right)}\right)\right).
\end{align*}
\end{lem}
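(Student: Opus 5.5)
The plan is to mirror the proof of Lemma~\ref{decnuR1}, now working on the right component $\mathfrak{D}_r\!\left(x^-, y^-, \alpha\right)$ given by the Polar--type Representation Lemma~\ref{lemnulambda11}. The reflection $u\mapsto -u$ sends $\mathfrak{D}_r$ to the set
\[
\left\{(u,v)\;:\; u\le -\frac{y^+}{\sqrt{2x^-}},\quad \frac{-y^+-u\sqrt{2x^-}}{\sqrt{2\alpha}}\le v\right\}.
\]
This is exactly $\mathfrak{D}_\ell$ with $y^-$ replaced by $-y^+<0$. The integrand $e^{-u^2-v^2}$ is invariant under this reflection, so $\lambda(\mathcal{R})$ equals the $\nu$--type integral with parameter $-y^+$. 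Equivalently, one can work directly with the polar description of $\mathfrak{D}_r$.

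The boundary lines of $\mathfrak{D}_r$ are $(\mathcal{L}_2')~: x=\sqrt{K\left(y^+,x^-\right)}$ and
\[
(\mathcal{L}_1')~: \; y=\frac{x\sqrt{2x^-}-y^+}{\sqrt{2\alpha}},
\]
that is, $\sqrt{2x^-}\,x-\sqrt{2\alpha}\,y=y^+$. In polar coordinates this reads
\[
r=r_1'(\theta)=\frac{\sqrt{K\left(y^+,x^+\right)}}{\cos\left(\theta+\tau(\alpha,x^-)\right)},\qquad \tau(\alpha,x^-)=\arctan A(\alpha,x^-),
\]
and the vertical line is $r=r_2'(\theta)=\sqrt{K\left(y^+,x^-\right)}/\cos\theta$. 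Since $y^+>0$, the origin lies outside $\mathfrak{D}_r$. A point of $\mathfrak{D}_r$ at angle $\theta$ lies to the right of $\mathcal{L}_2'$ and above $\mathcal{L}_1'$. Because $\mathcal{L}_1'$ has positive slope $\cot\tau$, the admissible angles are $\theta\in\left(-\left(\frac{\pi}{2}-\tau\right),\frac{\pi}{2}\right)$. For each such angle the ray enters the region at $\max\{r_1',r_2'\}$ and never leaves it.

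I will split according to which boundary is met first. The two lines intersect at the vertex $\left(\sqrt{K\left(y^+,x^-\right)},0\right)$, i.e.\ at $\theta=0$. For $\theta\in[0,\pi/2)$ the ray enters through $\mathcal{L}_2'$, and for $\theta\in\left(-\left(\frac{\pi}{2}-\tau\right),0\right)$ it enters through $\mathcal{L}_1'$. Hence
\[
\lambda(\mathcal{R})=\frac{1}{\pi}\int_0^{\pi/2}e^{-r_2'(\theta)^2}\,d\theta+\frac{1}{\pi}\int_{-\frac{\pi}{2}+\tau}^{0}e^{-r_1'(\theta)^2}\,d\theta .
\]
I must check carefully that this inner/outer geometry is correct, since it determines the sign structure. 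By Craig's Formula~\eqref{craig}, the first integral equals $\frac12\erfc\left(\sqrt{K\left(y^+,x^-\right)}\right)$. In the second, substitute $\varphi=\theta+\tau$, which ranges over $[-\pi/2+2\tau,\tau]$. Writing $\int_{-\pi/2+2\tau}^{\tau}=\int_{0}^{\pi/2}-\int_{\tau}^{\pi/2}-\int_{-\pi/2+2\tau}^{0}$ is awkward. It is better to use parity and compute
\[
\int_{0}^{\pi/2}\;-\;\int_{0}^{\pi/2-2\tau}\;+\;\int_0^{\tau},
\]
though I suspect the range bookkeeping here is exactly where an error could creep in.

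The main obstacle is this angular bookkeeping: the target formula contains $+\frac12\erfc\left(\sqrt{K\left(y^+,x^-\right)}\right)-\frac12\erfc\left(\sqrt{K\left(y^+,x^+\right)}\right)+\frac1\pi e^{-K}T$. To reach it, I would rather reduce to Lemma~\ref{decnuR1} via the reflection and an analytic continuation in $y^-$. The computation in Lemma~\ref{decnuR1} used $y^->0$ only to locate the origin relative to the domain. For negative parameter $-y^+$ the origin lies inside the complement picture, which flips the sign of the $\erfc$ terms: the full half--plane contributes $1-\frac12\erfc(\cdot)$ instead of $\frac12\erfc(\cdot)$.

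Concretely, I will take complements. The region $\{u\le c,\ v\ge (c'-u\sqrt{2x^-})/\sqrt{2\alpha}\}$ with $c<0$ is the quadrant $\{u\le c\}$, of mass $\frac{\pi}{2}\cdot\frac12\erfc(|c|)$ in the $\frac{2}{\pi}$--normalisation, minus the part lying below the line. The part below the line is then computed by the same polar split, using Owen's $T$ via $t=\tan\varphi$ over $[0,\tau]$. Assembling the pieces and using $\int_0^{\pi/2}e^{-k/\cos^2}=\frac{\pi}{2}\erfc(\sqrt k)$ should yield exactly the stated identity, with $T\left(A(\alpha,x^-),K\left(y^+,x^+\right)\right)$ appearing with a plus sign and the two $\erfc$ terms with the reversed sign compared to Lemma~\ref{decnuR1}.
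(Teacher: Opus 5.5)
\textbf{There is a genuine error in your direct polar computation, which is the main line of the proposal.} Set $c=\sqrt{K(y^+,x^-)}$ and $\tau=\arctan A(\alpha,x^-)$.

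For $u\ge c$ the line $\mathcal{L}_1'$ reads $v=(u-c)\cot\tau\ge 0$. So $\mathfrak{D}_r$ is the wedge with apex $(c,0)$ bounded by the upward vertical ray and the ray of direction angle $\pi/2-\tau$, and it lies in the closed first quadrant. Consequently:
\begin{itemize}
\item no ray with $\theta<0$ meets $\mathfrak{D}_r$;
\item a ray with $0<\theta<\pi/2-\tau$ does \emph{not} stay in $\mathfrak{D}_r$. It enters through $\mathcal{L}_2'$ at $r_2'(\theta)$ and leaves through $\mathcal{L}_1'$ at $r_1'(\theta)>r_2'(\theta)$, because its slope $\tan\theta$ is below $\cot\tau$;
\item only for $\pi/2-\tau\le\theta<\pi/2$ does the ray enter and never leave.
\end{itemize}
Your identity $\lambda(\mathcal{R})=\tfrac1\pi\int_0^{\pi/2}e^{-r_2'(\theta)^2}\,\mathrm{d}\theta+\tfrac1\pi\int_{-\pi/2+\tau}^{0}e^{-r_1'(\theta)^2}\,\mathrm{d}\theta$ therefore adds a positive term where one must be subtracted. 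It would give $\lambda(\mathcal{R})>\frac12\erfc(c)$. That is impossible, because $\mathfrak{D}_r\subset\{u\ge c,\ v\ge 0\}$ and this quadrant has $\frac{2}{\pi}$--mass exactly $\frac12\erfc(c)$. The later analytic-continuation and complement paragraphs are only asserted (``should yield''), so as written the proposal does not reach the statement.

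\textbf{The repair is short and is exactly the paper's proof.} With the correct geometry,
\[
\lambda(\mathcal{R})=\frac{1}{\pi}\int_0^{\pi/2}e^{-r_2'(\theta)^2}\,\mathrm{d}\theta-\frac{1}{\pi}\int_0^{\pi/2-\tau}e^{-r_1'(\theta)^2}\,\mathrm{d}\theta .
\]
\begin{itemize}
\item The first term is $\frac12\erfc\bigl(\sqrt{K(y^+,x^-)}\bigr)$ by Craig's formula~\eqref{craig}.
\item In the second term, $\varphi=\theta+\tau$ maps $[0,\pi/2-\tau]$ onto $[\tau,\pi/2]$; no parity trick is needed.
\item Split $\int_\tau^{\pi/2}=\int_0^{\pi/2}-\int_0^{\tau}$ and put $t=\tan\varphi$ in the last piece. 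The second term becomes $\frac12\erfc\bigl(\sqrt{K(y^+,x^+)}\bigr)-\frac1\pi e^{-K(y^+,x^+)}T\bigl(A(\alpha,x^-),K(y^+,x^+)\bigr)$, which gives the lemma.
\end{itemize}

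This is the same as your complement idea: the quadrant minus the wedge under the slanted line. That idea is sound once carried out, with two corrections. After the reflection it is the quadrant $\{u\le -c,\ v\ge 0\}$, not the half-plane $\{u\le -c\}$. Its $\frac{2}{\pi}$--mass is $\frac12\erfc(c)$, not $\frac{\pi}{2}\cdot\frac12\erfc(c)$.

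Your analytic-continuation route is also legitimate and genuinely different, since it needs no new geometry. It works only under three conditions:
\begin{itemize}
\item First rewrite Lemma~\ref{decnuR1} with the signed arguments $y^-/\sqrt{2x^\pm}$ in place of $\sqrt{K(y^-,x^\pm)}$. Substituting $-y^+$ into the $\sqrt{K}$ form verbatim gives the wrong sign on the $\erfc$ terms.
\item Check that $\nu=2\int_0^\infty p_\alpha(v)\int_{y^--v}^{y^-}p_{x^-}(u)\,\mathrm{d}u\,\mathrm{d}v$ is real-analytic in $y^-\in\mathbb{R}$.
\item Conclude with $\erfc(-z)=2-\erfc(z)$.
\end{itemize}
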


\begin{proof}
The proof mimics that of the preceding statement and is thus only sketched. Specifically, from the Polar--type Representation Lemma~\ref{lemnulambda11}, the quantity $\lambda(\mathcal{R})$ is given by an integral  over the domain $\mathfrak{D}_r\!\left(x^-, y^-, \alpha\right)$ defined in~\eqref{dedomd1}. This domain is bounded by the two lines $$\left(\mathcal{L}_3\right)~: x\;=\; \sqrt{K\left(y^+, x^-\right)} \qquad \textrm{and}\qquad \left(\mathcal{L}_4\right)~: y\;=\; \frac{x\cdot\sqrt{2x^-}-y^--\beta}{\sqrt{2\alpha}}$$ (to make the link with Figure~\ref{figure_of_D1}, recall here that $y^+=y^-+\beta$ from the definition of the parameter $\beta$ in~\eqref{defalphabeta}). In polar coordinates, the equations of these lines become $$\left(\mathcal{L}_3\right): r=\widehat{r}_1\left(\theta\right)= \frac{\sqrt{K\left(y^+, x^-\right)}}{ \cos\theta}\quad \textrm{and}\quad \left(\mathcal{L}_4\right): r=\widehat{r}_2\left(\theta\right)\;=\; \frac{\sqrt{K\left(y^+, x^+\right)}}{\cos\left(\theta+\tau(\alpha, x^-)\right)},$$ where, as in the previous proof, one sets $\tau(\alpha, x^-)= \arctan \; A\!\left(\alpha, x^-\right)$. The integral expressing $\lambda(\mathcal{R})$ in  the Polar--type Representation Lemma~\ref{lemnulambda11} then yields that 
\begin{align*}
\lambda(\mathcal{R})\;&=\; \frac{2}{\pi}\cdot\int_{0}^{\frac{\pi}{2}-\tau(\alpha, x^-)}\textrm{d}\theta\int_{\widehat{r}_1(\theta)}^{\widehat{r}_2(\theta)}r\cdot e^{-r^2}\cdot \textrm{d}r\;+\; \frac{2}{\pi}\cdot \int_{\frac{\pi}{2}-\tau(\alpha, x^-)}^{\frac{\pi}{2}}\textrm{d}\theta\int_{\widehat{r}_1(\theta)}^\infty r\cdot e^{-r^2}\cdot\textrm{d}r.\nonumber \\
&=\; \frac{1}{\pi}\cdot\int_{0}^{\frac{\pi}{2}} e^{-\widehat{r}_1(\theta)^2}\cdot \textrm{d}\theta \;-\;  \frac{1}{\pi}\cdot \int_{0}^{\frac{\pi}{2}-\tau(\alpha, x^-)}e^{-\widehat{r}_2(\theta)^2}\cdot\textrm{d}\theta.
\end{align*}
The calculations are completed in the same way as in the proof of the  Lemma~\ref{decnuR1}.
\end{proof}

\subsection{An Auxiliary Estimate}

\noindent The proof of Proposition~\ref{proppasssqbis} requires the determination of sharp estimates on Owen's $T$--function defined in~\eqref{owen} in all ranges of its variables. This plays a crucial role in the exploitation of the identities established in  Lemmata~\ref{decnuR1} and~\ref{declaR1}. 

\begin{lem}[Bounds on Owen's $T$--function]\label{boundsowen} Let $a,k>0$ be reals. Then, 
\begin{equation}\label{genboundowen}
T(a,k)\;\le\; \min\left\{\arctan(a),\; \frac{\sqrt{\pi}}{2}\cdot\frac{1}{\sqrt{k}}\right\}\;\le\;  \min\left\{\frac{\pi}{2}, \; a,\; \frac{\sqrt{\pi}}{2}\cdot\frac{1}{\sqrt{k}}\right\}.
\end{equation}
Furthermore,   
\begin{equation}\label{regimeka2le1}
0\;\le\; \arctan(a)-T(a,k)\;\le\; k\cdot\left(a-\arctan(a)\right)\qquad \textrm{when}\qquad ka^2\le 1
\end{equation}
and 
\begin{equation}\label{regimeka2le1ale1}
0\;\le\; a-T(a,k)\;\le\; (k+1)\cdot \frac{a^3}{3} \qquad \textrm{when}\qquad ka^2\le 1\qquad \textrm{and}\qquad  a<1.
\end{equation}
\end{lem}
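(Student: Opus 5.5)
All three estimates follow from elementary pointwise bounds on the integrand $e^{-kt^2}/(1+t^2)$ in the definition~\eqref{owen}, integrated over $[0,a]$. No earlier result of the paper is needed beyond that definition.

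\textbf{The bound~\eqref{genboundowen}.} Since $e^{-kt^2}\le 1$, one has
\[
T(a,k)\le\int_0^a\frac{\mathrm{d}t}{1+t^2}=\arctan(a).
\]
Since $1/(1+t^2)\le 1$, one also has
\[
T(a,k)\le\int_0^\infty e^{-kt^2}\,\mathrm{d}t=\frac{\sqrt{\pi}}{2\sqrt{k}}.
\]
These two bounds give the first inequality. The second inequality follows from $\arctan(a)\le\min\{\pi/2,a\}$ for $a\ge 0$.

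\textbf{The bound~\eqref{regimeka2le1}.} Write
\[
\arctan(a)-T(a,k)=\int_0^a\frac{1-e^{-kt^2}}{1+t^2}\,\mathrm{d}t .
\]
The integrand is nonnegative, which gives the lower bound $0$. For the upper bound, use $1-e^{-x}\le x$ with $x=kt^2$, so the difference is at most
\[
k\int_0^a\frac{t^2}{1+t^2}\,\mathrm{d}t=k\int_0^a\Bigl(1-\frac{1}{1+t^2}\Bigr)\mathrm{d}t=k\,\bigl(a-\arctan(a)\bigr).
\]
This argument does not use the hypothesis $ka^2\le 1$. The condition only ensures the bound is meaningful, since then $k(a-\arctan a)\le ka^3/3\le a/3$.

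\textbf{The bound~\eqref{regimeka2le1ale1}.} Split
\[
a-T(a,k)=\bigl(a-\arctan(a)\bigr)+\bigl(\arctan(a)-T(a,k)\bigr)\le (1+k)\bigl(a-\arctan(a)\bigr),
\]
using the previous step for the second bracket. Then note that
\[
a-\arctan(a)=\int_0^a\frac{t^2}{1+t^2}\,\mathrm{d}t\le\int_0^a t^2\,\mathrm{d}t=\frac{a^3}{3}.
\]
This yields $(k+1)a^3/3$. Nonnegativity of $a-T(a,k)$ follows from $T(a,k)\le\arctan(a)\le a$. As before, the hypotheses $a<1$ and $ka^2\le 1$ are not actually needed; they only indicate the regime where the estimate is sharp.

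There is no genuine obstacle here. The only point requiring care is the direction of the inequality $1-e^{-x}\le x$ and checking that the constants match the statement exactly, namely the factor $\sqrt{\pi}/2$ from the Gaussian integral and the $1/3$ from integrating $t^2$.
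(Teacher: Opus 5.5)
Your proof is correct and follows the paper's argument: it uses the same pointwise bounds $e^{-kt^2}\le 1$, $1/(1+t^2)\le 1$ and $1-e^{-x}\le x$ on the integrand, and the paper differs only in rescaling $t=au$ before integrating. The one cosmetic difference is that you bound $a-\arctan(a)=\int_0^a t^2/(1+t^2)\,\mathrm{d}t\le a^3/3$ directly instead of via the alternating series of $\arctan$, which confirms your observation that neither $ka^2\le 1$ nor $a<1$ is actually needed.
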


\begin{proof}
It is clear from the definition~\eqref{owen}  of Owen's T--function that $T(a,k)\le \int_0^1 \textrm{d}t/(1+t^2)=\arctan (a)$. What is more, from an elementary change of variables, 
\begin{equation*}
T(a,k)\;\le\;  \int_0^a e^{-kt^2}\cdot\textrm{d}t\;=\; \frac{1}{\sqrt{k}}\cdot \int_0^{a\sqrt{k}}e^{-v^2}\cdot \textrm{d}v\;\le\; \frac{\sqrt{\pi}}{2}\cdot \frac{1}{\sqrt{k}}\cdotp
\end{equation*}
This yields the bound~\eqref{genboundowen}.\\

\noindent In the regime $ka^2\le 1$, it follows from the convexity inequality $1-u\le e^{-u}\le 1$ valid for all $u\ge 0$ that 
\begin{align*}
0\;\le\; a\cdot \int_0^1\frac{1}{1+a^2u^2}\cdot\textrm{d}u-T(a,k)\;\le\; ka^3\cdot\int_0^1\frac{u^2}{1+a^2u^2}\cdot \textrm{d}u .
\end{align*}
An explicit calculation of the integrals gives the inequality~\eqref{regimeka2le1}. When $a<1$, the alternating power series expansion of the $\arctan$ function shows that $a-a^3/3\le \arctan(a)\le a$, whence the inequality~\eqref{regimeka2le1ale1}.  
\end{proof}

\subsection{Derivation of Sharp Bounds on the Subprobabilities}

\noindent The sum $\nu(\mathcal{R})+\lambda(\mathcal{R})$ is first estimated with the help of Lemmata~\ref{decnuR1} and~\ref{declaR1}.

\begin{coro}[Bounds on the sum  $\nu(\mathcal{R})+\lambda(\mathcal{R})$ when $x^->0$ and $y^->0$]\label{asymptdecomposumnulambda1} Assume that $x^->0$ and $y^->0$. Then, with the notation introduced just before the statement of Proposition~\ref{proppasssqbis},
\begin{align*}
\nu(\mathcal{R})+\lambda(\mathcal{R})\;\le\; &\frac{E\left(x^+, y^-\right)}{\pi}\cdot\left(2\cdot F\left(\alpha, x^-, x^+, y^-\right)+ \frac{\pi}{2}\cdot G\left(\beta, x^+, y^-\right)\right)\\
&\qquad  + \frac{E\left(x^-, y^-\right)}{2}\cdot G\left(\beta, x^-, y^-\right).
\end{align*}
Furthermore, this estimate can be refined in the regimes 1.(a) and 1.(b) defined in Proposition~\ref{proppasssqbis}. Indeed, the sum $\nu(\mathcal{R})+\lambda(\mathcal{R})$ then equals the right--hand side of the equation~\eqref{ppi1a} with the same specification of the values of the function involved in it but with  only one change~: one requires here that $\lambda=\tau\in [0,1]$

\end{coro}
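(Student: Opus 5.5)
The proof will add the two closed-form identities from Lemmata~\ref{decnuR1} and~\ref{declaR1} and bound the resulting terms one at a time. Write $K^-_\pm=K\left(y^-,x^\pm\right)$ and $K^+_\pm=K\left(y^+,x^\pm\right)$, and set $a=A\left(\alpha,x^-\right)$. The sum then reads
\begin{align*}
\nu(\mathcal{R})+\lambda(\mathcal{R})\;=\;&\frac{1}{\pi}\left(e^{-K^-_+}\,T\!\left(a,K^-_+\right)+e^{-K^+_+}\,T\!\left(a,K^+_+\right)\right)\\
&+\frac12\left(\erfc\sqrt{K^-_+}-\erfc\sqrt{K^+_+}\right)-\frac12\left(\erfc\sqrt{K^-_-}-\erfc\sqrt{K^+_-}\right).
\end{align*}
Both erfc differences have nonnegative brackets, because $y^+>y^->0$ and erfc is decreasing. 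The first one therefore enters positively and the second negatively. For the upper bound I will simply discard the negative bracket.

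First I bound the Owen $T$-terms using \eqref{genboundowen}. This gives $T(a,k)\le\min\{\pi/2,a,\sqrt{\pi}/(2\sqrt k)\}$. With $k=K^\pm_+$ we have $1/\sqrt{k}=\sqrt{2x^+}/y^\pm$, and the resulting bound is at most $F\left(\alpha,x^-,x^+,y^-\right)$, since $y^+\ge y^-$. Also $e^{-K^+_+}\le e^{-K^-_+}=E\left(x^+,y^-\right)$. Together these produce the term $\frac{E(x^+,y^-)}{\pi}\cdot 2F$.

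Next I bound the erfc difference $\frac12\left(\erfc\sqrt{K^-_+}-\erfc\sqrt{K^+_+}\right)$. Writing it as an integral, it equals $\frac{1}{\sqrt\pi}\int_{y^-/\sqrt{2x^+}}^{y^+/\sqrt{2x^+}}e^{-t^2}\,\textrm{d}t$. I bound this integral in two ways. The first uses the length of the interval times the maximum of the integrand, giving at most $\frac{1}{\sqrt\pi}\,e^{-K^-_+}\cdot\frac{\beta}{\sqrt{2x^+}}$. The second uses the Mills-type tail bound $\erfc(z)\le \frac{2}{\sqrt\pi}\frac{e^{-z^2}}{1+z}$, valid for $z\ge0$, which yields at most $\frac{1}{\sqrt\pi}\frac{e^{-K^-_+}}{1+\sqrt{K^-_+}}$. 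Taking the minimum, the term is at most $\frac{E(x^+,y^-)}{2}\,G\left(\beta,x^+,y^-\right)$. The term with $x^-$ is handled identically in absolute value and gives $\frac{E(x^-,y^-)}{2}G\left(\beta,x^-,y^-\right)$. It is kept in the stated bound, which is harmless since discarding it would also be valid. The Mills-type inequality with the constant matching the $2/\sqrt\pi$ in $G$ is the one point where I would need to check the constant carefully. I would establish it from $\erfc(z)\le e^{-z^2}$ for $z\le1$ and $\erfc(z)\le e^{-z^2}/(\sqrt\pi z)$ for $z\ge1$.

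For the refined regimes, I write $T\left(a,K^-_+\right)=\arctan a-\sigma K^-_+H$ using \eqref{regimeka2le1}; this requires $K^-_+a^2\le1$, which holds in both 1.(a) and 1.(b). Here $\sigma\in[0,1]$, and the sign convention must be matched with $H=\arctan a-a\le0$. When $a<1$ I instead use \eqref{regimeka2le1ale1} to obtain the starred forms. The erfc difference of the previous paragraph equals $\tau\cdot\frac{E(x^+,y^-)}{2}G\left(\beta,x^+,y^-\right)$ for some $\tau\in[0,1]$, which accounts for the $\frac{\pi\tau}{2}G$ in $U$. The $x^-$ bracket is written as $\frac{E(x^-,y^-)}{2}W_\zeta$, with $\zeta$ absorbing the negative sign. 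This is why the corollary restricts the parameter $\lambda=\tau$ to $[0,1]$, with the sign handled in this way. The $y^+$ Owen term gives $V_\theta$: in regime 1.(a) via \eqref{regimeka2le1} with $k=K^+_+$, and in regime 1.(b) via the crude bound $T\le F\left(\alpha,x^-,x^+,y^+\right)$ with $\theta\in[0,1]$.

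I expect the main obstacle to be the bookkeeping of signs and admissible parameter ranges in the refined identity, rather than any analytic difficulty.
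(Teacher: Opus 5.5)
\textbf{Gap: the constant in the $\erfc$-difference step.} Your route is the paper's: add Lemmas~\ref{decnuR1} and~\ref{declaR1}, bound Owen's $T$ by Lemma~\ref{boundsowen}, and bound the $\erfc$ differences. On one point you are more careful than the paper: the $x^-$ bracket $D_-=\frac12\bigl(\erfc(\sqrt{K(y^-,x^-)})-\erfc(\sqrt{K(y^+,x^-)})\bigr)$ really enters with a minus sign, as you say, whereas the paper's $\mathcal{E}$ adds it.

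The step that fails is ``taking the minimum, the term is at most $\frac{E(x^+,y^-)}{2}G(\beta,x^+,y^-)$''. Write $z^\pm=\sqrt{K(y^\pm,x^+)}$. Your two estimates give $\frac{E(x^+,y^-)}{\sqrt\pi}\min\{\frac{\beta}{\sqrt{2x^+}},\frac{1}{1+z^-}\}$. The target is $\frac{E(x^+,y^-)}{2}G(\beta,x^+,y^-)=E(x^+,y^-)\min\{\frac{\beta}{2\sqrt{2x^+}},\frac{1}{\sqrt\pi(1+z^-)}\}$. The Mills entries agree, but the length-times-maximum entry is too big by the factor $2/\sqrt\pi>1$. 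This is not slack in your estimate: as $\beta\to0$, $\frac12(\erfc(z^-)-\erfc(z^+))=\frac{1}{\sqrt\pi}e^{-(z^-)^2}\frac{\beta}{\sqrt{2x^+}}(1+o(1))$, so the termwise inequality is false. The paper's displayed $\erfc$ estimate is also off by constant factors, so its termwise route does not give the stated constants either.

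The constant you flagged is actually fine. However, deducing the Mills bound from $\erfc(z)\le e^{-z^2}$ and $\erfc(z)\le e^{-z^2}/(\sqrt\pi z)$ leaves roughly $0.13<z<1$ uncovered. Instead, note that $h(z)=\frac{2}{\sqrt\pi}\frac{e^{-z^2}}{1+z}-\erfc(z)$ satisfies $h'(z)=-\frac{2}{\sqrt\pi}\frac{z^2e^{-z^2}}{(1+z)^2}\le0$ and $h(z)\to0$ as $z\to\infty$, so $h\ge 0$.

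\textbf{Repair: split on which entry realises $F$.} The first claimed bound is nevertheless true. Let $a=A(\alpha,x^-)$, so $F=\min\{\pi/2,a,\sqrt\pi/(2z^-)\}$.

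If $F=\min\{\pi/2,\sqrt\pi/(2z^-)\}$, do not bound the $y^+$ Owen term by $F$. The substitution $t=\tan\varphi$ and Craig's formula~\eqref{craig}, exactly as in the proof of Lemma~\ref{decnuR1}, give $\frac1\pi e^{-(z^+)^2}T(a,(z^+)^2)\le\frac12\erfc(z^+)$. This telescopes with the $x^+$ difference into $\frac12\erfc(z^-)\le\frac{e^{-(z^-)^2}}{2}\min\{1,\frac{1}{\sqrt\pi z^-}\}=\frac{E(x^+,y^-)}{\pi}F$, so no $G$ term is needed.

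If $F=a$, then $a\le\pi/2$ and $az^-\le\sqrt\pi/2$. The $x^+$ difference exceeds $\frac{E(x^+,y^-)}{2}G(\beta,x^+,y^-)$ by at most $\bigl(\frac{1}{\sqrt\pi}-\frac12\bigr)E(x^+,y^-)G(\beta,x^+,y^-)\approx0.064\,E(x^+,y^-)G(\beta,x^+,y^-)$. Moreover $E(x^-,y^-)=E(x^+,y^-)e^{-a^2(z^-)^2}\ge e^{-\pi/4}E(x^+,y^-)$ and $G(\beta,x^-,y^-)\ge G(\beta,x^+,y^-)/\sqrt{1+a^2}$. Hence $\frac{E(x^-,y^-)}{2}G(\beta,x^-,y^-)\ge\frac{e^{-\pi/4}}{2\sqrt{1+\pi^2/4}}E(x^+,y^-)G(\beta,x^+,y^-)\approx0.12\,E(x^+,y^-)G(\beta,x^+,y^-)$. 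So once $-D_-$ has been dropped, the $x^-$ term is exactly what pays for the excess; it is not a dispensable extra.

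\textbf{Refined part.} Here \eqref{ppi1a} has $\zeta\ge0$, so letting $\zeta$ ``absorb the negative sign'' is not the same specification. Your instinct is nonetheless right: with $\zeta\ge0$, the $\Diamond$ form of the refined identity for $\nu+\lambda$ cannot hold. Take $y^-\to0^+$ and $\beta\to0$ inside regime 1.(a). The $\erfc$ part is then $\frac{\beta}{\sqrt{2\pi}}\bigl(\frac{1}{\sqrt{x^+}}-\frac{1}{\sqrt{x^-}}\bigr)(1+o(1))<0$. Meanwhile each Owen term exceeds its smallest admissible value $\arctan a+K\,H(\alpha,x^-)$ by at most $K^2a^3/6=O(\beta^4)$. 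This failure comes from the paper's sign slip; for $\ppi=\mu+\nu+\lambda$ the two $x^-$ brackets cancel exactly, since $\mu=D_-$.

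Note also that the correct form is $T=\arctan a+\sigma K H$, with a plus sign, since $H\le0$. Your $\tau\in[0,1]$ fails by the same factor $2/\sqrt\pi$ as above. What your termwise assignment actually proves is the identity with $\tau\in[0,2/\sqrt\pi]$ and $\zeta\in[-2/\sqrt\pi,0]$, and you should state it that way.
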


\noindent The proof of Corollary~\ref{asymptdecomposumnulambda1} relies on the well--known inequality
\begin{equation}\label{ineqerrfct1}
0\;\le\; \erfc(x)\;\le\; \frac{2}{\sqrt{\pi}}\cdot \frac{e^{-x^2}}{1+x}
\end{equation}
valid for all $x\ge 0$ --- see, e.g., \cite[p.17]{IMcK} for a proof. 

\begin{proof}[Proof of Corollary~\ref{asymptdecomposumnulambda1}] Putting together the conclusions of the above two Lemmata~\ref{decnuR1} and~\ref{declaR1}, one has that
\begin{align}
\nu(\mathcal{R})+\lambda(\mathcal{R})\;=\; &\frac{1}{\pi}\cdot\exp\left(-K\left(y^-, x^+\right)\right)\cdot T\left(A\left(\alpha, x^-\right), K\left(y^-, x^+\right)\right)\nonumber\\
&+   \frac{1}{\pi}\cdot\exp\left(-K\left(y^+, x^+\right)\right)\cdot T\left(A\left(\alpha, x^-\right), K\left(y^+, x^+\right)\right) \nonumber +\frac{1}{2}\cdot \mathcal{E}\left(x^-, y^-, \alpha, \beta\right), \label{endal-1}
\end{align}
where, given  $a,b,\varepsilon, \eta>0$ one has set  
\begin{align*}
&\mathcal{E}\left(a,b,\varepsilon, \eta\right)\;=\; \\
 &\qquad \left(\erfc\left(\frac{b}{\sqrt{2\left(a+\varepsilon\right)}}\right)-\erfc\left(\frac{b+\eta}{\sqrt{2\left(a+\varepsilon\right)}}\right)\right) +\left(\erfc\left(\frac{b}{\sqrt{2a}}\right)-\erfc\left(\frac{b+\eta}{\sqrt{2a}}\right)\right).
 \end{align*}
It follows from the Mean Value Inequality and from the above relation~\eqref{ineqerrfct1} that
\begin{align*}
\erfc\left(\frac{b}{\sqrt{2a}}\right)-\erfc\left(\frac{b+\eta}{\sqrt{2a}}\right)\;\le\; \frac{\exp\left(-\frac{b^2}{2a}\right)}{2}\cdot \min\left\{\frac{\eta}{2a}, \frac{2}{\sqrt{\pi}}\cdot\frac{1}{1+\frac{b}{\sqrt{2a}}}\right\},
\end{align*}
and similarly when $a$ is replaced with $a+\varepsilon$. 
Thus,  
\begin{align*}
0\;\le\;\mathcal{E}\left(a,b,\varepsilon,\; \eta\right)\;\le\;  &  \frac{\exp\left(-b^2/(2a)\right)}{2}\cdot \min\left\{\frac{\eta}{\sqrt{2a}}; \, \frac{2}{\sqrt{\pi}}\cdot \frac{1}{1+\frac{b}{\sqrt{2a}}}\right\} \\
&+ \frac{\exp\left(-\frac{b^2}{2(a+\varepsilon)}\right)}{2} \cdot \min\left\{\frac{\eta}{\sqrt{2(a+\varepsilon)}}; \, \frac{2}{\sqrt{\pi}}\cdot \frac{1}{1+\frac{b}{\sqrt{2(a+\varepsilon)}}}\right\}.
\end{align*}
As a consequence, $$ \mathcal{E}\left(x^-,y^-,\alpha,\beta\right)\;\le\;  E\left(x^-, y^-\right)\cdot G\left(\beta, x^-, y^-\right)+E\left(x^+, y^-\right)\cdot G\left(\beta, x^+, y^-\right).$$
This leads one to the statement upon putting these estimates together with the bounds for Owen's T--function obtained in Lemma~\ref{boundsowen}.  
\end{proof}

\noindent A final lemma in this section provides an elementary bound for the probability $\mu\left(\mathcal{R}\right)$ introduced as part of the Decomposition Lemma~\ref{decompoprob1}.

\begin{lem}[Elementary upper bound for  the probability $\mu\left(\mathcal{R}\right)$ when $x^->0$]\label{probmuR1}  Assume that  $x^->0$. Then, the  probabi\-lity $\mu(\mathcal{R})$  meets the relation $$\mu\left(\mathcal{R}\right)\;\le\; \frac{E\left(x^-, y^-\right)}{2}\cdot G\left(\beta, x^-, y^-\right).$$
\end{lem}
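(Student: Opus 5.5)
The plan is to start from the explicit expression of $\mu(\mathcal{R})$ in the Decomposition Lemma~\ref{decompoprob1} and rewrite it with the complementary error function~\eqref{erfc}. The change of variables $u\mapsto u/\sqrt{2x^-}$ in $\int_{y^-}^{y^+}p_{x^-}(u)\,\textrm{d}u$ gives
\begin{equation*}
\mu(\mathcal{R})\;=\;\frac{1}{2}\cdot\left(\erfc(a)-\erfc(a+h)\right)\;=\;\frac{1}{\sqrt{\pi}}\cdot\int_a^{a+h}e^{-t^2}\cdot\textrm{d}t,
\end{equation*}
where $a=y^-/\sqrt{2x^-}=\sqrt{K(y^-,x^-)}$ and $h=\beta/\sqrt{2x^-}$. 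Throughout I would assume $y^-\ge 0$. This is the range in which $E$ and $G$ are defined, and Cases~3 and~4 of Proposition~\ref{proppasssqbis} reduce to it by reflection and splitting. The target is $\mu(\mathcal{R})\le \frac{1}{2}e^{-a^2}\cdot\min\{h,\,2/(\sqrt{\pi}(1+a))\}$, and I would bound $\mu(\mathcal{R})$ by each entry of the minimum separately.

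\emph{Second entry.} Drop the subtracted term $\erfc(a+h)\ge 0$ and apply inequality~\eqref{ineqerrfct1}:
\begin{equation*}
\mu(\mathcal{R})\;\le\;\frac{1}{2}\erfc(a)\;\le\;\frac{1}{\sqrt{\pi}}\cdot\frac{e^{-a^2}}{1+a}\;=\;\frac{E(x^-,y^-)}{2}\cdot\frac{2}{\sqrt{\pi}\left(1+\sqrt{K(y^-,x^-)}\right)}.
\end{equation*}
This is exactly the second entry of $G(\beta,x^-,y^-)$ with the stated constant.

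\emph{First entry.} Since $t\mapsto e^{-t^2}$ is nonincreasing on $[a,\infty)$ when $a\ge 0$, the Mean Value Inequality bounds the integral by $h\,e^{-a^2}$. This gives
\begin{equation*}
\mu(\mathcal{R})\;\le\;\frac{E(x^-,y^-)}{2}\cdot\frac{2}{\sqrt{\pi}}\cdot\frac{\beta}{\sqrt{2x^-}}.
\end{equation*}
This is the main obstacle. The factor $2/\sqrt{\pi}>1$ here is not an artefact of the argument. When $a=0$ and $h\to0^+$ one has $\mu(\mathcal{R})\sim h/\sqrt{\pi}$, which exceeds $\frac{1}{2}h$. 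So the $\beta$-entry of the minimum, with constant $\frac{1}{2}$ in front of $E$, cannot hold literally in the regime where $\beta/\sqrt{2x^-}$ is small and $y^-$ is small relative to $\sqrt{x^-}$.

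Before finalising I would first try to locate an error in this computation. For instance, I would check whether the normalisation of $p_t$ in~\eqref{def_p} or of $\erfc$ in~\eqref{erfc} could absorb the constant. It does not appear to. Both are standard, and the identity $\mu(\mathcal{R})=\frac{1}{2}(\erfc(a)-\erfc(a+h))$ is exact. Failing that, I would establish the lemma in the corrected form $\mu(\mathcal{R})\le \frac{E(x^-,y^-)}{2}\cdot\frac{2}{\sqrt{\pi}}\cdot G(\beta,x^-,y^-)$. This follows from the two bounds above, since each entry of the minimum is then dominated with constant at most $2/\sqrt{\pi}$.

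I would then check that only this weaker form is used downstream, by tracking the constant through to~\eqref{genboundcase1} and~\eqref{ppi1a}. In~\eqref{ppi1a} the term is $\frac{1}{2}E(x^-,y^-)\,W_\zeta$ with $\zeta\in[0,2]$, so there is room to absorb the extra factor, because $2/\sqrt{\pi}\cdot\frac{1}{2}+\frac{1}{2}\le 2\cdot\frac{1}{2}\cdot\frac{2}{\sqrt{\pi}}$ after enlarging $\zeta$'s range by an absolute factor. Corollary~\ref{coroutile} is stated with $\ll$, so the absolute factor $2/\sqrt{\pi}$ is harmless there.
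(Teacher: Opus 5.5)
Your argument is the same as the paper's one-line proof: write $\mu(\mathcal{R})=\frac{1}{2}\left(\erfc(a)-\erfc(a+h)\right)$, use the Mean Value Inequality for the $\beta$-entry of $G$, and use~\eqref{ineqerrfct1} for the other entry. Your objection is also correct, and the paper's proof silently loses the factor $2/\sqrt{\pi}$. The Mean Value Inequality only gives $\mu(\mathcal{R})\le \frac{1}{\sqrt{\pi}}\, e^{-a^2} h$. For instance, $x^-=1/2$, $y^-=0.01$, $\beta=0.1$ give $\mu(\mathcal{R})\approx 0.0562$ against a right-hand side $\approx 0.0500$, so the stated constant fails even for $y^->0$.

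Your corrected bound is what the argument actually proves. It is harmless, because the lemma only feeds, via Proposition~\ref{proppasssqbis}, into Corollary~\ref{coroutile}, which is stated with $\ll$. In fact, in Case~1 the $\erfc$-terms evaluated at $x^-$ in Lemmata~\ref{decnuR1} and~\ref{declaR1} add up to exactly $-\mu(\mathcal{R})$, so $\mu(\mathcal{R})$ cancels from $\ppi(\mathcal{R})$ altogether.
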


\begin{proof} Since by definition,  $$\mu(\mathcal{R})\;=\; \frac{1}{2}\cdot\left(\erfc\left(\frac{y^-}{\sqrt{2x^-}}\right)-\erfc\left(\frac{y^+}{\sqrt{2x^-}}\right)\right),$$ the statement is, as in the previous proof,  an immediate consequence of the Mean Value Inequality and of  the  relation~\eqref{ineqerrfct1}. 
\end{proof}

%
%
%

\subsection{Completion of the Proof {of} the Main Estimates}

\begin{proof}[Proof of Proposition~\ref{proppasssq}]
From the symmetry property (P1), it is enough to prove the statement when assuming that $y^+\ge 0$. Fix the coordinates $y^\pm$ and also the value of the parameter $\alpha>0$ defined in~\eqref{defalphabeta}. Consider the family of horizontally translated rectangles $$\mathfrak{R}\left[\alpha, y^\pm\right]\;=\;\left\{\mathcal{R}\subset \R_{\ge 0}\times\R\;:\; \left(x^-_{\mathcal{R}},\; x^+_{\mathcal{R}},\; y^-_{\mathcal{R}},\; y^+_{\mathcal{R}}\right) \;=\;  \left(x^-, \; x^-+\alpha, \; y^-,\;  y^+\right) \right\}_{x^-\ge 0}$$ parametrised by the value of $x^-\ge 0$.  An element in the family $\mathfrak{R}\left[\alpha, y^\pm\right]$ is thus denoted by $\mathcal{R}\left(x^-\right)$ and the corresponding hitting probability by $$\ppi\left(x^-\right)\; :=\; \ppi\left(\mathcal{R}\left(x^-\right)\right).$$ The key idea in the proof is to combine the Probability Decomposition Lemma~\ref{decompoprob1} with the \emph{heat equation} satisfied by the Gaussian density function $p_t$ when $t>0$, namely that 
\begin{equation*}
\frac{\partial p_t}{\partial t}(u)\;=\; \frac{1}{2}\cdot\frac{\partial^2 p_t}{\partial u^2}(u).
\end{equation*}
When $x^->0$, one thus obtains that 
\begin{align*}
\frac{\partial \ppi\left(\mathcal{R}\left(x^-\right)\right)}{\partial x^-}\;&=\; \frac{1}{2}\cdot \int_{y^-}^{y^+}\frac{\partial^2 p_{x^-}}{\partial u^2}(u)\cdot \textrm{d}u \;+\; \int_{-\infty}^{y^-} \frac{\partial^2 p_{x^-}}{\partial u^2}(u)\cdot\left(\int_{y^--u}^{\infty}p_\alpha(v)\cdot\textrm{d}v\right)\\
&\qquad \qquad \qquad \qquad  \qquad \qquad +\int_{y^+}^{\infty} \frac{\partial^2 p_{x^-}}{\partial u^2}(u)\cdot\left(\int_{u-y^+}^{\infty}p_\alpha(v)\cdot\textrm{d}v\right).
\end{align*}
Since $$\frac{\partial p_t}{\partial u}(u)\;=\; \left(-\frac{u}{t}\right)\cdot p_t(u),$$ it follows from an integration by parts that 
\begin{align*}
\frac{\partial \ppi\left(\mathcal{R}\left(x^-\right)\right)}{\partial x^-}\;&=\; \int_{-\infty}^{y^-}\frac{u}{x^-}\cdot p_{x^-}(u)\cdot p_{\alpha}\left(y^--u\right)\cdot\textrm{d}u\;-\; \int_{y^+}^{\infty}\frac{u}{x^-}\cdot p_{x^-}(u)\cdot p_{\alpha}\left(u-y^+\right)\cdot\textrm{d}u,
\end{align*}
and thus that 
\begin{align}
 \ppi\left(\mathcal{R}\left(x^-\right)\right)- \ppi\left(\mathcal{R}(0)\right)\;=\; &\int_{-\infty}^{y^-}u\cdot \left(\int_0^{x^-}\frac{p_t(u)}{t}\cdot\textrm{d}t\right)\cdot p_{\alpha}\left(y^--u\right)\cdot\textrm{d}u \nonumber\\
& - \int_{y^+}^{\infty}u\cdot \left(\int_0^{x^-}\frac{p_t(u)}{t}\cdot\textrm{d}t\right)\cdot p_{\alpha}\left(u-y^+\right)\cdot\textrm{d}u .\label{eqcomppro}
 \end{align}
 In this relation, by an elementary change of variables when $u\neq 0$,
\begin{align}
\int_0^{x^-}\frac{p_t(u)}{t}\cdot\textrm{d}t\;&=\; \frac{1}{\sqrt{2\pi}}\cdot \int_0^{x^-}\frac{e^{-u^2/(2t)}}{t}\cdot\textrm{d}t\nonumber \;=\; \frac{2}{|u|\cdot \sqrt{\pi}}\int_{\frac{|u|}{\sqrt{2x^-}}}^{\infty}e^{-v^2}\cdot\textrm{d}v\\
&=\; \frac{1}{|u|}\cdot \erfc\left(\frac{|u|}{\sqrt{2x^-}}\right).\label{intgaust}
\end{align} 
Also, the probability that  Brownian motion should hit the rectangle $\mathcal{R}(0)$ equals
$$ \ppi\left(\mathcal{R}(0)\right)\;=\; \bP\left(M\left(x^+\right)\ge \max\left\{0, y^-\right\}\right).$$ From the Law of Maximum~\eqref{Mt}, this is saying  that
\begin{equation}
\label{lawmaxprobcom}\ppi\left(\mathcal{R}(0)\right)\;=\; 2\cdot \int_{ \max\left\{0, y^-\right\}}^{\infty} p_{x^+}(u)\cdot \textrm{d}u\;\underset{\eqref{def_p}}{=}\; \erfc\left( \frac{\max\left\{0, y^-\right\}}{\sqrt{2x^+}}\right).
\end{equation}
The conclusion then follows upon putting together the equations~\eqref{eqcomppro}, \eqref{intgaust},  and~\eqref{lawmaxprobcom}. 
\end{proof}

\begin{proof}[Proof of Proposition~\ref{proppasssqbis}] In view of the  Decomposition Lemma~\ref{decompoprob1}, \textbf{Case~1} of the statement is obtained by adding up the estimates established in Corollary~\ref{asymptdecomposumnulambda1} and Lemma~\ref{probmuR1}.\\

\noindent \textbf{Cases~2.(c)} and  \textbf{4.(b)} are immediate consequences of the fact that  Brownian motion starts almost surely at the origin.\\

\noindent The reduction stated in \textbf{Case~3} is justified by the property of symmetry (P1). \\

\noindent \textbf{Cases~2.(a, b)} are obtained from an application of the Dominated Convergence Theorem.\\

\noindent Finally, to deal with \textbf{Case~4.(a)} , it is enough to notice that the probability $ \ppi\left(\mathcal{R}\right)$ is the sum of the probabilities that  Brownian motion should cross the subrectangles $\mathcal{R}^+$ and $\mathcal{R}^-$ minus the probability of intersecting both. From~\cite[Theorem~3.26, p.77]{K}, the latter probability equals $$1-\frac{2}{\pi}\arcsin\sqrt{\frac{x^-}{x^+}},$$ whence the statement in this case.
\end{proof}

\section{Asymptotics of the Diagonal Probability Term}\label{asymdiag} 

Propositions~\ref{diagestimterm} and~\ref{diagestimtermbis} are established in this section. As a matter of fact, a common  inhomogeneous generalisation of them is proved. It corresponds to the case where the rectangles under consideration are not centered at rationals but at rational points shifted by some inhomogeneous quantity. This generality is needed in the proof of the estimation of the off--diagonal probability term (i.e.~of Proposition~\ref{offdiagestimterm}) in Section~\ref{asymoffdiag}. It requires some additional notation. 

\paragraph{Notation for the inhomogeneous setting.}  Throughout this section, the generic notation $\bm{s}/q=\left(s_1/q, s_2/q\right)$ always stands for a fraction with denominator  $q\ge 1$ and numerators $s_1\ge 0$ and $s_2\in\Z$. Thus,   
\begin{equation}\label{convention}
\frac{\bm{s}}{q}=\left(\frac{s_1}{q}, \frac{s_2}{q}\right)\in \Q_{\ge 0}\times \Q,
\end{equation}
and this is assumed implicitly but not be specified anymore to alleviate the notation.  Given a real parameter $\gamma_1$, define the inhomogeneous version of the strip $\Q_{\xi}( \bm{\mathcal{T}})$ introduced in~\eqref{defstrip} as the set 
\begin{align}\label{deinhomogstrip}
 \Q_{\xi}^ {(\gamma_1)} ( \bm{\mathcal{T}})\;=\;  \left\{\frac{\bm{s}}{q} \;:\; \left[\frac{s_1}{q}+\gamma_1-\frac{\xi(q)}{2q}, \frac{s_1}{q}+\gamma_1+\frac{\xi(q)}{2q}\right]\cap\left[T_1, T_2\right]\;\neq\; \emptyset\right\}.
\end{align}
Introducing an additional real parameter $\gamma_2$ and setting $\bm{\gamma}=\left(\gamma_1, \gamma_2\right)$,  decompose this set into two subsets 
\begin{align}\label{decomCQT}
\Q_{\xi}^ {(\gamma_1)} (\bm{\mathcal{T}})\;=\; \widehat{\Q}_{\xi}^ {(\bm{\gamma})}( \bm{\mathcal{T}}) \; \cup\;  \partial \Q_{\xi}^ {(\bm{\gamma})} (\bm{\mathcal{T}}).  
\end{align}
Here, the so--called \emph{main set of rationals} $\widehat{\Q}_{\xi}^ {(\gamma)}( \bm{\mathcal{T}})$ is 
 \begin{align}\label{decomCQT1}
\widehat{\Q}_{\xi}^ {(\gamma)}( \bm{\mathcal{T}}) \;=\; &\left\{\frac{\bm{s}}{q}  \in   \Q_{\xi}^ {(\gamma_1)} ( \bm{\mathcal{T}}) \;:\;  \left|\frac{s_2}{q}+\gamma_2\right|\;\ge\;  \frac{\xi(q)}{2q}, \right. \nonumber\\
 &\qquad \quad   \left. \max\left\{\frac{\xi(q)}{q}, \;T_1\right\}\le \frac{s_1}{q}+\gamma_1-\frac{\xi(q)}{2q}<\frac{s_1}{q}+\gamma_1+\frac{\xi(q)}{2q}\le T_2\right\}, 
 \end{align}
and the so--called \emph{set of boundary rationals} $\partial \Q_{\xi}^ {(\bm{\gamma})} (\bm{\mathcal{T}})$ is the complementary set 
\begin{align}\label{decomCQT0}
 \partial \Q_{\xi}^ {(\bm{\gamma})} (\bm{\mathcal{T}}) \;=\; \Q_{\xi}^ {(\gamma_1)} (\bm{\mathcal{T}})\;\backslash\;  \widehat{\Q}_{\xi}^ {(\gamma)}( \bm{\mathcal{T}}) . 
 \end{align} 

\noindent Also, given  a rational point $\bm{s}/q=\left(s_1/q, s_2/q\right)$, denote by $\mathcal{R}_\xi^{(\bm{\gamma})}\!\left(\bm{\mathcal{T}}; \bm{s}/q\right)$ the rectangle obtained by shifting the rectangle $\mathcal{R}_\xi\!\left(\bm{\mathcal{T}}; \bm{s}/q\right)$ defined in~\eqref{defract}  by $\bm{\gamma}$ in the following sense~:  
\begin{align}\label{inhomogrect}
\mathcal{R}_\xi^{(\bm{\gamma})}\!\left(\bm{\mathcal{T}}; \frac{\bm{s}}{q}\right) \;= \; &\left(\left[\frac{s_1}{q}+\gamma_1-\frac{\xi(q)}{2q}, \; \frac{s_1}{q}+\gamma_1+\frac{\xi(q)}{2q}\right]\cap \left[T_1,T_2\right]\right)\nonumber\\
&\qquad\qquad \times\,\left[\frac{s_2}{q}+\gamma_2-\frac{\xi(q)}{2q}, \; \frac{s_2}{q}+\gamma_2+\frac{\xi(q)}{2q}\right].
\end{align}
By analogy with the quantity $\ppi_\xi\left(\bm{\mathcal{T}}; \bm{s}/q\right)$ introduced in~\eqref{defpiprob}, the probability that a trajectory of  Brownian motion should hit the inhomogeneous rectangle $\mathcal{R}_\xi^{(\bm{\gamma})}\!\left(\bm{\mathcal{T}} ; \bm{s}/q\right)$ is  defined as
\begin{equation*}\label{inhomogprob}
\ppi_\xi^{(\bm{\gamma})}\!\left(\bm{\mathcal{T}}; \frac{\bm{s}}{q}\right)\;=\; \ppi\left(\mathcal{R}_\xi^{(\bm{\gamma})}\!\left(\bm{\mathcal{T}}; \frac{\bm{s}}{q}\right)\right).
\end{equation*} 
\hfill \qedsymbol\\

\noindent With this set of notations, given an integer $q\ge 1$, the main quantities of interest are the \emph{global inhomogeneous sum}
\begin{align}\label{defaxilfixq} 
\mathcal{S}_{\xi}^{(\bm{\gamma})}\left(\bm{\mathcal{T}}; q\right)\;=\; \sum_{\frac{\bm{s}}{q}\in   \Q_{\xi}^{(\gamma_1)} ( \bm{\mathcal{T}})}\ppi_\xi^{(\bm{\gamma})}\!\left(\bm{\mathcal{T}}; \frac{\bm{s}}{q}\right)
\end{align}
and the  \emph{auxiliary homogeneous one}
\begin{align}\label{defaxilfixqbis} 
\mathcal{O}_{ \xi}\left(\bm{\mathcal{T}}; q\right)\;=\; \sum_{\frac{\bm{s}}{q}\in  \Q_\xi(\bm{\mathcal{T}})}\ppi_\xi\!\left(\frac{\bm{s}}{q}\right)\cdot \frac{s_1}{q}, 
\end{align}
where the summation is over the set of integers $\bm{s}=\left(s_1, s_2\right)\in\N_{\ge 0}\times\Z$ meeting the stated conditions. Most of the work consists in establishing a precise enough asymptotic expansion for the global inhomogeneous sum. The one for the auxiliary sum  follows from it. \\

\noindent Focussing consequently first on the quantity $\mathcal{S}_{\xi}^{(\bm{\gamma})}\left(\bm{\mathcal{T}}; q\right)$, in accordance with the decomposition~\eqref{decomCQT}, the sum is split (with obvious notation)   into two subsums~: 
\begin{align}\label{decomSIB}
\mathcal{S}_{\xi}^{(\bm{\gamma})}\left(\bm{\mathcal{T}}; q\right)\;=\; \widehat{\mathcal{S}}_{\xi}^{(\bm{\gamma})}\left(\bm{\mathcal{T}}; q\right)\;+\;  \partial \mathcal{S}_{\xi}^{(\bm{\gamma})}\left(\bm{\mathcal{T}}; q\right).
\end{align}
The \emph{main subsum} $ \widehat{\mathcal{S}}_{\xi}^{(\bm{\gamma})}\left(\bm{\mathcal{T}}; q\right)$ corresponds to a regime where the sharp estimate~\eqref{refestimcoro} established in Corollary~\ref{coroutile} applies whereas the cruder upper bound~\eqref{mainupbprob}  in the same statement suffices to estimate the  \emph{complementary subsum} $\partial \mathcal{S}_{\xi}^{(\bm{\gamma})}\left(\bm{\mathcal{T}}; q\right)$.  The main result of this section determines the asymptotic behavior of the global inhomogeneous sum $\mathcal{S}_{\xi}^{(\bm{\gamma})}\left(\bm{\mathcal{T}}; q\right)$ regardless of the relative size of the parameters $T_1$ and $T_2$ on the one hand and of the integer $q$ on the other. This is crucial to estimate the off--diagonal probability term in the following section as the parameter $\bm{\mathcal{T}}=\left(T_1, T_2\right)$ is then  taken as a suitable function of $q$.

\begin{prop}[Asymptotic behavior of the global inhomogeneous sum $\mathcal{S}_{\xi}^{(\bm{\gamma})}\left(\bm{\mathcal{T}}; q\right)$]\label{asymptinhomg}
Recall that $\xi~:~\left[1, \infty\right)\rightarrow (0, 1]$ is an approximation function. Fix the limit times  $\bm{\mathcal{T}}=(T_1, T_2)$, an integer $q\ge 1$ and   the shift vector 
\begin{equation}\label{eqshift}
\bm{\gamma}=\left(\gamma_1, \gamma_2\right) \in \left[0, \;\frac{1}{q}\right)\times \left[-\frac{1}{2q}, \;\frac{1}{2q}\right).
\end{equation} 
Then,  
the global inhomogeneous sum $\mathcal{S}_{\xi}^{(\bm{\gamma})}\left(\bm{\mathcal{T}}; q\right)$ defined in~\eqref{defaxilfixq} satisfies the asymptotic expansion
\begin{align*}
&\mathcal{S}_{\xi}^{(\bm{\gamma})}\left(\bm{\mathcal{T}}; q\right)\;=\; \frac{2\sqrt{2}}{\sqrt{\pi}}\cdot\left(T_2-T_1\right)\cdot\sqrt{q^3\cdot\xi(q)}\;+\; O\left(\mathcal{E}_{\xi}^{\left(\bm{\gamma}\right)}\left(\bm{\mathcal{T}}; q\right)\right),
\end{align*}
where the error term $\mathcal{E}_{\xi}^{\left(\bm{\gamma}\right)}\left(\bm{\mathcal{T}}, q\right)$ is defined as 
\begin{align*}
\mathcal{E}_{\xi}^{\left(\bm{\gamma}\right)}\left(\bm{\mathcal{T}}, q\right)\;=\;\sqrt{q \xi(q)}\cdot&\left(\left(1+\sqrt{q \xi(q)}\right)\cdot\left(1+T_2-T_1+\sqrt{T_2}-\sqrt{T_1}\right)\right.\\
&\quad  \left.+\left|\log\left(\frac{T_2}{\max\left\{T_1, 1/q\right\}}\right)\right|\right)
+\Lambda_{\xi}^{\left(\bm{\gamma}\right)}\left(\bm{\mathcal{T}}; q\right).
\end{align*} 
Here, $\Lambda_{\xi}^{\left(\bm{\gamma}\right)}\left(\bm{\mathcal{T}}; q\right)$ is the sum of four terms which are nonzero only conditionally. Specifically, $$\Lambda_{\xi}^{\left(\bm{\gamma}\right)}\left(\bm{\mathcal{T}}, q\right)\;=\; \sum_{i=1}^4 \Lambda_{\xi}^{\left(\bm{\gamma}, i\right)}\left(\bm{\mathcal{T}}; q\right),$$
where
\begin{itemize}
\item $\Lambda_{\xi}^{\left(\bm{\gamma}; 1\right)}\left(\bm{\mathcal{T}}, q\right)$ vanishes unless 
\begin{align*}
T_1<\frac{\xi(q)}{q} \qquad \textrm{and}\qquad \frac{s_1}{q}+\gamma_1<\frac{3\xi(q)}{2q}\quad  \textrm{ for some } s_1\in\left\{ 0,1\right\},
\end{align*} 
in which case it equals 
\begin{align*}
\Lambda_{\xi}^{\left(\bm{\gamma}, 1\right)}\left(\bm{\mathcal{T}}, q\right)\;=\; 1\;+\;q\sqrt{T_1} \;+\;
\begin{cases}
\xi(q)/\left(q\sqrt{T_1}\right) & \textrm{if} \quad\left|\gamma_2\right|\le\xi(q)/\left(2q\right);\\
0&\textrm{otherwise};
\end{cases}
\end{align*}

\item $\Lambda_{\xi}^{\left(\bm{\gamma}, 2\right)}\left(\bm{\mathcal{T}}; q\right)$ vanishes unless 
\begin{align*}
T_2-T_1\;\ge\; \frac{\xi(q)}{q},
\end{align*} 
in which case it equals 
\begin{align*}
&\Lambda_{\xi}^{\left(\bm{\gamma}, 2\right)}\left(\bm{\mathcal{T}}, q\right)\;=\;  \\
&\min\!\left\{1, \sqrt{\frac{\xi(q)}{qT_2-\xi(q)}}\right\}\cdot \left(\sqrt{q\left(qT_2-\xi(q)\right)}+\max\!\left\{\exp\!\left(-\frac{\left\{\xi(q)/2\pm q\gamma_2\right\}^2}{2q\left(qT_2-\xi(q)\right)}\right)\right\}\right); 
\end{align*}

\item $\Lambda_{\xi}^{\left(\bm{\gamma}, 3\right)}\left(\bm{\mathcal{T}}; q\right)$ vanishes unless 
\begin{align*}
\max\left\{q\left(T_1-\gamma_1\right)-\xi(q), 0\right\}\;\le\; \left\lfloor \frac{3}{2}\cdot \xi(q)-q\gamma_1\right\rfloor,
\end{align*} 
in which case it equals 
\begin{align*}
&\Lambda_{\xi}^{\left(\bm{\gamma}, 3\right)}\left(\bm{\mathcal{T}}, q\right)\;=\;  1;
\end{align*}

\item $\Lambda_{\xi}^{\left(\bm{\gamma}, 4\right)}\left(\bm{\mathcal{T}}; q\right)$ vanishes unless 
\begin{align*}
\max\left\{\frac{\xi(q)}{q}, T_1\right\}\;\le\; \gamma_1-\frac{\xi(q)}{2q}\;<\; \gamma_1+\frac{\xi(q)}{2q}\;\le\; T_2,
\end{align*} 
in which case it equals 
\begin{align*}
&\Lambda_{\xi}^{\left(\bm{\gamma}, 4\right)}\left(\bm{\mathcal{T}}, q\right))\;=\; \sqrt{\frac{\xi(q)}{q\gamma_1-\xi(q)/2}}\cdot\left(1+\sqrt{q\cdot\left(q\gamma_1+\frac{\xi(q)}{2}\right)}\right)\nonumber\\
&\qquad +\left(\frac{\xi(q)}{q\gamma_1-\xi(q)/2}\right)^{3/2}\cdot\left(1+\frac{\left(q\gamma_1+\xi(q)/2\right)^{3/2}}{\sqrt{q}\cdot\left(q\gamma_1-\xi(q)/2\right)}+\sqrt{q\cdot\left(q\gamma_1+\frac{\xi(q)}{2}\right)}\right).
\end{align*}
\end{itemize}
In particular, $\Lambda_{\xi}^{\left(\bm{\gamma}, 1\right)}\left(\bm{\mathcal{T}}; q\right)=\Lambda_{\xi}^{\left(\bm{\gamma}, 3\right)}\left(\bm{\mathcal{T}}; q\right)=\Lambda_{\xi}^{\left(\bm{\gamma}, 4\right)}\left(\bm{\mathcal{T}}; q\right)=0$ and 
\begin{align}\label{redasympt}
\Lambda_{\xi}^{\left(\bm{\gamma}\right)}\left(\bm{\mathcal{T}}; q\right)=\Lambda_{\xi}^{\left(\bm{\gamma}, 2\right)}\left(\bm{\mathcal{T}}; q\right)\qquad \textrm{whenever}\qquad q\ge \frac{4}{T_1}\cdotp
\end{align}
\end{prop}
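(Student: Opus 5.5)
The plan is to follow the decomposition~\eqref{decomSIB}: estimate the main subsum $\widehat{\mathcal{S}}_{\xi}^{(\bm{\gamma})}\left(\bm{\mathcal{T}}; q\right)$ with the refined estimate~\eqref{refestimcoro} of Corollary~\ref{coroutile}, and bound the boundary subsum $\partial \mathcal{S}_{\xi}^{(\bm{\gamma})}\left(\bm{\mathcal{T}}; q\right)$ with the cruder bound~\eqref{mainupbprob}, together with Cases~2--4 of Proposition~\ref{proppasssqbis}.

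\textbf{Main subsum.} For a rational in $\widehat{\Q}_{\xi}^{(\bm{\gamma})}(\bm{\mathcal{T}})$, the rectangle has
\begin{align*}
x^-=\frac{s_1}{q}+\gamma_1-\frac{\xi(q)}{2q}\;\ge\;\frac{\xi(q)}{q},\qquad \alpha=\beta=\frac{\xi(q)}{q},
\end{align*}
and it lies entirely on one side of the abscissa axis. By Case~3 we may therefore assume $y^->0$. Two regimes arise.
\begin{itemize}
\item When $x^-\gg \alpha$ and $y^-$ satisfies~\eqref{refcondi}, the main contribution is $\frac{2}{\pi}\sqrt{\alpha/x^-}\,\exp(-(y^-)^2/(2x^+))$.
\item Otherwise the exponential factor is small, and~\eqref{mainupbprob} suffices.
\end{itemize}
Summing over $s_2\in\Z$ at fixed $s_1$ is a Riemann sum with mesh $1/q$ of a Gaussian in $y$. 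Both signs together give $q\sqrt{2\pi x^+}+O(1)$, with errors controlled by the total variation of the Gaussian. Multiplying by $\frac{2}{\pi}\sqrt{\xi(q)/(q x^-)}$ and using $x^+/x^-=1+O(\alpha/x^-)$ gives $\frac{2\sqrt{2}}{\sqrt{\pi}}\sqrt{q\,\xi(q)}\,(1+O(\xi(q)/(qx^-)))+O(\sqrt{\xi(q)/(qx^-)})$. The error terms of~\eqref{refestimcoro} summed over $s_2$ give contributions of the same shape: the $\beta$-terms are $O(q\beta)=O(\xi(q))$ and the $\beta y^+/x^+$ term is $O(\beta\sqrt q)$.

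\textbf{Summing over $s_1$.} The number of admissible $s_1$ is $q(T_2-T_1)+O(1)$, so the main term becomes $\frac{2\sqrt2}{\sqrt\pi}(T_2-T_1)\sqrt{q^3\xi(q)}$. The $O(1)$ endpoint discrepancies contribute $O(\sqrt{q\xi(q)})$. The secondary terms $\sum_{s_1}\xi(q)/(q x^-)\cdot\sqrt{q\xi(q)}$ and $\sum_{s_1}\sqrt{\xi(q)/(qx^-)}$ are harmonic-type sums over $x^-\in[\max\{T_1,\xi(q)/q\},T_2]$ with spacing $1/q$. They produce respectively
\begin{align*}
\sqrt{q\xi(q)}\,\left|\log\left(\frac{T_2}{\max\{T_1,1/q\}}\right)\right|\qquad\textrm{and}\qquad \sqrt{q\xi(q)}\left(\sqrt{T_2}-\sqrt{T_1}\right)\cdot(\ldots),
\end{align*}
matching the stated $\mathcal{E}_{\xi}^{(\bm{\gamma})}$. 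The term $\Lambda^{(4)}$ collects the first admissible index $s_1=0$, which is present only when $\gamma_1$ itself lies in the range. For that index the ratio $\alpha/x^-$ is not small, so the $(\alpha/x^-)^{3/2}\max\{1,(y^-)^2/x^+\}$ remainder of~\eqref{refestimcoro} must be kept explicitly and summed over $s_2$. The pieces $\sqrt{q(q\gamma_1+\xi/2)}$ and the like come from $\sum_{s_2}(y^-)^2/x^+\cdot e^{-\cdots}\asymp q\sqrt{x^+}$.

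\textbf{Boundary subsum.} Three kinds of rationals fall in $\partial\Q$.
\begin{itemize}
\item \emph{Those with $|s_2/q+\gamma_2|<\xi(q)/(2q)$.} These are rectangles straddling the axis, at most one $s_2$ per $s_1$. Bound each by~\eqref{mainupbprob} or Case~4.(a); using $1-\frac2\pi\arcsin\sqrt{x^-/x^+}\ll\sqrt{\alpha/x^-}$, summing over $s_1$ gives $O(\sqrt{q\xi(q)}(\sqrt{T_2}-\sqrt{T_1})+\ldots)$. The near-origin straddling square gives the $\xi(q)/(q\sqrt{T_1})$ piece of $\Lambda^{(1)}$.
\item \emph{Those whose horizontal interval meets $T_1$ or $T_2$, or has $x^-<\xi(q)/q$.} There are $O(1)$ values of $s_1$ here, so one sums $\ppi$ over $s_2$ using~\eqref{mainupbprob}. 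This gives $O(1+q\sqrt{x^+})\cdot\min\{1,\sqrt{\alpha/x^-}\}$, yielding $\Lambda^{(1)},\Lambda^{(2)},\Lambda^{(3)}$. In particular, at the right end $x^-=T_2-\xi(q)/q$, and the exponential factor with $y^-=\{\xi(q)/2\pm q\gamma_2\}/q$ appears for the nearest straddling-type row.
\end{itemize}
Finally, when $q\ge4/T_1$ one has $T_1>\xi(q)/q$ and $x^-\ge T_1-1/q>\frac32\xi(q)/q$, so the conditions for $\Lambda^{(1)},\Lambda^{(3)},\Lambda^{(4)}$ fail, giving~\eqref{redasympt}.

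\textbf{Main obstacle.} I expect the hardest step to be the uniform Riemann-sum comparison in $s_2$ together with the case bookkeeping. The condition~\eqref{refcondi} cuts off the refined regime at $y^-\approx\sqrt{2(x^-)^2/\alpha}$. Beyond that cutoff one must check that the tail
\begin{align*}
\sum_{s_2}\exp\left(-\frac{(y^-)^2}{2x^+}\right)\cdot\min\left\{\ldots\right\}
\end{align*}
is negligible, i.e.\ $\ll\sqrt{q\xi(q)}$ times the allowed error. I would first try the bound $\exp(-x^-/\alpha)$, uniformly in all ranges of $T_1,T_2$ relative to $1/q$, since that uniformity is needed later in Section~\ref{asymoffdiag}.
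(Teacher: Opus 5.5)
Your proposal is correct and takes essentially the paper's own route. Both split into a main subsum and a boundary subsum. On the main subsum, both apply the refined estimate \eqref{refestimcoro}, compare the sum over $s_2$ with a Gaussian integral, and then evaluate harmonic-type sums over $s_1$, isolating $s_1=0$ as $\Lambda^{(4)}$. On the boundary subsum, both use \eqref{mainupbprob} with Case~4.(a) over the straddling, near-origin, near-$T_1$ and near-$T_2$ pieces, which produce $\Lambda^{(1)},\Lambda^{(2)},\Lambda^{(3)}$. One point goes beyond the paper: it applies the refined estimate to every $s_2>0$ without checking \eqref{refcondi}, whereas your proposed tail bound $\exp(-x^-/\alpha)$ decays geometrically in $s_1$, so it contributes only $O(1+\sqrt{q\,\xi(q)})$ and closes that gap.
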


\noindent It should be noted that the assumption~\eqref{eqshift} is made without loss of generality in view of the $(1/q)$--periodicity in the plane of the rationals with denominators $q$. A consequence of this statement is a similar asymptotic expansion for the auxiliary inhomogeneous sum $\mathcal{O}_{ \xi}^{(\bm{\gamma})}\left(\bm{\mathcal{T}}; q\right)$ defined in~\eqref{defaxilfixqbis}.

\begin{coro}[Asymptotic behavior of the auxiliary homogeneous  sum $\mathcal{O}_{ \xi}\!\left(\bm{\mathcal{T}}; q\right)$]\label{asymptinhomgbis} Keep the notation and assumptions of Proposition~\ref{asymptinhomg}. Then, provided that 
\begin{equation*}
q\;\ge\; \frac{1}{T_1},
\end{equation*}
one has that  
\begin{equation*}
\mathcal{O}_{ \xi}\!\left(\bm{\mathcal{T}}; q\right)\;=\; \frac{\sqrt{2}}{\sqrt{\pi}}\cdot\left(T_2^2-T_1^2\right)\cdot\sqrt{q^3\cdot\xi(q)}\;+\;O\left(V\!\left(\bm{\mathcal{T}}\right)\cdot\sqrt{q\cdot\xi(q)^3}\;+\; T_2\cdot \mathcal{E}_{\xi}^{\left(\bm{0}\right)}\!\left(\bm{\mathcal{T}}, q\right)\right)
\end{equation*}
with an absolute implicit constant. Here, $V\!\left(\bm{\mathcal{T}}\right)$ is the constant defined in~\eqref{defVT} and $\mathcal{E}_{\xi}^{\left(\bm{0}\right)}\!\left(\bm{\mathcal{T}}, q\right)$ is the error term $\mathcal{E}_{\xi}^{\left(\bm{\gamma}\right)}\!\left(\bm{\mathcal{T}}, q\right)$ introduced in the statement of Proposition~\ref{asymptinhomg} specialised to the homogeneous case $\bm{\gamma}=\bm{0}$.
\end{coro}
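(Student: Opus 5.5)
The plan is to derive Corollary~\ref{asymptinhomgbis} from Proposition~\ref{asymptinhomg} by partial (Abel) summation in the time variable. The weight $s_1/q$ only depends on the abscissa of the centre of the square. The weighted sum can therefore be written as an integral of unweighted sums over truncated time windows.

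Concretely, with $\bm{\gamma}=\bm{0}$, I write $s_1/q=T_1+\int_{T_1}^{s_1/q}\mathrm{d}t$ for every centre $s_1/q\ge T_1$. Exchanging summation and integration then gives
\begin{equation*}
\mathcal{O}_{\xi}\!\left(\bm{\mathcal{T}}; q\right)\;=\;T_1\cdot \mathcal{S}_{\xi}^{(\bm{0})}\!\left(\bm{\mathcal{T}}; q\right)\;+\;\int_{T_1}^{T_2}\sum_{\frac{\bm{s}}{q}\in\Q_\xi(\bm{\mathcal{T}}),\; s_1/q> t}\ppi_\xi\!\left(\bm{\mathcal{T}};\frac{\bm{s}}{q}\right)\cdot\mathrm{d}t,
\end{equation*}
up to the boundary rationals whose centres lie outside $[T_1,T_2]$. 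There are at most $O(1)$ such columns at each end. Their total contribution is $T_2\cdot O(\sqrt{q\xi(q)}\cdot(\ldots))$, which is absorbed in $T_2\cdot\mathcal{E}^{(\bm{0})}_\xi$ via the crude bound~\eqref{mainupbprob} and the column estimates already implicit in Proposition~\ref{asymptinhomg}.

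The inner sum over $s_1/q>t$ is, up to one column at $t$, the sum $\mathcal{S}_{\xi}^{(\bm{0})}((t,T_2);q)$. The rectangles are not truncated by the new window, since the probabilities $\ppi$ depend only on the rectangle and not on $\bm{\mathcal{T}}$. Moreover $t\ge T_1\ge 1/q$, so Proposition~\ref{asymptinhomg} applies with $\bm{\mathcal{T}}$ replaced by $(t,T_2)$. It yields
\begin{equation*}
\mathcal{S}_{\xi}^{(\bm{0})}\!\left((t,T_2); q\right)\;=\;\tfrac{2\sqrt2}{\sqrt\pi}(T_2-t)\sqrt{q^3\xi(q)}\;+\;O\!\left(\mathcal{E}^{(\bm{0})}_\xi((t,T_2),q)\right).
\end{equation*}
For the main terms I need the identity $T_1(T_2-T_1)+\int_{T_1}^{T_2}(T_2-t)\,\mathrm{d}t=\tfrac12(T_2^2-T_1^2)$. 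This produces exactly $\tfrac{\sqrt2}{\sqrt\pi}(T_2^2-T_1^2)\sqrt{q^3\xi(q)}$.

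It remains to control $\int_{T_1}^{T_2}\mathcal{E}^{(\bm{0})}_\xi((t,T_2),q)\,\mathrm{d}t$ by $T_2\cdot\mathcal{E}^{(\bm{0})}_\xi(\bm{\mathcal{T}},q)$. This is the main obstacle, since the error term is not monotone in $t$ term by term. The pieces $\sqrt{q\xi}(1+\sqrt{q\xi})(1+T_2-t+\sqrt{T_2}-\sqrt t)$ are monotone and integrate to at most $(T_2-T_1)$ times their value at $t=T_1$. The logarithmic piece $\sqrt{q\xi}\log(T_2/t)$ integrates to $\sqrt{q\xi}\,(T_2-T_1-T_1\log(T_2/T_1))\le \sqrt{q\xi}\,(T_2-T_1)$, which is fine. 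The $\Lambda$-terms are the delicate part. Only $\Lambda^{(2)}$ depends on $t$, and it does so only through the condition $T_2-t\ge\xi(q)/q$, not through its value. Integrating over $t$ therefore multiplies it by at most $T_2-T_1$. The terms $\Lambda^{(1)},\Lambda^{(3)},\Lambda^{(4)}$ vanish in the homogeneous setting once $t\ge T_1\ge 1/q$ (cf.~\eqref{redasympt}), up to the short range $t<4/q$. That range has length $O(1/q)$ and contributes $O(1+q\sqrt{t})/q\ll 1$; this is where the term $V(\bm{\mathcal{T}})\sqrt{q\xi(q)^3}$ and the condition $q\ge 1/T_1$ enter. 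The column at $s_1/q=t$ that is double counted has total mass $\ll\sqrt{q\xi(q)}\cdot\xi(q)\cdot(\ldots)$ after integration over $t$. Collecting all of these with $T_2-T_1\le T_2$ gives the stated error, with an absolute constant since every bound invoked is absolute.
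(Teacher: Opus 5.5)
Your argument is correct, but it does not follow the paper's route. The paper does not use Proposition~\ref{asymptinhomg} as a black box. It reruns that proof with the weights $s_1/q$ inserted, arguing that each weight is at most about $T_2$, so the error terms produced there are multiplied by at most $T_2$. It then recomputes only the main term, namely twice $\sqrt{\tfrac{2}{\pi}q\xi(q)}\sum_{s_1}\tfrac{s_1}{q}\sqrt{(s_1+\xi(q)/2)/(s_1-\xi(q)/2)}$. Expanding the square root gives $\tfrac{\sqrt2}{\sqrt\pi}(T_2^2-T_1^2)\sqrt{q^3\xi(q)}$ plus a secondary term $\tfrac{\sqrt2}{\sqrt\pi}(T_2-T_1)\sqrt{q\,\xi(q)^3}$, which is where $V(\bm{\mathcal{T}})\sqrt{q\,\xi(q)^3}$ in the statement comes from.

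Your identity $s_1/q=T_1+\int_{T_1}^{T_2}\mathbf{1}_{\{s_1/q>t\}}\,\mathrm{d}t$ instead reduces $\mathcal{O}_\xi$ to the unweighted sums $\mathcal{S}^{(\bm{0})}_\xi((t,T_2);q)$. It then uses Proposition~\ref{asymptinhomg} exactly as stated, exploiting its uniformity in the window. This spares you from reopening the proof, at the cost of integrating the error term in $t$. You do that correctly for the monotone part, the logarithm and $\Lambda^{(2)}$, and your main-term identity checks out.

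One step of your bookkeeping needs fixing. A leftover $O(1)$ from the range $t<4/q$ would not be absorbed by $V(\bm{\mathcal{T}})\sqrt{q\,\xi(q)^3}$, nor by $T_2\,\mathcal{E}^{(\bm{0})}_{\xi}(\bm{\mathcal{T}},q)$, when $\xi(q)$ is very small. In fact no such term arises. For $\bm{\gamma}=\bm{0}$ and $t\ge T_1\ge 1/q\ge \xi(q)/q$, the terms $\Lambda^{(1)}$ and $\Lambda^{(4)}$ attached to the window $(t,T_2)$ vanish directly from their defining conditions, so no appeal to~\eqref{redasympt} is needed. The term $\Lambda^{(3)}$ can survive only if $\xi(q)\ge 2/3$ and $t\le(1+\xi(q))/q$, and then it contributes $O(1/q)\ll T_2\sqrt{q\xi(q)}$.

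Likewise, the straddling column is harmless: even the crude bound $(T_2-T_1)\sqrt{q\xi(q)}$ is at most $T_2\,\mathcal{E}^{(\bm{0})}_{\xi}(\bm{\mathcal{T}},q)$. So your route never produces the term $V(\bm{\mathcal{T}})\sqrt{q\,\xi(q)^3}$. That term is in any case redundant in the statement, since $(T_2-T_1)\sqrt{q\,\xi(q)^3}\le T_2\sqrt{q\xi(q)}$.
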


\noindent Proposition~\ref{asymptinhomg} suffices to determine the asymptotic behavior of the diagonal probabi\-lity term~\eqref{DPT}.

\begin{proof}[Deduction of Proposition~\ref{diagestimterm} from Proposition~\ref{asymptinhomg}] It should be clear that, given integers $Q_2>Q_1\ge 1$,   the diagonal probability term $\Delta_{\xi}\left(\bm{\mathcal{Q}}; \bm{\mathcal{T}}\right)$ defined in~\eqref{DPT} is obtained upon specialising   the quantity $\mathcal{S}_{\xi}^{(\bm{\gamma})}\left(\bm{\mathcal{T}}; q\right)$ to the value  $\bm{\gamma}=\bm{0}$ in the sense that
$\Delta_{\xi}\left(\bm{\mathcal{Q}}; \bm{\mathcal{T}}\right)= \sum_{q=Q_1}^{Q_2} \mathcal{S}_{\xi}^{(\bm{0})}\left(\bm{\mathcal{T}}; q\right).$ Proposition~\ref{diagestimterm} then becomes an immediate consequence of the asymptotic expansion for the quantity $\mathcal{S}_{\xi}^{(\bm{0})}\left(\bm{\mathcal{T}}; q\right)$ stated in the above Proposition~\ref{asymptinhomg} upon noticing that under the assumption~\eqref{asymptlageT_1T_2}, the reduction~\eqref{redasympt} holds.
\end{proof}

\begin{proof}[Deduction of Proposition~\ref{diagestimtermbis} from Proposition~\ref{asymptinhomg}] From the definition of the quantity $\widehat{f}_q$ in~\eqref{dehtfq}, it is immediate that $\widehat{f}_q=\mathcal{S}_{\xi}^{(\bm{0})}\left(\bm{\mathcal{T}}; q\right)$. The conclusion of Proposition~\ref{diagestimtermbis}  is then a particular case of that of  Proposition~\ref{asymptinhomg}.
\end{proof}

\color{black}

\noindent Corollary~\ref{asymptinhomgbis} is  easily derived in Section~\ref{subsubasymexpains} below from the proof of Proposition~\ref{asymptinhomg}. This proof relies on  two auxiliary lemmata, each concerned with one of the subsums introduced in the decomposition~\eqref{decomSIB}. The notation and assumptions introduced in the statement of the proposition are maintained in   the lemmata. \\

\noindent The integral test for convergence is used repeatedly in the various proofs in the follo\-wing form~: if $\alpha\le\beta$ are elements of the extended real line $\R\cup\left\{\pm\infty\right\}$ and if $f~: \left(\alpha, \beta\right)\rightarrow \R$ is  a nonincreasing map, then (with obvious conventions when $\alpha, \beta\in\left\{\pm\infty\right\})$,  $$\int_{\lceil\alpha\rceil}^{\left[\beta\right]}f\;\le\; \sum_{\alpha< k< \beta}f(k)\;\le\; f\left(\lceil\alpha\rceil\right)\;+\;\int_{ \alpha}^{ \beta}f.$$ Here and throughout, $\lfloor\;\cdot\;\rfloor$ and $\lceil\;\cdot\;\rceil$ denote the floor and ceiling functions, respectively, and  $\left\{\;\cdot\;\right\}$   refers to the fractional part. Furthermore, the notation $\left[\beta\right]$ stands for $\lfloor\beta\rfloor$ if $\beta$ is an integer and $\lfloor\beta\rfloor+1$ otherwise. The integral test for convergence is needed to compare various sums to the  three classical integrals
\begin{align}\label{indetitiesintexp}
\int_0^{\infty}e^{-\alpha x^2}\cdot \textrm{d}x=\sqrt{\frac{\pi}{4\alpha}},  \quad \int_0^{\infty}x\cdot e^{-\alpha x^2}\cdot \textrm{d}x=\frac{1}{2\alpha}\;\;  \& \;\; \int_0^{\infty}x^2\cdot e^{-\alpha x^2}\cdot \textrm{d}x=\frac{\sqrt{\pi}}{4(\alpha^{3/2})},
\end{align}
where $\alpha>0$.

\subsection{The Boundary Contribution} 

\begin{lem}[Asymptotic behavior of the boundary subsum $\partial \mathcal{S}_{\xi}^{(\bm{\gamma})}\left(\bm{\mathcal{T}}; q\right)$]\label{asymptinhomg2} Recall that by definition
\begin{equation}\label{border1}
\partial \mathcal{S}_{\xi}^{(\bm{\gamma})}\left(\bm{\mathcal{T}}; q\right)\;=\; \sum_{\frac{\bm{s}}{q}\in \partial \Q_{\xi}^ {(\bm{\gamma})} (\bm{\mathcal{T}})}\ppi_\xi^{(\bm{\gamma})}\!\left(\bm{\mathcal{T}}; \frac{\bm{s}}{q}\right).
\end{equation}
This subsum meets the asymptotic bound
\begin{align*}
\partial\mathcal{S}_{\xi}^{(\bm{\gamma})}\left(\bm{\mathcal{T}}; q\right)\;\ll \; &\sqrt{\xi(q)}\cdot\left(1+\sqrt{q}\cdot\left(\sqrt{T_2}-\sqrt{T_1}\right)\right)
+\mathcal{E}_{\xi}^{\left(\bm{\gamma}\right)}\left(\bm{\mathcal{T}}, q\right).
\end{align*} 
Here, 
the so--called conditional term $\mathcal{E}_{\xi}^{\left(\bm{\gamma}\right)}\left(\bm{\mathcal{T}}, q\right)$ decomposes into the three terms 

$$\mathcal{E}_{\xi}^{\left(\bm{\gamma}\right)}\left(\bm{\mathcal{T}}, q\right)  \;=\; \Lambda_{\xi}^{\left(\bm{\gamma}, 1\right)}\left(\bm{\mathcal{T}}, q\right) \;+\; \Lambda_{\xi}^{\left(\bm{\gamma}, 2\right)}\left(\bm{\mathcal{T}}, q\right)  \;+\; \Lambda_{\xi}^{\left(\bm{\gamma}, 3\right)}\left(\bm{\mathcal{T}}, q\right),$$ where each $\Lambda_{\xi}^{\left(\bm{\gamma}, i\right)}\left(\bm{\mathcal{T}}, q\right)$, $1\le i\le 3$, is defined as part of Proposition~\ref{asymptinhomg}. 
\end{lem}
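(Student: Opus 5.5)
The plan is to split the boundary set $\partial \Q_{\xi}^{(\bm{\gamma})}(\bm{\mathcal{T}})$ according to which of the defining conditions of the main set $\widehat{\Q}_{\xi}^{(\bm{\gamma})}(\bm{\mathcal{T}})$ in~\eqref{decomCQT1} fails, and to estimate each piece with the crude bound~\eqref{mainupbprob} of Corollary~\ref{coroutile}. Write $x^\pm = s_1/q+\gamma_1\pm\xi(q)/(2q)$, truncated to $[T_1,T_2]$, and $y^\pm=s_2/q+\gamma_2\pm\xi(q)/(2q)$. A boundary rational falls into at least one of three families:
\begin{itemize}
\item[(i)] the rectangle meets the horizontal axis, i.e. $|s_2/q+\gamma_2|<\xi(q)/(2q)$;
\item[(ii)] the rectangle is cut by the vertical line $t=T_1$ or $t=T_2$;
\item[(iii)] $x^- < \xi(q)/q$, i.e. the rectangle lies very close to the origin.
\end{itemize}
Since $|\gamma_2|<1/(2q)$ and $\xi(q)\le 1$, family (i) contains at most one value of $s_2$, essentially $s_2\in\{0,-1\}$, for each admissible $s_1$. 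Family (ii) contains at most two values of $s_1$, and family (iii) contains $O(1)$ values of $s_1$, namely $s_1\le \lfloor 3\xi(q)/2-q\gamma_1\rfloor$. Cases~3 and~4 of Proposition~\ref{proppasssqbis} reduce everything to rectangles with $y^-\ge 0$ after reflection and splitting, at the cost of the $\arcsin$ correction. That correction is bounded by $\ll\sqrt{\alpha/x^-}$.

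\textbf{Family (i).} For each $s_1$ the contribution is at most $O(1)$ times the bound~\eqref{mainupbprob} with $y^-=0$, which gives $\ll \sqrt{\alpha/x^-}+\beta/\sqrt{x^-}$, where $\alpha,\beta\le \xi(q)/q$ and $x^-\asymp s_1/q$. Summing $\sqrt{\xi(q)/s_1}$ over $qT_1\lesssim s_1\lesssim qT_2$ by the integral test yields $\sqrt{\xi(q)}\,(1+\sqrt q(\sqrt{T_2}-\sqrt{T_1}))$. This is the first term of the claimed bound. The term $\beta/\sqrt{x^-}$ is dominated by it, since $\xi(q)\le 1$. The first value of $s_1$, where $x^-$ may be of order $T_1$ rather than $s_1/q$, is kept separately. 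It produces the contributions $1$ and $\xi(q)/(q\sqrt{T_1})$ appearing in $\Lambda^{(\bm\gamma,1)}$, the latter only when $|\gamma_2|\le\xi(q)/(2q)$.

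\textbf{Family (iii).} Here $\ppi\le 1$ trivially. The $O(1)$ such rationals with $y$-range near $0$ give $\Lambda^{(\bm\gamma,3)}=1$. For $s_2\ne 0$, the factor $\exp(-(y^-)^2/(2x^+))$ with $x^+\ll \xi(q)/q$ and $y^-\gg |s_2|/q$ makes the sum over $s_2$ geometric, except in the case $T_1<\xi(q)/q$. That case is recorded as the $q\sqrt{T_1}$ term of $\Lambda^{(\bm\gamma,1)}$, obtained by comparing $\sum_{s_2}\beta/\sqrt{T_1}\,e^{-(s_2/q)^2/(2T_1)}$ with $\int e^{-y^2/(2T_1)}\,\mathrm{d}y\cdot q\cdot\beta/\sqrt{T_1}$.

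\textbf{Family (ii).} Fix the boundary column near $T_2$, with $x^-\approx T_2-\xi(q)/q$. Sum~\eqref{mainupbprob} over $s_2\in\Z$. The terms $\min\{1,\sqrt{\alpha/x^-}\}\,e^{-(s_2/q)^2/(2x^+)}$ sum by~\eqref{indetitiesintexp} to $\ll\min\{1,\sqrt{\xi(q)/(qT_2-\xi(q))}\}\cdot\sqrt{q(qT_2-\xi(q))}$. The largest term, $s_2$ nearest to $-q\gamma_2$, is isolated; it produces the maximum of exponentials in $\{\xi(q)/2\pm q\gamma_2\}$. The $\beta$-terms sum to $O(1)$ because $\beta\sqrt{q^2 x}/\sqrt{x}\ll\xi(q)\le1$. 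Together these give $\Lambda^{(\bm\gamma,2)}$. The $T_1$ column is treated identically, with the roles of $T_1$ and $T_2$ swapped, and is absorbed in the same bound.

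The main obstacle will be the bookkeeping in family (ii) and in the first column of (i) and (iii). There the truncation by $[T_1,T_2]$ makes $x^-$ and $x^+$ differ substantially from $s_1/q\pm\xi(q)/(2q)$, so that the ratio $\alpha/x^-$ is no longer small. One must check that the bound $\min\{1,\cdot\}$ in~\eqref{mainupbprob} is exactly what produces the stated $\min$ factors. I will first verify the sum over $s_2$ of Gaussian weights via the integral test with the isolated maximal term, since this step determines the precise form of $\Lambda^{(\bm\gamma,2)}$.
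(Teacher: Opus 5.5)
Your plan is the paper's proof, taken in a different order. The paper's sets $\partial\Q_{\xi}^{(\bm{\gamma},i)}(\bm{\mathcal{T}})$, $1\le i\le 4$, are your near-origin family (iii), your axis-crossing family (i) (which is exactly $s_2=0$), and the $T_1$ and $T_2$ columns of (ii). Each is bounded, as you propose, through \eqref{mainupbprob} with Case~4.(a) and Gaussian sums compared with \eqref{indetitiesintexp}, and your integral-test sum for (i) is \eqref{subsbound2}.

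The genuine problem lies in (iii) and in the $T_1$ column. In \eqref{mainupbprob} the Gaussian of the main term has variance $x^+$, not $x^-$. In (iii), when $x^-=T_1\le\alpha\asymp\xi(q)/q$, the prefactor $\min\{1,\sqrt{\alpha/x^-}\}$ equals $1$ and $x^+\asymp\xi(q)/q$. The sum over $s_2$ is then of order $1+q\sqrt{x^+}$, i.e.\ up to $\sqrt{q\xi(q)}$. That is neither geometric (unless $q\xi(q)\ll 1$) nor $\ll q\sqrt{T_1}$. The $\beta$-term you actually evaluate gives $\sqrt{2\pi}\,q\beta\le\sqrt{2\pi}\,\xi(q)$, not $q\sqrt{T_1}$. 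Likewise, a $T_1$ column of width $\alpha$ contributes about $q\sqrt{\alpha}$, again up to $\sqrt{q\xi(q)}$. $\Lambda^{(\bm{\gamma},2)}$ absorbs this only when $T_2-T_1\ge\xi(q)/q$; otherwise nothing on the right-hand side does.

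This is not bookkeeping: the stated bound fails in these regimes. Take $\xi\equiv 1/4$ and $\bm{\gamma}=(0,1/(4q))$.
\begin{itemize}
\item For $\bm{\mathcal{T}}=(q^{-3},1/(4q))$, only $s_1=0$ occurs and every rational is a boundary one. Each of the $\asymp\sqrt q$ boxes $[q^{-3},1/(8q)]\times[(s_2+1/8)/q,(s_2+3/8)/q]$ with $0\le s_2\le\sqrt q/10$ is hit with probability bounded below.
\item For $\bm{\mathcal{T}}=(1,1+1/(8q))$, only $s_1=q$ occurs. By \eqref{refestimcoro}, the $\asymp q$ boxes with $|s_2|\le q$ are each hit with probability $\asymp q^{-1/2}$.
\end{itemize}
In both cases $\partial\mathcal{S}_{\xi}^{(\bm{\gamma})}(\bm{\mathcal{T}};q)\gg\sqrt q$, whereas the right-hand side is $O(1)$. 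Indeed $\Lambda^{(\bm{\gamma},2)}=0$ since $T_2-T_1<\xi(q)/q$, and in the first case the $\xi(q)/(q\sqrt{T_1})$ part of $\Lambda^{(\bm{\gamma},1)}$ is switched off because $|\gamma_2|>\xi(q)/(2q)$.

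The paper's proof contains the same two slips. The display leading to \eqref{subsbound1} writes $2T_1$ where \eqref{mainupbprob} gives $2x^+=2(T_1+2\xi(q)/q)$. And \eqref{subsbound3}, which equals $\sqrt{q\xi(q)}$ once $T_1\ge\xi(q)/q$, is not covered by the final statement when $T_2-T_1<\xi(q)/q$. In general an extra term of order $1+\sqrt{q\xi(q)}$ is needed.

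Under \eqref{asymptlageT_1T_2}, which is the setting of Proposition~\ref{diagestimterm}, your argument does go through up to constants, as does the paper's:
\begin{itemize}
\item family (iii) is empty, since $T_1\ge 4/q>\xi(q)/q$;
\item $T_2-T_1\ge 1/q\ge\xi(q)/q$, so the $T_1$ column is dominated by $\Lambda^{(\bm{\gamma},2)}$ and $\sqrt{\xi(q)}$;
\item $x^+\le\frac54 x^-$ in both columns, so using $x^-$ in place of $x^+$ costs only a constant.
\end{itemize}
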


\begin{proof}
Decompose the set of boundary rationals $\partial \Q_{\xi}^ {(\bm{\gamma})} (\bm{\mathcal{T}})$ defined in~\eqref{decomCQT0} into four subsets~:
$$ \partial \Q_{\xi}^ {(\bm{\gamma})} (\bm{\mathcal{T}})\;=\;  \bigcup_{i=1}^{4} \partial \Q_{\xi}^ {(\bm{\gamma}, i)} (\bm{\mathcal{T}}).
$$ 
Here, 
\begin{align*}
\partial \Q_{\xi}^ {(\bm{\gamma}, 1)} (\bm{\mathcal{T}}) \;&=\; \left\{\frac{\bm{s}}{q} \in \Q_{\xi}^ {(\gamma_1)} (\bm{\mathcal{T}}) \;:\;   \max\left\{T_1, \; \frac{s_1}{q}+\gamma_1-\frac{\xi(q)}{2q}\right\}\;\le\; \frac{\xi(q)}{q}\right\},\\
 \partial \Q_{\xi}^ {(\bm{\gamma}, 2)} (\bm{\mathcal{T}}) \;&=\; \left\{\frac{\bm{s}}{q}\in \Q_{\xi}^ {(\gamma_1)} (\bm{\mathcal{T}})\;\backslash\; \partial \Q_{\xi}^ {(\bm{\gamma}, 1)} (\bm{\mathcal{T}}) \;:\;   \left|\frac{s_2}{q}+\gamma_2\right| \;\le\; \frac{\xi(q)}{2q}\right\},
 \end{align*}
 \begin{align*}
 \partial \Q_{\xi}^ {(\bm{\gamma}, 3)} (\bm{\mathcal{T}}) \;&=\; \left\{\frac{\bm{s}}{q}\in \Q_{\xi}^ {(\gamma_1)} (\bm{\mathcal{T}})\;\backslash\; \left(\bigcup_{i=1}^{2}\partial \Q_{\xi}^ {(\bm{\gamma}, i)} (\bm{\mathcal{T}}) \right) \;:\; \right. \\ 
 & \qquad  \qquad  \qquad \qquad\qquad  \qquad  \qquad\left. \frac{s_1}{q}+\gamma_1-\frac{\xi(q)}{2q}<T_1<\frac{s_1}{q}+\gamma_1+\frac{\xi(q)}{2q}\right\},
 \end{align*}
 and finally
\begin{align*}
  \partial \Q_{\xi}^ {(\bm{\gamma}, 4)} (\bm{\mathcal{T}}) \;&=\; \left\{\frac{\bm{s}}{q}\in \Q_{\xi}^ {(\gamma_1)} (\bm{\mathcal{T}})\;\backslash\; \left(\bigcup_{i=1}^{3}\partial \Q_{\xi}^ {(\bm{\gamma}, i)} (\bm{\mathcal{T}}) \right) \;:\; \right. \\ 
 & \qquad  \qquad  \qquad \qquad\qquad  \qquad  \qquad\left. \frac{s_1}{q}+\gamma_1-\frac{\xi(q)}{2q}<T_2<\frac{s_1}{q}+\gamma_1+\frac{\xi(q)}{2q}\right\}.
  \end{align*}
 These sets are not guaranteed to be nonempty. Denote by  $\partial \mathcal{S}_{\xi}^{(\bm{\gamma}, i)}\left(\bm{\mathcal{T}}; q\right)$, where $1\le i\le 4$, the corresponding subsums (defined as the restrictions of the sum on the right--hand side of~\eqref{border1}  to each of the above subsets, respectively). Clearly,  
\begin{equation}\label{decoboundsubsum}
\partial \mathcal{S}_{\xi}^{(\bm{\gamma})}\left(\bm{\mathcal{T}}; q\right)\;\le\;  \sum_{i=1}^{4} \partial \mathcal{S}_{\xi}^{(\bm{\gamma}, i)}\left(\bm{\mathcal{T}}; q\right).
\end{equation}
Each of these terms is analysed separately. 

\paragraph{$\bullet$ Sum over the set $\partial \Q_{\xi}^ {(\bm{\gamma}, 1)} (\bm{\mathcal{T}}) $~: } For the set  $  \partial \Q_{\xi}^ {(\bm{\gamma}, 1)} (\bm{\mathcal{T}}) $ to be nonempty, it is necessary that 
\begin{equation}\label{assumptioacse1}
 T_1\;<\;\xi(q)/q \qquad \textrm{and}\qquad \frac{s_1}{q}+\gamma_1\;<\;3\frac{\xi(q)}{2q}\cdotp 
\end{equation}
Since  $0\le \gamma_1<1/q$ (from the assumption~\eqref{eqshift}) and since the map  $\xi$ is throughout assumed to take values in $[0,1]$, this implies in particular that $s_1\in\left\{0,1\right\}$ (recall here that $s_1\ge 0$ from the relations~\eqref{convention}).  The definition of the rectangle $\mathcal{R}_\xi^{(\bm{\gamma})}\!\left(\bm{\mathcal{T}}; \frac{\bm{s}}{q}\right) $ in~\eqref{inhomogrect} then guarantees that whenever $\bm{s}/q\in\partial \Q_{\xi}^ {(\bm{\gamma}, 1)} (\bm{\mathcal{T}})$, 
\begin{equation*}\label{inclurectbord}
\mathcal{R}_\xi^{(\bm{\gamma})}\!\left(\bm{\mathcal{T}}; \frac{\bm{s}}{q}\right) \;\subset\; \left[T_1,\; T_1+ 2\cdot\frac{\xi(q)}{q}\right] \times\,\left[\frac{s_2}{q}+\gamma_2-\frac{\xi(q)}{2q}, \; \frac{s_2}{q}+\gamma_2+\frac{\xi(q)}{2q}\right].
 \end{equation*}
Apply Corollary~\ref{coroutile} with the choice of parameters 
 \begin{align*}
 x^-\;=\; T_1,\qquad x^+\;=\; T_1+\frac{2\xi(q)}{q}, \qquad \alpha=\frac{2\xi(q)}{q}
 \end{align*}
 and also
 \begin{align}\label{casesvert}
 \begin{cases}
 y^\pm\;=\;\left|\frac{s_2}{q}+\gamma_2\right|\pm\frac{\xi(q)}{2q}\quad \textrm{and} \quad \beta=\frac{\xi(q)}{q} \; &\textrm{when} \quad \left|\frac{s_2}{q}+\gamma_2\right|>\frac{\xi(q)}{2q},\\
 y^-= 0, \quad  \textrm{and} \quad  y^+= \beta= \max\left\{\left|\frac{s_2}{q}+\gamma_2\pm \frac{\xi(q)}{2q}\right|\right\} \; &\textrm{when} \quad \left|\frac{s_2}{q}+\gamma_2\right|\le\frac{\xi(q)}{2q}.
 \end{cases}
 \end{align}
Relying on Case~4.(a) in Proposition~\ref{proppasssqbis} to deal with the  second possibility (which may never occur and, when it does, imposes that   $s_2=0$ under the assumption~\eqref{eqshift} and the assumption that the approximation function $\xi$ takes values in   $(0,1)$), one obtains that 
\begin{align*}
\partial \mathcal{S}_{\xi}^{(\bm{\gamma}, 1)}\left(\bm{\mathcal{T}}; q\right)\;&=\; \sum_{\frac{\bm{s}}{q}\in \partial \Q_{\xi}^ {(\bm{\gamma}, 1)} (\bm{\mathcal{T}})}\ppi_\xi^{(\bm{\gamma})}\!\left(\bm{\mathcal{T}}; \frac{\bm{s}}{q}\right)\\
&\ll \; \min\left\{1, \sqrt{\frac{\xi(q)}{q\cdot T_1}}\right\}\cdot \sum_{\underset{\left|\frac{s_2}{q}+\gamma_2\right|>\frac{\xi(q)}{2q}}{s_2\in\Z~:}}\exp\left(-\frac{\left(\left|\frac{s_2}{q}+\gamma_2\right|-\frac{\xi(q)}{2q}\right)^2}{2T_1}\right)\\
& \qquad +\left[ \min\left\{1, \sqrt{\frac{\xi(q)}{q\cdot T_1}}\right\}\;+\;\frac{1}{\sqrt{T_1}}\cdot\max\left\{\left|\gamma_2\pm \frac{\xi(q)}{2q}\right|\right\} \right]_{(\gamma_2, \xi)}^*.
\end{align*}
Here, the bracket-star notation $\left[\;\;\right]^*_{{(\gamma_2, \xi)}}$ means that the corresponding term vanishes unless 
$\left|\gamma_2\right|\le\xi(q)/2q$. 
With the help of the identities~\eqref{indetitiesintexp}, the sum on the right--hand side of the above relation compares as
\begin{align}
&\sum_{\underset{\left|\frac{s_2}{q}+\gamma_2\right|>\frac{\xi(q)}{2q}}{s_2\in\Z~:}}\exp\left(-\frac{\left(\left|\frac{s_2}{q}+\gamma_2\right|-\frac{\xi(q)}{2q}\right)^2}{2T_1}\right)\nonumber\\
&\quad \quad  \le\; \exp\left(-\frac{\left\{\frac{\xi(q)}{2}-q\gamma_2\right\}^2}{2q^2T_1}\right)+\exp\left(-\frac{\left\{\frac{\xi(q)}{2}\;+\; q\gamma_2\right\}^2}{2q^2T_1}\right)+q\sqrt{2\pi T_1}.\label{stimsubsums21}
\end{align}
Note also  that under the assumption that $\left|\gamma_2\right|\le\xi(q)/(2q)$, 
one has that $\max\left\{\left| \gamma_2\pm \xi(q)/(2q)\right|\right\}
\le \xi(q)/q$. As a consequence, since $T_1\le \xi(q)/q$ in this regime,
\begin{align}
\partial \mathcal{S}_{\xi}^{(\bm{\gamma}, 1)}\left(\bm{\mathcal{T}}; q\right)\;& \ll \;  q\sqrt{T_1}+1  + \left[\frac{\xi(q)}{q\sqrt{T_1}}\right]_{(\gamma_2, \xi)}^*,\label{subsbound1}
\end{align}
which upper bound is valid whenever the assumption~\eqref{assumptioacse1} holds.

\paragraph{$\bullet$ Sum over the set $\partial \Q_{\xi}^ {(\bm{\gamma}, 2)} (\bm{\mathcal{T}}) $~: } the set $\partial \Q_{\xi}^ {(\bm{\gamma}, 2)} (\bm{\mathcal{T}}) $ is nonempty precisely when 
$\left|\gamma_2\right|\le\xi(q)/(2q)$, in which case 
\begin{align}
\mathcal{R}_\xi^{(\bm{\gamma})}\!\left(\bm{\mathcal{T}}; \frac{\bm{s}}{q}\right) \;&\subset\; \left[\frac{s_1}{q}+\gamma_1-\frac{\xi(q)}{2q},\; \frac{s_1}{q}+\gamma_1+\frac{\xi(q)}{2q}\right] \times\,\left[\frac{s_2}{q}+\gamma_2-\frac{\xi(q)}{2q}, \; \frac{s_2}{q}+\gamma_2+\frac{\xi(q)}{2q}\right] \nonumber\\
&\underset{\eqref{deinhomogstrip}}{\subset} \left[T_1-\frac{\xi(q)}{q},\; T_2+\frac{\xi(q)}{q}\right] \times\,\left[ \gamma_2-\frac{\xi(q)}{2q}, \;  \gamma_2+\frac{\xi(q)}{2q}\right]. \label{inclusubcase2}
\end{align} 
Apply Corollary~\ref{coroutile} (with the help of Case~4.(a) in Proposition~\ref{proppasssqbis}) with the choice of parameters 
 \begin{align*}
 x^\pm\;=\; \frac{s_1}{q}+\gamma_1\pm\frac{\xi(q)}{2q} \qquad \textrm{ and }\qquad \alpha\;=\;\frac{2\xi(q)}{q},
 \end{align*}
 and also $y^\pm$ and $\beta$ as in the distinction of cases~\eqref{casesvert} with $s_2=0$. Note that $x^->0$ as otherwise the inclusion~\eqref{inclusubcase2} would mean that $ \bm{s}/q\in \partial \Q_{\xi}^ {(\bm{\gamma}, 1)} (\bm{\mathcal{T}}) $, which is ruled out by the definition of the set $  \partial \Q_{\xi}^ {(\bm{\gamma}, 2)} (\bm{\mathcal{T}}) $.  Then, upon setting 
 \begin{align}\label{defs*}
 s^*_1\;=\; \left\lfloor \frac{3}{2}\cdot \xi(q)-q\gamma_1\right\rfloor ,
 \end{align}
one obtains, as before,
 \begin{align}
&\partial \mathcal{S}_{\xi}^{(\bm{\gamma}, 2)}\left(\bm{\mathcal{T}}; q\right)\;=\; \sum_{\frac{\bm{s}}{q}\in \partial \Q_{\xi}^ {(\bm{\gamma}, 2)} (\bm{\mathcal{T}})}\ppi_\xi^{(\bm{\gamma})}\!\left(\bm{\mathcal{T}}; \frac{\bm{s}}{q}\right)\nonumber\\
&\ll \; \sum_{\underset{q\left(T_1-\gamma_1\right)-\xi(q)\le s_1\le q\left(T_2-\gamma_1\right)+\xi(q)}{s_1\ge 0~:}}\left( \min\left\{1, \sqrt{\frac{\xi(q)}{q\cdot \left(\frac{s_1}{q}+\gamma_1-\frac{\xi(q)}{2q}\right)}}\right\}+\frac{\beta}{\sqrt{\frac{s_1}{q}+\gamma_1+\frac{\xi(q)}{2q}}}\right)\nonumber\\
&\ll \left[1\right]^{**}_{\left(q, T_1, \gamma_1, \xi\right)}\;+\; \sqrt{ \xi(q)}\cdot \sum_{\underset{q\left(T_1-\gamma_1\right)-\xi(q)\le s_1\le q\left(T_2-\gamma_1\right)+\xi(q)}{s_1>s_1^*~: }}\frac{1}{\sqrt{s_1+q\gamma_1-\xi(q)/2}}\nonumber\\
&\ll \left[1\right]^{**}_{\left(q, T_1, \gamma_1, \xi\right)}\;+\;  \sqrt{\xi(q)}\cdot \left(1+\sqrt{q}\cdot\left(\sqrt{T_2}-\sqrt{T_1}\right)\right).\label{subsbound2}
\end{align}
 Here, the bracket-double-star notation $\left[\;\;\right]^{**}_{\left(q, T_1, \gamma_1, \xi\right)}$ means that the corresponding term vanishes unless 
 \begin{equation}\label{constras*}
s_1^*\;\ge\; q\left(T_1-\gamma_1\right)-\xi(q).
 \end{equation}
 
 \paragraph{$\bullet$ Sum over the set $\partial \Q_{\xi}^ {(\bm{\gamma}, 3)} (\bm{\mathcal{T}}) $~: } when $\bm{s}/q \in \partial \Q_{\xi}^ {(\bm{\gamma}, 3)} (\bm{\mathcal{T}}) $, 
\begin{align}
\mathcal{R}_\xi^{(\bm{\gamma})}\!\left(\bm{\mathcal{T}}; \frac{\bm{s}}{q}\right) \subset  \left[T_1,\; T_1+ \frac{\xi(q)}{q}\right] \times\,\left[\frac{s_2}{q}+\gamma_2-\frac{\xi(q)}{2q}, \; \frac{s_2}{q}+\gamma_2+\frac{\xi(q)}{2q}\right] \subset \R_{>0} \times\,\left(\R\backslash\left\{0\right\}\right). \label{inclusubcase3}
\end{align} 
Apply Corollary~\ref{coroutile} with the choice of parameters 
 \begin{align*}
 x^-\;=\; T_1,\qquad x^+\;=\; T_1+\frac{\xi(q)}{q}, \qquad \alpha=\frac{\xi(q)}{q}
 \end{align*}
 and also
 \begin{align*}
 y^\pm\;=\;\left|\frac{s_2}{q}+\gamma_2\right|\pm\frac{\xi(q)}{2q}\qquad \textrm{ and } \qquad \beta=\frac{\xi(q)}{q}\cdotp
 \end{align*}
 The calculations carried out for the sum over the set $\partial \Q_{\xi}^ {(\bm{\gamma}, 1)} (\bm{\mathcal{T}}) $ can be repeated  so as to yield that 
\begin{align}
\partial \mathcal{S}_{\xi}^{(\bm{\gamma}, 3)}&\left(\bm{\mathcal{T}}; q\right)\;=\; \sum_{\frac{\bm{s}}{q}\in \partial \Q_{\xi}^ {(\bm{\gamma}, 3)} (\bm{\mathcal{T}})}\ppi_\xi^{(\bm{\gamma})}\!\left(\bm{\mathcal{T}}; \frac{\bm{s}}{q}\right)\nonumber\\
&\ll \; \min\left\{1, \sqrt{\frac{\xi(q)}{q\cdot T_1}}\right\}\cdot \sum_{\underset{\left|\frac{s_2}{q}+\gamma_2\right|>\frac{\xi(q)}{2q}}{s_2\in\Z~:}}\exp\left(-\frac{\left(\left|\frac{s_2}{q}+\gamma_2\right|-\frac{\xi(q)}{2q}\right)^2}{2T_1}\right)\nonumber\\
&\underset{\eqref{stimsubsums21}}{\ll}\; \min\left\{1, \sqrt{\frac{\xi(q)}{q\cdot T_1}}\right\}\cdot \left(q\sqrt{T_1}+\max\left\{\exp\left(-\frac{\left\{\xi(q)/2\pm q\gamma_2\right\}^2}{2q^2T_1}\right)\right\}\right).\label{subsbound3}
\end{align}

 \paragraph{$\bullet$ Sum over the set $\partial \Q_{\xi}^ {(\bm{\gamma}, 4)} (\bm{\mathcal{T}}) $~: } when $\bm{s}/q \in \partial \Q_{\xi}^ {(\bm{\gamma}, 4)} (\bm{\mathcal{T}}) $, 
\begin{align}
\mathcal{R}_\xi^{(\bm{\gamma})}\!\left(\bm{\mathcal{T}}; \frac{\bm{s}}{q}\right) \subset  \left[T_2-\frac{\xi(q)}{q},\; T_2\right] \times\,\left[\frac{s_2}{q}+\gamma_2-\frac{\xi(q)}{2q}, \; \frac{s_2}{q}+\gamma_2+\frac{\xi(q)}{2q}\right] \subset \R \times\,\left(\R\backslash\left\{0\right\}\right). \label{inclusubcase4}
\end{align} 
If, furthermore, $T_2-\frac{\xi(q)}{q}<T_1$, then the inclusions~\eqref{inclusubcase3} part of the previously treated case are here again valid and the corresponding estimate~\eqref{subsbound3} holds. Otherwise, 
\begin{equation}\label{ineqT_2xiT_1}
T_2-\frac{\xi(q)}{q}\ge T_1
\end{equation} 
and, in view of the above inclusions~\eqref{inclusubcase4}, it suffices to reproduce the calculations of the previous case upon switching the role of the pair $\left(T_1, T_1+\xi(q)/q\right)$ to that of the pair $\left(T_2-\frac{\xi(q)}{q}, T_2\right)$. All in all, this gives that 
\begin{align}
&\partial \mathcal{S}_{\xi}^{(\bm{\gamma}, 4)}\left(\bm{\mathcal{T}}; q\right)\;=\; \sum_{\frac{\bm{s}}{q}\in \partial \Q_{\xi}^ {(\bm{\gamma}, 4)} (\bm{\mathcal{T}}) }\ppi_\xi^{(\bm{\gamma})}\!\left(\bm{\mathcal{T}}; \frac{\bm{s}}{q}\right)\nonumber\\
&\ll\; \min\left\{1, \sqrt{\frac{\xi(q)}{q\cdot T_1}}\right\}\cdot \left(q\sqrt{T_1}+\max\left\{\exp\left(-\frac{\left\{\xi(q)/2\pm q\gamma_2\right\}^2}{2q^2T_1}\right)\right\}\right)\;+\; \nonumber\\
&\left[\min\!\left\{1, \sqrt{\frac{\xi(q)}{q\cdot T_2-\xi(q)}}\right\}\cdot \left(q\sqrt{T_2-\frac{\xi(q)}{q}}+\max\!\left\{\exp\left(-\frac{\left\{\xi(q)/2\pm q\gamma_2\right\}^2}{2q\left(qT_2-\xi(q)\right)}\right)\right\}\right)\right]^{\dagger}_{\left(\bm{\mathcal{T}}, q, \xi\right)},
\label{subsbound4}
\end{align}
where the dagger--bracket notation $\left[\;\;\right]^{\dagger}_{\left(\bm{\mathcal{T}}, q, \xi\right)}$ means that the corresponding term va\-ni\-shes unless the inequality~\eqref{ineqT_2xiT_1} is met.

\paragraph{$\bullet$ } The statement of the lemma follows from the decomposition~\eqref{decoboundsubsum} for the boundary sum $\partial \mathcal{S}_{\xi}^{(\bm{\gamma})}\left(\bm{\mathcal{T}}; q\right)$ and from the upper bounds~\eqref{subsbound1} (which is valid under the assumption~\eqref{assumptioacse1}), \eqref{subsbound2}, \eqref{subsbound3} and \eqref{subsbound4} 
for each of the subsums  $\partial \mathcal{S}_{\xi}^{(\bm{\gamma}, i)}\left(\bm{\mathcal{T}}; q\right)$, $1\le i\le 4$, respectively.
\end{proof}

\subsection{The Main Contribution}

\begin{lem}[Asymptotic behavior of the main subsum $ \widehat{\mathcal{S}}_{\xi}^{(\bm{\gamma})}\left(\bm{\mathcal{T}}; q\right)$]\label{asymptinhomg3} Recall that by definition
\begin{equation*}\label{border2}
\widehat{\mathcal{S}}_{\xi}^{(\bm{\gamma})}\left(\bm{\mathcal{T}}; q\right)\;=\; \sum_{\frac{\bm{s}}{q}\in \widehat{\Q}_{\xi}^ {(\gamma)}( \bm{\mathcal{T}})}\ppi_\xi^{(\bm{\gamma})}\!\left(\bm{\mathcal{T}}; \frac{\bm{s}}{q}\right),
\end{equation*} 
where the set of rationals $\widehat{\Q}_{\xi}^ {(\gamma)}( \bm{\mathcal{T}})$ is defined in~\eqref{decomCQT1}. 
Then, this subsum meets the asymptotic expansion
\begin{align*}
\widehat{\mathcal{S}}_{\xi}^{(\bm{\gamma})}\left(\bm{\mathcal{T}}; q\right)\;&=\; 2\sqrt{\frac{2}{\pi}}\cdot \sqrt{q^3\xi(q)}\cdot \left(T_2-T_1\right)  \;+\; O\!\!\left(\Lambda_{\xi}^{\left(\bm{\gamma}, 4\right)}\left(\bm{\mathcal{T}}, q\right)\right)\\
&+O\!\!\left(\sqrt{q\xi(q)}\cdot\left(\left(\sqrt{q\xi(q)}+1\right)\cdot\left(T_2-T_1+1\right)+\left|\log\left(\frac{T_2}{\max\left\{T_1, 1/q\right\}}\right)\right|\right)\right),
\end{align*}
where the conditional term $\Lambda_{\xi}^{\left(\bm{\gamma}, 4\right)}\left(\bm{\mathcal{T}}, q\right)$ is the one introduced in the statement of Proposition~\ref{asymptinhomg}.
\end{lem}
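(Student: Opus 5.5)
The plan is to apply the refined estimate~\eqref{refestimcoro} of Corollary~\ref{coroutile} to each rectangle indexed by the main set $\widehat{\Q}_{\xi}^{(\gamma)}(\bm{\mathcal{T}})$. For such a point $\bm{s}/q$, set
$$x^\pm=\frac{s_1}{q}+\gamma_1\pm\frac{\xi(q)}{2q},\qquad \alpha=\beta=\frac{\xi(q)}{q},\qquad y^\pm=\left|\frac{s_2}{q}+\gamma_2\right|\pm\frac{\xi(q)}{2q}.$$
By symmetry (P1) and the definition~\eqref{decomCQT1}, one has $x^-\ge\max\{\xi(q)/q,T_1\}$ and $y^-\ge 0$. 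The sum then splits according to whether condition~\eqref{refcondi} holds.

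When~\eqref{refcondi} holds, the main term of $\ppi$ is $\frac{2}{\pi}\sqrt{\alpha/x^-}\,e^{-(y^-)^2/(2x^+)}$.
\begin{itemize}
\item First sum over $s_2$. Comparing with the Gaussian integrals~\eqref{indetitiesintexp} via the integral test, the sum over both signs of $s_2/q+\gamma_2$ gives $2\cdot q\sqrt{\pi x^+/2}$. The error is $O(1)$ from the monotone-sum boundary term. Replacing $y^-$ by $|s_2/q+\gamma_2|$ costs a relative factor $O(\xi(q)\cdot\sqrt{q}/\sqrt{x})$-type.
\item This yields, for fixed $s_1$, the expression $\frac{2}{\pi}\sqrt{\xi(q)/(qx^-)}\cdot q\sqrt{2\pi x^+}\,(1+\dots)$, that is, $2\sqrt{2/\pi}\,\sqrt{q\,\xi(q)}\,(1+O(\alpha/x^-))$.
\item Then sum over $s_1$, which ranges over about $q(T_2-T_1)$ values. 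This gives the main term $2\sqrt{2/\pi}\sqrt{q^3\xi(q)}(T_2-T_1)$, plus $O(\sqrt{q\xi(q)})$ from the endpoints.
\item The correction $\sqrt{q\xi(q)}\sum_{s_1}\xi(q)/(s_1+q\gamma_1)$ is harmonic. It produces the $\sqrt{q\xi(q)}\,|\log(T_2/\max\{T_1,1/q\})|$ term, and, for the $s_1=0$ term with $\gamma_1$ small, the conditional term $\Lambda^{(\bm\gamma,4)}_\xi$. This is exactly where the explicit $\gamma_1$-dependence in $\Lambda^{(\bm\gamma,4)}$ arises: it is the single-$s_1$ contribution with $x^\pm=\gamma_1\pm\xi(q)/(2q)$.
\end{itemize}

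The remaining error terms in~\eqref{refestimcoro} are handled the same way.
\begin{itemize}
\item The term $\sqrt{\alpha/x^-}\min\{1,\beta y^+/x^+\}$ summed over $s_2$ gives $\sqrt{\xi/(qx)}\cdot\xi/q\cdot q^2\cdot O(1)$, i.e.\ $O(\xi(q))$ per $s_1$. Over $q(T_2-T_1)$ values of $s_1$ this totals $q\xi(q)(T_2-T_1)$, which is the $(\sqrt{q\xi})^2(T_2-T_1)$ term.
\item The $(\alpha/x^-)^{3/2}\max\{1,(y^-)^2/x^+\}$ terms sum over $s_2$ to $(\xi/(qx))^{3/2}q\sqrt{x}$. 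Over $s_1$ this is dominated by small $s_1$, giving $O(\sqrt{q\xi}\cdot\xi)$.
\item The $\beta/\sqrt{x^\pm}$ terms sum over $s_2$ to $O(q\cdot\xi/q)=O(\xi(q))$ per $s_1$, hence $O(q\xi(q)(T_2-T_1+1))$.
\end{itemize}

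When~\eqref{refcondi} fails, this is possible only if $y^-\gg\sqrt{x^-\cdot x^-/\alpha}$, i.e.\ a Gaussian tail, or if $\alpha\ge x^-$, which is excluded in the main set up to the single $s_1$ already absorbed in $\Lambda^{(\bm\gamma,4)}$. In that case I would use the crude bound~\eqref{mainupbprob}. Since $e^{-(y^-)^2/(2x^+)}$ is then at most $e^{-c\,x^-/\alpha}$ times the term $\frac{\sqrt{x^+}}{y^-}$, the tail sum over $s_2$ is $O(\sqrt{q\xi(q)})$ per $s_1$ times a factor decaying in $s_1$. The main-term sum must also be extended to these $s_2$, which costs the same amount.

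I expect the main obstacle to be bookkeeping the transition in $s_1$. For small $s_1+q\gamma_1$ (comparable to $\xi(q)$), $\alpha/x^-$ is not small, so the refined estimate degrades. The errors must be summed carefully through the harmonic and $x^{-3/2}$ sums so that only the logarithm and $\Lambda^{(\bm\gamma,4)}$ survive. I would first isolate the smallest admissible $s_1$ (giving $\Lambda^{(\bm\gamma,4)}$) and then treat all larger $s_1$ uniformly with $\alpha/x^-\le 1/2$.
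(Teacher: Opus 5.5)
Your proposal is correct and follows essentially the paper's route: apply the refined estimate~\eqref{refestimcoro} of Corollary~\ref{coroutile} with $\alpha=\beta=\xi(q)/q$, compare the $s_2$-sums with the Gaussian integrals~\eqref{indetitiesintexp}, then sum over $s_1$. This gives the main term $2\sqrt{2/\pi}\,\sqrt{q^3\xi(q)}\,(T_2-T_1)$ and a harmonic (logarithmic) error, with the $s_1=0$ term accounting for $\Lambda_{\xi}^{(\bm{\gamma},4)}(\bm{\mathcal{T}},q)$. You are slightly more careful than the paper, which applies~\eqref{refestimcoro} to every point of the main set without checking~\eqref{refcondi}, whereas you handle the Gaussian-tail range where it fails via~\eqref{mainupbprob}; the one slip is that the $(\alpha/x^-)^{3/2}$ error, of size about $\xi(q)^{3/2}\sqrt{q}/(s_1+q\gamma_1-\xi(q)/2)$ per $s_1$, is harmonic in $s_1$ rather than dominated by small $s_1$, which is harmless because it is absorbed by the $\sqrt{q\xi(q)}\,|\log(T_2/\max\{T_1,1/q\})|$ term.
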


\begin{proof}
Decompose the sum $ \widehat{\mathcal{S}}_{\xi}^{(\bm{\gamma})}\left(\bm{\mathcal{T}}; q\right)$ as 
\begin{equation}\label{decompospmhat} 
\widehat{\mathcal{S}}_{\xi}^{(\bm{\gamma})}\left(\bm{\mathcal{T}}; q\right)\;=\;  \widehat{\mathcal{S}}_{\xi}^{(\bm{\gamma}, +)}\left(\bm{\mathcal{T}}; q\right)\;+\;  \widehat{\mathcal{S}}_{\xi}^{(\bm{\gamma}, -)}\left(\bm{\mathcal{T}}; q\right), 
\end{equation}
 where 
\begin{equation}\label{defspmhat}
 \widehat{\mathcal{S}}_{\xi}^{(\bm{\gamma}, \pm)}\left(\bm{\mathcal{T}}; q\right)\;=\; \sum_{\underset{s_2\square 0}{\frac{\bm{s}}{q}\in  \widehat{\Q}_{\xi}^ {(\gamma)}( \bm{\mathcal{T}})}}\ppi_\xi^{(\bm{\gamma})}\!\left(\bm{\mathcal{T}}; \frac{\bm{s}}{q}\right).
\end{equation}
Here, $\square$ is the symbol $>$ if the sign is positive and is the symbol $<$ otherwise. It should be clear that in view of the definition of the set $\widehat{\Q}_{\xi}^ {(\gamma)}( \bm{\mathcal{T}})$  that 
\begin{equation}\label{s+=s-hat} 
\widehat{\mathcal{S}}_{\xi}^{(\bm{\gamma}, +)}\left(\bm{\mathcal{T}}; q\right)\;=\;   \widehat{\mathcal{S}}_{\xi}^{(\bm{\gamma}, -)}\left(\bm{\mathcal{T}}; q\right).
\end{equation}
Consider therefore only the sum $\widehat{\mathcal{S}}_{\xi}^{(\bm{\gamma}, +)}\left(\bm{\mathcal{T}}; q\right)$ and set 
 \begin{align*}
T_1< x^\pm= \frac{s_1}{q}+\gamma_1\pm\frac{\xi(q)}{2q}<T_2, \quad  \alpha=\beta= \frac{\xi(q)}{q}\qquad \textrm{and}\qquad y^\pm=\frac{s_2}{q}+\gamma_2\pm\frac{\xi(q)}{2q}>0.
 \end{align*}
When $\bm{s}/q \in \widehat{\Q}_{\xi}^ {(\gamma)}( \bm{\mathcal{T}}) $ and $s_2>0$, the refined estimate established in  Corollary~\ref{coroutile} reduces with this choice of parameters to 
\begin{equation}\label{probatsqgxi}
\ppi_\xi^{(\bm{\gamma})}\!\left(\bm{\mathcal{T}}; \frac{\bm{s}}{q}\right)= \frac{2}{\pi}\sqrt{\frac{\alpha}{x^-}}\cdot \exp\left(-\frac{\left(y^-\right)^2}{2x^+}\right)\cdot\left(1+O\!\!\left(\sqrt{\alpha}\left(1+\frac{\sqrt{\alpha}}{x^-}\left(1+y^++\frac{\left(y^+\right)^2}{x^-}\right)\right)\right)\right).
\end{equation}
The sum $\widehat{\mathcal{S}}_{\xi}^{(\bm{\gamma}, +)}\left(\bm{\mathcal{T}}; q\right)$ is then estimated in several steps from its definition in~\eqref{defspmhat}.

\paragraph{$\bullet$} The sum over the integers $s_2>0$ involves three subsums (related to the quantities $y^\pm$), each of which compares to an integral in the identities~\eqref{indetitiesintexp}. 
Given an integer $s_1>0$, taking into account the definition of the set $\widehat{\Q}_{\xi}^ {(\gamma)}( \bm{\mathcal{T}})$ in~\eqref{decomCQT1}, and upon setting for the sake of simplicity of notation 
\begin{equation*}
f(s_2)\;=\; \exp\left(-\frac{\left(s_2/q+\gamma_2 - \xi(q)/(2q)\right)^2}{2\left( s_1/q+\gamma_1+\xi(q)/(2q)\right)}\right),
\end{equation*} 
the first one is
\begin{align}
\sum_{\underset{\eqref{decomCQT1}}{s_2>0~:}} f(s_2)\;&=\; \int_{0}^{\infty} f \;+\;O\left( f(0)\right)
& \underset{\eqref{indetitiesintexp}}{=}\; \sqrt{\frac{\pi}{2}\cdot q\cdot\left(s_1+q\gamma_1+\frac{\xi(q)}{2}\right)} \;+\;O\left(1  \right).
\label{subestim1princi}
\end{align} 

\noindent The second and third subsums are  
\begin{align*}
\sum_{\underset{\eqref{decomCQT1}}{s_2>0~:}}  \left(\frac{s_2}{q}+\gamma_2 + \frac{\xi(q)}{2q}\right)\cdot f\left(s_2\right) \qquad \textrm{and}\qquad \sum_{\underset{\eqref{decomCQT1}}{s_2>0~:}}  \left(\frac{s_2}{q}+\gamma_2 + \frac{\xi(q)}{2q}\right)^2\cdot f\left(s_2\right).
\end{align*} 
A comparison with the second and third integrals in the identities~\eqref{indetitiesintexp} yields after a short calculation that 
\begin{align}
\sum_{\underset{\eqref{decomCQT1}}{s_2>0~:}}  \left(\frac{s_2}{q}+\gamma_2 + \frac{\xi(q)}{2q}\right)\cdot f\left(s_2\right)\;\ll\; 1+q\left(\frac{s_1}{q}+\gamma_1+\frac{\xi(q)}{q}\right).
\label{subestim2princi}
\end{align} 
and 
\begin{align}
\sum_{\underset{\eqref{decomCQT1}}{s_2>0~:}} \left(\frac{s_2}{q}+\gamma_2 + \frac{\xi(q)}{2q}\right)^2\cdot f\left(s_2\right)\;\ll\;  1+q\left(\frac{s_1}{q}+\gamma_1+\frac{\xi(q)}{q}\right)^{\frac{3}{2}}. 
\label{subestim3princi}
\end{align} 

\paragraph{$\bullet$} In view of the asymptotic expansion~\eqref{probatsqgxi} for the probability $\ppi_\xi^{(\bm{\gamma})}\!\left(\bm{\mathcal{T}}; \frac{\bm{s}}{q}\right)$, and given the just established estimations~\eqref{subestim1princi}, \eqref{subestim2princi}  and~\eqref{subestim3princi}, the sum $\widehat{\mathcal{S}}_{\xi}^{(\bm{\gamma}, +)}\left(\bm{\mathcal{T}}; q\right)$ defined in~\eqref{defspmhat} becomes 
\begin{align*}
\widehat{\mathcal{S}}_{\xi}^{(\bm{\gamma}, +)}\left(\bm{\mathcal{T}}; q\right)=& \sqrt{\frac{2}{\pi}\cdot q\xi(q)}\cdot\sum_{\underset{\eqref{decomCQT1}}{s_1\neq 0~:}}\sqrt{\frac{s_1+q\gamma_1+\xi(q)/2}{s_1+q\gamma_1-\xi(q)/2}}+O\!\!\left(\sum_{\underset{\eqref{decomCQT1}}{s_1\neq 0~:}}\sqrt{\frac{\xi(q)}{s_1+q\gamma_1-\xi(q)/2}}\right)\\
&+O\!\!\left(\sum_{\underset{\eqref{decomCQT1}}{s_1\neq 0~:}}\xi(q)\cdot\left(1+\frac{\sqrt{q\xi(q)}}{s_1+q\gamma_1-\xi(q)/2}\right)\right) +O\!\!\left(\Lambda_{\xi}^{\left(\bm{\gamma}, 4\right)}\left(\bm{\mathcal{T}}, q\right)\right),
\end{align*}
where the conditional term $\Lambda_{\xi}^{\left(\bm{\gamma}, 4\right)}\left(\bm{\mathcal{T}}, q\right)$ is the one defined in Proposition~\ref{asymptinhomg}. 
It corresponds to the contribution of the term $s_1=0$, whenever it belongs to the set $\widehat{\Q}_{\xi}^ {(\gamma)}( \bm{\mathcal{T}})$. From the definition of this set in~\eqref{decomCQT1}, when $s_1\neq 0$, 
\begin{align*}
 \max\left\{1, \; q\left(\max\left\{T_1, \frac{\xi(q)}{q}\right\}-\gamma_1\right)+\frac{\xi(q)}{2}\right\}\;
 \le\; s_1\;\le\; q\left(T_2-\gamma_1\right)-\frac{\xi(q)}{2}\cdot
\end{align*}
As a consequence, after calculations, 
\begin{align*}
\widehat{\mathcal{S}}_{\xi}^{(\bm{\gamma}, +)}\left(\bm{\mathcal{T}}; q\right)\;=&\; \sqrt{\frac{2}{\pi}\cdot q^3\xi(q)}\cdot \left(T_2-T_1\right)  \;+\; O\!\!\left(\Lambda_{\xi}^{\left(\bm{\gamma}, 4\right)}\left(\bm{\mathcal{T}}, q\right)\right)\\
&+O\!\!\left(\sqrt{q\xi(q)}\cdot\left(\left(\sqrt{q\xi(q)}+1\right)\cdot\left(T_2-T_1+1\right)+\left|\log\left(\frac{T_2}{\max\left\{T_1, 1/q\right\}}\right)\right|\right)\right).
\end{align*}
This completes the proof of the lemma in view of the decomposition~\eqref{decompospmhat} and of the identity~\eqref{s+=s-hat}.
\end{proof}

\subsection{Asymptotic Expansions of the Inhomogeneous Sums.}\label{subsubasymexpains} 

\begin{proof}[Completion of the proof of Proposition~\ref{asymptinhomg}]
This is just a restatement of  the conclusions of Lemmata~\ref{asymptinhomg2} and~\ref{asymptinhomg3}.
\end{proof}

\begin{proof}[Proof of Corollary~\ref{asymptinhomgbis}] All the  calculations in the proof of Proposition~\ref{asymptinhomg} can be carried out  in the simpler case where the inhomogeneous term $\bm{\gamma}$ is set to vanish upon making the following changes~:
\begin{itemize}
\item from their very definitions, each term in the sum defining $\mathcal{O}_{ \xi}\!\left(\bm{\mathcal{T}}; q\right)$ is at most $T_2$ times the corresponding term in the sum defining $ \mathcal{S}_{\xi}^{(\bm{0})}\!\left(\bm{\mathcal{T}}; q\right)$. When reproducing the above calculations, the error term for $\mathcal{O}_{ \xi}\!\left(\bm{\mathcal{T}}; q\right)$ is thus plainly at most $T_2$ times the error term established for $ \mathcal{S}_{\xi}^{(\bm{0})}\!\left(\bm{\mathcal{T}}; q\right)$;\\
\item as far as the leading term is concerned, following the above calculations, it equals twice the quantity 
\begin{align*}
\sqrt{\frac{2}{\pi}\cdot q\xi(q)}\cdot\sum_{\underset{\eqref{decomCQT1}}{s_1\neq 0~:}}&\sqrt{\frac{s_1+q\gamma_1+\xi(q)/2}{s_1+q\gamma_1-\xi(q)/2}}\cdot \frac{s_1}{q}\;= \frac{1}{2}\cdot\frac{\sqrt{2}}{\sqrt{\pi}}\cdot\sqrt{q^3\xi(q)}\cdot\left(T_2^2-T_1^2\right)\\ 
& +\; \frac{1}{2}\cdot\frac{\sqrt{2}}{\sqrt{\pi}}\cdot\sqrt{q\xi(q)^3}\cdot\left(T_2-T_1\right)+O\left(\sqrt{q\cdot \xi(q)^5}\cdot \log\left(\frac{T_2}{T_1}\right)\right),
\end{align*}
where the implicit constant is absolute as long as $q\ge 1/T_1$.
\end{itemize}
The statement follows.
\end{proof}

\section{Asymptotics of the Off--Diagonal Probability Term}\label{asymoffdiag} 

This section is devoted to the proof of Proposition~\ref{offdiagestimterm}. \\

\noindent To this end, keep all notation introduced in the previous section and, given integers $0\le Q_1<Q_2$,  define the auxiliary counting function 
\begin{equation}\label{sommeglobbis} 
\Delta_{\xi}^{(\bm{\gamma})}\left(\bm{\mathcal{T}}; \bm{\mathcal{Q}}\right)\;=\; \sum_{Q_1\le q\le Q_2} \mathcal{S}_{\xi}^{(\bm{\gamma})}\left(\bm{\mathcal{T}}; q\right).
\end{equation}
Here,  $\mathcal{S}_{\xi}^{(\bm{\gamma})}\left(\bm{\mathcal{T}}; q\right)$ is the inhomogeneous sum introduced in~\eqref{defaxilfixq} whose  asymptotic behavior is determined by Proposition~\ref{asymptinhomg}. When $\bm{\gamma}=\bm{0}$, one has that 
\begin{equation}\label{sommeglob} 
\Delta_{\xi}^{(\bm{0})}\left(\bm{\mathcal{T}}; \bm{\mathcal{Q}}\right)\;=\; \Delta_{\xi}\left(\bm{\mathcal{T}}; \bm{\mathcal{Q}}\right),
\end{equation}
where $\Delta_{\xi}\left(\bm{\mathcal{Q}}; \bm{\mathcal{T}}\right)$ is the diagonal probability term defined in~\eqref{DPT}. \\

\noindent Recall that the quantity of interest is the off--diagonal probability term $\Theta_{ \xi}\left(\bm{\mathcal{T}}; \bm{\mathcal{Q}}\right)$ --- see~\eqref{ODPT} for its definition. Its domain of summation is split into four subdomains depending on the choice of  a parameters $\mu>0$. 
More precisely, given integers $q',q\ge Q_1$, let 
\begin{align}\label{defhH} 
H_{\xi}(q,q')=  2\max\left\{\frac{\xi(q)}{q}, \frac{\xi(q')}{q'}\right\}\qquad \textrm{and}\qquad h_\xi^{(\mu)}(q,q')= H_{\xi}(q,q')^{1/2-\mu}.
\end{align}
Consider then the following four regimes and the corresponding subsums~:

\begin{itemize}
\item \textbf{Regime 1~:}  the summation is restricted to those rationals with bounded deno\-mi\-nators corresponding to pairs of squares sufficiently far apart from each other along the horizontal line. Specifically, the sum under consideration in this case is 
\begin{equation}\label{subsm1}
\Theta_{ \xi}^{(1)}\left(\bm{\mathcal{Q}}; \bm{\mathcal{T}}\right)\;=\; \sum_{Q_1\le q, q'\le Q_2}\; \sum \ppi_\xi\left(\bm{\mathcal{T}}; \left(\frac{\bm{s}}{q}, \frac{\bm{s}'}{q'}\right)\right),
\end{equation}
where   $ \ppi_\xi\left(\bm{\mathcal{T}}; \left(\bm{s}/q, \bm{s}'/q'\right)\right)$ is the probability of passing through the two squares $\mathcal{R}_{\xi}\!\left(\bm{\mathcal{T}}; \bm{s}/q\right)$ and $\mathcal{R}_{\xi}\!\left(\bm{\mathcal{T}}; \bm{s}'/q'\right)$ introduced and defined in~\eqref{defpiprobbis}, and where the inner sum is over the set of integers $\bm{s}, \bm{s}'$ such that $$\frac{\bm{s}}{q}=\left(\frac{s_1}{q}, \frac{s_2}{q}\right), \; \; \frac{\bm{s}'}{q'}=\left(\frac{s'_1}{q'}, \frac{s'_2}{q'}\right)\in \Q_{\xi}(\bm{\mathcal{T}})$$
and such that 
\begin{equation}\label{csubsm1}
\left|\frac{s'_1}{q'}- \frac{s_1}{q}\right|\;\ge\; H_{\xi}(q,q')\cdotp
\end{equation}
This is illustrated in Figure~\ref{R1}.

\item  \textbf{Regime 2~:}  the summation is restricted to those rationals with bounded deno\-mi\-nators corresponding to pairs of squares  which are close enough to each other along the horizontal axis but which are vertically well--spaced.  Specifically, the sum under consideration in this case is 
\begin{equation}\label{subsm2}
\Theta_{ \xi}^{(2)}\left(\bm{\mathcal{Q}}; \bm{\mathcal{T}}\right)\;=\; \sum_{Q_1\le q,q'\le Q_2}\; \sum \ppi_\xi\left(\bm{\mathcal{T}}; \left(\frac{\bm{s}}{q}, \frac{\bm{s}'}{q'}\right)\right),
\end{equation}
where the inner sum is over the set of integers $\bm{s}, \bm{s}'$ such that $\bm{s}/q, \bm{s'}/q'\in \Q_{\xi}(\bm{\mathcal{T}})$ and such that 
\begin{equation}\label{csubsm2}
\left|\frac{s_1}{q}-\frac{s'_1}{q'}\right|\;<\;  H_{\xi}(q,q')\qquad\textrm{and}\qquad \left|\frac{s'_2}{q'}-\frac{s_2}{q}\right|\;\ge\; h_\xi^{(\mu)}(q,q').
%
\end{equation}
This is illustrated in Figure~\ref{R2}.

\item \textbf{Regime 3~:}  the summation is restricted to those rationals with bounded deno\-mi\-nators corresponding to pairs of squares  which are close enough to each other without sharing the same center.  Specifically, the sum under consideration in this case is 
\begin{equation}\label{subsm3}
\Theta_{\xi}^{(3)}\left(\bm{\mathcal{Q}}; \bm{\mathcal{T}}\right)\;=\; \sum_{Q_1\le q,q'\le Q_2}\; \sum \ppi_\xi\left(\bm{\mathcal{T}}; \left(\frac{\bm{s}}{q}, \frac{\bm{s}'}{q'}\right)\right),
\end{equation}
where the inner sum is over the set of integers $\bm{s}, \bm{s}'$ such that the rationals $\bm{s}/q, \bm{s'}/q'\in \Q_{\xi}(\bm{\mathcal{T}})$ meet the three conditions
\begin{equation}\label{csubsm3} 
\frac{\bm{s}}{q}\neq\frac{\bm{s}'}{q'}, \qquad \left|\frac{s_1}{q}-\frac{s_1'}{q'}\right| < H_{\xi}(q,q') \qquad \textrm{and}\qquad   \left|\frac{s_2}{q}-\frac{s_2'}{q'}\right|< h_\xi^{(\mu)}(q,q')\cdot
\end{equation}
This  is illustrated in Figure~\ref{R3}.

\item \textbf{Regime 4~:} in this final situation, the summation is concerned with the intersection of squares centered at the same rational expressed in different forms. Specifically,  the sum under consideration  is 
\begin{equation}\label{subsm4} 
\Theta_{\xi}^{(4)}\left(\bm{\mathcal{Q}}; \bm{\mathcal{T}}\right)\;=\; \sum_{Q_1\le q,q'\le Q_2}\; \sum \ppi_\xi\left(\bm{\mathcal{T}}; \left(\frac{\bm{s}}{q}, \frac{\bm{s}'}{q'}\right)\right),
\end{equation}
where the inner sum is over the set of integers $\bm{s}, \bm{s}'$ such that $\bm{s}/q, \bm{s'}/q'\in \Q_{\xi}(\bm{\mathcal{T}})$ and such that 
\begin{equation*}\label{csubsm4}
\frac{\bm{s}}{q}\;=\;  \frac{\bm{s'}}{q'}\cdotp
\end{equation*}
This  is illustrated in Figure~\ref{R4}.
\end{itemize}

\begin{figure}[h!]
    \centering
    \begin{subfigure}{0.48\textwidth}
        \centering
        \begin{tikzpicture}[scale=1.8] 
    \draw[thick,->] (-0.5,0) -- (3.5,0) node[right] {\textit{u}};
    \draw[thick,->] (0,-0.5) -- (0,4) node[above] {\textit{v}};
%

    \draw[thick,blue] (0.7,1) -- (0.7,2);
    \draw[thick,blue] (1.7,1) -- (1.7,2);

    \draw[thick,blue] (0.7,1) -- (1.7,1);
    \draw[thick,blue] (0.7,2) -- (1.7,2);

    \fill[blue!15,opacity=0.5] (0.7,1) -- (0.7,2) -- (1.7,2) -- (1.7, 1) -- cycle;
    
    \draw[thick,blue] (2.4,1.95) -- (2.4,2.55);
    \draw[thick,blue] (3,1.95) -- (3,2.55);

    \draw[thick,blue] (2.4,1.95) -- (3,1.95);
    \draw[thick,blue] (2.4,2.55) -- (3,2.55);

    \fill[blue!15,opacity=0.5] (2.4,1.95) -- (2.4,2.55) -- (3,2.55) -- (3, 1.95) -- cycle;
    
    \draw[densely dotted,blue] (1.2,3.5) -- (1.2,0);    
    \draw[densely dotted,blue] (1.2,1.5) -- (0,1.5);   
    \draw[densely dotted,blue] (2.7,3.5) -- (2.7,0);    
    \draw[densely dotted,blue] (2.7,2.25) -- (0,2.25);  
                        
    \node[below] at (1.2,0) {$\frac{s_1}{q}$};
    \node[left] at (0,1.5) {$\frac{s_2}{q}$};
    \node[below] at (2.7,0) {$\frac{s'_1}{q'}$};    
    \node[left] at (0, 2.25) {$\frac{s'_2}{q'}$};    
    \node[right] at (1.9, 1.5) {$\frac{\xi(q)}{q}$}; 
    \node[right] at (3.2, 2.25) {$\frac{\xi(q')}{q'}$}; 
    
    \draw [<->] (1.9, 1) -- (1.9, 2);
    \draw [<->] (3.2, 1.95) -- (3.2, 2.55);
        
	\draw [decorate, decoration = {calligraphic brace,amplitude=10pt}] (1.2,3.6) --  (2.7,3.6);    
	\node[above] at (1.95, 3.7) {$\ge H_{\xi}(q,q')$};    
        \end{tikzpicture}
        \caption{Disposition of squares in Regime 1}
        \label{R1}
    \end{subfigure}
    \hfill
    \begin{subfigure}{0.48\textwidth}
        \centering
        \begin{tikzpicture}[scale=1.8] 
    \draw[thick,->] (-0.5,0) -- (3.5,0) node[right] {\textit{u}};
    \draw[thick,->] (0,-0.5) -- (0,4) node[above] {\textit{v}};
%

    \draw[thick,blue] (0.7,0.5) -- (0.7,1.5);
    \draw[thick,blue] (1.7,0.5) -- (1.7,1.5);

    \draw[thick,blue] (0.7,0.5) -- (1.7,0.5);
    \draw[thick,blue] (0.7,1.5) -- (1.7,1.5);

    \fill[blue!15,opacity=0.5] (0.7,0.5) -- (0.7,1.5) -- (1.7,1.5) -- (1.7, 0.5) -- cycle;
    
    \draw[thick,blue] (1.45,1.95) -- (1.45,2.55);
    \draw[thick,blue] (2.05,1.95) -- (2.05,2.55);

    \draw[thick,blue] (1.45,1.95) -- (2.05,1.95);
    \draw[thick,blue] (1.45,2.55) -- (2.05,2.55);

    \fill[blue!15,opacity=0.5] (1.45,1.95) -- (1.45,2.55) -- (2.05,2.55) -- (2.05, 1.95) -- cycle;

    \draw[densely dotted,blue] (1.75,3.5) -- (1.75,0);        
    \draw[densely dotted,blue] (1.2,1.5) -- (1.2,0);    
    \draw[densely dotted,blue] (1.2,3.5) -- (1.2,0);   
    \draw[densely dotted,blue] (3,1) -- (0,1);   
    \draw[densely dotted,blue] (1.2,1) -- (1.2,0);   
    \draw[densely dotted,blue] (1.75,2.25) -- (1.75,0);    
    \draw[densely dotted,blue] (3,2.25) -- (0,2.25);  
   \node[below] at (1.2,0) {$\frac{s_1}{q}$};
    \node[left] at (0,1) {$\frac{s_2}{q}$};
    \node[below] at (1.75,0) {$\frac{s'_1}{q'}$};    
    \node[left] at (0, 2.25) {$\frac{s'_2}{q'}$};    
    \node[right] at (1.9, 1) {$\frac{\xi(q)}{q}$};
    \node[right] at (2.25, 2.25) {$\frac{\xi(q')}{q'}$};
    \node[above] at (1.4, 3.7) {$< H_{\xi}(q,q')$};     
    \node[right] at (3.2, 1.625) {$\ge h^{(\mu)}_{\xi}(q,q')$};    
	                  
	\draw [decorate, decoration = {calligraphic brace,amplitude=5pt}] (1.2,3.6) --  (1.75,3.6);    
	\draw [decorate, decoration = {calligraphic brace,amplitude=5pt}] (3.1, 2.25) --  (3.1,1);    	

    \draw [<->] (1.9, 0.5) -- (1.9, 1.5);
    \draw [<->] (2.25, 1.95) -- (2.25, 2.55);
        \end{tikzpicture}
        \caption{Disposition of squares in Regime 2}
        \label{R2}
    \end{subfigure}

    \begin{subfigure}{0.48\textwidth}
        \vspace{8mm}
        \centering
        \begin{tikzpicture}[scale=1.8] 
    \draw[thick,->] (-0.5,0) -- (3.5,0) node[right] {\textit{u}};
    \draw[thick,->] (0,-0.5) -- (0,4) node[above] {\textit{v}};
%

    \draw[thick,blue] (0.9,0.6) -- (0.9,1.6);
    \draw[thick,blue] (1.9,0.6) -- (1.9,1.6);

    \draw[thick,blue] (0.9,0.6) -- (1.9,0.6);
    \draw[thick,blue] (0.9,1.6) -- (1.9,1.6);

    \fill[blue!15,opacity=0.5] (0.9,0.6) -- (0.9,1.6) -- (1.9,1.6) -- (1.9, 0.6) -- cycle;
    
    \draw[thick,blue] (1.45,1.7) -- (1.45,2.3);
    \draw[thick,blue] (2.05,1.7) -- (2.05,2.3);

    \draw[thick,blue] (1.45,1.7) -- (2.05,1.7);
    \draw[thick,blue] (1.45,2.3) -- (2.05,2.3);

    \fill[blue!15,opacity=0.5] (1.45,1.7) -- (1.45,2.3) -- (2.05,2.3) -- (2.05, 1.7) -- cycle;
    
    \draw[densely dotted,blue] (1.4,3.5) -- (1.4,0);    
    \draw[densely dotted,blue] (3,1) -- (0,1);   


    \draw[densely dotted,blue] (1.75,3.5) -- (1.75,0);    
    \draw[densely dotted,blue] (3,2) -- (0,2);  
                        
   \node[below] at (1.4,0) {$\frac{s_1}{q}$};
    \node[left] at (0,1) {$\frac{s_2}{q}$};
    \node[below] at (1.75,0) {$\frac{s'_1}{q'}$};    
    \node[left] at (0, 2) {$\frac{s'_2}{q'}$};    
    \node[right] at (2.1, 1) {$\frac{\xi(q)}{q}$};
    \node[right] at (2.25, 2) {$\frac{\xi(q')}{q'}$};
    \node[above] at (1.6, 3.65) {$< H_{\xi}(q,q') $};    
    \node[right] at (3.2, 1.5) {$< h^{(\mu)}_{\xi}(q,q')$};    
                	                  
	\draw [decorate, decoration = {calligraphic brace,amplitude=3pt}] (1.4,3.6) --  (1.75,3.6);    
	\draw [decorate, decoration = {calligraphic brace,amplitude=5pt}] (3.1, 2) --  (3.1,1);    	

    \draw [<->] (2.1, 0.6) -- (2.1, 1.6);
    \draw [<->] (2.25, 1.7) -- (2.25, 2.3);
        \end{tikzpicture}
        \caption{Disposition of squares in Regime 3}
        \label{R3}
    \end{subfigure}
        \hfill
    \begin{subfigure}{0.48\textwidth}
        \vspace{8mm}
        \centering
        \begin{tikzpicture}[scale=1.8] 
    \draw[thick,->] (-0.5,0) -- (3.5,0) node[right] {\textit{u}};
    \draw[thick,->] (0,-0.5) -- (0,4) node[above] {\textit{v}};
%



    
    \draw[thick,blue] (1.45,1.7) -- (1.45,2.3);
    \draw[thick,blue] (2.05,1.7) -- (2.05,2.3);

    \draw[thick,blue] (1.45,1.7) -- (2.05,1.7);
    \draw[thick,blue] (1.45,2.3) -- (2.05,2.3);

    \fill[blue!15,opacity=0.5] (1.45,1.7) -- (1.45,2.3) -- (2.05,2.3) -- (2.05, 1.7) -- cycle;

    \draw[thick,blue] (0.75,1) -- (0.75,3);
    \draw[thick,blue] (2.75,1) -- (2.75,3);

    \draw[thick,blue] (0.75,1) -- (2.75,1);
    \draw[thick,blue] (0.75,3) -- (2.75,3);

    \fill[blue!15,opacity=0.5] (0.75,1) -- (2.75,1) -- (2.75,3) -- (0.75, 3) -- cycle;
        


    \draw[densely dotted,blue] (1.75,2) -- (1.75,0);    
    \draw[densely dotted,blue] (1.75,2) -- (0,2);  
                        
    \node[below] at (1.75,0) {$\frac{s_1}{q}=\frac{s'_1}{q'}$};    
    \node[left] at (0, 2) {$\frac{s_2}{q}=\frac{s'_2}{q'}$};    
    \node[right] at (3.1, 2) {$\frac{\xi(q)}{q}$};
    \node[right] at (2.2, 2) {$\frac{\xi(q')}{q'}$};
                	                  

    \draw [<->] (3.05, 1) -- (3.05, 3);
    \draw [<->] (2.25, 1.7) -- (2.25, 2.3);
        \end{tikzpicture}
        \caption{Disposition of squares in Regime 4}
        \label{R4}
    \end{subfigure}    
    
    \caption{Disposition of squares in the four regimes under consideration. 
    The quantities $H_{\xi}(q,q')$ and $h^{(\mu)}_{\xi}(q,q')$ are defined  in  \eqref{defhH}.}
    \hfill    
\end{figure}

\noindent It should be clear that 
\begin{align}\label{decomthetabig}
\Theta_{\xi}\left(\bm{\mathcal{Q}}; \bm{\mathcal{T}}\right)\;=\; \Theta_{\xi}^{(1)}\left(\bm{\mathcal{Q}};\bm{\mathcal{T}}\right)\;+\; \Theta_{\xi}^{(2)}\left(\bm{\mathcal{Q}}; \bm{\mathcal{T}}\right)\;+\; \Theta_{\xi}^{(3)}\left(\bm{\mathcal{Q}};\bm{\mathcal{T}}\right)\;+\; \Theta_{\xi}^{(4)}\left(\bm{\mathcal{Q}};\bm{\mathcal{T}}\right).
\end{align}
With this identity in mind, the main statement of this section reads as follows~: 

\begin{prop}[Estimation of the subsums in the various regimes]\label{propestimregi} Keep the above definitions and notation. In particular, recall the definitions of the quantity $V\!\left(\bm{\mathcal{T}}\right)$ and of the sum $\Delta_{\xi}\left(\bm{\mathcal{Q}}; \bm{\mathcal{T}}\right)$  in~\eqref{defVT} and~\eqref{sommeglob}, respectively. Assume furthermore as in Proposition~\ref{diagestimterm} --- see the inequality~\eqref{asymptlageT_1T_2} --- that
\begin{equation}\label{asymptlageT_1T_2bis} 
Q_1\;\ge\; \max\left\{\frac{4}{T_1},\; \frac{1}{T_2-T_1},\; \frac{2}{T_2}\right\}.
\end{equation}
Then, 
the following estimates hold~: 
\begin{itemize}
\item (Asymptotic estimate for the subsum in Regime 1)~: 
\begin{align}
&\Theta_{\xi}^{(1)}\left(\bm{\mathcal{Q}}; \bm{\mathcal{T}}\right) \;=\; \nonumber\\
&\left(\Delta_{\xi}\left(\bm{\mathcal{Q}}; \bm{\mathcal{T}}\right)\right)^2 +O\!\left(V\!\left(\bm{\mathcal{T}}\right)^3\cdot\Delta_{\xi}\left(\bm{\mathcal{Q}}; \bm{\mathcal{T}}\right)\cdot\left(\sum_{q=Q_1}^{Q_2} \left(\sqrt{q\xi(q)}\cdot\left(1+\sqrt{q\xi(q)}+\log q\right)\right)\right)\right). \label{estimsub1} 
\end{align}

\item (Upper bound for the subsum in Regime 2)~: given a parameter $\mu>0$ and under the additional assumption that $Q_1\ge 12^{1/(2\mu)}$,
\begin{equation}\label{estimsub2} 
\Theta_{ \xi}^{(2)}\left(\bm{\mathcal{Q}}; \bm{\mathcal{T}}\right) \;\ll\;V\!\left(\bm{\mathcal{T}}\right)^2\cdot\Delta_{\xi}\left(\bm{\mathcal{Q}}; \bm{\mathcal{T}}\right),
\end{equation}
where the implicit constant depends on $\mu$;

\item (Upper bound for the subsum in Regime 3)~: 
\begin{align}\label{estimsub3} 
\Theta_{\xi}^{(3)}\left(\bm{\mathcal{Q}}; \bm{\mathcal{T}}\right)\;& \ll\; \left(1+T_2-T_1\right)\times\nonumber \\
& \sum_{Q_1\le q,q'\le Q_2}\sqrt{\xi(q)\cdot \xi(q')}\times\left(\left(qq'\ \right)^{3/2}\cdot \sqrt{H_{\xi}(q,q')}\cdot  h_\xi^{(\mu)}(q,q')\right.\nonumber \\
&\left.\qquad \quad   + \left(1+\frac{ h_\xi^{(\mu)}(q,q')}{H_{\xi}(q,q')}\right)\cdot \sqrt{qq'\cdot H_{\xi}(q,q')}\cdot  \min\left\{q,q'\right\} \right).
\end{align}

\item (Upper bound for the subsum in Regime 4)~: recall  the notation  
\begin{equation}\label{sumdiv}
\sigma(q)\;=\; \sum_{d|q}d
\end{equation}
introduced in~\eqref{sumdiv0} to denote the sum of the divisors of an integer $q\ge 1$ and the notation $ \Xi(Q)$ defined  in~\eqref{estimsub4asym0} for the sum
\begin{equation}\label{estimsub4asym}
 \Xi(Q)\;=\; \sum_{q=1}^{Q}\sqrt{q^3\cdot \xi(q)}.
\end{equation}
Then, under the  assumption that the map $q\mapsto \xi(q)/q$ is nonincreasing, 
\begin{align}\label{estimsub4} 
\Theta_{ \xi}^{(4)}\left( \bm{\mathcal{Q}}; \bm{\mathcal{T}}\right)  \ll V\!\left(\bm{\mathcal{T}}\right)\cdot \left(\sum_{q=Q_1}^{Q_2}\sqrt{q\cdot \xi(q)}\cdot \sigma(q)\right).
\end{align}
Furthermore, the sum $ \Xi(Q)$ admits the asymptotic expansion
\begin{equation}\label{defMxibis}
\sum_{q=1}^{Q}\sqrt{q\cdot \xi(q)}\cdot \sigma(q)\;=\;   \frac{\pi^2}{6}\cdotp \Xi(Q)\;+\; O\left(\sum_{q=1}^{Q}\sqrt{q\xi(q)}\cdot\log q\right)
\end{equation}
as $Q$ tends to infinity.  
\color{black}
\end{itemize}
\end{prop}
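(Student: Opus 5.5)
The four regimes will be handled separately, each time reducing the joint hitting probability $\ppi_\xi(\bm{\mathcal{T}};(\bm{s}/q,\bm{s}'/q'))$ to products or sums of single-rectangle probabilities controlled by Proposition~\ref{proppasssqbis}, Corollary~\ref{coroutile} and the inhomogeneous sums of Proposition~\ref{asymptinhomg}. The plan is to treat Regime~1 by an approximate Markov factorisation, Regimes~2 and~3 by crude geometric bounds, and Regime~4 by divisor counting.

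\textbf{Regime 1.} Assume without loss of generality that $s_1/q+\xi(q)/(2q)\le s_1'/q'-\xi(q')/(2q')$. I will condition on the first hitting time $\tau$ of the first square $\mathcal{R}=\mathcal{R}_\xi(\bm{s}/q)$ and apply the strong Markov property at $\tau$. On that event, $B(\tau)$ lies within $\xi(q)/q$ of $s_2/q$, and $\tau\le x^+_{\mathcal{R}}$. The conditional probability of then hitting $\mathcal{R}'=\mathcal{R}_\xi(\bm{s}'/q')$ is the hitting probability of the shifted rectangle $\mathcal{R}'-(\tau,B(\tau))$, with shift $\bm{\gamma}$ of size $O(1/q)$.

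Summing over $\bm{s}'$ for fixed $\bm{s}/q$, the inner sum is an inhomogeneous sum $\mathcal{S}^{(\bm{\gamma})}_\xi$ on the shifted time window. Proposition~\ref{asymptinhomg} evaluates it with main term $\frac{2\sqrt2}{\sqrt\pi}(T_2-T_1)\sqrt{q'^3\xi(q')}$, up to the time lost near $\tau$. Crucially, this main term does not depend on $\bm{\gamma}$. Summing over $\bm{s}$ then gives $\ppi_\xi(\bm{s}/q)$ times this main term, that is, $\Delta^2$.

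The error terms are controlled as follows. The lost time window of length $s_1/q-T_1$ produces a discrepancy, but the missing symmetric configurations (the second square hit first) are recovered by the ordering symmetry. The errors $\mathcal{E}^{(\bm{\gamma})}$ carry factors $1+T_2-T_1+\sqrt{T_2}-\sqrt{T_1}+\log(T_2/T_1)$, all bounded by $V(\bm{\mathcal{T}})$. The conditional term $\Lambda^{(4)}$, with $x^-$ as small as $H_\xi$, is where the separation condition~\eqref{csubsm1} is used. Multiplying by $\ppi_\xi(\bm{s}/q)$ and summing, using Proposition~\ref{diagestimtermbis}, yields the error in~\eqref{estimsub1}. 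The extra powers of $V$ come from the shifted window and the bound on $\widehat f_q$.

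\textbf{Regimes 2 and 3.} For Regime~2, I will again condition at $\tau$. The second square now lies within horizontal distance $H_\xi$ but at vertical distance at least $h^{(\mu)}_\xi=H_\xi^{1/2-\mu}$. By the Law of Maximum~\eqref{Mt}, the conditional probability of reaching it is at most $\exp(-h^2/(4H))=\exp(-H^{-2\mu}/4)$. Since $H\le 2/q$ and $Q_1\ge 12^{1/(2\mu)}$, this beats any power of $q$. The number of admissible $\bm{s}'$ is at most polynomial in $qq'$, so the whole regime is bounded by $V^2\Delta$.

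For Regime~3, I will bound the joint probability by $\min(\ppi(\mathcal{R}),\ppi(\mathcal{R}'))\le\sqrt{\ppi(\mathcal{R})\ppi(\mathcal{R}')}$ and use Corollary~\ref{coroutile}, which gives $\ppi\ll\sqrt{\xi/(q x^-)}$ times a Gaussian factor. I will then count lattice pairs in the box $H\times h$ around each $\bm{s}/q$. Each configuration is either a generic pair, counted at about $qq'Hh$ per $\bm{s}$, or a degenerate pair forced onto the same vertical or horizontal line, where the count is $\min\{q,q'\}$ via $|s_1q'-s_1'q|\ge1$ or $=0$. Summing over $s_1$ in $[T_1,T_2]$ and over $s_2$ with the Gaussian produces the factor $(1+T_2-T_1)$ and the two terms of~\eqref{estimsub3}.

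\textbf{Regime 4 and \eqref{defMxibis}.} Here $\bm{s}/q=\bm{s}'/q'$. Write the common point as $\bm{r}/d$ in lowest terms, with $d\mid\gcd(q,q')$. By monotonicity of $\xi(q)/q$, the joint probability is at most the hitting probability of the smaller square, which is the one of larger denominator. So the sum is bounded by $\sum_{q}\sum_{q'\le q}\sum_{\bm{r}/d}\ppi_\xi(\bm{r}/d\text{ seen at }q)$. For fixed $q$, the number of $q'\le q$ sharing the reduced point is controlled by divisors of $q$. Summing the single-denominator probabilities as in Proposition~\ref{asymptinhomg} but restricted to points of reduced denominator $d$ gives roughly $\sqrt{q\xi(q)}\cdot d$ per divisor, which yields $\sigma(q)$ and~\eqref{estimsub4}.

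The asymptotic~\eqref{defMxibis} then follows from $\sigma(q)=q\sum_{d\mid q}1/d$ by swapping sums: $\sum_d d^{-1}\sum_{d\mid q}\sqrt{q^3\xi(q)}$. The mean of $\sigma(q)/q$ is $\pi^2/6$, with partial-summation error $O(\log q)$ per term.

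\textbf{Main obstacle.} I expect Regime~1 to be the hardest step. One must make the conditional factorisation precise with a random starting point $(\tau,B(\tau))$ rather than a fixed shift, and verify that the resulting error in the inhomogeneous sum, notably the $\Lambda^{(4)}$ term when the two squares are only $H_\xi$ apart, remains summable to $V^3\Delta\sum\sqrt{q\xi}(1+\sqrt{q\xi}+\log q)$.
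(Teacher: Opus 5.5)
Your architecture matches the paper in Regimes~1 and~4: a strong Markov restart reduces Regime~1 to the inhomogeneous sums of Proposition~\ref{asymptinhomg}, and Regime~4 is handled by divisor counting. Regime~4 and \eqref{defMxibis} are fine. Organising by the reduced denominator $d\mid q$ rather than by $\gcd(q,q')$ gives the same $q\,\sigma(q)$ count, and the partial summation uses the monotonicity of $\xi(q)/q$.

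\textbf{Regime~2 fails as written.} The Law of Maximum only bounds the probability of \emph{reaching the height} of the second square, namely $\exp(-cH_\xi^{-2\mu})$. Your claim $H_\xi\le 2/q$ is false: $H_\xi(q,q')=2\max\{\xi(q)/q,\xi(q')/q'\}$ is controlled only by the smaller denominator. When $\xi(q)/q\ge\xi(q')/q'$, this factor depends on $q$ alone and does not decay as $q'$ grows. For such $q'$, the Gaussian-weighted number of admissible $\bm{s}'$ is of order $(1+q'H_\xi)(1+q'\sqrt{H_\xi})$. Your bound then contains $\sum_{q'\le Q_2}q'^2H_\xi^{3/2}\asymp Q_2^3H_\xi^{3/2}$. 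This cannot be $\ll V(\bm{\mathcal T})^2\Delta_\xi(\bm{\mathcal Q};\bm{\mathcal T})$; take $\xi$ with $\sum q^{3/2}\sqrt{\xi(q)}<\infty$, so that $\Delta_\xi$ stays bounded. The missing ingredient is the small-target factor. Corollary~\ref{coroutile} gives a conditional probability after the restart that is $\ll\sqrt{(\xi(q')/q')/H_\xi}\,\exp(-cH_\xi^{-2\mu}-c(s_2'/q'-p_2)^2/H_\xi)$. The factor $\sqrt{\xi(q')/q'}$ turns the $q'$-sum into $H_\xi\sum_{q'}\sqrt{q'^3\xi(q')}\ll H_\xi V(\bm{\mathcal T})^2\Delta_\xi$. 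You must then split according to which of $\xi(q)/q$, $\xi(q')/q'$ realises the maximum. That way the superpolynomially small factor is summed over the denominator that controls it, as in \eqref{finreg2-1}--\eqref{finreg2-2}.

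\textbf{Regime~3 contains a lossy step.} Relaxing $\min(\ppi(\mathcal R),\ppi(\mathcal R'))$ to $\sqrt{\ppi(\mathcal R)\ppi(\mathcal R')}$ gives a factor $(\xi(q)\xi(q')/(qq'))^{1/4}$. By contrast, \eqref{estimsub3} corresponds to $\sqrt{\delta_\xi(q,q')}=\sqrt{2\xi(q)\xi(q')/(qq'H_\xi)}$, which is smaller by $(\delta_\xi(q,q')/\max\{\xi(q)/q,\xi(q')/q'\})^{1/4}$. You must keep the hitting probability of the smaller square, of side $\delta_\xi(q,q')$, as the paper does with $\pi^*_{\bm{\mathcal T}}$. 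The lattice count also cannot be done per $\bm{s}$: when $q'H_\xi<1$, most $\bm{s}$ have no partner. Instead, count pairs in each unit box through the integers $q's_i-qs_i'$. Lemma~\ref{arithlem} gives at most $36(qq')^2H_\xi h^{(\mu)}_\xi+6qq'(H_\xi+h^{(\mu)}_\xi)\gcd(q,q')$ pairs; then sum $\pi^*_{\bm{\mathcal T}}$ over the boxes.

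\textbf{Regime~1 main term.} The restarted inner sum has main term $\frac{2\sqrt2}{\sqrt\pi}(T_2-s_1/q-H_\xi)\sqrt{q'^3\xi(q')}$, not $(T_2-T_1)\sqrt{q'^3\xi(q')}$. Each ordering contributes $\frac12\Delta_\xi^2$ only because $\mathcal O_\xi(\bm{\mathcal T};q)=\frac{T_1+T_2}{2}\mathcal S^{(\bm 0)}_\xi(\bm{\mathcal T};q)+O(\cdots)$ (Corollary~\ref{asymptinhomgbis}). In other words, the hitting-weighted abscissa is uniformly distributed on $[T_1,T_2]$; symmetry between the two orderings does not give this by itself. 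The $-H_\xi$ shift also needs its own error estimate.
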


\noindent The statement is proved for each of the four regimes separately from Section~\ref{reg1} to~\ref{reg4}. In particular, the proof of the bound~\eqref{estimsub4} in Regime~4 (see Section~\ref{reg4}) makes it clear that the assumption of the monotonicity under which it holds can be substantially weakened at the cost of some further developments.  Details are left to the interested reader. \\

\noindent Proposition~\ref{offdiagestimterm} is first derived from Proposition~\ref{propestimregi} in the following section.

\subsection{Estimation of the Off--Diagonal Probability Term} 

\begin{proof}[Completion of the  proof of Proposition~\ref{offdiagestimterm} modulo Proposition~\ref{propestimregi}]  For the sake of this proof, generalise the definition of the quantity $ \Xi(Q)$ in~\eqref{estimsub4asym} by setting  
\begin{equation*}
 \Xi(\bm{\mathcal{Q}})\;=\; \sum_{q=Q_1}^{Q_2}\sqrt{q^3\cdot \xi(q)}.
\end{equation*}
Proposition~\ref{diagestimterm} then makes it clear that 
\begin{equation}\label{estimxiQdelta}
 \Xi(\bm{\mathcal{Q}})\;\ll\; V\!\left(\bm{\mathcal{T}}\right)^2\cdot \Delta_{\xi}\left(\bm{\mathcal{Q}}; \bm{\mathcal{T}}\right).
\end{equation}

\noindent The first step in the proof is to reduce the upper bound for $\Theta_{\xi}^{(3)}\left(\bm{\mathcal{Q}};\bm{\mathcal{T}}\right)$ stated in~\eqref{estimsub3} to a simpler one under the assumption of the monotonicity of the map  $q\mapsto \xi(q)/q$. To this end, one reformulates the upper bound under consideration as
\begin{align*}
\Theta_{\xi}^{(3)}\left(\bm{\mathcal{Q}}; \bm{\mathcal{T}}\right)\; \underset{\eqref{defhH}}{\ll}\; &V\!\left(\bm{\mathcal{T}}\right)\cdot \left( \sum_{q=Q_1}^{Q_2} \xi(q)^{3/2-\mu}\cdot q^{1/2+\mu}\cdot \left(\sum_{q'=q}^{Q_2}\sqrt{(q')^3\cdot \xi(q')}\right)\right) \nonumber \\
&+V\!\left(\bm{\mathcal{T}}\right)\cdot\left( \sum_{q=Q_1}^{Q_2} \left(\xi(q)\cdot q+q^{3/2+\mu}\cdot \xi(q)^{1/2-\mu}\right)\cdot   \left(\sum_{q'=q}^{Q_2}\sqrt{q'\cdot \xi(q')}\right) \right).
\end{align*}
Under the assumption of the monotonicity, the second sum meets the inequality 
\begin{align*}
\sum_{q=Q_1}^{Q_2} &\left(\xi(q)\cdot q+q^{3/2+\mu}\cdot \xi(q)^{1/2-\mu}\right)\cdot   \left(\sum_{q'=q}^{Q_2}\sqrt{q'\cdot \xi(q')}\right)\\
&\qquad\qquad \ll\; \sum_{q'=Q_1}^{Q_2}\sqrt{q'\cdot \xi(q')}\cdot \sum_{q=Q_1}^{q'}q^{3/2+\mu}\cdot \xi(q)^{1/2-\mu}\\
&\qquad\qquad \ll\; \sum_{q'=Q_1}^{Q_2} \left(q'\right)^{1/2+\mu}\cdot \xi(q')^{1/2-\mu}\cdot \left(\sum_{q=Q_1}^{q'}q^{3/2}\cdot \xi(q)^{1/2}\right).
\end{align*}
As a consequence, 
\begin{align*}
\Theta_{\xi}^{(3)}\left(\bm{\mathcal{Q}}; \bm{\mathcal{T}}\right)\; &\ll\; V\!\left(\bm{\mathcal{T}}\right)\cdot   \Xi(\bm{\mathcal{Q}})\cdot \left( \sum_{q=Q_1}^{Q_2} q^{1/2+\mu}\cdot \xi(q)^{1/2-\mu}\right)\\
&\underset{\eqref{estimxiQdelta}}{\ll} V\!\left(\bm{\mathcal{T}}\right)^3\cdot   \Delta\left(\bm{\mathcal{Q}}, \bm{\mathcal{T}}\right)\cdot \left( \sum_{q=Q_1}^{Q_2} q^{1/2+\mu}\cdot \xi(q)^{1/2-\mu}\right).
\end{align*}
\noindent  Proposition~\ref{offdiagestimterm} then becomes an immediate consequence of the subsum estimates stated in Proposition~\ref{propestimregi} and of the decomposition~\eqref{decomthetabig}. Indeed, they altogether imply that 
\begin{align*}
&\Theta_{\xi}\left(\bm{\mathcal{Q}}; \bm{\mathcal{T}}\right)\;=\; \left(\Delta_{\xi}\left(\bm{\mathcal{Q}}; \bm{\mathcal{T}}\right)\right)^2\\
&+\!O\!\left(V\!\left(\bm{\mathcal{T}}\right)^3\cdot \Delta_{\xi}\left(\bm{\mathcal{Q}}; \bm{\mathcal{T}}\right)\cdot\left(1\!+\!\sum_{q=Q_1}^{Q_2}\!\left(\sqrt{q\xi(q)}\cdot\left(1\!+\!\log q \!+\!\sqrt{q\xi(q)}\right)+q^{\frac{1}{2}\!+\!\mu}\cdot\xi(q)^{\frac{1}{2}-\mu}\right)\right)\right)\\
&+\!O\!\left(V\!\left(\bm{\mathcal{T}}\right)\cdot \sum_{q=Q_1}^{Q_2}\sqrt{q\cdot \xi(q)}\cdot \sigma(q)\right),
\end{align*} 
whence the statement. 
\end{proof}

\noindent The goal is now to estimate each of the subsums intervening in Proposition~\ref{propestimregi} separately. This involves a blend of ideas from Analysis, Arithmetic and Probability.

\subsection{Regime 1~: Squares Horizontally Distant}\label{reg1} 

\begin{proof}[Proof of the expansion~\eqref{estimsub1}.] Given a rational vector $\bm{s}/q=\left(s_1/q, s_2/q\right)\in \Q_{\xi}(\bm{\mathcal{T}}) $, denote by $\partial_{\xi}\!\left(\bm{\mathcal{T}}; \bm{s}/q\right)$ the boundary of the square $\mathcal{R}_{\xi}\!\left(\bm{\mathcal{T}}; \bm{s}/q\right)$ defined in~\eqref{defract} with the open right side removed. \\ 

\noindent The Strong Markov Property satisfied by Brownian motion (see~\cite[Theorem~2.16]{MP})  implies that if it hits the square $\mathcal{R}_{\xi}\!\left(\bm{\mathcal{T}}; \bm{s}/q\right)$ at a time $t\in\left[ T_1, T_2\right]$ and at a point $$\bm{p}\;=\;(p_1,p_2)\in \partial_{\xi}\!\left(\bm{\mathcal{T}}; \bm{s}/q\right),\qquad \textrm{where} \; t=p_1,$$ then, conditionally on this event, it restarts from this point.  Accordingly, define for the sake of this proof  $\ppi_{\xi}\left( \bm{\mathcal{T}};\left( \bm{s}/q\middle| \bm{p}\right)\right)$ as the probability that  Brownian motion hits at time $t =p_1$ the square $\mathcal{R}_{\xi}\left(\bm{\mathcal{T}}; \bm{s}/q\right)$ at the point $\bm{p}$ for the first time, conditionally on the event that it hits this square. Given points $\bm{s}/q,\; \bm{s}'/q'\in \Q_{\xi}(\bm{\mathcal{T}}) $, one thus has that  
\begin{equation}\label{decocondipasssq}
 \ppi_\xi\left(\bm{\mathcal{T}}; \left(\frac{\bm{s}}{q}, \frac{\bm{s}'}{q'}\right)\right)\;=\; \ppi_{\xi}\left(\bm{\mathcal{T}}; \frac{\bm{s}}{q}\right)\cdot \bigintssss_{\partial_{\xi}\left(\bm{\mathcal{T}}; \frac{\bm{s}}{q}\right)} \ppi_{\xi}\left(\bm{\mathcal{T}}; \left( \frac{\bm{s}'}{q'}\middle|\; \bm{p}\right)\right)\cdot \textrm{d}\bm{p}\qquad \textrm{if}\qquad \frac{s_1}{q}\le \frac{s'_1}{q'}
\end{equation}
and \emph{mutatis mutandis} when $s'_1/q'\le s_1/q$. Here, $\textrm{d}\bm{p}$ denotes an infinitesimal probability density (the explicit form of which is not of interest here) such that 
\begin{equation}\label{intdensinfini}
\bigintssss_{\partial_{\xi}\left(\bm{\mathcal{T}}; \frac{\bm{s}}{q}\right)}   \textrm{d}\bm{p}\;=\; 1.
\end{equation} 
As a consequence, 
\begin{align}
&\Theta_{ \xi}^{(1)}\left( \bm{\mathcal{Q}}; \bm{\mathcal{T}}\right) \underset{\eqref{subsm1}}{=}\sum_{\underset{\bm{s}/q\in \Q_{\xi}(\bm{\mathcal{T}}) }{Q_1\le q,q'\le Q_2}} \ppi_{\xi}\left(\bm{\mathcal{T}}; \frac{\bm{s}}{q}\right)\cdot \bigintssss_{\partial_{\xi}\left(\bm{\mathcal{T}}; \frac{\bm{s}}{q}\right)} \left(\sum_{\underset{\underset{s_1/q\le s'_1/q'}{\eqref{csubsm1}\; \;\&}}{\bm{s}'/q'\in \Q_{\xi}(\bm{\mathcal{T}})~:}} \ppi_{\xi}\left(\bm{\mathcal{T}}; \left( \frac{\bm{s}'}{q'}\middle|\; \bm{p}\right)\right)\right)\cdot \textrm{d}\bm{p}\nonumber\\
&\qquad + \sum_{\underset{\bm{s}'/q'\in \Q_{\xi}(\bm{\mathcal{T}}) }{Q_1\le q,q'\le Q_2}} \ppi_{\xi}\left(\bm{\mathcal{T}}; \frac{\bm{s}'}{q'}\right)\cdot \bigintssss_{\partial_{\xi}\left(\bm{\mathcal{T}}; \frac{\bm{s}'}{q'}\right)} \left(\sum_{\underset{\underset{s_1/q > s'_1/q'}{\eqref{csubsm1}\;\; \&}}{\bm{s}/q\in \Q_{\xi}(\bm{\mathcal{T}})~:}} \ppi_{\xi}\left(\bm{\mathcal{T}}; \left( \frac{\bm{s}}{q}\middle|\; \bm{p}\right)\right)\right)\cdot \textrm{d}\bm{p}\label{completedectheta1}
\end{align}
with the meaning that the inner sums are over the set of integer vectors $\bm{s}'$ and $\bm{s}$, respectively, meeting in particular the assumptions~\eqref{csubsm1} defining this regime. Note that in the above decomposition, the two terms differ  inasmuch as the first one imposes the condition $s_1/q\le s'_1/q'$ in the inner sum and the second one the complementary condition $s_1/q> s'_1/q'$.\\

\noindent The key observation here is that the inner sums turn out to be expressible in terms of the global inhomogeneous sum $ \mathcal{S}_{\xi}^{(\bm{\gamma})}\left(\bm{\mathcal{T}}; q\right)$ introduced in~\eqref{defaxilfixq}. To see this, consider first the inner sum inside the first integral in the equation~\eqref{completedectheta1}. Set then
\begin{equation*}
\bm{\mathcal{T}}_{\le}\;=\;\left(T_1^{(\le)}, T_2^{(\le)}\right)\quad \textrm{with}\quad T_1^{(\le)}\;=\; \frac{s_1}{q}+H_\xi\left(q,q'\right)-p_1 \quad \textrm{and}\quad T_2^{(\le)}\;=\; T_2-p_1
\end{equation*}
and also
\begin{equation*}
\bm{\gamma}_{\le}= \left(\gamma_1^{(\le)}, \gamma_2^{(\le)}\right)\quad \textrm{with}\quad \gamma_2^{(\le)}= \textrm{sgn}\left(\frac{s_2}{q}-p_2\right)\cdot \frac{\left\langle qp_2\right\rangle}{q}
\quad \textrm{and}\quad \gamma_1^{(\le)}\;=\; \left\{\frac{\left\lfloor qp_1\right\rfloor}{q}\right\}.
\end{equation*}
Here, $\textrm{sgn}$ denotes the sign function extended to the origin by setting $\textrm{sgn}(0)=1$, $\left\{\;\cdot\;\right\}$ stands for the fractional part, $\left\lfloor \;\cdot\; \right\rfloor$ for the integer part and $\left\langle\;\cdot\; \right\rangle$ for the distance to the nearest integer. Then, 
\begin{equation*}
\sum_{\underset{\underset{s_1/q\le s'_1/q'}{\eqref{csubsm1}\quad \&}}{\bm{s}'/q'\in \Q_{\xi}(\bm{\mathcal{T}})~:}} \ppi_{\xi}^{(t)}\left(\bm{\mathcal{T}}; \left( \frac{\bm{s}'}{q'}\middle|\; \bm{p}\right)\right)\;=\; \mathcal{S}_{\xi}^{(\bm{\gamma}_{\le})}\left(\bm{\mathcal{T}}_{\le}\;; q'\right).
\end{equation*}

\noindent From Proposition~\ref{asymptinhomg}, taking into account the assumption~\eqref{asymptlageT_1T_2bis} to simplify the bounds, one infers that 
\begin{align}
& \mathcal{S}_{\xi}^{(\bm{\gamma}_{\le})}\left(\bm{\mathcal{T}}_{\le}\;; q'\right)= \frac{2\sqrt{2}}{\sqrt{\pi}}\cdot \left(T_2-\frac{s_1}{q}-H_\xi\left(q,q'\right)\right)\cdot\sqrt{\left(q'\right)^3\xi(q')}+ O\!\left(K_\xi\left(\bm{\mathcal{T}}; q'\right)\right), \label{asymp<sumgat}
\end{align}
where 
\begin{align*}
 K_\xi\left(\bm{\mathcal{T}}; q'\right)\;=\;  \sqrt{q' \xi(q')}&\cdot\left(\left(1+\sqrt{q'\xi(q')}\right)\cdot \left(1+T_2-\frac{s_1}{q}+\sqrt{T_2}\right)\right.\\
&\qquad \qquad  \left. +\left|\log\left(\frac{T_2}{\max\left\{\frac{1}{q'}, H_\xi\!\left(q,q'\right)-\frac{\xi(q')}{2q'}\right\}}\right)\right|\right).
\end{align*}
This relation still holds for the second inner sum in~\eqref{completedectheta1} upon switching the roles of the rationals $\bm{s}'/q'$ and $\bm{s}/q$~: this is saying that the above expansion is also valid for the similarly defined quantity $ \mathcal{S}_{\xi}^{(\bm{\gamma}_{>})}\left(\bm{\mathcal{T}}_{>}\;; q\right)$ upon replacing $q$ with $q'$, and conversely. Furthermore, in both cases, the asymptotic expansions are left unchanged after integration against the infinitesimal probability density $d\bm{p}$ as can be seen from relation~\eqref{intdensinfini}. As a consequence,   it follows from the definitions of the sums $\mathcal{S}_{\xi}^{(\bm{0})}\!\left(\bm{\mathcal{T}}; q\right)$ and $\mathcal{O}_{ \xi}\!\left(\bm{\mathcal{T}}; q\right)$ in~\eqref{defaxilfixq} and~~\eqref{defaxilfixqbis}, respectively,  that
\begin{align}
&\Theta_{ \xi}^{(1)}\left( \bm{\mathcal{Q}}; \bm{\mathcal{T}}\right)\;\underset{\eqref{completedectheta1}\&\eqref{asymp<sumgat}}{=}\; 2\cdot \frac{2\sqrt{2}}{\sqrt{\pi}}\sum_{Q_1\le q,q'\le Q_2} \left(T_2\cdot \mathcal{S}_{\xi}^{(\bm{0})}\!\left(\bm{\mathcal{T}}; q\right)-\mathcal{O}_{ \xi}\!\left(\bm{\mathcal{T}}; q\right)\right)\cdot\sqrt{\left(q'\right)^3\xi(q')}\nonumber\\
&+\!O\!\left(\sum_{Q_1\le q,q'\le Q_2} \mathcal{S}_{\xi}^{(\bm{0})}\!\left(\bm{\mathcal{T}}; q\right)\cdot H_\xi\!\left(q, q'\right)\cdot\sqrt{\left(q'\right)^3\xi(q')}\right)\nonumber\\
&+\!O\!\left(\sum_{Q_1\le q,q'\le Q_2}\!\!\!\sqrt{q' \xi(q')}\cdot \mathcal{S}_{\xi}^{(\bm{0})}\!\left(\bm{\mathcal{T}}; q\right) \cdot\left(\left(1\!+\! \sqrt{q'\xi(q')}\right)\cdot\left(1\!+\! T_2+\sqrt{T_2}\right)\! +\! \left|\log\left(q'T_2\right)\right|\right) \right)\nonumber\\
&+\!O\!\left(\sum_{Q_1\le q,q'\le Q_2}\sqrt{q' \xi(q')}\cdot\left(1+\sqrt{q'\xi(q')}+\log q' \right)\cdot \mathcal{O}_{ \xi}\!\left(\bm{\mathcal{T}}; q\right) \right). \label{defbistheta1tqff}
\end{align}
The asymptotic expansions of the quantities  $\mathcal{S}_{\xi}^{(\bm{0})}\!\left(\bm{\mathcal{T}}; q\right)$ and $\mathcal{O}_{ \xi}\!\left(\bm{\mathcal{T}}; q\right)$ are established in Proposition~\ref{asymptinhomg} and Corollary~\ref{asymptinhomgbis}, respectively. Under the assumption~\eqref{asymptlageT_1T_2bis}, they imply that 
\begin{align}
\mathcal{O}_{ \xi}\!\left(\bm{\mathcal{T}}; q\right)\;&=\; \frac{\sqrt{2}}{\sqrt{\pi}}\cdot\left(T_2^2-T_1^2\right)\cdot\sqrt{q^3\xi(q)}\nonumber \\
&\qquad \qquad\qquad \qquad +O\!\left(\left(1+T_2\right)\cdot V\!\left(\bm{\mathcal{T}}\right)\cdot\sqrt{q\xi(q)}\cdot\left(1+\sqrt{q\xi(q)}+\log q\right)\right)\nonumber\\
&\underset{\eqref{defVT}}{=}\; \frac{T_1+T_2}{2}\cdot \mathcal{S}_{\xi}^{(\bm{0})}\!\left(\bm{\mathcal{T}}; q\right)\;+\; O\!\left(V\!\left(\bm{\mathcal{T}}\right)^2\cdot \sqrt{q\xi(q)}\cdot\left(1+\sqrt{q\xi(q)}+\log q\right)\right)\nonumber\\
&\underset{\eqref{defVT}}{\ll}\; V\!\left(\bm{\mathcal{T}}\right)^2\cdot \mathcal{S}_{\xi}^{(\bm{0})}\!\left(\bm{\mathcal{T}}; q\right).\label{upebOSreg1}
\end{align}
\noindent Also, from its definition in~\eqref{DPT}, the diagonal probability term $\Delta_{\xi}\left(\bm{\mathcal{Q}}; \bm{\mathcal{T}}\right)$ meets the equation $\Delta_{\xi}\left(\bm{\mathcal{Q}}; \bm{\mathcal{T}}\right)= \sum_{q=Q_1}^{Q_2} \mathcal{S}_{\xi}^{(\bm{0})}\left(\bm{\mathcal{T}}; q\right)$, and its asymptotic expansion is determined in  Proposition~\ref{diagestimterm}. This is readily seen to imply that the leading term in the above identity~\eqref{defbistheta1tqff} can be expressed as
\begin{align*}
&2\cdot \frac{2\sqrt{2}}{\sqrt{\pi}}\sum_{Q_1\le q,q'\le Q_2} \left(T_2\cdot \mathcal{S}_{\xi}^{(\bm{0})}\!\left(\bm{\mathcal{T}}; q\right)-\mathcal{O}_{ \xi}\!\left(\bm{\mathcal{T}}; q\right)\right)\cdot\sqrt{\left(q'\right)^3\xi(q')}\\
&\underset{\eqref{defVT}}{=}\; \left(\Delta_{\xi}\left(\bm{\mathcal{Q}}; \bm{\mathcal{T}}\right)\right)^2\;+\; O\!\left(V\!\left(\bm{\mathcal{T}}\right)^3\cdot\Delta_{\xi}\left(\bm{\mathcal{Q}}; \bm{\mathcal{T}}\right)\cdot\left(\sum_{q=Q_1}^{Q_2} \sqrt{q\xi(q)}\cdot\left(1+\sqrt{q\xi(q)}+\log q\right)\right)\right).
\end{align*}
The analysis of the first error term in~\eqref{defbistheta1tqff}   requires some careful consideration. From the very definition of the quantity $H_\xi\!\left(q, q'\right)$ in~\eqref{defhH}, 
\begin{align*}
&\sum_{Q_1\le q,q'\le Q_2} \mathcal{S}_{\xi}^{(\bm{0})}\!\left(\bm{\mathcal{T}}; q\right)\cdot H_\xi\!\left(q, q'\right)\cdot\sqrt{\left(q'\right)^3\xi(q')}\\
&\qquad\qquad\le\; \sum_{Q_1\le q,q'\le Q_2} \mathcal{S}_{\xi}^{(\bm{0})}\!\left(\bm{\mathcal{T}}; q\right)\cdot \left(\frac{\xi(q)}{q}+ \frac{\xi(q')}{q'}\right)\cdot\sqrt{\left(q'\right)^3\xi(q')}\\
&\qquad\qquad\le\; \Delta_{\xi}\left(\bm{\mathcal{Q}}; \bm{\mathcal{T}}\right)\cdot\left(\sum_{Q_1\le q'\le Q_2} \sqrt{\left(q'\right)^3\xi(q')}\cdot \left(\frac{\xi(q')}{q'}\right)\right)\\
&\qquad\qquad \qquad\qquad  +\left(\sum_{Q_1\le q'\le Q_2}\sqrt{\left(q'\right)^3\xi(q')}\right)\cdot\left(\sum_{Q_1\le q\le Q_2}\mathcal{S}_{\xi}^{(\bm{0})}\!\left(\bm{\mathcal{T}}; q\right)\cdot \frac{\xi(q)}{q}\right).
\end{align*}
Substituting $\mathcal{S}_{\xi}^{(\bm{0})}\!\left(\bm{\mathcal{T}}; q\right)$ with its asymptotic expansion given in Proposition~\ref{asymptinhomg}  and ta\-king into account that of the quantity $\Delta_{\xi}\left(\bm{\mathcal{Q}}; \bm{\mathcal{T}}\right)$ stated in Proposition~\ref{diagestimterm}, the second term in the above bound becomes 
\begin{align*}
&\left(\sum_{Q_1\le q'\le Q_2}\sqrt{\left(q'\right)^3\xi(q')}\right)\cdot\left(\sum_{Q_1\le q\le Q_2}\mathcal{S}_{\xi}^{(\bm{0})}\!\left(\bm{\mathcal{T}}; q\right)\cdot\frac{\xi(q)}{q}\right)\\
&\qquad \qquad \qquad\qquad \qquad  \underset{\eqref{defVT}}{\ll}\;  V\!\left(\bm{\mathcal{T}}\right)^3\cdot \Delta_{\xi}\left(\bm{\mathcal{Q}}; \bm{\mathcal{T}}\right)\cdot\left(\sum_{Q_1\le q'\le Q_2} \sqrt{\left(q'\right)^3\xi(q')}\cdot \frac{\xi(q')}{q'}\right).
\end{align*}
The first error term can thus be bounded as 
\begin{align*}
\sum_{Q_1\le q,q'\le Q_2} \mathcal{S}_{\xi}^{(\bm{0})}\!\left(\bm{\mathcal{T}}; q\right)\cdot H_\xi\!\left(q, q'\right)\cdot&\sqrt{\left(q'\right)^3\xi(q')}\\ 
& \ll\;  V\!\left(\bm{\mathcal{T}}\right)^3\cdot \Delta_{\xi}\left(\bm{\mathcal{Q}}; \bm{\mathcal{T}}\right)\cdot\left(\sum_{Q_1\le q\le Q_2} q^{1/2}\xi(q)^{3/2}\right).
\end{align*}
\noindent With the help of the inequality~\eqref{upebOSreg1}, the analysis of the last two error terms in the expansion~\eqref{defbistheta1tqff} does not cause any difficulty. This yields the sought relation~\eqref{estimsub1}. 
\end{proof}

\subsection{Regime 2~: Squares Vertically Distant} \label{reg2}

\begin{proof}[Proof of the inequality~\eqref{estimsub2}.] Let there be rationals $\bm{s}/q, \bm{s'}/q'\in\Q_\xi\left(\bm{\mathcal{T}}\right)$ with denominators $Q_1\le q, q'\le Q_2$. Since the sought relation~\eqref{estimsub2} involves only an upper bound, it suffices, by the symmetry of the problem, to consider the case where Brownian motion hits first the square $\mathcal{R}_{\xi}\left(\bm{\mathcal{T}}; \bm{s}/q\right)$  and then the square $\mathcal{R}_{\xi}\left(\bm{\mathcal{T}}; \bm{s'}/q'\right)$, and to furthermore assume that $s_1/q\le s'_1/q'$ and that $s_2/q\le s'_2/q'$ (all other cases are treated with obvious modifications). \\

\noindent The proof relies on an extension of the idea   intervening in the proof of the  just--established equation~\eqref{estimsub1}, and in particular of the decomposition~\eqref{decocondipasssq}. Denote by $\partial_{\xi}\!\left(\bm{\mathcal{T}}; \bm{s}/q\right)$ and by $\partial_{\xi}\!\left(\bm{\mathcal{T}}; \bm{s}'/q'\right)$  the boundaries of the squares $\mathcal{R}_{\xi}\!\left(\bm{\mathcal{T}}; \bm{s}/q\right)$ and $\mathcal{R}_{\xi}\!\left(\bm{\mathcal{T}}; \bm{s}'/q'\right)$, respectively, with the open right sides removed. Let $$\bm{p}\;=\;(p_1,p_2)\in \partial_{\xi}\!\left(\bm{\mathcal{T}}; \bm{s}/q\right)\qquad \textrm{and} \qquad \bm{p}'\;=\;(p'_1,p'_2)\in \partial_{\xi}\!\left(\bm{\mathcal{T}}; \bm{s}'/q'\right)$$ be the points where  Brownian motion hits first the squares $\mathcal{R}_{\xi}\left(\bm{\mathcal{T}}; \bm{s}/q\right)$ and $\mathcal{R}_{\xi}\left(\bm{\mathcal{T}}; \bm{s}'/q'\right)$, respectively (in particular, $p'_1>p_1$). Define $\ppi_{\xi}\left( \bm{\mathcal{T}};\left( \bm{s}'/q' :   \bm{p}'\middle| \bm{p}\right)\right)$ as the probability that Brownian motion hits at time $t =p'_1$ the square $\mathcal{R}_{\xi}\left(\bm{\mathcal{T}}; \bm{s}'/q'\right)$ at the point $\bm{p}'$ for the first time, conditionally on the event that it hits first  the square $\mathcal{R}_{\xi}\left(\bm{\mathcal{T}}; \bm{s}/q\right)$ at the point $\bm{p}$  at time $t =p_1<p'_1$ for the first time. Then, from the Strong Markovian Property of Brownian motion, 
 \begin{equation}\label{decocondipasssqbis}
 \ppi_\xi\left(\bm{\mathcal{T}}; \left(\frac{\bm{s}}{q}, \frac{\bm{s}'}{q'}\right)\right)= \ppi_{\xi}\left(\bm{\mathcal{T}}; \frac{\bm{s}}{q}\right)\cdot \bigintssss_{\partial_{\xi}\left(\bm{\mathcal{T}}; \frac{\bm{s}}{q}\right)}\!\! \left( \bigintssss_{\partial_{\xi}\left(\bm{\mathcal{T}}; \frac{\bm{s}'}{q'}\right)} \ppi_{\xi}\left(\bm{\mathcal{T}}; \left( \frac{\bm{s}'}{q'} :   \bm{p}'\middle|\; \bm{p}\right)\right)\cdot \textrm{d}^{(\bm{p})}\bm{p}'\right)\! \!\textrm{d}\bm{p}. 
 \end{equation}
Here, $\textrm{d}\bm{p}$ and $\textrm{d}^{(\bm{p})}\bm{p}'$ are infinitesimal probability densities (the explicit forms of which are not of interest here) such that 
\begin{equation*}\label{intdensinfinibis}
\bigintssss_{\partial_{\xi}\left(\bm{\mathcal{T}}; \frac{\bm{s}}{q}\right)} \bigintssss_{\partial_{\xi}\left(\bm{\mathcal{T}}; \frac{\bm{s}'}{q'}\right)}  \textrm{d}^{(\bm{p})}\bm{p}' \cdot  \textrm{d}\bm{p}\;=\; 1.
\end{equation*} 
Set
 \begin{alignat}{2}\label{boundreg2a}
&x^+\;=\; \frac{s'_1}{q'}+\frac{\xi(q')}{2q'}-p_1\underset{\eqref{defhH}\&\eqref{csubsm2}}{\le} 3H_{\xi}(q,q'), \qquad &&x^-\;=\;p'_1-p_1\;>\; 0,\\
&y^-\;=\;p'_2-p_2, 
\qquad  &&y^+\;=\; \frac{s'_2}{q'}\pm \frac{\xi(q')}{2q'}-p_2
\end{alignat} 
and also  
\begin{alignat}{2}\label{boundreg2}
&\alpha\;=\; \min\left\{\frac{\xi(q')}{q'}, \;\frac{s'_1}{q'}+\frac{\xi(q')}{2q'}-p'_1\right\}\qquad  \textrm{and}\qquad &&\beta\;=\;  \frac{\xi(q')}{q'}\cdotp
\end{alignat}
Here, from the relations~\eqref{defhH} and~\eqref{csubsm2}, one easily checks that 
\begin{equation}\label{lasty^-} 
\left|y^-\right|\;\ge\; \frac{1}{2}\cdot h^{(\mu)}_{\xi}(q,q')\;+\; \frac{1}{4}\cdot \left|\frac{s'_2}{q'}-p_2\right|. 
\end{equation}
Also,  the upper bound~\eqref{mainupbprob} stated in Corollary~\ref{coroutile} implies that 
\begin{align}
\ppi_{\xi}\left(\bm{\mathcal{T}}; \left( \frac{\bm{s}'}{q'} :   \bm{p}'\middle|\; \bm{p}\right)\right) \;\ll \; \min&\left\{ \sqrt{\frac{\alpha}{x^-}}, \frac{\sqrt{x^+}}{y^-}\right\}\cdot\exp\left(-\frac{\left(y^-\right)^2}{2x^+}\right)\nonumber\\
& +\;\max\left\{\frac{\beta}{\sqrt{x^\pm}}\cdot  \exp\left(-\frac{\left(y^-\right)^2}{2x^\pm}\right)\right\}.\label{ezggze}
\end{align}
To simplify the right--hand side, make the following observations~: 
\begin{itemize}
\item given a real $y>0$, the map $x>0\mapsto \exp(-y^2/(2x))/\sqrt{x}$ is increasing over $\left(0, y^2\right]$ and decreasing over $\left[ y^2, \infty\right)$. The lower bound $Q_1\ge 12^{1/(2\mu)}$ part of the assumption ensures that  $3H_{\xi}(q,q')\le \left(h^{(\mu)}_{\xi}(q,q')\right)^2/4$, and therefore that 
\begin{equation}\label{defe}
x^-\;\le\; x^+\;\le\; 3\cdot H_{\xi}(q,q')\;\le\; \left(\frac{h^{(\mu)}_{\xi}(q,q')}{2}\right)^2\;\le\;  \left(y^-\right)^2.
\end{equation} 
Consequently, the maximum on the right--hand side of the upper bound~\eqref{ezggze} is attained at the positive sign, i.e.~at $x^+$;
 
\item if $x^-<\xi(q')/q'=\beta$, then $x^+\le x^-+\xi(q')/q'\le 2\xi(q')/q'= 2\beta$ and   $$\frac{\sqrt{x^+}}{\left|y^-\right|}\;\le\; \frac{4\sqrt{\beta}}{h^{(\mu)}_{\xi}(q,q')+\left| s'_2/q'-p_2\right|};$$
\item if $x^-\ge \xi(q')/q'=\beta$, then $x^+\le x^-+\xi(q')/q'\le 2x^-$ and $$\sqrt{\frac{\alpha}{x^-}}\;\ll\; \frac{\sqrt{\alpha}}{\sqrt{x^+}}\;\ll\; \frac{\sqrt{\xi(q')/q'}}{\sqrt{x^+}}\;=\; \sqrt{\frac{\beta}{x^+}}.$$
\end{itemize}
This analysis shows that 
\begin{align}
\ppi_{\xi}\left(\bm{\mathcal{T}}; \left( \frac{\bm{s}'}{q'} :   \bm{p}'\middle|\; \bm{p}\right)\right) \;&\ll \;  \sqrt{\beta}\cdot \left(\frac{1}{h^{(\mu)}_{\xi}(q,q')}+\frac{1}{\sqrt{x^+}}\right)\cdot  \exp\left(-\frac{\left(y^-\right)^2}{2x^+}\right)\label{majreg2codn0}\\
&\underset{\eqref{defhH}}{\ll}\; \sqrt{\frac{\beta}{H_{\xi}(q,q')}}\cdot \exp\left(-\frac{1}{48\cdot H_{\xi}(q,q')^{2\mu}}-\frac{\left( s'_2/q'-p_2\right)^2}{48\cdot H_{\xi}(q,q')}\right), \label{majreg2codn}
\end{align}
where this last bound follows again by monotonicity  from the inequalities~\eqref{defe}. \\

\noindent Fix a value of the rational $\bm{s}/q\in\Q_\xi\left(\bm{\mathcal{T}}\right)$ and of the denominator $Q_1\le q'\le Q_2$. From the decomposition~\eqref{decocondipasssqbis} and then by comparison with an integral (with the help, e.g., of the estimates for the complementary error function given in~\eqref{ineqerrfct1}), one thus obtains  that 
\begin{align}
&\sum_{\underset{\eqref{csubsm2}}{s_2'\in\Z~: }} \ppi_\xi\left(\bm{\mathcal{T}}; \left(\frac{\bm{s}}{q}, \frac{\bm{s}'}{q'}\right)\right)\nonumber\\
&\underset{\eqref{majreg2codn}}{\ll}  \ppi_{\xi}\left(\bm{\mathcal{T}}; \frac{\bm{s}}{q}\right)\cdot \sqrt{\frac{\xi(q')/q'}{H_{\xi}(q,q')}}\cdot \exp\left(-\frac{1/48}{ H_{\xi}(q,q')^{2\mu}}\right)\cdot \left(\sum_{s'_2\in\Z}  \exp\left(-\frac{\left( s'_2/q'-p_2\right)^2}{48\cdot H_{\xi}(q,q')}\right)\right) \nonumber \\
&\ll  \ppi_{\xi}\left(\bm{\mathcal{T}}; \frac{\bm{s}}{q}\right)\cdot \sqrt{\frac{\xi(q')/q'}{H_{\xi}(q,q')}}\cdot \exp\left(-\frac{1}{48\cdot H_{\xi}(q,q')^{2\mu}}\right)\cdot  \left(1+q'\sqrt{H_\xi(q,q')}\right).\label{ineqproofreg2}
\end{align}
For fixed values of $s_1,q$ and $q'$, the number of integers $s'_1$ meeting the first inequality in the condition~\eqref{csubsm2} defining this regime   is $O\left(1+q' H_\xi(q,q')\right)$. Therefore, 
\begin{align}
&\sum_{Q_1\le q'\le Q_2} \sum_{\underset{\eqref{csubsm2}}{\bm{s}'\in\Z^2~: }} \ppi_\xi\left(\bm{\mathcal{T}}; \left(\frac{\bm{s}}{q}, \frac{\bm{s}'}{q'}\right)\right)\nonumber\\
&\underset{\eqref{ineqproofreg2}}{\ll}\; \ppi_\xi\left(\bm{\mathcal{T}}; \frac{\bm{s}}{q}\right)\cdot \sum_{Q_1\le q'\le Q_2} \left(\sqrt{\frac{\xi(q')/q'}{H_{\xi}(q,q')}}\cdot   \left(1+q'\sqrt{H_\xi(q,q')}\right)\times\right.\nonumber\\
&\left.\qquad\qquad\qquad\qquad\qquad \qquad\left(1+q' H_\xi(q,q')\right)\cdot \exp\left(-\frac{1}{48\cdot H_{\xi}(q,q')^{2\mu}}\right)\right).\label{finreg20}
\end{align}
The sum over $q'$ is then split into two subsums. In the first one, one assumes that 
\begin{equation}\label{regsubmax1}
\max\left\{\frac{\xi(q)}{q}, \frac{\xi(q')}{q'}\right\}\;=\; \frac{\xi(q')}{q'}
\end{equation}
so as to obtain from the definition of $H_\xi(q,q')$ in~\eqref{defhH} that 
\begin{align}
 \sum_{\underset{\eqref{regsubmax1}}{Q_1\le q'\le Q_2~: }} &\sqrt{\frac{\xi(q')/q'}{H_{\xi}\!(q,q')}}\cdot   \left(1+q'\sqrt{H_\xi\!(q,q')}\right)\cdot \left(1+q' H_\xi\!(q,q')\right)\cdot \exp\left(-\frac{1/48}{H_{\xi}\!(q,q')^{2\mu}}\right)\nonumber\\
  &\ll \sum_{\underset{\eqref{regsubmax1}}{Q_1\le q'\le Q_2~: }} \sqrt{q'}\cdot \exp\left(-\frac{\left(q'/2\right)^{2\mu}}{48}\right)\;\ll\; 1.\label{finreg2-1}
\end{align}
In the second subsum, one assumes that 
\begin{equation}\label{regsubmax2}
\max\left\{\frac{\xi(q)}{q}, \frac{\xi(q')}{q'}\right\}\;=\; \frac{\xi(q)}{q}\cdotp
\end{equation}
Then,  upon relying on the bound $\exp(-1/x)\ll x^A$ valid for all reals $x>0$ and all exponents $A>0$, one readily sees that 
\begin{align}
 \sum_{\underset{\eqref{regsubmax2}}{Q_1\le q'\le Q_2~: }} &\sqrt{\frac{\xi(q')/q'}{H_{\xi}\!(q,q')}}\cdot   \left(1+q'\sqrt{H_\xi\!(q,q')}\right)\cdot \left(1+q' H_\xi\!(q,q')\right)\cdot \exp\left(-\frac{1/48}{H_{\xi}\!(q,q')^{2\mu}}\right)\nonumber \\
  &\ll  \left(\frac{\xi(q)}{q}\right)^{2A\mu-1/2}\cdot\left( \sum_{ Q_1\le q'\le Q_2 } \sqrt{\xi(q')\cdot \left(q'\right)^3}\right), \label{finreg2-2}
  \end{align} 
 where the implicit constant depends on the choice of $A$ and $\mu$. Here,  the second factor compares with the quantity $\Delta_\xi\left(\bm{\mathcal{Q}}, \bm{\mathcal{T}}\right)$ since Proposition~\ref{diagestimterm} implies that 
\begin{equation}\label{upboundreg2fin}
 \sum_{ Q_1\le q'\le Q_2 } \sqrt{\xi(q')\cdot \left(q'\right)^3}\;\ll\;  V\!\left(\bm{\mathcal{T}}\right)^2\cdot\Delta_{\xi}\left(\bm{\mathcal{Q}}; \bm{\mathcal{T}}\right).
\end{equation}

\noindent The sought inequality~\eqref{estimsub2} then follows upon putting the relations~\eqref{finreg20}, \eqref{finreg2-1}, \eqref{finreg2-2} and~\eqref{upboundreg2fin} together and upon choosing $A>3/(4\mu)$. Indeed, one has then that  
\begin{align}
\Theta_{ \xi}^{(2)}\left(\bm{\mathcal{Q}}; \bm{\mathcal{T}}\right)\;&=\; \sum_{Q_1\le q\le Q_2}\;\sum_{\underset{\bm{s}/q\in\Q_\xi\!\left(\bm{\mathcal{T}}\right)}{\bm{s}\in\Z^2~: }}\;\sum_{Q_1\le q'\le Q_2} \;\sum_{\underset{\eqref{csubsm2}}{\bm{s}'\in\Z^2~: }} \ppi_\xi\left(\bm{\mathcal{T}}; \left(\frac{\bm{s}}{q}, \frac{\bm{s}'}{q'}\right)\right)\nonumber\\
&\ll\; \left(\sum_{Q_1\le q\le Q_2} \;\sum_{\underset{\bm{s}/q\in\Q_\xi\!\left(\bm{\mathcal{T}}\right)}{\bm{s}\in\Z^2~: }}\;  \ppi_{\xi}\left(\bm{\mathcal{T}}; \frac{\bm{s}}{q}\right)\right)\nonumber\\
&\qquad +\; V\!\left(\bm{\mathcal{T}}\right)^2\cdot\Delta_{\xi}\left(\bm{\mathcal{Q}}; \bm{\mathcal{T}}\right)\cdot\left(\sum_{Q_1\le q\le Q_2} \;\left(\frac{\xi(q)}{q}\right)^{2A\mu-1/2}\cdot\sum_{\underset{\bm{s}/q\in\Q_\xi\!\left(\bm{\mathcal{T}}\right)}{\bm{s}\in\Z^2~: }}\;  \ppi_{\xi}\left(\bm{\mathcal{T}}; \frac{\bm{s}}{q}\right)\right)\nonumber\\
&\ll\; V\!\left(\bm{\mathcal{T}}\right)^2\cdot\Delta_{\xi}\left(\bm{\mathcal{Q}}; \bm{\mathcal{T}}\right).\label{upbreg2}
\end{align}
This last simplification results on the one hand from the definition of $\Delta_{\xi}\left(\bm{\mathcal{Q}}; \bm{\mathcal{T}}\right)$ in~\eqref{sommeglob} and, on the other, from the choice of an exponent $A>0$ large enough so that the second sum over $q$ converges. (Proposition~\ref{asymptinhomg} --- which gives an asymptotic as $q$ grows to infinity for the inner sums over $\bm{s}\in\Z^2$ --- shows that this is indeed possible.) This completes the proof of the inequality~\eqref{estimsub2} in Regime~2. 
\end{proof}

\subsection{Regime 3~: Squares Close to one-another with Distinct Centers}  \label{reg3}

\noindent Given reals $\sigma$ and $\tau$ such that $\sigma\ge 0$, let 
\begin{equation}\label{qsigtauT}
\Q_{\bm{\mathcal{T}}}\!\left(\sigma, \tau)\right)= \Q^2\;\bigcap\;  \left(\left[\widehat{T}_1+\sigma;\; \widehat{T}_1+\sigma+1\right)\times  \left[\tau;\; \tau+1\right)\right), \; \textrm{where}\; \widehat{T}_1= \max\left\{\frac{T_1}{2}, \;T_1-\frac{1}{2}\right\},
\end{equation}
and define, in the notation introduced in~\eqref{defproR}, the probability 
\begin{equation}\label{pistarprob}
\pi^{*}_{\bm{\mathcal{T}}}\!\left(\left(\sigma, \tau\right); \delta\right)\;=\;  \sup_{\bm{x}\in \Q_{\bm{\mathcal{T}}}\!\left(\sigma, \tau\right)} \;\ppi\!\left(\mathcal{R}\left(\bm{x}, \delta\right)\right).
\end{equation}
Here, $\delta>0$ and  given a vector $\bm{x}=\left(x_1, x_2\right)$, 
\begin{equation*}
\mathcal{R}\left(\bm{x}, \delta\right)\;=\; \left[x_1-\frac{\delta}{2};\; x_1+\frac{\delta}{2}\right]\times \left[x_2-\frac{\delta}{2};\; x_2+\frac{\delta}{2}\right].
\end{equation*}
In other words, $\pi^{*}_{\bm{\mathcal{T}}}\!\left(\left(\sigma, \tau\right); \delta\right)$ measures the supremum of the probabilities that a Brownian motion should hit a square with side length $\delta$ centered at a point in $\Q_{\bm{\mathcal{T}}}\!\left(\sigma, \tau\right)$.\\

\noindent The main result in this section reads as follows~: 

\begin{prop}\label{progreg3}
Assume that $\sigma\ge 0$ and $ \tau$ are real numbers, and let $\Theta_{\xi}^{(3)}\left(\bm{\mathcal{Q}};\bm{\mathcal{T}};  \left(\sigma, \tau\right)\right)$ be the subsum $\Theta_{\xi}^{(3)}\left(\bm{\mathcal{Q}}; \bm{\mathcal{T}}\right)$  corresponding to this regime restricted to the set  $\Q_{\bm{\mathcal{T}}}\!\left(\sigma, \tau)\right)$; that is, 
\begin{align*}
\Theta_{\xi}^{(3)}\left(\bm{\mathcal{Q}};\bm{\mathcal{T}};  \left(\sigma, \tau\right)\right)\;=\; \sum_{Q_1\le q,q'\le Q_2}\; \sum_{\underset{\eqref{csubsm3}}{\frac{\bm{s}}{q}, \frac{\bm{s}'}{q'}\in \Q_{\bm{\mathcal{T}}}\!\left(\sigma, \tau\right)~: }} \ppi_\xi\left(\bm{\mathcal{T}}; \left(\frac{\bm{s}}{q}, \frac{\bm{s}'}{q'}\right)\right).
\end{align*}
Given denominators $Q_1\le q,q'\le Q_2$, set 
\begin{equation}\label{defdeltaiqq}
\delta_\xi\!\left(q,q'\right)\;=\; \min\left\{\frac{\xi(q)}{q}, \frac{\xi(q')}{q'}\right\}.
\end{equation}
Then, 
\begin{align}
\Theta_{\xi}^{(3)}\left(\bm{\mathcal{Q}};\bm{\mathcal{T}};  \left(\sigma, \tau\right)\right)\;\ll\; &\sum_{Q_1\le q,q'\le Q_2}\; \pi^{*}_{\bm{\mathcal{T}}}\!\left(\left(\sigma, \tau\right); \delta_\xi\!\left(q,q'\right) \right)\times  \left[   \left(qq'\right)^2\cdot H_{\xi}(q,q') \cdot  h_\xi^{(\mu)}(q,q')\right.\nonumber \\
&\qquad\qquad \left.+ \;  qq'\cdot \left(H_{\xi}(q,q')+h_\xi^{(\mu)}(q,q')\right)\cdot \gcd\!\left(q,q'\right)\right],\label{thetpropo3sigtau}
\end{align}
where the implicit constant is absolute. 
\end{prop}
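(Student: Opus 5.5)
The plan is to bound the joint probability crudely by a single-square probability and then count the lattice pairs. For each pair $\bm{s}/q,\bm{s}'/q'$ in Regime~3, whichever of the two squares is smaller (side $\delta_\xi(q,q')$) has its center in $\Q_{\bm{\mathcal{T}}}(\sigma,\tau)$. Hence
\[
\ppi_\xi\left(\bm{\mathcal{T}};\left(\tfrac{\bm{s}}{q},\tfrac{\bm{s}'}{q'}\right)\right)\le \min\left\{\ppi_\xi\left(\bm{\mathcal{T}};\tfrac{\bm{s}}{q}\right),\ppi_\xi\left(\bm{\mathcal{T}};\tfrac{\bm{s}'}{q'}\right)\right\}\le \pi^{*}_{\bm{\mathcal{T}}}\!\left(\left(\sigma,\tau\right);\delta_\xi(q,q')\right).
\]
This uses monotonicity of $\ppi$ under inclusion together with the definition~\eqref{pistarprob}. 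The truncation to $[T_1,T_2]$ only shrinks the rectangle, so it costs nothing. The choice $\widehat{T}_1$ in~\eqref{qsigtauT} ensures the relevant centres lie in the domain where $\pi^*$ is defined. The whole estimate is thus reduced to counting, for fixed $q,q'$, the number $N(q,q')$ of pairs $(\bm{s},\bm{s}')$ with both points in $\Q_{\bm{\mathcal{T}}}(\sigma,\tau)$ satisfying~\eqref{csubsm3}. The aim is
\[
N(q,q')\ll (qq')^2 H h+qq'(H+h)\gcd(q,q'),
\]
where $H=H_\xi(q,q')$ and $h=h_\xi^{(\mu)}(q,q')$.

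The count factorises into a horizontal and a vertical problem. In each coordinate, for $s_1/q$ ranging over a unit interval (there are $O(q)$ choices), I count the $s_1'$ with $|s_1/q-s_1'/q'|<H$. The same is done for $s_2$ with $h$ in place of $H$. Write $d=\gcd(q,q')$. The quantity $s_1q'-s_1'q$ runs over multiples of $d$, and each value $kd$ is attained by $O(qq'/d\cdot d/\max(q,q'))$, i.e.\ $O(\min(q,q'))$, pairs in the unit box. The standard one-dimensional count of pairs with $|s_1q'-s_1'q|<Hqq'$ is therefore
\[
\ll \min(q,q')\left(1+\frac{Hqq'}{d}\right)\ll qq'H+ d\cdot\frac{qq'}{\max(q,q')}\cdot\frac{1}{d}\cdots
\]
More cleanly, I will use the bound
\[
\#\{(s_1,s_1')\}\ll qq'H+\gcd(q,q').
\]
The same argument gives $\#\{(s_2,s_2')\}\ll qq'h+\gcd(q,q')$, with the count taken inside a unit box in each case. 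Multiplying the two gives
\[
(qq')^2Hh+qq'(H+h)d+d^2.
\]

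The remaining work is to dispose of the term $d^2$ using the condition $\bm{s}/q\ne\bm{s}'/q'$. The $d^2$ term comes from pairs where both differences $s_1q'-s_1'q$ and $s_2q'-s_2'q$ vanish, i.e.\ the same rational point, and these are excluded in Regime~3. So in the product I will split according to whether the horizontal difference is zero. If it is nonzero, it is a nonzero multiple of $d$ with $|\cdot|<Hqq'$, which forces $d\le Hqq'$, so the horizontal count is $\ll qq'H$. If it is zero, then the vertical difference must be nonzero, so the vertical count is $\ll qq'h$, while the horizontal count is $\ll d$. Either way the total is bounded by the two bracketed terms in~\eqref{thetpropo3sigtau}. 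Summing over $q,q'$ finishes the argument.

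The main obstacle I expect is making the lattice-pair count rigorous and uniform. This means checking that each residue value $kd$ is hit by $O(\min(q,q'))$ pairs inside a unit box, and handling the zero and nonzero cases carefully so that no stray $d^2$ or $\min(q,q')$ factor survives. Everything else is definitional.
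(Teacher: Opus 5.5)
Your route is the paper's. You bound each joint probability by the hitting probability of the smaller square, hence by $\pi^{*}_{\bm{\mathcal{T}}}((\sigma,\tau);\delta_\xi(q,q'))$, and then count quadruples $(s_1,s_1',s_2,s_2')$ according to which of $q's_1-qs_1'$ and $q's_2-qs_2'$ vanish. Your two cases are equivalent to the paper's three cases from its Arithmetic Lemma. The probabilistic reduction and the final bookkeeping of the product counts are fine.

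The step that is wrong as written is the multiplicity count, which the whole estimate rests on. You assert that each value $kd$ of $s_1q'-s_1'q$, with $d=\gcd(q,q')$, is attained by $O(\min\{q,q'\})$ pairs in the unit box, and you name checking this as the main obstacle. That bound is too weak.

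With it, the number of pairs with $0<|s_1q'-s_1'q|<Hqq'$ is only $\ll \min\{q,q'\}\cdot Hqq'/d$. For coprime $q,q'$ this exceeds the required $qq'H$ by a factor $\min\{q,q'\}$, so the leading term $(qq')^2Hh$ of the bracket would be lost.

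What is true, and what you need, is that each value is attained by at most $d$ pairs:
\begin{itemize}
\item If $s_1q'-s_1'q=n$ has a solution $(s_1^0,s_1'^0)$, then all solutions are $(s_1^0+tq/d,\; s_1'^0+tq'/d)$ with $t\in\Z$.
\item Since $s_1$ is confined to a half-open interval of length $q$, at most $d$ values of $t$ survive.
\item Hence there are at most $(2Hqq'/d)\cdot d=2Hqq'$ pairs with nonzero difference.
\item There are at most $d$ pairs with zero difference.
\end{itemize}
This is exactly the bound $\ll qq'H+\gcd(q,q')$ that you go on to use, and it is the content of Points~(1) and~(3) of the paper's Arithmetic Lemma. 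The same holds for the vertical coordinate with $h$ in place of $H$. With this correction your argument is complete.
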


\begin{proof}[Deduction of the bound~\eqref{estimsub3} in Regime 3 from Proposition~\ref{progreg3}] It should be clear that if $\bm{s}/q$ and $\bm{s}'/q'$ are two rational points in the strip $\Q_\xi\left(\bm{\mathcal{T}}\right)$ (defined in~\eqref{defstrip}) such that the corresponding squares $\mathcal{R}_\xi\left(\bm{\mathcal{T}}; \bm{s}/q\right)$ and $\mathcal{R}_\xi\left(\bm{\mathcal{T}}; \bm{s}/q\right)$ (defined in~\eqref{defract}) meet the conditions~\eqref{csubsm3} defining Regime~3, then there exist integers 
\begin{equation}\label{bornemn}
0\;\le\, m\; \underset{\eqref{qsigtauT}}{\ll}\;\max\left\{1, T_2-T_1\right\}\qquad\qquad\textrm{and}\qquad\qquad n\in\Z
\end{equation}
such that 
\begin{align*}
\mathcal{R}_\xi\left(\bm{\mathcal{T}}; \frac{\bm{s}}{q}\right),  \; &\mathcal{R}_\xi\left(\bm{\mathcal{T}}; \frac{\bm{s}'}{q'}\right)\\
& \underset{\eqref{qsigtauT}}{\subset}\; \bigcup_{\kappa_1, \kappa_2\in \left\{0,1\right\}} \left[\widehat{T}_1+m+\frac{\kappa_1}{2};\; \widehat{T}_1+m+\frac{\kappa_1}{2}+1\right)\times  \left[n+\frac{\kappa_2}{2};\; n+\frac{\kappa_2}{2}+1\right).
\end{align*}
Therefore, the subsum $\Theta_{\xi}^{(3)}\left(\bm{\mathcal{Q}}; \bm{\mathcal{T}}\right)$ corresponding to this regime is bounded as 
\begin{align*}
\Theta_{\xi}^{(3)}\left(\bm{\mathcal{Q}}; \bm{\mathcal{T}}\right)\;\le\; \sum_{\kappa_1, \kappa_2\in \left\{0,1\right\}} \; \sum_{\underset{\eqref{bornemn}}{(m,n)\in\Z^2~:}}\Theta_{\xi}^{(3)}\left(\bm{\mathcal{T}}; \bm{\mathcal{Q}}; \left(m+\frac{\kappa_1}{2}, \; n+\frac{\kappa_2}{2}\right)\right).
\end{align*}
Set $$x^-\;\underset{\eqref{qsigtauT}}{=}\;\widehat{T}_1+m+\frac{\kappa_1}{2}, \qquad x^+\;=\; x^-+1, \qquad y^-\;=\; n+\frac{\kappa_2}{2}, \qquad y^+\;=\;y^-+1$$ and$$\alpha\;=\; \beta\; =\;  \delta_\xi\!\left(q,q'\right). $$ Relying on the  notation introduced in Proposition~\ref{progreg3}, it follows from the upper bound~\eqref{mainupbprob} for the hitting probability stated in Corollary~\ref{coroutile} that  for any choice of $\kappa_1, \kappa_2\in\left\{0,1\right\}$, 
\begin{align*}
&\sum_{\underset{\eqref{bornemn}}{(m,n)\in\Z^2~:}} \pi^{*}_{\bm{\mathcal{T}}}\!\left(\left(m+\frac{\kappa_1}{2}, \; n+\frac{\kappa_2}{2}\right); \delta_\xi\!\left(q,q'\right) \right)\\
&\ll\;\; \sum_{0\le m\ll \max\left\{1, T_2-T_1\right\}}\sqrt{\frac{\delta_\xi\!\left(q,q'\right)}{\widehat{T}_1+m+\kappa_1/2}}\cdotp\sum_{n\in\Z}\exp\left(-\frac{\left(n+\kappa_2/2\right)^2}{2\cdot\left(\widehat{T}_1+1+m+\kappa_1/2\right)}\right)\\
&\ll\; \sqrt{\delta_\xi\!\left(q,q'\right) }\cdot\max\left\{1, \; T_2-T_1\right\}.
\end{align*}
This last relation follows from the fact that   the inner sum (over $n\in\Z$) is  bounded by $O\left(1+\sqrt{\widehat{T}_1+m+\kappa_1/2}\right)$ as can be seen from a comparison with an integral. Upon relying on the identity $\max\left\{x,y\right\}\cdot \min\left\{x,y\right\}= x\cdot y$ valid for all reals $x,y\ge 0$, one thus obtains from the inequality~\eqref{thetpropo3sigtau} that 
\begin{align*}
\Theta_{\xi}^{(3)}\left(\bm{\mathcal{Q}}; \bm{\mathcal{T}}\right)\;& \ll\; \max\left\{1, T_2-T_1\right\}\times\\
& \sum_{Q_1\le q,q'\le Q_2}\sqrt{\xi(q)\cdot \xi(q')}\times\left(\left(qq' \right)^{3/2}\cdot \sqrt{H_{\xi}(q,q')}\cdot  h_\xi^{(\mu)}(q,q')\right.\\
&\left.\qquad \qquad \qquad  \qquad + \left(1+\frac{h_\xi^{(\mu)}(q,q')}{H_{\xi}(q,q')}\right)\cdot \sqrt{qq'\cdot H_{\xi}(q,q')}\cdot  \gcd(q,q') \right).
\end{align*}
This yields the form of the stated inequality~\eqref{estimsub3} upon employing the trivial  upper  bound $\gcd(q,q')\le \min\left\{q, q'\right\}$.
\end{proof}

\noindent It remains to establish Proposition~\ref{progreg3}. Its proof relies on an auxiliary lemma of an arithmetic nature.

\begin{lem}[Arithmetic Lemma]\label{arithlem}  \hfill
\begin{itemize}
\item[$\bm{(1)}$] Fix  integers $g,h\ge 1$. Let also $A\ge 1$ and $\sigma', \tau'\in\R$. Then, the number of pairs $\left(c,d\right)\in\Z^2$ solutions to the system
\begin{align*}
\begin{cases}
1\;\le\; \left|gd-ch\right|\;\le\; A ;\\
\sigma' h\;\le\; d\;\le\; h(\sigma'+1), \; d\neq 0 ;\\
\tau' g\;\le\; c\;\le\; g(\tau'+1), \; c\neq 0
\end{cases}
\end{align*}
is at most $6A$.

\item[$\bm{(2)}$] Fix  integers $g,h\ge 1$. Let also $A, B\ge 1$ and $\sigma_1, \sigma_2, \tau_1, \tau_2 \in\R$. Then, the number of quadruples $\left(c_1, c_2,d_1, d_2\right)\in\Z^4$ solutions to the system
\begin{align*}
\begin{cases}
1\;\le\; \left|gd_1-c_1h\right|\;\le\; A \quad \&\quad  1\;\le\; \left|gd_2-c_2h\right|\;\le\; B;\\
\sigma_1 h\;\le\; d_1\;\le\; h(\sigma_1+1), \; d_1\neq 0  \quad \&\quad  \sigma_2 h\;\le\; d_2\;\le\; h(\sigma_2+1), \; d_2\neq 0 ;\\
\tau_1g\;\le\; c_1\;\le\; g(\tau_1+1), \; c_1\neq 0  \quad \&\quad  \tau_2g\;\le\; c_2\;\le\; g(\tau_2+1), \; c_2\neq 0
\end{cases}
\end{align*}
is at most $36AB$.

\item[$\bm{(3)}$]  Given reals $\sigma', \tau'\in\R$, the number of pairs $\left(c,d\right)\in\Z^2$  solutions to the system 
\begin{align*}
\begin{cases}
gd\;=\; ch;\\
\sigma' h\;\le\; d\;<\; h(\sigma'+1);\\
\tau' g\;\le\; c\;<\; g(\tau'+1) 
\end{cases}
\end{align*}
is at most $\gcd(g,h)$.
\end{itemize}
\end{lem}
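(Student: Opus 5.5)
The proof of the Arithmetic Lemma~\ref{arithlem} is elementary. It rests on one observation: in a box of the stated shape, the linear form $gd-ch$ cannot take any given value more than a bounded number of times. Part~(2) then follows from Part~(1) at once, since the constraints on $(c_1,d_1)$ and on $(c_2,d_2)$ decouple. The number of quadruples is therefore the product of two counts of the type in Part~(1), which gives at most $6A\cdot 6B=36AB$. So the work lies in Parts~(1) and~(3).

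For Part~(1), I will fix an integer $k$ with $1\le|k|\le A$ and count the solutions of $gd-ch=k$ in the box. Set $e=\gcd(g,h)$. If $e\nmid k$ there are no solutions. Otherwise all solutions form the single family
\begin{equation*}
(c,d)=(c_0,d_0)+j\cdot\left(\frac{g}{e},\frac{h}{e}\right),\qquad j\in\Z.
\end{equation*}
The constraint $\sigma' h\le d\le h(\sigma'+1)$ confines $d$ to an interval of length $h$. Hence $j$ ranges over an interval of length $e$, so at most $e+1$ values of $j$ are allowed. Summing over the admissible $k$, which are multiples of $e$ with $1\le|k|\le A$, there are at most $2A/e$ of them. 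The total is at most $(2A/e)(e+1)\le 4A$, which lies within the claimed $6A$.

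The one place needing care is the case where $A<e$. Then there may be no admissible $k$ at all, which is harmless since the count is zero. One should not be misled into counting $\lfloor A/e\rfloor$ as $\gg 1$. I will also handle the closed endpoints of the intervals (length $h$, inclusive) with the bound $e+1\le 2e$; this slack is why the constant $6$ is comfortable. The extra conditions $c,d\neq 0$ only remove solutions, so they can be ignored.

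For Part~(3), the equation $gd=ch$ with $e=\gcd(g,h)$ forces $(c,d)=j\,(g/e,\,h/e)$ with $j\in\Z$. The half-open interval $[\sigma' h,\,h(\sigma'+1))$ for $d$ has length exactly $h$, so $j$ lies in a half-open interval of length $e$. Such an interval contains exactly $e$ integers. The condition on $c$ can only reduce this number, which gives the bound $\gcd(g,h)$. Note that half-openness is needed here to avoid an $e+1$.

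I expect no real obstacle. The only delicate point is the bookkeeping of interval endpoints (closed versus half-open) to obtain exactly the stated constants. Where closed intervals appear, I will absorb the resulting $+1$ terms using $e\ge 1$ and $A\ge 1$.
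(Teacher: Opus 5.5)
Your proposal is correct and follows the paper's own argument. You parametrise the solutions of $gd-ch=k$ by steps along $(g/e,h/e)$ with $e=\gcd(g,h)$, bound the number of admissible steps for each of the at most $2A/e$ admissible values of $k$, multiply the two counts for Part~(2), and use the half-open interval of length $e$ for the multiplier in Part~(3). The only (harmless) difference is that you get at most $e+1$ steps from the length of the $d$-interval, whereas the paper bounds $|t|\le e$ relative to a fixed solution via the $c$-interval and gets $2e+1$. You therefore reach $4A$ instead of $6A$.
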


\begin{proof} Set $$a=\gcd(g,h), \qquad g'=\frac{g}{a}\qquad \textrm{and}\qquad h'=\frac{h}{a}\cdotp$$

\noindent $\bm{(1)}$  Fix an integer $n$ such that $1\le \left| n\right|\le A$ and consider the linear Diophantine equation in two variables
\begin{equation*}
gd-ch\;=\; n.
\end{equation*}
The above equation admits solutions in integers $\left(c,d\right)\in\Z^2$ if, and only if, $a$ divides $n$, in which case, upon letting $n'$ denote the integer $n/a$, it is equivalent to 
\begin{equation*}
g'd-ch'\;=\; n', \qquad \textrm{where}\qquad 1\;\le\; \left|n'\right|\;\le\; \frac{A}{a}\cdotp
\end{equation*}
Furthermore, if $\left(c_1, d_1\right)$ and $\left(c_2, d_2\right)$ are two particular distinct solutions, then $\left(c_2, d_2\right)=\left(c_1-tg', d_1-th'\right)$ for some integer $t\neq 0$. The bounds imposed on the admissible solutions imply that $ \left|c_2-c_1\right|= \left|tg'\right|\le g$, meaning that $1\le \left|t\right|\le a$. \\

\noindent This shows that for a fixed integer $n'$, the number of admissible pairs $\left(c,d\right)$ is at most $2a+1$. Since $n'$ takes at most $2A/a$ values, the total number of admissible pairs turn out to be at most $2A\cdot (2a+1)/a\le 6A.$\\

\noindent $\bm{(2)}$ The second point follows from the first one upon applying it separately to the pairs $\left(c_1, d_1\right)$ and $\left(c_2, d_2\right)$ and upon multiplying the corresponding estimates.\\

\noindent $\bm{(3)}$  The system under consideration is equivalent to the existence of an integer $\lambda$ such that  
\begin{align*}
\begin{cases}
c\;=\; \lambda g'\quad \& \quad d\;=\; \lambda h';\\
\sigma' a\;\le\; \lambda\;<\; a(\sigma'+1);\\
\tau' a\;\le\; \lambda\;<\; a(\tau'+1). 
\end{cases}
\end{align*}
Clearly, the number of integers $\lambda$ meeting   the last two sets of inequalities is at most $a$, whence the claim.
\end{proof}

\begin{proof}[Proof of Proposition~\ref{progreg3}] For fixed denominators $Q_1\le q, q'\le Q_2$, let 
\begin{equation*}
\mathfrak{C}_{\bm{\mathcal{T}}}\!\left(\left(q, q'\right); \left(\sigma, \tau\right)\right)\;=\; \left\{\left(\frac{\bm{s}}{q}, \frac{\bm{s}'}{q'}\right)\in \left(\Q_{\bm{\mathcal{T}}}\!\left(\sigma, \tau\right)\right)^2\;:\; \eqref{csubsm3}\; \textrm{holds}\right\}
\end{equation*}
and  recall the definition of the quantity $\delta_\xi\!\left(q,q'\right)$ in~\eqref{defdeltaiqq}. It should be clear that, in the notation introduced in the statement of the proposition, 
\begin{align}\label{upreg3supar}
\Theta_{\xi}^{(3)}\left(\bm{\mathcal{Q}};\bm{\mathcal{T}};  \left(\sigma, \tau\right)\right)\;\underset{\eqref{pistarprob}}{\le}\; \sum_{Q_1\le q,q'\le Q_2}\; \pi^{*}_{\bm{\mathcal{T}}}\!\left(\left(\sigma, \tau\right); \delta_\xi\!\left(q,q'\right) \right)\cdot \# \mathfrak{C}_{\bm{\mathcal{T}}}\!\left(\left(q, q'\right); \left(\sigma, \tau\right)\right).
\end{align} 
Here, $\# \mathfrak{C}_{\bm{\mathcal{T}}}\!\left(\left(q, q'\right); \left(\sigma, \tau\right)\right)$ stands for the cardinality of the set $\mathfrak{C}_{\bm{\mathcal{T}}}\!\left(\left(q, q'\right); \left(\sigma, \tau\right)\right)$. From the conditions~\eqref{csubsm3} defining this regime, it is equal to the number of integer quadruples $\left(s_1, s'_1, s_2, s'_2\right)$ such that
\begin{align*}
\frac{\bm{s}}{q}=\left(\frac{s_1}{q}, \frac{s_2}{q}\right)\in \Q_{\bm{\mathcal{T}}}\!\left(\sigma, \tau\right),\qquad  \frac{\bm{s}'}{q'}\;=\;\left(\frac{s'_1}{q'}, \frac{s'_2}{q'}\right)\in \Q_{\bm{\mathcal{T}}}\!\left(\sigma, \tau\right), \qquad \frac{\bm{s}}{q}\neq\frac{\bm{s}'}{q'}\;
\end{align*}
and
\begin{align*}
\begin{cases}
&\left|q's_1-qs'_1\right|\;\le\;  qq'\cdot H_{\xi}(q,q') ;\\
&\left|q's_2-qs'_2\right|\;\le\;   qq'\cdot h_\xi^{(\mu)}(q,q'). 
\end{cases}
\end{align*}
This   cardinality is estimated upon making a distinction of cases~:

\begin{itemize}
\item assume first that $\left|q's_1-qs'_1\right|\ge 1$ and that $\left|q's_2-qs'_2\right|\ge 1$. From Point~(2) in the Arithmetic Lemma~\ref{arithlem} (applied with obvious choices of parameters), the number of admissible quadruples is at most
\begin{equation*}
36\cdot \left(qq'\right)^2\cdot H_{\xi}(q,q') \cdot h_\xi^{(\mu)}(q,q');\\
\end{equation*}

\item assume then that $\left|q's_1-qs'_1\right|\ge 1$ and that $q's_2=qs'_2$. From Points~(1) and~(3) in the Arithmetic Lemma~\ref{arithlem}, the number of admissible quadruples is at most 
\begin{equation*}
6\cdot qq'\cdot H_{\xi}(q,q')  \cdot \gcd\!\left(q,q'\right);\\
\end{equation*}

\item assume finally that $q's_1=qs'_1$ and that $\left|q's_2-qs'_2\right|\ge 1$. As in the previous case, from Points~(1) and~(3) in the Arithmetic Lemma~\ref{arithlem}, the number of admissible quadruples is at most 
\begin{equation*} 
6\cdot qq'\cdot h_\xi^{(\mu)}(q,q')\cdot \gcd\!\left(q,q'\right);
\end{equation*}
\end{itemize}
\noindent As a consequence, 
\begin{align*}
\# \mathfrak{C}_{\bm{\mathcal{T}}}\!\left(\left(q, q'\right); \left(\sigma, \tau\right)\right)\;\le\; &36\cdot \left(qq'\right)^2\cdot H_{\xi}(q,q')\cdot h_\xi^{(\mu)}(q,q') \\
&+6\cdot qq'\cdot \left(H_{\xi}(q,q') +h_\xi^{(\mu)}(q,q')\right)\cdot \gcd\!\left(q,q'\right).
\end{align*}
The statement then follows from the inequality~\eqref{upreg3supar}.
\end{proof}

\subsection{Regime 4~: Squares with the same Centers}  \label{reg4}

The first step in the proof of  the upper bound~\eqref{estimsub4}  is to express the inner sum intervening in the definition of the sum of interest in this regime, namely $\Theta_{\xi}^{(4)}\left(\bm{\mathcal{Q}}; \bm{\mathcal{T}}\right)$ (defined in~\eqref{subsm4}), as a function of the global inhomogeneous sum $\mathcal{S}_{\xi}^{(\bm{\gamma})}\left(\bm{\mathcal{T}}; q\right)$ (defined  in~\eqref{defaxilfixq}). This is to then employ the sharp estimates established in Proposition~\ref{asymptinhomg} for $\mathcal{S}_{\xi}^{(\bm{\gamma})}\left(\bm{\mathcal{T}}; q\right)$. \\

\noindent To this end, it is convenient, given a pair of denominators $\bm{q}=\left(q, q'\right)\in \left(\Z_{\ge 1}\right)^2$, to set throughout this section
\begin{equation}\label{defggcdqq'}
g(\bm{q})=\gcd(q,q')\qquad \textrm{and} \qquad  \delta_\xi\!\left(q,q'\right)\;=\; \min\left\{\frac{\xi(q)}{q}, \frac{\xi(q')}{q'}\right\}.
\end{equation}

\begin{lem}[Reduction step]\label{lemreg4as}
Given integers $q,q'\ge 1$, let 
$$\mathcal{S}_{\xi(\bm{q}, \:\cdot\:)}\left(\bm{\mathcal{T}}; g(\bm{q})\right)\;=\; \sum_{\underset{\frac{\bm{s}}{g(\bm{q})}\in   \Q_{\xi(\bm{q}, \:\cdot\:)} ( \bm{\mathcal{T}})}{\bm{s}\in\Z^2~:}}\ppi_{\xi(\bm{q}, \:\cdot\:)}\!\left(\bm{\mathcal{T}}; \frac{\bm{s}}{g(\bm{q})}\right),$$
 where 
 \begin{equation}\label{defxibmq}
 \xi(\bm{q}, \:\cdot\:)~: g\;\mapsto\; g\cdot \xi(\bm{q}, g)\;=\; g\cdot \delta_\xi\!\left(q,q'\right)
 \end{equation}
(this corresponds to the  global inhomogeneous sum $\mathcal{S}_{\xi}^{(\bm{\gamma})}\left(\bm{\mathcal{T}}; q\right)$ defined  in~\eqref{defaxilfixq} specialised to the   homogeneous case $\bm{\gamma}=\left(0,0\right)$ and to the choice of the above approximation function $\xi(\bm{q}, \:\cdot\:)$). Then, the inner sum defining the subsum $\Theta_{\xi}^{(4)}\left(\bm{\mathcal{Q}}; \bm{\mathcal{T}}\right)$ defined in~\eqref{subsm4} equals  $\mathcal{S}_{\xi(\bm{q}, \:\cdot\:)}\left(\bm{\mathcal{T}}; g\right)$. This is saying that
\begin{equation*}
\mathcal{S}_{\xi(\bm{q}, \:\cdot\:)}\left(\bm{\mathcal{T}}; g(\bm{q})\right)\;=\;  \sum \ppi_\xi\left(\bm{\mathcal{T}}; \left(\frac{\bm{s}}{q}, \frac{\bm{s}'}{q'}\right)\right),
\end{equation*}
where the sum  is over the set of integers $\bm{s}, \bm{s}'$ such that $\bm{s}/q, \bm{s'}/q'\in \Q_{\xi}(\bm{\mathcal{T}})$ and  $\bm{s}/q=\bm{s'}/q'$.
\end{lem}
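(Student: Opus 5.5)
The plan is to establish a bijection between the index set of the inner sum in $\Theta_{\xi}^{(4)}\left(\bm{\mathcal{Q}}; \bm{\mathcal{T}}\right)$ and the set of rationals with denominator $g=g(\bm{q})$ lying in $\Q_{\xi(\bm{q}, \:\cdot\:)}(\bm{\mathcal{T}})$. Then, term by term, we identify the probability of hitting two concentric squares with the probability of hitting the smaller one.

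\emph{Parametrisation.} Write $q=g\,q_0$ and $q'=g\,q_0'$ with $\gcd(q_0,q_0')=1$. If $\bm{s}/q=\bm{s}'/q'$, then $q_0' \bm{s}=q_0\bm{s}'$, so $q_0\mid \bm{s}$ componentwise by coprimality. Hence $\bm{s}=q_0\bm{t}$ and $\bm{s}'=q_0'\bm{t}$ for a unique $\bm{t}\in\Z^2$, and the common point is $\bm{t}/g$. Conversely, every $\bm{t}\in\Z^2$ gives such a pair. The map $(\bm{s},\bm{s}')\mapsto\bm{t}$ is therefore a bijection between the pairs with $\bm{s}/q=\bm{s}'/q'$ and $\Z^2$, and it respects the convention~\eqref{convention} on the sign of the first numerator.

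\emph{Nested squares.} Both squares $\mathcal{R}_\xi(\bm{\mathcal{T}};\bm{s}/q)$ and $\mathcal{R}_\xi(\bm{\mathcal{T}};\bm{s}'/q')$ are centered at $\bm{t}/g$, with half side lengths $\xi(q)/(2q)$ and $\xi(q')/(2q')$. Intersecting with the strip $[T_1,T_2]\times\R$ preserves inclusion. So one of the two rectangles contains the other, and the smaller one has half side $\delta_\xi(q,q')/2$. The event of hitting both rectangles then equals the event of hitting the smaller one, and thus
\[
\ppi_\xi\left(\bm{\mathcal{T}};\left(\tfrac{\bm{s}}{q},\tfrac{\bm{s}'}{q'}\right)\right)=\ppi\left(\mathcal{R}_\xi\left(\bm{\mathcal{T}};\tfrac{\bm{s}}{q}\right)\cap\mathcal{R}_\xi\left(\bm{\mathcal{T}};\tfrac{\bm{s}'}{q'}\right)\right).
\]
With the approximation function $\xi(\bm{q},\cdot)$ of~\eqref{defxibmq} evaluated at $g$, one has $\xi(\bm{q},g)/g=\delta_\xi(q,q')$. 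The smaller rectangle is therefore exactly $\mathcal{R}_{\xi(\bm{q},\cdot)}(\bm{\mathcal{T}};\bm{t}/g)$ as defined in~\eqref{defract}, and the probabilities agree term by term.

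\emph{Matching the index sets.} The main obstacle is bookkeeping rather than analysis: one must check that the joint condition $\bm{s}/q,\bm{s}'/q'\in\Q_\xi(\bm{\mathcal{T}})$ coincides with $\bm{t}/g\in\Q_{\xi(\bm{q},\cdot)}(\bm{\mathcal{T}})$. By~\eqref{defstrip}, each membership says that an interval centered at $t_1/g$ meets $[T_1,T_2]$. The intervals are nested, so both conditions hold if and only if the smaller interval, of half length $\delta_\xi(q,q')/2$, meets $[T_1,T_2]$, which is precisely the defining condition of $\Q_{\xi(\bm{q},\cdot)}(\bm{\mathcal{T}})$. Summing over $\bm{t}$ gives the stated identity.

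A minor point is that $\xi(\bm{q},\cdot)$ should take values in $(0,1)$ for it to qualify as an approximation function. This holds because $g\,\delta_\xi(q,q')\le \xi(q)\,g/q\le\xi(q)<1$.
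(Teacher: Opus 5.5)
Your proposal is correct and follows the paper's own argument. The paper writes the common point as $\bm{u}/g(\bm{q})$ and observes that the two concentric squares are nested, so that hitting both amounts to hitting the smaller one, of side length $\xi(\bm{q},g(\bm{q}))/g(\bm{q})=\delta_\xi(q,q')$. You additionally spell out details the paper leaves implicit, namely the coprimality argument behind the bijection and the check that the joint condition $\bm{s}/q,\bm{s}'/q'\in\Q_\xi(\bm{\mathcal{T}})$ reduces to $\bm{t}/g(\bm{q})\in\Q_{\xi(\bm{q},\cdot)}(\bm{\mathcal{T}})$.
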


\begin{proof}
For fixed values of $q, q'$, the equality $\bm{s}/q=\bm{s'}/q'$ holds if, and only if, there exists an integer vector $\bm{u}\in\Z^2$ such that  $\bm{s}/q=\bm{s'}/q'= \bm{u}/g(\bm{q})$. Furthermore, the square centered at the rational point $\bm{u}/g(\bm{q})$ with side length $$\frac{\xi(\bm{q}, g(\bm{q}))}{g(\bm{q})}\;\underset{\eqref{defxibmq}}{=}\;  \delta_\xi\!\left(q,q'\right)$$ coincides with the intersection of the squares centered at the rationals $\bm{s}/q$  and $\bm{s'}/q'$  with respective side lengths $\xi(q)/q$ and $\xi(q')/q'$. The statement follows. 
\end{proof}

 \begin{prop}[Asymptotic expansion of the subsum $\Theta_{\xi}^{(4)}\left(\bm{\mathcal{Q}}; \bm{\mathcal{T}}\right)$]\label{asympreg4par} Assume that 
$T_1\;\ge\; 4/Q_1$. Then, 
\begin{align*}
\Theta_{\xi}^{(4)}\left(\bm{\mathcal{Q}}; \bm{\mathcal{T}}\right)\;&=\; \frac{2\sqrt{2}}{\sqrt{\pi}}\cdot \left(T_2-T_1\right)\cdot \sum_{Q_1\le q,q'\le Q_2} g(\bm{q})^2\cdot \sqrt{\delta_\xi\!\left(q,q'\right)} \\
&\qquad  +\; O\!\left(V\!\left(\bm{\mathcal{T}}\right)\cdot \sum_{Q_1\le q,q'\le Q_2} g(\bm{q}) \sqrt{\delta_\xi\!\left(q,q'\right)}\cdot \left(1+g(\bm{q})\cdot  \sqrt{\delta_\xi\!\left(q,q'\right)}\right)\right).
\end{align*}
\end{prop}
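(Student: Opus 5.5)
The plan is to combine the Reduction Lemma~\ref{lemreg4as} with Proposition~\ref{asymptinhomg} applied at $\bm{\gamma}=\bm{0}$. By definition~\eqref{subsm4} and Lemma~\ref{lemreg4as},
\begin{equation*}
\Theta_{\xi}^{(4)}\left(\bm{\mathcal{Q}}; \bm{\mathcal{T}}\right)\;=\;\sum_{Q_1\le q,q'\le Q_2}\mathcal{S}_{\xi(\bm{q}, \:\cdot\:)}\left(\bm{\mathcal{T}}; g(\bm{q})\right).
\end{equation*}
Each summand is the homogeneous global sum for the denominator $g=g(\bm{q})$ and the approximation function $\xi(\bm{q},\cdot)$ of~\eqref{defxibmq}. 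One should first check that Proposition~\ref{asymptinhomg} applies. It only involves the value $\xi(\bm{q},g)=g\,\delta_\xi(q,q')$. This value lies in $(0,1]$, since $g\le\min\{q,q'\}$ and $\xi<1$ give $g\,\delta_\xi(q,q')\le \xi(q)\,g/q\le 1$. With this in hand, the proposition yields the main term
\begin{equation*}
\frac{2\sqrt{2}}{\sqrt{\pi}}\,(T_2-T_1)\sqrt{g^3\cdot g\,\delta_\xi(q,q')}\;=\;\frac{2\sqrt{2}}{\sqrt{\pi}}\,(T_2-T_1)\,g(\bm{q})^2\sqrt{\delta_\xi(q,q')}.
\end{equation*}
Summing over $q,q'$ gives exactly the leading term claimed.

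It remains to control the error term $\mathcal{E}^{(\bm{0})}_{\xi(\bm{q},\cdot)}(\bm{\mathcal{T}},g)$. Its generic part is
\begin{equation*}
\sqrt{g\,\xi(\bm{q},g)}\,\Bigl((1+\sqrt{g\,\xi(\bm{q},g)})(1+T_2-T_1+\sqrt{T_2}-\sqrt{T_1})+\bigl|\log(T_2/\max\{T_1,1/g\})\bigr|\Bigr),
\end{equation*}
where $\sqrt{g\,\xi(\bm{q},g)}=g\sqrt{\delta_\xi}$. The factor $1+T_2-T_1+\sqrt{T_2}-\sqrt{T_1}$ is $\ll V(\bm{\mathcal{T}})$ by~\eqref{defVT}. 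The logarithm is also $\ll V(\bm{\mathcal{T}})$ when $g\ge 1/T_1$, since it then equals $\log(T_2/T_1)$. When $g<1/T_1$ it is $\log(gT_2)$, which is at most $\log(T_2/T_1)\le V(\bm{\mathcal{T}})$ when $gT_2\ge 1$, and otherwise is at most $\log(1/(gT_2))\le\log(1/T_2)<\log(1/T_1)\ll 1/\sqrt{T_1}\le V(\bm{\mathcal{T}})$. In every case the generic part is $\ll V(\bm{\mathcal{T}})\,g\sqrt{\delta_\xi}\,(1+g\sqrt{\delta_\xi})$, which is the stated error.

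The main obstacle is the conditional term $\Lambda^{(\bm{0})}$. The reduction~\eqref{redasympt} requires $g\ge 4/T_1$, whereas the hypothesis $T_1\ge 4/Q_1$ only controls $q,q'\ge Q_1$, and $g(\bm{q})$ may be as small as $1$. So $\Lambda^{(1)},\Lambda^{(3)},\Lambda^{(4)}$ must be handled directly at $\bm{\gamma}=\bm{0}$, using $\xi(\bm{q},g)=g\,\delta_\xi\le g\cdot\xi(Q_1)/Q_1$, which is small.

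\begin{itemize}
\item $\Lambda^{(1)}$ needs $T_1<\xi(\bm{q},g)/g=\delta_\xi\le 1/Q_1$, which is impossible since $T_1\ge 4/Q_1$.
\item $\Lambda^{(4)}$ at $\gamma_1=0$ needs $-\xi/(2g)\ge T_1>0$, which is impossible.
\item $\Lambda^{(3)}$ needs $\max\{gT_1-\xi(\bm{q},g),0\}\le\lfloor \tfrac32\xi(\bm{q},g)\rfloor$. Since $\xi(\bm{q},g)=g\delta_\xi\le g/Q_1\le 1$ (as $g\le Q_1$ or else $g\delta_\xi\le 1$ anyway), the floor is $\le 1$. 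The condition then forces $gT_1\le \tfrac52\xi(\bm{q},g)=\tfrac52 g\delta_\xi$, i.e.\ $T_1\le \tfrac52\delta_\xi\le \tfrac52/Q_1$, which contradicts $T_1\ge 4/Q_1$.
\end{itemize}

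For $\Lambda^{(2)}$, one bounds the minimum by $\sqrt{\xi(\bm{q},g)/(gT_2-\xi(\bm{q},g))}\ll \sqrt{\delta_\xi/T_2}$, using $gT_2\ge 2\xi(\bm{q},g)$, which follows from $T_2>T_1\ge 4\delta_\xi$. The bracket is $\ll 1+g\sqrt{T_2}$. Hence
\begin{equation*}
\Lambda^{(2)}\ll \sqrt{\delta_\xi}\,\bigl(g+T_2^{-1/2}\bigr)\ll V(\bm{\mathcal{T}})\, g\sqrt{\delta_\xi},
\end{equation*}
since $T_2^{-1/2}<T_1^{-1/2}\le V(\bm{\mathcal{T}})$. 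Summing these bounds over $q,q'$ completes the proof. The step most likely to need care is the bookkeeping of these conditional terms for small $g$, where~\eqref{redasympt} cannot be invoked as it was in the deduction of Proposition~\ref{diagestimterm}.
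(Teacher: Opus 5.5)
Your proof is correct and takes essentially the paper's route: Lemma~\ref{lemreg4as} turns $\Theta_{\xi}^{(4)}$ into $\sum_{q,q'}\mathcal{S}_{\xi(\bm{q},\cdot)}(\bm{\mathcal{T}}; g(\bm{q}))$, Proposition~\ref{asymptinhomg} at $\bm{\gamma}=\bm{0}$ gives the main term, and the only surviving conditional term $\Lambda^{(2)}$ is bounded by $\ll V(\bm{\mathcal{T}})\, g(\bm{q})\sqrt{\delta_\xi(q,q')}$, as in the paper. The paper checks the vanishing of $\Lambda^{(1)},\Lambda^{(3)},\Lambda^{(4)}$ only implicitly, through $T_1\ge 4\delta_\xi(q,q')$ rather than \eqref{redasympt} (which would need $g(\bm{q})\ge 4/T_1$); you verify it explicitly, and you also fill in the logarithmic bound and the check that $\xi(\bm{q},g)\le 1$, which the paper omits.
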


\begin{proof}
From the definition in~\eqref{subsm4} of the subsum $\Theta_{\xi}^{(4)}\left(\bm{\mathcal{Q}}; \bm{\mathcal{T}}\right)$ of interest in this regime and from the above Lemma~\ref{lemreg4as}, one has, in the notation of the latter statement,
\begin{align}\label{sumreg4qq'}
\Theta_{\xi}^{(4)}\left(\bm{\mathcal{Q}}; \bm{\mathcal{T}}\right)\;=\; \sum_{Q_1\le q,q'\le Q_2}\; \mathcal{S}_{\xi(\bm{q}, \:\cdot\:)}\left(\bm{\mathcal{T}}; g(\bm{q})\right).
\end{align} 
Recall here that $\mathcal{S}_{\xi(\bm{q}, \:\cdot\:)}\left(\bm{\mathcal{T}}; g(\bm{q})\right) = \mathcal{S}_{\xi}^{(\bm{\gamma})}\left(\bm{\mathcal{T}}; g(\bm{q})\right)$, where $\mathcal{S}_{\xi}^{(\bm{\gamma})}\left(\bm{\mathcal{T}}; \;\cdot\; \right)$ is the global inhomogeneous sum defined in~\eqref{defaxilfixq}, in the particular  case where   $\bm{\gamma}=\left(0,0\right)$  and where $\xi=\xi(\bm{q}, \:\cdot\:)$.  From Proposition~\ref{asymptinhomg}, under the assumption that $T_1\;\ge\; 4/Q_1$, and thus that 
\begin{equation}\label{b}
T_1\;\ge\; \frac{4}{Q_1}\;\ge\; 4\cdot \frac{\xi(\bm{q}, g(\bm{q}))}{g(\bm{q})}\;\underset{\eqref{defxibmq}}{=}\; 4\cdot\delta_\xi\!\left(q,q'\right),
\end{equation}
it admits the asymptotic expansion
\begin{align*}
\mathcal{S}_{\xi(\bm{q},\,\cdot)}
\left(\bm{\mathcal{T}}; g(\bm{q})\right)
&\underset{\eqref{defVT}}{=}
\frac{2\sqrt{2}}{\sqrt{\pi}}\,
\left(T_2-T_1\right)\,
g(\bm{q})^2\,
\sqrt{\delta_\xi\!\left(q,q'\right)}
\;+\; \\
&
O\!\left(
V\!\left(\bm{\mathcal{T}}\right)\cdot\left(
g(\bm{q})\sqrt{\delta_\xi\!\left(q,q'\right)}
\left(
1
+g(\bm{q})\sqrt{\delta_\xi\!\left(q,q'\right)}\right)\right)
+\Lambda_{\xi(\bm{q}, \:\cdot\:)}^{\left(\bm{0}, 2\right)}\left(\bm{\mathcal{T}}; g(\bm{q})\right)
\right). \\
\end{align*}
Here, $\Lambda_{\xi(\bm{q},\,\cdot)}^{\left(\bm{0},2\right)}
\left(\bm{\mathcal{T}}; g(\bm{q})\right)$ is a term which vanishes unless 
\begin{equation}\label{T_2-T_1last}
T_2-T_1\;\ge\; \frac{\xi(\bm{q}, g(\bm{q}))}{g(\bm{q})}\;\underset{\eqref{b}}{=}\; \delta_\xi\!\left(q,q'\right),
\end{equation}
in which case it is bounded above as
\begin{align*}
\Lambda_{\xi(\bm{q},\,\cdot)}^{\left(\bm{0},2\right)}
\left(
\bm{\mathcal{T}};\,
g(\bm{q})
\right)
&\;\le\;
g(\bm{q})\sqrt{\delta_\xi\!\left(q,q'\right)}
+
\sqrt{
\frac{
\xi\!\left(\bm{q},g(\bm{q})\right)
}{
g(\bm{q})\cdot T_2-\xi\!\left(\bm{q},g(\bm{q})\right)
}
}
\\
&\underset{\eqref{T_2-T_1last}}{\le}\;
g(\bm{q})\sqrt{\delta_\xi\!\left(q,q'\right)}
+
\sqrt{
\frac{
\xi\!\left(\bm{q},g(\bm{q})\right)
}{
g(\bm{q})\cdot T_1
}
}\\
&\underset{\eqref{defVT}\&\eqref{T_2-T_1last}}{\le}\; 2 V\!\left(\bm{\mathcal{T}}\right)\cdot g(\bm{q})\sqrt{\delta_\xi\!\left(q,q'\right)}
\end{align*}
As a consequence,
\begin{align*}
\mathcal{S}_{\xi(\bm{q},\,\cdot)}
\left(\bm{\mathcal{T}}; g(\bm{q})\right) &=\; \frac{2\sqrt{2}}{\sqrt{\pi}}\cdot \left(T_2-T_1\right)\cdot g(\bm{q})^2\cdot \sqrt{\delta_\xi\!\left(q,q'\right)}  \\
& \qquad \qquad +\; O\!\left(V\!\left(\bm{\mathcal{T}}\right)\cdot g(\bm{q}) \sqrt{\delta_\xi\!\left(q,q'\right)}\cdot \left(1+g(\bm{q})\cdot  \sqrt{\delta_\xi\!\left(q,q'\right)}\right)\right).
\end{align*}

The identity~\eqref{sumreg4qq'} then establishes the claim.
\end{proof}

\noindent The upper bound \eqref{estimsub4} in Regime 4 follows from the above statement~:

\begin{coro}[Upper Bound for the Subsum $\Theta_{\xi}^{(4)}\left(\bm{\mathcal{Q}}; \bm{\mathcal{T}}\right)$ under the assumption of monotonicity] Assume that $T_1\;\ge\; 3/Q_1$ and, furthermore, that the map $q\mapsto\xi(q)/q$ is nonincreasing. Then, 
\begin{align}\label{upbreg4concl}
\Theta_{\xi}^{(4)}\left(\bm{\mathcal{Q}}; \bm{\mathcal{T}}\right)\;&\ll\;  V\!\left(\bm{\mathcal{T}}\right)\cdot\sum_{q=Q_1}^{Q_2}\sqrt{q\cdot \xi(q)}\cdot \sigma\left(q\right),
\end{align}
where $\sigma(q)$ is the sum of the divisors of the integer $q$ defined in~\eqref{sumdiv}.
\end{coro}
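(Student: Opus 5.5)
We start from Proposition~\ref{asympreg4par}, which applies since $T_1\ge 3/Q_1$ is (up to an immaterial change of constant, already absorbed in the condition $Q_1\ge 4/T_1$ assumed throughout this section) the hypothesis needed there. It gives
\[
\Theta_{\xi}^{(4)}\left(\bm{\mathcal{Q}}; \bm{\mathcal{T}}\right)\;\ll\; V\!\left(\bm{\mathcal{T}}\right)\cdot \sum_{Q_1\le q,q'\le Q_2} \left(g(\bm{q})^2\sqrt{\delta_\xi(q,q')}+g(\bm{q})\sqrt{\delta_\xi(q,q')}\right),
\]
since $T_2-T_1\le V\!\left(\bm{\mathcal{T}}\right)$. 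Because $g(\bm{q})\ge1$, the second summand is dominated by the first, so it suffices to bound $\sum_{q,q'} g(\bm{q})^2\sqrt{\delta_\xi(q,q')}$.

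Next we use the monotonicity of $q\mapsto \xi(q)/q$. By symmetry we restrict to $q'\ge q$ (losing a factor $2$). Then $\delta_\xi(q,q')=\xi(q')/q'$, and the sum becomes
\[
\sum_{q'=Q_1}^{Q_2}\sqrt{\frac{\xi(q')}{q'}}\sum_{q\le q'}\gcd(q,q')^2 .
\]
The key arithmetic step is the bound on the inner gcd sum. We group the $q\le q'$ according to $d=\gcd(q,q')$: $d$ runs over the divisors of $q'$, and the number of $q\le q'$ with $d\mid q$ is at most $q'/d$. Hence
\[
\sum_{q\le q'}\gcd(q,q')^2\;\le\;\sum_{d\mid q'} d^2\cdot\frac{q'}{d}\;=\;q'\,\sigma(q').
\]

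Substituting this bound gives
\[
\sum_{q'}\sqrt{\frac{\xi(q')}{q'}}\,q'\,\sigma(q')\;=\;\sum_{q'}\sqrt{q'\xi(q')}\,\sigma(q'),
\]
which is exactly~\eqref{upbreg4concl}. The only delicate point is checking that the error and conditional terms inherited from Proposition~\ref{asympreg4par} are really absorbed, namely that $V\!\left(\bm{\mathcal{T}}\right)$ controls both the leading constant $(T_2-T_1)$ and the $\Lambda^{(\bm{0},2)}$ contribution. That proposition already states this uniformly in $\bm{q}$, so no new work is needed.

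The expansion~\eqref{defMxibis} is a separate elementary claim, deferred to its own argument. One writes $\sigma(q)=\sum_{d\mid q}q/d$, isolates the main term from $d\le$ a small bound by partial summation against $\Xi$, and bounds the tail by the $\log q$ error term.
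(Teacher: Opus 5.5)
Your argument is correct and is essentially the paper's proof. You apply Proposition~\ref{asympreg4par} and absorb both the factor $T_2-T_1$ and the error term $g(\bm{q})\sqrt{\delta_\xi(q,q')}\,(1+g(\bm{q})\sqrt{\delta_\xi(q,q')})$ into $V(\bm{\mathcal{T}})\,g(\bm{q})^2\sqrt{\delta_\xi(q,q')}$, using $g(\bm{q})\ge 1$ and $\delta_\xi\le 1$. After a symmetric split, monotonicity of $q\mapsto\xi(q)/q$ identifies $\delta_\xi(q,q')$ with $\xi(\max\{q,q'\})/\max\{q,q'\}$, and counting multiples of each divisor gives $\sum_{k\le q}\gcd(q,k)^2\le q\,\sigma(q)$, exactly as the paper does for $D_2(Q_1,q)$. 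Your closing paragraph on~\eqref{defMxibis} is not needed for this statement; the paper proves that expansion separately, from $\sum_{q\le Q}q\,\sigma(q)=\tfrac{\pi^2}{18}Q^3+O(Q^2\log Q)$ and partial summation.
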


\begin{proof}
Given integers $q\ge Q\ge 1$, set for the sake of this proof 
\begin{equation*}
D_2\left(Q, q\right)\;=\; \sum_{k=Q}^{q} \gcd(q,k)^2.
\end{equation*}
Splitting the sums intervening in the conclusion of Proposition~\ref{asympreg4par} into the two complementary ranges $Q_1\le q\le q'\le Q_2$ and $Q_1\le q'<q\le Q_2$,  it follows from the assumption of monotonicity under consideration that 
\begin{align*}
\Theta_{\xi}^{(4)}\left(\bm{\mathcal{Q}}; \bm{\mathcal{T}}\right)\;&\ll\;  V\!\left(\bm{\mathcal{T}}\right)\cdot\sum_{q=Q_1}^{Q_2}\sqrt{\frac{\xi(q)}{q}}\cdot D_2\left(Q_1, q\right).
\end{align*}
The conclusion is then a consequence of the (crude) estimate
\begin{equation*}
D_2\left(Q, q\right)\;\le\; \sum_{k=1}^q \gcd(q,k)^2\;\le\; \sum_{d|q}  d^2\cdot \#\left\{1\le m\le q\;:\; d|m\right\}\;\le\; q\cdot\sigma(q)
\end{equation*}
(obtained by bounding the cardinality by $q/d$).
\end{proof}

\begin{proof}[Completion of the proof of the estimates in Regime 4] It remains to establish the asymptotic expansion~\eqref{defMxibis} for the partial sums appearing on the right-hand side of the relation~\eqref{upbreg4concl}. To this end, given $Q\ge 1$, let $$T(Q)\;=\; \sum_{q=1}^{Q}q\cdot\sigma(q).$$ By partial summation, 
\begin{align}\label{partdumfun}
\sum_{q=1}^{Q}\sqrt{q\cdot \xi(q)}\cdot \sigma\left(q\right)\;&=\; \sqrt{\frac{\xi(Q)}{Q}}\cdot T(Q)\;+\; \sum_{q=1}^{Q-1}\left(\sqrt{\frac{\xi(q)}{q}}-\sqrt{\frac{\xi(q+1)}{q+1}}\right)\cdot T(q).
\end{align}
Furthermore, 
\begin{align*}
T(Q)\;&=\; \sum_{1\le ds\le Q}d^2s\;=\; \sum_{1\le s\le Q}s \sum_{d\le Q/s}d^2 \;=\; \sum_{1\le s\le Q}s \cdot \left(\frac{1}{3}\cdot \left(\frac{Q}{s}\right)^3+O\!\left(\left(\frac{Q}{s}\right)^2\right)\right)\\
&=\; \frac{\pi^2}{18}\cdot Q^3\;+\; O\!\left(Q^2\cdot \log Q\right).
\end{align*}
Substituting this relation back into~\eqref{partdumfun}, a short simplification based on reverse partial summations shows that 
\begin{align*}\label{partdumfunbis}
\sum_{q=1}^{Q}\sqrt{q\cdot \xi(q)}\cdot \sigma\left(q\right)\;&=\; \frac{\pi^2}{6}\cdot \sum_{q=1}^{Q}\sqrt{q^3\cdot\xi(q)}\;+\; O\!\left(\sum_{q=1}^Q \sqrt{q\cdot\xi(q)}\right)\\
&\qquad \qquad +\; O\!\left(\sum_{q=2}^{Q}\sqrt{\frac{\xi(q)}{q}}\cdot\left(q^2\log q - \left(q-1\right)^2\cdot \log (q-1)\right)\right)\\
&=\; \frac{\pi^2}{6}\cdot \sum_{q=1}^{Q}\sqrt{q^3\cdot\xi(q)}\;+\; O\!\left(\sum_{q=1}^Q \sqrt{q\cdot\xi(q)}\cdot \log q\right).
\end{align*}
This completes the proof of the relation~\eqref{defMxibis}.
\end{proof}

\section{Moment Estimates for Uniform Counting}\label{secfin}

\noindent This final section is devoted to the proof of Proposition~\ref{smeuf} (First and Second Moment Estimates for Uniform Counting). \\

\noindent In fact, the First Moment Estimate~\eqref{exp0} has already been established as part of the investigations in Subsection~\ref{completionproofbis}, and more precisely in~\eqref{asympexpbis}. It thus remains to prove the Second Moment Estimate~\eqref{var0}. \\

\begin{proof}[Proof {of} the Second Moment Estimate~\eqref{var0}] The proof relies on the distinction of cases induced by the four regimes introduced 
in Proposition~\ref{propestimregi}. The approximation function is chosen here as  the one defined in~\eqref{defpsideltaq}, namely $\xi_Q^{(\delta)}$. Consequently, the horizontal shift $H_{\xi_Q^{(\delta)}}(q,q')$ defined in~\eqref{defhH} admits the simpler form 
\begin{equation}\label{HQdelta}
H_Q^{\left(\delta\right)}\; :=\; H_{\xi_Q^{(\delta)}}(q,q')\;=\; {4}\cdot\frac{\delta(Q)}{Q} \qquad \textrm{ when }\qquad 1\le q, q'\le Q.
\end{equation}
The vertical shift $h_{\xi_Q^{(\delta)}}^{(\mu)}(q,q')$ also introduced in~\eqref{defhH} and intervening in the definition of Regimes~2 and~3 is, however, defined differently in this proof, namely as 
\begin{equation}\label{hQdelta}
h_Q^{\left(\delta\right)}\;:=\; h_{\xi_Q^{(\delta)}}(q,q')\;=\; C_0\cdot\sqrt{H_Q^{\left(\delta\right)}}\cdot \log\left(2Q\right) \qquad \textrm{ when }\qquad 1\le q, q'\le Q.
\end{equation}
Here, $C_0$ is a sufficiently large absolute constant chosen so that  
\begin{align*}
H_Q^{\left(\delta\right)}\le\min\left\{h_Q^{\left(\delta\right)}, \left(\frac{h_Q^{\left(\delta\right)}}{4}\right)^2\right\}\qquad \textrm{ and }\qquad h_Q^{\left(\delta\right)}<\frac{1}{2}
\end{align*}
for all integers 
\begin{equation}\label{QgeQ_0}
Q\; \ge\; Q_0, 
\end{equation} 
where $Q_0\ge 1$ is an auxiliary integer. The real $C_0$ and the integer $Q_0$ are introduced to ensure that the proofs based on the conditional probability decompositions  in Regimes~1 and~2 are still valid (in particular, for the chain of inequalities~\eqref{defe} to hold). \\

\noindent This new choice of parameters induces adjustments in the proof of the estimates in Proposition~\ref{propestimregi} in the various regimes. These adjustments, and the resulting conclusions,  are recorded hereafter. To this end, it is convenient to set 
\begin{equation*}
\Theta_{ \delta}^{*}\left(Q; \bm{\mathcal{T}}\right) \;=\; \Theta_{\xi}\left(\bm{\mathcal{Q}}; \bm{\mathcal{T}}\right)\qquad \textrm{and }\qquad \Theta_{ \delta}^{(i, *)}\left(Q; \bm{\mathcal{T}}\right)\; =\;  \Theta_{\xi}^{(i)}\left(\bm{\mathcal{Q}}; \bm{\mathcal{T}}\right)
\end{equation*}
whenever 
\begin{equation*}
\xi=\xi_Q^{(\delta)}, \qquad i=1, \dots, 4 \qquad \textrm{and} \qquad \bm{\mathcal{Q}}=(1,Q).
\end{equation*}
Here, $ \Theta_{\xi}\left(\bm{\mathcal{Q}}; \bm{\mathcal{T}}\right)$ is the quantity defined in~\eqref{ODPT} and  the subsums $\Theta_{\xi}^{(i)}\left(\bm{\mathcal{Q}}; \bm{\mathcal{T}}\right)$, where $1\le i\le 4$, are those introduced in the relations~\eqref{subsm1}, \eqref{subsm2}, \eqref{subsm3} and~\eqref{subsm4}. In particular, the decomposition~\eqref{decomthetabig} becomes
\begin{align}\label{decomthetabigbisdd}
\Theta_{ \delta}^{*}\left(Q; \bm{\mathcal{T}}\right) \;=\; \Theta_{ \delta}^{(1, *)}\left(Q; \bm{\mathcal{T}}\right) \;+\; \Theta_{ \delta}^{(2, *)}\left(Q; \bm{\mathcal{T}}\right) \;+\; \Theta_{ \delta}^{(3, *)}\left(Q; \bm{\mathcal{T}}\right) \;+\; \Theta_{ \delta}^{(4, *)}\left(Q; \bm{\mathcal{T}}\right) .
\end{align}
Taking into account the fact that the proof of Proposition~\ref{propestimregi} imposes that one deals with denominators larger than some integer  depending only on the  boundary para\-me\-ter $ \bm{\mathcal{T}}=\left(T_1, T_2\right)$  (this is the assumption~\eqref{asymptlageT_1T_2bis}), let  also
\begin{equation}\label{restricquan} 
\widehat{\Theta}_{ \delta}^{*}\left(Q; \bm{\mathcal{T}}\right) \qquad \textrm{and }\qquad  \widehat{\Theta}_{ \delta}^{(i, *)}\left(Q; \bm{\mathcal{T}}\right),
\end{equation}
where $1\le i\le 4$, denote the above sums restricted to those denominators $q\ge Q_{ \bm{\mathcal{T}}}$, where $Q_{ \bm{\mathcal{T}}}\ge 2$ is a free parameter. Similarly, define 
\begin{align*}
\Delta_{ \delta}^{*}\left(Q; \bm{\mathcal{T}}\right) \qquad \textrm{and} \qquad \widehat{\Delta}_{ \delta}^{*}\left(Q; \bm{\mathcal{T}}\right)
\end{align*}
from the Diagonal Probability Term $\Delta_{\xi}\left(\bm{\mathcal{Q}} ; \bm{\mathcal{T}}  \right)$ introduced in~\eqref{DPT} with the same meaning as above. \\

\noindent Adopting the same notation as in Proposition~\ref{propestimregi}, the above--mention adjustments in its proof and   conclusions now read as follows~: 

\begin{itemize}
\item \textbf{Regime~1}~: the proof in Subsection~\ref{reg1} runs smoothly when working with the map $\xi_Q^{(\delta)}$ so as to yield the same conclusion as in~\eqref{estimsub1}; namely that, provided that $Q_{ \bm{\mathcal{T}}}$ is large enough, for any $Q\ge Q_{ \bm{\mathcal{T}}}$,  
\begin{align}\label{theta*1}
& {\widehat{\Theta}_{ \delta}^{(1, *)}\left(Q; \bm{\mathcal{T}}\right)}\;=\; \left(\widehat{\Delta}_{ \delta}^{*}\left(Q; \bm{\mathcal{T}}\right)\right)^2+ \widehat{\Gamma}_{ \delta}^{*}\left(Q; \bm{\mathcal{T}}\right),
\end{align}
where the error term $\widehat{\Gamma}_{ \delta}^{*}\left(Q; \bm{\mathcal{T}}\right)$ meets the bound 
\begin{align}
\left|\widehat{\Gamma}_{ \delta}^{*}\left(Q; \bm{\mathcal{T}}\right)\right|\;&\ll\; \widehat{\Delta}_{ \delta}^{*}\left(Q; \bm{\mathcal{T}}\right)\cdot\left(\sum_{q=Q_{ \bm{\mathcal{T}}}}^{Q} \left(\sqrt{q\xi_Q^{(\delta)}}\cdot\left(1+\sqrt{q\xi_Q^{(\delta)}}+\log q\right)\right)\right)\nonumber\\
&\underset{\eqref{defpsideltaq}, \eqref{expdeldtqdel} \& \eqref{asympexp}}{\ll}\; \delta\cdot Q^4\cdot \log Q+\delta^{3/2}\cdot Q^{9/2}.\label{theta*1bis}
\end{align}
Here, and throughout, the implicit constants are allowed to depend on the boundary parameter $\bm{\mathcal{T}}$.\\

\item \textbf{Regime~2}~: this is the regime the adaptation of which requires the most attention. Since the condition~\eqref{defe} is met from the above choices of the parameters $C_0$ and $Q_0$, the proof in Subsection~\ref{reg2} runs smoothly up to the bound~\eqref{majreg2codn0} when replacing the quantities $\xi$, $H_{\xi}(q,q')$ and $h^{(\mu)}_{\xi}(q,q')$ with $\xi_Q^{(\delta)}$, $H_Q^{\left(\delta\right)}$ and $h_Q^{\left(\delta\right)}$, respectively. This {bound} thus becomes, adopting the remaining notations of Subsection~\ref{reg2}, 
\begin{align}
\ppi_{\xi}\left(\bm{\mathcal{T}}; \left( \frac{\bm{s}'}{q'} :   \bm{p}'\middle|\; \bm{p}\right)\right) \;&\ll \;  \sqrt{\beta}\cdot \left(\frac{1}{h_Q^{\left(\delta\right)}}+\frac{1}{\sqrt{x^+}}\right)\cdot  \exp\left(-\frac{\left(y^-\right)^2}{2x^+}\right).\nonumber
\end{align}
In the present case, $\beta=H_Q^{\left(\delta\right)}/2$ (see the definition of $\beta$ in~\eqref{boundreg2}). The monotonicity argument yielding the upper bound~\eqref{majreg2codn} in  the inequality following~\eqref{majreg2codn0} thus provides a simplified bound of the form
\begin{align}
\ppi_{\xi}\left(\bm{\mathcal{T}}; \left( \frac{\bm{s}'}{q'} :   \bm{p}'\middle|\; \bm{p}\right)\right)
\;&\underset{\eqref{boundreg2a}, \eqref{HQdelta}, \eqref{hQdelta}}{\ll}\;  \exp\left(-\frac{\left(h_Q^{\left(\delta\right)}/2 +\left|s'_2/q'-p_2\right|/4\right)^2}{6\cdot H_Q^{\left(\delta\right)}}\right)\nonumber\\
&\ll\; \exp\left(-\frac{\left(h_Q^{\left(\delta\right)}\right)^2/4 +\left(s'_2/q'-p_2\right)^2/16}{6\cdot H_Q^{\left(\delta\right)}}\right)\nonumber\\
\;&\underset{\eqref{HQdelta}\& \eqref{hQdelta}}{\ll}\;  \exp\left(-\frac{\left(C_0\cdot \log\left(2Q\right)\right)^2}{24}- \frac{\left(s'_2/q'-p_2\right)^2}{96\cdot H_Q^{\left(\delta\right)}}\right).\nonumber
\end{align}
The integer $s'_2$ is here restricted by the condition~\eqref{csubsm2} defining this regime. In the present case, it reads
\begin{equation}\label{reg2unifbisb}
\left|\frac{s'_2}{q'}-\frac{s_2}{q}\right|\;\ge\; h_Q^{\left(\delta\right)}.
\end{equation}
Continuing the calculations as in~\eqref{ineqproofreg2}, one is thus led to 
\begin{align}
&\sum_{\underset{\eqref{reg2unifbisb}}{s_2'\in\Z~: }} \ppi_\xi\left(\bm{\mathcal{T}}; \left(\frac{\bm{s}}{q}, \frac{\bm{s}'}{q'}\right)\right)\nonumber\\
&\ll  \ppi_{\xi}\left(\bm{\mathcal{T}}; \frac{\bm{s}}{q}\right)\cdot \exp\left(-\frac{\left(C_0\cdot \log\left(2Q\right)\right)^2}{24}\right)\cdot  \left(\sum_{s'_2\in\Z}  \exp\left(- \frac{\left(s'_2/q'-p_2\right)^2}{96\cdot H_Q^{\left(\delta\right)}}\right)\right) \nonumber \\
&\ll  \ppi_{\xi}\left(\bm{\mathcal{T}}; \frac{\bm{s}}{q}\right)\cdot \exp\left(-\frac{\left(C_0\cdot \log\left(2Q\right)\right)^2}{24}\right)\cdot  \left(1+q'\sqrt{H_Q^{\left(\delta\right)}}\right).\label{ineqproofreg2bisb}
\end{align}
The proof is concluded in the {same} vein as in Subsection~\ref{reg2}, with slight modifications~:  for fixed values of $s_1,q$ and $q'$, the number of integers $s'_1$ meeting the analogue of the first inequality in~\eqref{csubsm2} defining this regime, namely the inequality
\begin{equation}\label{defehjzd}
\left|\frac{s_1}{q}-\frac{s'_1}{q'}\right|\;<\;  H_Q^{\left(\delta\right)},
\end{equation}   
is $O\left(1+q' H_Q^{\left(\delta\right)}\right)$. Therefore, provided that $Q_{ \bm{\mathcal{T}}}$ is large enough, for any $Q\ge Q_{ \bm{\mathcal{T}}}$,   
\begin{align}
\sum_{Q_{ \bm{\mathcal{T}}}\le q'\le Q} \sum_{\underset{\eqref{reg2unifbisb}\& \eqref{defehjzd}}{\bm{s}'\in\Z^2~: }} \ppi_\xi\left(\bm{\mathcal{T}}; \left(\frac{\bm{s}}{q}, \frac{\bm{s}'}{q'}\right)\right)\;&\underset{\eqref{ineqproofreg2bisb}}{\ll}\; \ppi_\xi\left(\bm{\mathcal{T}}; \frac{\bm{s}}{q}\right)\cdot \exp\left(-\frac{\left(C_0\cdot \log\left(2Q\right)\right)^2}{24}\right)\times\nonumber \\
&\qquad  \sum_{Q_{ \bm{\mathcal{T}}}\le q'\le Q} \left(  \left(1+q'\sqrt{H_Q^{\left(\delta\right)}}\right)\cdot \left(1+q' H_Q^{\left(\delta\right)}\right)\right)\nonumber\\
&\ll\; \ppi_\xi\left(\bm{\mathcal{T}}; \frac{\bm{s}}{q}\right)\cdot \exp\left(-\frac{\left(C_0\cdot \log\left(2Q\right)\right)^2}{24}\right)\cdot Q^3\nonumber\\
&\ll\; \ppi_\xi\left(\bm{\mathcal{T}}; \frac{\bm{s}}{q}\right).\nonumber
\end{align}
Summing over the admissible values of the fraction $\bm{s}/q$, one obtains as in~\eqref{upbreg2} that 
\begin{align}\label{theta*2}
{ \widehat{\Theta}_{ \delta}^{(2, *)}\left(Q; \bm{\mathcal{T}}\right)}
\;\ll\; \widehat{\Delta}_{ \delta}^{*}\left(Q; \bm{\mathcal{T}}\right)\; \underset{\eqref{expdeldtqdel} \& \eqref{asympexp}}{\ll}\; \sqrt{\delta\cdot Q^5}.
\end{align}

\item \textbf{Regime~3}~: the proof in Subsection~\ref{reg3} makes no use of the specific expression for the vertical and horizontal shifts. Its conclusion, namely the bound~\eqref{estimsub3}, thus still holds in the case of the above--defined quantities $H_Q^{\left(\delta\right)}$ and $h_Q^{\left(\delta\right)}$. It claims that, provided that $Q_{ \bm{\mathcal{T}}}$ is large enough, for any $Q\ge Q_{ \bm{\mathcal{T}}}$,   
\begin{align}
{ \widehat{\Theta}_{ \delta}^{(3, *)}\left(Q; \bm{\mathcal{T}}\right)}
&\ll
\sum_{Q_{ \bm{\mathcal{T}}}\le q,q'\le Q}
\sqrt{\xi_Q^{(\delta)}(q)\cdot\xi_Q^{(\delta)}(q')}
\times
\Biggl(
(qq')^{3/2}\cdot\sqrt{H_Q^{(\delta)}}\cdot h_Q^{(\delta)}
\nonumber \\
&\qquad\qquad
+
\left(1+\frac{h_Q^{(\delta)}}{H_Q^{(\delta)}}\right)
\sqrt{\left(qq'\right)\cdot H_Q^{(\delta)}}{\cdot \min\{q,q'\}}
\Biggr).\nonumber\\
&\underset{\eqref{defpsideltaq}, \eqref{HQdelta} \& \eqref{hQdelta}}{\ll} \delta^2\cdot Q^4\cdot \log\left(2Q\right)+\sqrt{\delta^{3}\cdot Q^{7}} + \delta\cdot Q^4\cdot \log\left(2Q\right)\nonumber \\
&\ll \delta\cdot Q^4\cdot \log\left(2Q\right). \label{theta*3}
\end{align}

\item \textbf{Regime~4}~: since the map $q\mapsto {\xi_Q^{(\delta)}(q)/q}$ is nonincreasing, the proof in Subsection~\ref{reg4} runs smoothly when working with the approximation function $\xi_Q^{(\delta)}$ so as to yield the same conclusion as in~\eqref{estimsub4}; namely that, provided that $Q_{ \bm{\mathcal{T}}}$ is large enough, for any $Q\ge Q_{ \bm{\mathcal{T}}}$,  
\begin{align*}
{ \widehat{\Theta}_{ \delta}^{(4, *)}\left(Q; \bm{\mathcal{T}}\right)}\;&\ll\; \sum_{q= Q_{ \bm{\mathcal{T}}}}^{Q}\sqrt{q\cdot \xi_Q^{(\delta)}}\cdot\sigma(q) \; \underset{\eqref{defpsideltaq}}{\ll}\;\sqrt{\frac{\delta}{Q}}\cdot  \left(\sum_{q= Q_{ \bm{\mathcal{T}}}}^{Q}q\cdot \sigma(q)\right),
\end{align*}
where $\sigma$ denotes the sum of divisors function. Since the latter  meets the well-known bound $\sum_{1\le q\le Q}q\cdot \sigma(q)\ll Q^3$, this yields 
\begin{align}\label{theta*4}
{ \widehat{\Theta}_{ \delta}^{(4, *)}\left(Q; \bm{\mathcal{T}}\right)}\;&\ll\; \sqrt{\delta Q^5}.
\end{align}
\end{itemize}

\noindent From the decomposition~\eqref{decomthetabigbisdd} and from the definition of the restricted quantities in~\eqref{restricquan}, one infers from the above--established relations~\eqref{theta*1}, \eqref{theta*1bis}, \eqref{theta*2}, \eqref{theta*3}  and~\eqref{theta*4} that, provided that   the integer $Q$ is larger than the threshold $Q_0\ge 1$ and than some integer $Q_{ \bm{\mathcal{T}}}\ge 1$ depending only on the  boundary parameter $ \bm{\mathcal{T}}=\left(T_1, T_2\right)$, 
\begin{align}\label{finalvarrestc}
\widehat{\Theta}_{ \delta}^{*}\left(Q; \bm{\mathcal{T}}\right) \;=\; \left(\widehat{\Delta}_{ \delta}^{*}\left(Q; \bm{\mathcal{T}}\right)\right)^2+O\left(\delta\cdot Q^4\cdot \log Q+\sqrt{\delta^{3}\cdot Q^{9}}+\sqrt{\delta Q^5}\right).
\end{align}

\noindent Furthermore, letting $$Q_{ \bm{\mathcal{T}}}^{(0)}\;=\; \max\left\{Q_{ \bm{\mathcal{T}}}, Q_0\right\}-1,$$ 
equations~\eqref{deltaexpec} and~\eqref{thetafomeg}, make it clear that 
\begin{align*}
\E\left[\mathcal{N}^{\,\flat}\!\left(\bm{\mathcal{T}};\left(\delta, Q\right)\right)-\mathcal{N}^{\,\flat}\!\left(\bm{\mathcal{T}};\left({\frac{\delta\cdot Q_{ \bm{\mathcal{T}}}^{(0)}}{Q}}, Q_{ \bm{\mathcal{T}}}^{(0)}\right)\right)\right]\;=\; \widehat{\Delta}_{ \delta}^{*}\left(Q; \bm{\mathcal{T}}\right) 
\end{align*}
and that 
\begin{align*}
\E\left[\left(\mathcal{N}^{\,\flat}\!\left(\bm{\mathcal{T}};\left(\delta, Q\right)\right)-\mathcal{N}^{\,\flat}\!\left(\bm{\mathcal{T}};\left({\frac{\delta\cdot Q_{ \bm{\mathcal{T}}}^{(0)}}{Q}}, Q_{ \bm{\mathcal{T}}}^{(0)}\right)\right)\right)^2\right]\;=\; \widehat{\Theta}_{ \delta}^{*}\left(Q; \bm{\mathcal{T}}\right) .
\end{align*}
As a consequence, one deduces from~\eqref{finalvarrestc} that 
\begin{align*}
&\V\left[\mathcal{N}^{\,\flat}\!\left(\bm{\mathcal{T}};\left(\delta, Q\right)\right)\right]\\
&=\; \V\left[\left(\mathcal{N}^{\,\flat}\!\left(\bm{\mathcal{T}};\left(\delta, Q\right)\right)-\mathcal{N}^{\,\flat}\!\left(\bm{\mathcal{T}};\left({\frac{\delta\cdot Q_{ \bm{\mathcal{T}}}^{(0)}}{Q}}, Q_{ \bm{\mathcal{T}}}^{(0)}\right)\right)\right)+  \mathcal{N}^{\,\flat}\!\left(\bm{\mathcal{T}};\left({\frac{\delta\cdot Q_{ \bm{\mathcal{T}}}^{(0)}}{Q}}, Q_{ \bm{\mathcal{T}}}^{(0)}\right)\right) \right]\\
&\le 2\cdot \V\left[\mathcal{N}^{\,\flat}\!\left(\bm{\mathcal{T}};\left(\delta, Q\right)\right)-\mathcal{N}^{\,\flat}\!\left(\bm{\mathcal{T}};\left({\frac{\delta\cdot Q_{ \bm{\mathcal{T}}}^{(0)}}{Q}}, Q_{ \bm{\mathcal{T}}}^{(0)}\right)\right)\right]+ 2\cdot \V\left[ \mathcal{N}^{\,\flat}\!\left(\bm{\mathcal{T}};\left({\frac{\delta\cdot Q_{ \bm{\mathcal{T}}}^{(0)}}{Q}}, Q_{ \bm{\mathcal{T}}}^{(0)}\right)\right)\right]\\
&\ll\; \delta\cdot Q^4\cdot \log Q+\sqrt{\delta^{3}\cdot Q^{9}}+\sqrt{\delta Q^5}+ \V\left[ \mathcal{N}^{\,\flat}\!\left(\bm{\mathcal{T}};\left({\frac{\delta\cdot Q_{ \bm{\mathcal{T}}}^{(0)}}{Q}}, Q_{ \bm{\mathcal{T}}}^{(0)}\right)\right)\right]\\
&\ll\; \delta\cdot Q^4\cdot \log Q+\sqrt{\delta^{3}\cdot Q^{9}}+\sqrt{\delta Q^5}+ \E\left[ \mathcal{N}^{\,\flat}\!\left(\bm{\mathcal{T}};\left({\frac{\delta\cdot Q_{ \bm{\mathcal{T}}}^{(0)}}{Q}}, Q_{ \bm{\mathcal{T}}}^{(0)}\right)\right)^2\right].
\end{align*} 
To conclude, it remains to notice that when a rational $\bm{s}/q=\left(s_1/q, s_2/q\right)$ intervenes in the counting function $\mathcal{N}^{\,\flat}\!\left(\bm{\mathcal{T}};\left({\delta\cdot Q_{ \bm{\mathcal{T}}}^{(0)}/Q}, Q_{ \bm{\mathcal{T}}}^{(0)}\right)\right)$, then $q$, and therefore $s_1$ are bounded as functions of the boundary times $\bm{\mathcal{T}}$ and of the integer $Q_0$. As for the integer $s_2$, it is clearly bounded by $q\cdot \left(1+\max_{T_1\le t\le T_2}\left|B(t)\right|\right)$ in such a way that 
\begin{align*}
 \E\left[ \mathcal{N}^{\,\flat}\!\left(\bm{\mathcal{T}};\left(\delta, Q_{ \bm{\mathcal{T}}}^{(0)}\right)\right)^2\right]\;&\ll\; 1+ \E\left[ \left(\max_{T_1\le t\le T_2}{\left|B(t)\right|}\right)^2\right]\\
 &=\; 1+2\cdot \int_{0}^{\infty}t\cdot \bP\left(\max_{T_1\le t\le T_2}{\left|B(t)\right|}\ge t\right)\cdot\textrm{d}t,
\end{align*}
where the implicit constant is allowed to depend on $\bm{\mathcal{T}}$ and  on $Q_0$ and where the last equation follows from Fubini's Theorem. It is now an easy consequence of the Law of Maximum~\eqref{Mt} that the above integral is finite, implying that 
\begin{align*}
\V\left[\mathcal{N}^{\,\flat}\!\left(\bm{\mathcal{T}};\left(\delta, Q\right)\right)\right]\;\ll\; 1+\delta Q^4\cdot \log Q+\sqrt{\delta^{3}\cdot Q^{9}}+\sqrt{\delta Q^5}.
\end{align*} 
This reduces to the Second Moment Estimate stated in~\eqref{var0} under the assumption~\eqref{assumpsecmom} that $\delta Q^5\ge 1$.
\end{proof}

\bibliographystyle{unsrt}
\addcontentsline{toc}{section}{References}

\end{document}